\theoremstyle{plain}
\newtheorem{theorem}{Theorem}[section]
\newtheorem{proposition}[theorem]{Proposition}
\newtheorem{lemma}[theorem]{Lemma}
\theoremstyle{remark}
\newtheorem{remark}[theorem]{Remark}
\newtheorem{definition}[theorem]{Definition}
\newtheorem*{example}{Example}
\newcommand{\E}{{\mathbb E}}
\newcommand{\R}{{\mathbb R}}
\renewcommand{\P}{{\mathbb P}}
\newcommand{\A}{{\mathcal{A}}}
\newcommand{\D}{{\mathcal{D}}}
\newcommand{\F}{{\mathcal{F}}}
\newcommand{\G}{{\mathcal{G}}}
\renewcommand{\H}{{\mathcal{H}}}
\newcommand{\I}{{\mathcal{I}}}
\newcommand{\K}{{\mathcal{K}}}
\newcommand{\infmars}{\mathcal{F}_{\infty-\text{mars}}} 
\newcommand{\Vmars}{V_{\text{mars}}}
\newcommand{\Vhk}{V_{\text{HK0}}}
\newcommand{\Sbeta}{T}
\newcommand{\ind}{\mathbf{1}}
\newcommand{\zerovec}{\mathbf{0}}
\newcommand{\onevec}{\mathbf{1}}
\newcommand{\talpha}{t^{(\alpha)}}
\newcommand{\salpha}{s^{(\alpha)}}
\newcommand{\xalpha}{x^{(\alpha)}}
\newcommand{\yalpha}{y^{(\alpha)}}
\newcommand{\sumoveralpha}{\sum_{\substack{\alpha \in \{0, 1\}^d \setminus \{\zerovec\} \\ |\alpha| \leq s}}} 
\newcommand{\sumoveralphafull}{\sum_{\alpha \in \{0, 1\}^d \setminus \{\zerovec\}}} 
\newcommand{\sumoverbeta}{\sum_{\substack{\beta \in \{0, 1, 2\}^d \\ \max_j \beta_j = 2}}} 
\newcommand{\fazeronu}{f_{a_{\zerovec}, \{\nu_{\alpha}\}}}
\newcommand{\fazeromu}{f_{a_{\zerovec}, \{\mu_{\alpha}\}}}
\newcommand{\intoper}{\mathcal{T}}
\newcommand{\genmalpha}{\mathcal{G}}
\newcommand{\floor}[1]{\lfloor #1 \rfloor}
\newcommand{\ceil}[1]{\lceil #1 \rceil}
\newcommand\inner[2]{\langle #1, #2 \rangle}
\def\qt#1{\qquad\text{#1}}
\def\argmin{\mathop{\rm argmin}}
\begin{document}

\begin{frontmatter}
\title{MARS via LASSO}
\runtitle{MARS via LASSO}

\begin{aug}
\author[A]{\fnms{Dohyeong}~\snm{Ki}\ead[label=e1]{dohyeong\_ki@berkeley.edu}},
\author[B]{\fnms{Billy}~\snm{Fang}\ead[label=e2]{blfang@berkeley.edu}}
\and
\author[A]{\fnms{Adityanand}~\snm{Guntuboyina}\ead[label=e3]{aditya@stat.berkeley.edu}}
\address[A]{Department of Statistics, University of California, Berkeley,
\printead[presep={,\ }]{e1,e3}}

\address[B]{Google LLC,
\printead[presep={,\ }]{e2}}
\end{aug}

\begin{abstract}
Multivariate adaptive regression splines (MARS) is a popular method for nonparametric regression introduced by Friedman in 1991. 
MARS fits simple nonlinear and non-additive functions to regression data. 
We propose and study a natural lasso variant of the MARS method. 
Our method is based on least squares estimation over a convex class of functions obtained by considering infinite-dimensional linear combinations of functions in the MARS
basis and imposing a variation based complexity constraint. 
Our estimator can be computed via finite-dimensional convex optimization, although it is defined as a solution to an infinite-dimensional optimization problem.
Under a few standard design assumptions, we prove that our estimator achieves a rate of convergence that depends only logarithmically on dimension and thus avoids the usual curse of dimensionality to some extent. 
We also show that our method is naturally connected to nonparametric estimation techniques based on smoothness constraints.
We implement our method with a cross-validation scheme for the selection of the involved tuning parameter and compare it to the usual MARS method in various simulation and real data settings.
\end{abstract}  

\begin{keyword}[class=MSC]
    \kwd[Primary ]{62G08}
\end{keyword}

\begin{keyword}
    \kwd{Bracketing entropy bounds}
    \kwd{constrained least squares estimation}
    \kwd{curse of dimensionality}
    \kwd{Hardy--Krause variation}
    \kwd{infinite-dimensional optimization}
    \kwd{integrated Brownian sheet}
    \kwd{locally adaptive regression spline}
    \kwd{L1 penalty}
    \kwd{metric entropy bounds}
    \kwd{mixed derivatives}
    \kwd{nonparametric regression}
    \kwd{piecewise linear function estimation}
    \kwd{small ball probability}
    \kwd{tensor products}
    \kwd{total variation regularization}
    \kwd{trend filtering}
\end{keyword}
\end{frontmatter}

\section{Introduction} \label{sec:intro}
We study a natural lasso variant of the
multivariate adaptive regression splines (MARS) method (see
\cite{friedman1991multivariate} or 
\cite{hastie2009elements}, Section 9.4) for nonparametric
regression. To understand the relationship between a response
variable $y$ and $d$ explanatory variables $x_1, \dots, x_d$ based on
observed data $(x^{(1)}, y_1), \dots, (x^{(n)}, y_n)$ with $x^{(i)}
\in \R^d$ and $y_i \in \R$, MARS
fits a function $y = \hat{f}_{\text{mars}}(x_1, \dots, x_d)$ where
$\hat{f}_{\text{mars}}$  is a sparse linear combination of functions of the
form:
\begin{equation}\label{basismars}
  \prod_{j=1}^d (b_j(x_j))^{\alpha_j} =  \prod_{j: \alpha_j
    =1} b_j(x_j)
\end{equation}
with $\alpha = (\alpha_1, \dots, \alpha_d) \in \{0, 1\}^d$ and
\begin{equation*}
b_j(x_j) = (x_j - 
t_j)_+ \text{ or }  (t_j - x_j)_+ \text{ for some real number }t_j.   
\end{equation*}
Here $\cdot_+ := \max \{ \cdot, 0 \}$ indicates the ReLU function. 
As a concrete example, to understand the relationship between the logarithm of Weekly Earnings ($y$) and the
two variables, Years of Education ($x_1$) and Years of Experience
($x_2$), from a standard dataset (\textsf{ex1029} in the R library \textsf{Sleuth3}) collected from 25,437 full-time
male workers in 1987, the default implementation of MARS from the R
package \textsf{earth} (with the maximum degree of interaction  set to two) fits the function 
\begin{equation}\label{expfuncfit}
\begin{split}
    &5.83 + 0.0695 (x_1 - 7)_+ - 0.0370 (11 - x_1)_+ \\
    &\ \ + 0.0155 (x_2 - 13)_+ - 0.0600 (13 - x_2)_+ - 0.0164 (x_2 - 30)_+ \\
    &\ \ - 0.0114 (x_1 - 11)_+ (x_2 - 40)_+ + 0.00148 (x_1 - 11)_+ (40 - x_2)_+,  
\end{split}
\end{equation}
which is clearly a linear combination of the eight functions each of the
form \eqref{basismars}. 

MARS fits a nonlinear function to the observed data that is simple enough to be
interpretable because it is built from the basic ReLU functions $(x_j
- t_j)_+$ and $(t_j - x_j)_+$. Furthermore, MARS fits non-additive
functions because of the presence of products in \eqref{basismars}, which enables interactions between the explanatory variables $x_1,
\dots, x_d$. Indeed, the term \eqref{basismars} can be interpreted as
an interaction term of order $|\alpha|$ between the variables in the
set $S(\alpha)$. Here and in the rest of the paper, we use the notation
\begin{equation*}
  S(\alpha) :=  \{j \in [d] : \alpha_j = 1\}, \qt{where $[d] := \{1, \dots, d\}$,}
\end{equation*}
and
\begin{equation*}
   |\alpha| := |S(\alpha)| = \sum_{j=1}^d \ind\{\alpha_j = 1\}. 
\end{equation*}
The exact methodology that MARS uses involves a greedy
algorithm similar to stepwise regression methods. Specifically, one
adds in basis functions of the form \eqref{basismars} starting with a 
constant function using a goodness of fit criterion. Typically, one
only considers terms \eqref{basismars} for which the interaction order
$|\alpha|$ is smaller than a pre-chosen integer $s \leq d$ (most
commonly $s =  1$ or $s = 2$). Once a reasonably large number of basis
functions with $|\alpha| \leq s$ 
  are added, a backward deletion procedure is applied  to
  remove superfluous basis functions. We refer the reader to
  \cite{hastie2009elements}, Section 9.4 for more details on MARS.    

Our goal in this paper is to propose and study a lasso variant of the
MARS method where we consider all the basis functions of the form
\eqref{basismars} with $|\alpha| \leq s$ and apply the lasso method of
\cite{tibshirani1996regression}. As is well-known, lasso is an
attractive alternative to stepwise regression methods in usual linear
models. In order to apply lasso in the MARS setting, we first
assume that the explanatory variables $x_1, \dots, x_d$ all take
values in the interval $[0, 1]$. In other words, the domain of the
regression function is assumed to be $[0, 1]^d$. In practical
settings, this can be achieved by subtracting the minimum possible
value and dividing by the range for each explanatory variable. This
scaling puts all the variables on a comparable footing
enabling the application of lasso. Without such a scaling,
$x_1, \dots, x_d$  might be on very different scales in which case
penalizing or constraining the sum of the absolute values of the
coefficients corresponding to the terms $(x_j  - t_j)_+$ would be
unnatural (e.g., think of the setting where $x_1$ is \textit{years} of
education and $x_2$ is \textit{days} of experience). After
fitting a function in the transformed domain $[0, 1]^d$, we can simply
invert the transformation 
to find the equation of the fitted function in the original domain
(see Section \ref{experiment} for some real data applications). 

In the rest of the paper, we assume that the explanatory variables
$x_1, \dots, x_d$ all belong to $[0, 1]$. The observed data is
$(x^{(1)}, y_1), \dots, (x^{(n)}, y_n)$ where $x^{(i)} \in [0, 1]^d$
and $y_i \in \R$. To this data, we fit functions of the
form $y = \hat{f}(x_1, \dots, x_d)$ where $\hat{f} : [0, 1]^d
\rightarrow \R$ via the application of lasso with the MARS basis
functions. The restriction $x_j \in [0, 1]$ allows us to make two
simplifications to the usual MARS setup:
\begin{longlist}
\item Instead of considering both kinds of functions $(x_j -
t_j)_+$ and $(t_j - x_j)_+$, we only take into account $(x_j -
t_j)_+$, because as each $x_j$ is assumed to be in $[0, 1]$,
we can write
\begin{equation*}
  (t_j - x_j)_+ = (x_j - t_j)_+ - x_j + t_j = (x_j - t_j)_+ - (x_j - 0)_+ + t_j,
\end{equation*}
which implies that every linear combination of functions of the
form \eqref{basismars} is also a linear combination of functions of the same form \eqref{basismars} where $b_j(x_j) = 
(x_j - t_j)_+$ for some $t_j$.
\item We assume that $t_j \in [0, 1)$ for each $j$. This is because
  when $t_j \geq 1$, the function $(x_j - t_j)_+$ becomes 0 as $x_j \in
  [0, 1]$, and for $t_j < 0$, the function $(x_j - t_j)_+ = x_j - t_j
  = (x_j - 0)_+ - t_j$ is a linear combination of $(x_j - 0)_+$ and
  the constant function 1. 
\end{longlist}
Because there are an uncountable number of functions of the form
\eqref{basismars} (as $t_j$ can be any number in $[0, 1)$), we work
with an infinite-dimensional version of
lasso. 
Infinite-dimensional lasso  
formulations have been used in many papers including
\cite{rosset2007L1}, \cite{de2012exact}, \cite{bredies2013inverse}, \cite{candes2014towards}, \cite{duval2015exact}, \cite{de2016exact}, \cite{denoyelle2019sliding}, and \cite{condat2020atomic},
which studied various inverse problems in spaces of measures. The main idea is to  
consider infinite linear combinations of basis functions that are
parametrized by signed measures and to measure complexity in
terms of the variations of the involved signed measures. In the MARS
context, infinite linear combinations of the basis functions
\eqref{basismars} with $|\alpha| \leq s$ are 
\begin{equation}\label{form-of-functions}
    \fazeronu (x_1, \dots, x_d) := a_{\zerovec} + \sumoveralpha
    \int_{[0, 1)^{|\alpha|}} \prod_{j \in S(\alpha)} (x_j -
    t_j)_+ \, d\nu_{\alpha}(\talpha),     
\end{equation}
where $\zerovec := (0, \dots, 0)$, $a_{\zerovec}
  \in \R$,  $\nu_{\alpha}$ is a finite (Borel) signed measure on $[0,
  1)^{|\alpha|}$, and $\talpha$ indicates the vector $(t_j, j \in
  S(\alpha))$ for each binary vector $\alpha \in \{0, 1\}^d
  \setminus \{\zerovec\}$ with $|\alpha| \leq s$.  
We will denote the collection of all such functions 
$\fazeronu$ by $\infmars^{d,s}$ (the subscript
$\infty$ highlights the fact that $\infmars^{d, s}$ contains
\textit{infinite} linear combinations of the functions
\eqref{basismars}). 
The usual MARS functions are special cases of
\eqref{form-of-functions} corresponding to discrete signed measures $\nu_{\alpha}$. Indeed,
when each $\nu_{\alpha}$ is supported on a finite set $\{(t_{lj}^{(\alpha)},
j \in S(\alpha)) : l  = 1, \dots, k_{\alpha} \}$ with   
\begin{equation}\label{disc.sign}
  \nu_{\alpha}\big(\big\{\big(t_{lj}^{(\alpha)}, j \in
        S(\alpha)\big)\big\}\big) = b_l^{(\alpha)} \qt{for $l
    = 1, \dots, k_{\alpha}$},  
\end{equation}
the function $f_{a_{\zerovec}, \{\nu_{\alpha}\}}$ becomes
\begin{equation}\label{genmars}
  (x_1, \dots, x_d) \mapsto a_{\zerovec} +\sumoveralpha \sum_{l=1}^{k_{\alpha}} 
  b_l^{(\alpha)}  \prod_{j \in S(\alpha)} \big(x_j - t_{lj}^{(\alpha)} \big)_+. 
\end{equation}

Our infinite-dimensional lasso estimator minimizes the least squares
criterion over $f_{a_{\zerovec}, \{\nu_\alpha\}} \in
\infmars^{d, s}$ with a constraint on the complexity of $f_{a_{\zerovec},
  \{\nu_\alpha\}}$. The complexity measure involves the sum of the
variations of the underlying signed measures  
$\nu_{\alpha}$ and is an infinite-dimensional analogue of the
usual $L^1$ norm of the coefficients used in finite-dimensional
lasso. Recall that, for a signed measure $\nu$ on $\Omega$ and a 
measurable subset $E \subseteq \Omega$, the variation of $\nu$ on $E$
is denoted by $|\nu|(E)$ and is
defined as the supremum of $\sum_{A \in \pi} |\nu(A)|$ over all
partitions $\pi$ of $E$ into a countable number of disjoint measurable
subsets. Using the variation of the involved signed measures, we define our complexity measure for functions $f = f_{a_{\zerovec},
  \{\nu_{\alpha}\}} \in \infmars^{d, s}$ by  
\begin{equation}\label{vardef}
      \Vmars(\fazeronu) = \sumoveralpha |\nu_{\alpha}|\big([0, 1)^{|\alpha|} \setminus \{\zerovec\}\big). 
\end{equation}
We are excluding $\zerovec = (0, \dots, 0)$ in the variation of
$\nu_{\alpha}$ because we want to only penalize those basis functions
that include at least one nonlinear term and leave unpenalized basis
functions that are products of linear functions (note 
that $(x_j - t_j)_+ = x_j$ is linear when $t_j = 0$ because $x_j \in
[0, 1]$). In Appendix \ref{pf:uniqueness-of-representation}, we show that this complexity measure 
is well-defined by proving the uniqueness of the representation $f =
f_{a_{\zerovec}, \{\nu_{\alpha}\}}$. 

To see why \eqref{vardef} is a generalization
(to infinite linear combinations) of the notion of $L^1$ norm over the
coefficients of the finite linear combination \eqref{genmars}, just
note that when $\nu_{\alpha}$ is the discrete signed measure given by
\eqref{disc.sign}, we have 
\begin{equation*}
  \Vmars(f_{a_{\zerovec}, \{\nu_{\alpha}\}}) = \sumoveralpha \sum_{l=1}^{k_{\alpha}} |b_l^{(\alpha)}| \cdot
  \ind\big\{\big(t_{lj}^{(\alpha)}, j \in S(\alpha)\big) \neq \zerovec \big\},
\end{equation*}
which is simply the sum of the absolute values of the coefficients in
\eqref{genmars} corresponding to the basis functions that have at least
one nonlinear term in their product. 

Our estimator is thus given by
\begin{equation}\label{our-problem-restated}
  \hat{f}^{d, s}_{n, V} \in \underset{f}{\argmin} \bigg\{\sum_{i=1}^n \big(y_i -
      f(x^{(i)}) \big)^2 : f \in \infmars^{d, s} \text{ and } \Vmars(f) \leq V \bigg\}
\end{equation}
for a tuning parameter $V > 0$. We prove that $\hat{f}_{n, V}^{d, s}$
exists and can be computed by applying finite-dimensional lasso
algorithms to the finite basis of functions obtained by placing the
following restrictions on the knots $t_j$ in \eqref{basismars}:   
\begin{equation}\label{fbasis}
    t_j \in  \{0\} \cup \big\{x^{(i)}_{j} : i \in [n]\big\}. 
\end{equation} 
Here we use the notation $x^{(i)} = (x^{(i)}_{1},
\dots, x^{(i)}_{d})$ for the $i^{th}$ design point $x^{(i)}$. As the
finite-dimensional lasso estimation procedure usually zeros out many
regression coefficients, it enables us to obtain $\hat{f}_{n, V}^{d, s}$ that is a sparse linear combination of 
\eqref{basismars}. Therefore, our estimation procedure can be seen as
an alternative to the usual MARS procedure.  It is interesting to note
that the  usual MARS algorithm also works with the restriction
\eqref{fbasis}  on the knots, although typically no  theoretical
justification is provided for this reduction. We also introduce a
computationally more efficient approximate version 
$\tilde{f}_{n, V}^{d, s}$ of $\hat{f}_{n, V}^{d, s}$ which seems to
work nearly as well in practice.  
The approximate version $\tilde{f}_{n, V}^{d, s}$ is
obtained by restricting the knots $t_j$ as
\begin{equation*}
t_j \in \Big\{0, \frac{1}{N_j}, \frac{2}{N_j}, \dots, 1 \Big\}   
\end{equation*} 
for some pre-selected positive integers $N_1, \dots, N_d$. 
For large $n$, $\tilde{f}_{n, V}^{d, s}$ can be computed much more efficiently
than $\hat{f}_{n, V}^{d, s}$. 

We study the theoretical accuracy of these estimators for an unknown
regression function $f^*$ under the standard regression model: 
\begin{equation*}
  y_i = f^*(x^{(i)}) + \xi_i
\end{equation*}
where $\xi_i$ are mean zero errors whose distributions satisfy certain
restrictions. We work with both the fixed design setting
where $x^{(1)}, \dots, x^{(n)}$ form a lattice in $[0, 1]^d$, as well
as the random design setting where $x^{(1)}, \dots, x^{(n)}$ are
assumed to be realizations of i.i.d. random variables. In the former
lattice design setting, which is restrictive but standard in nonparametric
function estimation (see, e.g., \cite{nemirovski2000}), 
we analyze the non-asymptotic accuracy of $\hat{f}_{n, V}^{d, s}$ and $\tilde{f}_{n, V}^{d, s}$. 
In the latter random design setting, we study their accuracy asymptotically.

Our theoretical results show that these estimators achieve rates of
convergence of the form $n^{-4/5} (\log n)^{a s + b}$ for some
constants $a$ and $b$.
It is already known that in
the univariate case ($d = s = 1$), the estimator $\hat{f}_{n, V}^{1, 1}$
achieves the rate $n^{-4/5}$ (see, e.g., \cite{mammen1997locally}, Theorem 10 or \cite{guntuboyina2020adaptive}, Theorem 2.1). Thus, our results imply that in going
from the univariate to the multivariate setting, the rate of
convergence only deteriorates by a logarithmic multiplicative
factor. This suggests that our lasso method for MARS fitting avoids
the usual curse of dimensionality to some extent and can thus be an
effective function estimation technique in higher dimensions.

We can see why our estimators achieve the dimension-free rates (up to the logarithmic multiplicative factors) in part from an alternative characterization of $\hat{f}_{n, V}^{d, s}$.
We can characterize $\hat{f}_{n, V}^{d, s}$  alternatively as a least
squares estimator over a class of functions whose smoothness order, in
a certain sense, grows with the dimension $d$. A key role in this
characterization is played by mixed partial derivatives of
order $2$. For an integer $k \ge 1$ and a real-valued function $f$ defined on $[0, 1]^m$, by the mixed partial derivatives of $f$ of order $k$, we mean
\begin{equation}\label{partial-derivative}
f^{(\beta)} := \frac{\partial^{\beta_1 + \dots + \beta_m}
    f}{\partial x_1^{\beta_1} \cdots \partial x_m^{\beta_m}} 
\end{equation}
where $\beta$ is an $m$-dimensional nonnegative integer vector with
$\max_j \beta_j = k$. 
Whenever we use the notation $f^{(\beta)}$, we inherently assume that $f$ is sufficiently smooth, so that the right-hand side of \eqref{partial-derivative} is irrespective of the order of differentiation and $f^{(\beta)}$ is well-defined. 
Using mixed partial derivatives, we prove the
following alternative characterization of $\hat{f}_{n, V}^{d, s}$:
\begin{equation}\label{altcharintro}
\begin{split}
    &\hat{f}_{n, V}^{d, s} \in \argmin_{f} \bigg\{\sum_{i=1}^n \big(y_i - f(x^{(i)}) 
    \big)^2 : \sum_{\substack{\beta \in \{0, 1, 2\}^d \\ \max_{j} \beta_j = 2}}
    \int_{\bar{T}^{(\beta)}} |f^{(\beta)} | \leq V \\ 
    &\qquad \qquad \qquad \qquad \qquad \qquad \quad \text{ and } f^{(\alpha)} = 0 \text{ for every } \alpha \in \{0, 1\}^d \text{ with } |\alpha| > s \bigg\}
\end{split}
\end{equation}
where
\begin{align}\label{tbeta}
    \bar{\Sbeta}^{(\beta)} := \bar{\Sbeta}^{(\beta)}_1  \times \dots \times
    \bar{\Sbeta}^{(\beta)}_d   ~\text{ where }~     \bar{\Sbeta}^{(\beta)}_k =  
    \begin{cases}
        [0, 1] &\mbox{if } \beta_k = \max_j \beta_j  \\
        \{0\} &\mbox{otherwise}.
    \end{cases}
\end{align}
The main condition here is that the sum of the $L^1$ norms of mixed partial
derivatives of order $2$ is at most $V$. The set
$\bar{\Sbeta}^{(\beta)}$ appearing in the integral signifies that the
integral of the mixed partial derivative $f^{(\beta)}$ is only over
those coordinates $x_l$ for which $\beta_l = \max_j \beta_j$ (the
remaining coordinates are set to zero). 
Also, the condition $f^{(\alpha)} = 0$ for $|\alpha| > s$ rules out interactions of order greater than
$s$. This characterization shows that the maximum total order
$\beta_1 + \dots + \beta_d$ of the mixed
partial derivatives appearing in the constraint equals $2d$. In this
sense, the smoothness order of the constraint can be taken to be $2d$,
which increases with the dimension $d$ and explains the dimension-free
(up to the logarithmic multiplicative factors) rates of convergence.
It should be noted however that not all (in fact, only one) mixed partial derivatives of total order $2d$ are considered in the constraint, and this keeps the function class being too small or restrictive. 
Also, it should be mentioned that it is well-known from approximation theory that $L^p$
norm constraints on mixed partial derivatives are advantageous
and allow one to overcome the curse of dimensionality to some extent from the perspective of metric entropy, approximation, and
interpolation (see, e.g., \cite{temlyakov2018multivariate, dung2018hyperbolic, bungartz2004sparse}).

In fact, the smoothness characterization \eqref{altcharintro} is not fully rigorous. 
Functions of the form \eqref{expfuncfit} 
clearly belong to the constraint set in \eqref{our-problem-restated}, but
they do not belong to the constraint set in \eqref{altcharintro} because mixed partial derivatives of order $2$ do not exist for these functions.
We fix this problem by
interpreting the $L^1$ norms of mixed partial derivatives of order 2
in terms of the Hardy--Krause variations of particular derivatives that we will define in Section \ref{smoothness}. 
Hardy--Krause variation (see, e.g., \cite{aistleitner2015functions,   owen2005multidimensional}) is a
multivariate generalization of total variation of univariate
functions (we review the definition of Hardy--Krause variation and its properties in Appendix \ref{hkreview}). 
Thus, even though \eqref{altcharintro} is not
fully rigorous because mixed partial derivatives of order 2 do not exist for many important MARS
functions, it is still helpful for understanding how the curse of dimensionality can be avoided by our estimators.

The characterization \eqref{altcharintro} also connects our estimators
to other related methods from the literature. In the univariate case
($d = s = 1$), we have 
\begin{equation*}
    \hat{f}_{n, V}^{1, 1} \in \argmin_{f} \bigg\{\sum_{i=1}^n \big(y_i - f(x^{(i)})
    \big)^2 : \int_0^1 |f''| \leq V \bigg\}
  \end{equation*}
which is a constrained analogue of the 
locally adaptive regression splines estimator of
\cite{mammen1997locally} when the order $k$ (in their notation) equals 2. Hence, our estimator
$\hat{f}_{n, V}^{d, s}$ can be seen as a multivariate generalization
of this univariate estimator of \cite{mammen1997locally}. 
Furthermore, if $s = d$ and the condition $\max_j \beta_j = 2$ is replaced with $\max_j \beta_j = 1$ in \eqref{altcharintro}, then one obtains the Hardy--Krause variation denoising
estimator of \cite{fang2021multivariate}. 
Therefore, we can also view $\hat{f}_{n, V}^{d, d}$ as a second-order Hardy--Krause variation
denoising estimator. Further connections to related work are detailed
in Section \ref{relwork}.

We would like to point out here that the theoretical rates of
convergence as well as the smoothness characterization have been made
possible due to our infinite-dimensional lasso formulation of
MARS. In contrast, to the best of our knowledge, no rates of
convergence are known for the usual MARS method. Also, there exist
no prior connections between the usual MARS method and nonparametric
regression methods based on smoothness assumptions.

In addition to the above theoretical contributions, we also implement
our method with a cross-validation scheme for the selection of the
tuning parameter $V$ and compare our estimators to the usual MARS
estimator using simulated and real data. 

The rest of the paper is organized as follows. In Section \ref{existcompute}, 
we present results on the existence and computation of $\hat{f}_{n, V}^{d, s}$ and also introduce the approximate version $\tilde{f}_{n, V}^{d, s}$.
Theoretical accuracy results for $\hat{f}_{n, V}^{d, s}$ and $\tilde{f}_{n, V}^{d, s}$ are in Section \ref{riskresults}. 
Section \ref{smoothness} is devoted to the alternative characterization \eqref{altcharintro} based on smoothness. 
In Section \ref{relwork}, we discuss connections between our method and other related methods.
In Section \ref{experiment}, we illustrate 
the performance of our method in simulated and real data settings and
compare its performance to that of the usual MARS algorithm.

\section{Existence, Computation, and Approximation} \label{existcompute} 
In this section, we prove the existence of our infinite-dimensional
lasso estimator $\hat{f}_{n, V}^{d, s}$ (defined in
\eqref{our-problem-restated}) and show that it can be computed via
finite-dimensional lasso algorithms. We also introduce a
computationally more efficient approximate version of our estimator. 

We start with the observation that the objective
function of the optimization problem defined in
\eqref{our-problem-restated} only depends on the function $f$ through
its values at the design points $x^{(i)}, i \in [n]$. As proved
in the next lemma, this observation allows us to restrict our attention to
the finite-dimensional subclass of $\infmars^{d, s}$ consisting of 
the functions \eqref{form-of-functions} where each 
$\nu_{\alpha}$ is a discrete signed measure supported on the
lattice generated by the design points. For each $k \in [d]$,
let $\mathcal{U}_k$ denote the finite subset of $[0, 1]$ consisting of
the points $0, x_k^{(1)}, \dots, x_k^{(n)}, 1$ (recall here that
$x_k^{(i)}$ denotes the $k^{th}$ coordinate of the $i^{th}$ design
point $x^{(i)} = (x_1^{(i)}, \dots, x_d^{(i)})$). As there could be
ties among $0, x_k^{(1)}, \dots, x_k^{(n)}, 1$, we will write, for
some $n_k \in [n + 1]$,
\begin{equation*}
  \mathcal{U}_k = \big\{u^{(k)}_0, u^{(k)}_1, \dots, u^{(k)}_{n_k}\big\}
  \qt{where $0 = u^{(k)}_0 < \dots < u^{(k)}_{n_k} = 1$}. 
\end{equation*}
Note specially that the cardinality of $\mathcal{U}_k$ is $n_k+1$,
that $u_0^{(k)}$ is always 0, and that $u_{n_k}^{(k)}$ is always
1. The next lemma (proved in Appendix
\ref{pf:reduction-to-discrete-measures}) implies that, for the
optimization problem \eqref{our-problem-restated}, we can restrict
to the functions of the form \eqref{form-of-functions} where each
$\nu_{\alpha}$ is a discrete signed measure supported on the finite set
$(\prod_{k \in S(\alpha)} \mathcal{U}_k) \cap [0, 1)^{|\alpha|}$.

\begin{lemma}\label{lem:reduction-to-discrete-measures}
    Suppose we are given a real number $a_{\zerovec}$ and a collection of finite signed measures $\{\nu_{\alpha}\}$ where $\nu_{\alpha}$ is defined on $[0, 1)^{|\alpha|}$ for each $\alpha \in \{0, 1\}^d \setminus \{\zerovec\}$ with $|\alpha| \le s$.
    Then, there exists a collection of discrete signed measures $\{\mu_{\alpha}\}$ where $\mu_\alpha$ is concentrated on $(\prod_{k \in S(\alpha)} \mathcal{U}_k) \cap [0, 1)^{|\alpha|}$ for each $\alpha \in \{0, 1\}^d \setminus \{\zerovec\}$ with $|\alpha| \le s$ such that 
    \begin{longlist}
        \item $\fazeromu (x^{(i)}) = \fazeronu (x^{(i)})$ for all $i \in [n]$, and    
        \item $\Vmars(\fazeromu) \le \Vmars(\fazeronu)$.
    \end{longlist}
  \end{lemma}

  When $\nu_\alpha$ is concentrated on $(\prod_{k \in S(\alpha)}
\mathcal{U}_k) \cap [0, 1)^{|\alpha|}$ for each $\alpha$, the function
$\fazeronu$ can be written as  
\begin{equation}\label{discnu}
   a_{\zerovec} + \sumoveralpha
    \sum_{l \in \prod\limits_{k \in S(\alpha)}[0: (n_k-1)]} 
      \nu_{\alpha}\big(\big\{\big(u^{(k)}_{l_k}, k \in
            S(\alpha)\big)\big\}\big) \cdot \prod_{k \in
        S(\alpha)} \big(x_k - u^{(k)}_{l_k}\big)_{+} ,  
    \end{equation}
where we use the notation $[p: q] := \{p, p + 1, \dots, q\}$ for two
integers $p \le q$. Also, its complexity measure becomes 
\begin{equation*}
    \Vmars(\fazeronu) = \sumoveralpha \sum_{l \in \prod\limits_{k \in S(\alpha)}[0: (n_k - 1)] \setminus \{\zerovec\}} \Big|\nu_{\alpha}\big(\big\{\big(u^{(k)}_{l_k}, k \in S(\alpha)\big)\big\}\big)\Big|.
  \end{equation*}
The above function \eqref{discnu} is a linear
combination of the basis functions \eqref{basismars} whose knots $t_k$ are chosen from $\mathcal{U}_k \setminus \{1\} = \{0, x_k^{(1)}, \dots, x_k^{(n)}\}$, and its complexity measure equals the absolute sum of
the coefficients of the involved basis functions with at least one nonlinear term. 
Thus, if we
additionally assume that $f$ in the problem
\eqref{our-problem-restated} is constructed from discrete signed
measures as above, then \eqref{our-problem-restated} reduces to 
a finite-dimensional lasso problem.  Lemma
\ref{lem:reduction-to-discrete-measures} then implies that every solution to this
finite-dimensional lasso problem is also a solution to \eqref{our-problem-restated}. A precise statement is given in
the following result, which we prove in Appendix
\ref{pf:reduction-to-lasso}.

\begin{proposition}\label{prop:reduction-to-lasso}
  Let 
    \begin{equation*}
    J = \bigg\{(\alpha, l): \alpha \in \{0, 1\}^d \setminus \{\zerovec\}, |\alpha| \le s, \mbox{ and } l \in \prod_{k \in S(\alpha)}[0: (n_k - 1)]\bigg\}
  \end{equation*}
  and let $M$ be the $n \times |J|$ matrix with columns indexed by
  $(\alpha, l) \in J$ such that 
    \begin{equation*}
M_{i, (\alpha, l)} = \prod_{k \in S(\alpha)} \big(x^{(i)}_k -
  u^{(k)}_{l_k}\big)_{+} \qt{for $i \in [n]$ and $(\alpha, l) \in
  J$}. 
\end{equation*}
Also, let $(\hat{a}_{\zerovec},
\hat{\gamma}_{n, V}^{d, s}) \in \R \times \R^{|J|}$ 
be a solution to the following finite-dimensional lasso problem
\begin{align}\label{finite-dimensional-lasso}
\begin{split}
    &(\hat{a}_{\zerovec}, \hat{\gamma}^{d, s}_{n, V}) \in
    \underset{a_{\zerovec} \in \R, \gamma \in \R^{|J|}}{\argmin}
    \Bigg\{\|y - a_{\zerovec} \onevec - M\gamma\|_2^2: \sum_{\substack{(\alpha, l) \in
    J \\ l \neq \zerovec}} |\gamma_{\alpha, l}| \le
    V \Bigg\},
\end{split}
\end{align}
where $\onevec := (1, \dots, 1)$ and $y = (y_i, i \in [n])$ is the vector of observations. Then,
the function $f$ on $[0, 1]^d$ defined by
\begin{equation}\label{eq:how-to-construct-solution}
    f(x_1, \dots, x_d) = \hat{a}_{\zerovec} + \sum_{(\alpha, l) \in J} \big(\hat{\gamma}^{d, s}_{n, V}\big)_{\alpha, l} \cdot \prod_{k \in S(\alpha)} \big(x_k - u^{(k)}_{l_k}\big)_{+}
\end{equation}
is a solution to the problem \eqref{our-problem-restated}. 
The problem \eqref{our-problem-restated} can have multiple solutions, but every solution $\hat{f}^{d, s}_{n, V}$ satisfies
\begin{equation*}
    \hat{f}^{d, s}_{n, V}(x^{(i)}) = \hat{a}_{\zerovec} + \big(M
    \hat{\gamma}^{d, s}_{n, V}\big)_i = \hat{a}_{\zerovec} + \sum_{(\alpha, l) \in J} \big(\hat{\gamma}^{d, s}_{n, V}\big)_{\alpha, l} \cdot \prod_{k \in S(\alpha)} \big(x^{(i)}_k - u^{(k)}_{l_k}\big)_{+} 
\end{equation*}
for every $i \in [n]$.
\end{proposition}

Because the set
\begin{equation*}
    \Bigg\{a_0 \onevec + M \gamma: a_0 \in \R, \gamma \in \R^{|J|}, \mbox{ and } \sum_{\substack{(\alpha, l) \in J \\ l \neq \zerovec}} |\gamma_{\alpha, l}| \le V \Bigg\}
\end{equation*}
is closed and convex, there exists a solution to the finite-dimensional lasso
problem \eqref{finite-dimensional-lasso}.
Hence, the existence of solutions to our estimation problem \eqref{our-problem-restated} is guaranteed by Proposition \ref{prop:reduction-to-lasso}.  
Also, once we find a solution to the
problem \eqref{finite-dimensional-lasso} via any optimization algorithms, we can construct a solution $\hat{f}^{d, s}_{n, V}$ to
the problem \eqref{our-problem-restated} through the equation
\eqref{eq:how-to-construct-solution}.

However, solving the finite-dimensional lasso problem
\eqref{finite-dimensional-lasso} can be computationally intensive if
$n$ is large because the number of columns of $M$ equals
\begin{equation*}
  |J| = \sum_{\substack{\alpha \in \{0, 1\}^d \setminus \{\zerovec\} \\ |\alpha| \le s}} \prod_{k \in S(\alpha)} n_k,
\end{equation*} 
which is of order $O(n^s)$ (ignoring a multiplicative factor in $d$)
in the worst case when each $n_k = O(n)$.
In the current implementation of our method, we utilize the optimization software \textsf{MOSEK} as a black-box tool for solving the problem \eqref{finite-dimensional-lasso} (see Section \ref{experiment} for more details). 
Using this black-box tool involves creating the whole matrix $M$, and thus, when $n$ is large, our current implementation not only requires a large amount of space for this matrix but also often consumes most of running time constructing it.

This limitation motivates us to come up with the following approximate method. 
As we have seen above, 
Lemma \ref{lem:reduction-to-discrete-measures} ensures that we only need to consider discrete signed measures $\nu_{\alpha}$ supported on the lattices $(\prod_{k \in S(\alpha)}
\mathcal{U}_k) \cap [0, 1)^{|\alpha|}$ for our estimation problem \eqref{our-problem-restated}.
In the approximate method, we instead restrict our attention to discrete signed measures $\nu_{\alpha}$ supported on the lattices generated by 
\begin{equation*}
  \tilde{\mathcal{U}}_k = \Big\{0, \frac{1}{N_k}, \frac{2}{N_k}, \dots, 1 \Big\} 
\end{equation*}
for some pre-selected positive integers $N_1, \dots, N_d$, and we only take into consideration the basis functions corresponding to those signed measures.
Note that in contrast to $\mathcal{U}_k$ whose cardinality are of order $O(n)$ in the worst case, the cardinality of each set $\tilde{\mathcal{U}}_k$ is always $N_k + 1$ regardless of the design points $x^{(1)}, \dots, x^{(n)}$.

We then consider the finite-dimensional optimization problem to which the problem \eqref{our-problem-restated} reduces when we additionally impose such restrictions on signed measures $\nu_{\alpha}$. 
We call this problem the approximate (finite-dimensional optimization) problem. 
The approximate problem has the same form as \eqref{finite-dimensional-lasso} but with different $M$ and $J$. 
Here
\begin{equation*}
    J = \bigg\{(\alpha, l): \alpha \in \{0, 1\}^d \setminus \{\zerovec\}, |\alpha| \le s, \mbox{ and } l \in \prod_{k \in S(\alpha)}[0: (N_k - 1)]\bigg\},
\end{equation*}
and $M$ is the $n \times |J|$ matrix with columns indexed by
$(\alpha, l) \in J$ such that 
\begin{equation*}
M_{i, (\alpha, l)} = \prod_{k \in S(\alpha)} \Big(x^{(i)}_k - \frac{l_k}{N_k}
  \Big)_{+} \qt{for $i \in [n]$ and $(\alpha, l) \in
  J$}. 
\end{equation*}
As opposed to the original finite-dimensional problem \eqref{finite-dimensional-lasso}, the number of columns of $M$ in this problem is always fixed and not affected by the design points $x^{(1)}, \dots, x^{(n)}$.
Hence, this approximate problem can be solved much efficiently than \eqref{finite-dimensional-lasso}, especially when $n$ is large. 
Once we find a solution to the approximate problem, we can construct an estimator of the true underlying function $f^*$ through the equation \eqref{eq:how-to-construct-solution} as before.
We denote this estimator by $\tilde{f}^{d, s}_{n, V}$ and call it an approximate version of $\hat{f}^{d, s}_{n, V}$. 
In the next section, we study the theoretical accuracy of $\tilde{f}^{d, s}_{n, V}$ along with $\hat{f}^{d, s}_{n, V}$. 
We will see that if we choose $N_k$ appropriately, the approximate method is as accurate as the original method, while it significantly improves computational efficiency.

\section{Risk Analysis} \label{riskresults}
This section is dedicated to the study of the theoretical accuracy of $\hat{f}^{d, s}_{n, V}$ and $\tilde{f}^{d, s}_{n, V}$ as an estimator for unknown regression functions.
We first consider the non-asymptotic accuracy of $\hat{f}^{d, s}_{n, V}$ and $\tilde{f}^{d, s}_{n, V}$ in the fixed design setting and then study their asymptotic accuracy in the random design setting. 
The proofs of all the results in this section are provided in Appendix \ref{subsec:proof-of-risk-results}.

\subsection{Fixed Design} 
Here we assume that $x^{(1)}, \dots,
x^{(n)}$ form a lattice
\begin{equation}\label{lattice-design}
    \{x^{(1)}, \dots, x^{(n)} \} = \prod_{k=1}^{d}
    \Big\{u^{(k)}_{i_k}: i_k \in [0: (n_k - 1)] \Big\}
\end{equation}
where for every $k \in [d]$, we have $n_k \ge 2$, $0 = u^{(k)}_0 < u^{(k)}_1 < \cdots < u^{(k)}_{n_k - 1} \le 1$, and 
\begin{equation*}
     u^{(k)}_{i_k} - u^{(k)}_{i_k - 1} \ge \frac{\rho}{n_k} \qt{for all $i_k \in [n_k - 1]$}
\end{equation*}
for some constant $\rho > 0$.
We also assume that $y_1, \dots, y_n$ are generated according to the regression model 
\begin{equation}\label{eq:model-fixed}
  y_i = f^*(x^{(i)}) + \xi_i
\end{equation}
where $f^* : [0, 1]^d \rightarrow \R$ is an unknown regression function and $\xi_i$ are independent sub-Gaussian errors with mean zero and with a sub-Gaussian parameter $\sigma$, i.e.,
\begin{equation*}
    \E [e^{\lambda \xi_i}] \le e^{\frac{\sigma^2 \lambda^2}{2}}
\end{equation*}
for all $\lambda \in \R$. 
We measure the accuracy of an
estimator $\hat{f}_n$ of $f^*$ via the squared empirical $L^2$ norm 
\begin{equation}\label{squared-empirical-norm}
\|\hat{f}_{n} - f^* \|_n^2 := \frac{1}{n} \sum_{i =
        1}^{n} \big(\hat{f}_n(x^{(i)}) - f^{*}(x^{(i)})\big)^2 
\end{equation}
and define its risk as 
\begin{equation*}
    \mathcal{R}_F(\hat{f}_n, f^{*}) = \E
    \|\hat{f}_{n} - f^* \|_n^2 
\end{equation*}
where the expectation is taken over $y_1, \dots, y_n$.

Our first result states an upper bound of the risk of $\hat{f}^{d, s}_{n,
  V}$ under the assumption $f^{*} \in \infmars^{d, s}$ and $\Vmars(f^{*})
\le V$. 
\begin{theorem}\label{thm:risk-upper-bound-fixed}
    Suppose $f^{*} \in \infmars^{d, s}$ and $\Vmars(f^{*}) \le V$ and
    assume the lattice design \eqref{lattice-design}. The estimator $\hat{f}^{d, s}_{n, V}$
    then satisfies that
    \begin{equation}\label{upper-bound-of-risk-fixed-design}
        \mathcal{R}_F\big(\hat{f}^{d, s}_{n, V}, f^{*}\big) \le C_{\rho, d}  \Big(\frac{\sigma^2 V^{\frac{1}{2}}}{n}\Big)^{\frac{4}{5}} \bigg[\log\Big(2 + \frac{V n^{\frac{1}{2}}}{\sigma}\Big)\bigg]^{\frac{3(2s - 1)}{5}} + C_{\rho, d} \frac{\sigma^2}{n} [\log n]^2
    \end{equation}
    for some positive constant $C_{\rho, d}$ depending on $\rho$ and $d$.
\end{theorem}
Note that for fixed $\rho, d, \sigma$, and $V$ and sufficiently large $n$, the first term is the dominant term on the right-hand side
of \eqref{upper-bound-of-risk-fixed-design}, so that  
\begin{equation*}
    \mathcal{R}_F\big(\hat{f}^{d, s}_{n, V}, f^{*}\big) = O \big(n^{-\frac{4}{5}} (\log n)^{\frac{3(2s-1)}{5}} \big),  
\end{equation*}
where the multiplicative constant underlying $O(\cdot)$ depends on $\rho, d, \sigma$, and $V$.
In the univariate case, we can deduce from \cite{guntuboyina2020adaptive}, Theorem 2.1 that
\begin{equation*}
    \mathcal{R}_F\big(\hat{f}^{1, 1}_{n, V}, f^{*}\big) \le C_{\rho} \Big(\frac{\sigma^2 V^{\frac{1}{2}}}{n}\Big)^{\frac{4}{5}} + C_{\rho} \frac{\sigma^2}{n} \log n,
\end{equation*}
where $C_{\rho}$ is a positive constant depending on $\rho$.
In other words,
\begin{equation*}
    \mathcal{R}_F\big(\hat{f}^{1, 1}_{n, V}, f^{*}\big) = O (n^{-\frac{4}{5}}),
\end{equation*}
where the multiplicative constant underlying $O(\cdot)$ depends on $\rho, \sigma$, and $V$.
Thus, what Theorem \ref{thm:risk-upper-bound-fixed} tells us is that for general $d$ and $s$,
$\hat{f}^{d, s}_{n, V}$ can achieve the same rate $n^{-4/5}$, although it
slightly deteriorates by a logarithmic multiplicative factor depending
on $s$.  
This suggests that our lasso method for MARS fitting can avoid the
curse of dimensionality to some extent and be a useful estimation technique in higher
dimensions.

The key step of our proof of Theorem \ref{thm:risk-upper-bound-fixed} is to
find an upper bound of the metric entropy of $\D_m$ (under
the $L^2$ norm), which is defined as the collection of all the functions
of the form  
\begin{equation*}
    (x_1, \dots, x_m) \mapsto \int (x_1 - t_1)_+ \cdots (x_m - t_m)_+ \, d\nu(t), 
\end{equation*}
where $m \in [d]$ and $\nu$ is a signed
measure on $[0, 1]^m$ with variation $|\nu|([0, 1]^m) \le 1$.
The following theorem contains our result on the metric entropy of $\D_m$. 

\begin{theorem}\label{thm:d-metric-entropy-main-text}
    There exist positive constants $C_m$ and $\epsilon_m$ depending on $m$ such that
    \begin{equation*}
        \log N(\epsilon, \D_m, \|\cdot\|_2) \le C_m \epsilon^{-\frac{1}{2}}\Big[\log\frac{1}{\epsilon}\Big]^{\frac{3(2m - 1)}{4}}
    \end{equation*}
    for every $0 < \epsilon < \epsilon_m$. 
    The logarithmic multiplicative factor can be omitted when $m = 1$.
\end{theorem}

\begin{remark}
If the class $\D_m$ is altered by replacing $(x - t)_+$ with
$\ind\{x \ge t\}$ and restricting $\nu$ to probability measures, 
one obtains the collection of all the functions of the form 
\begin{equation*}
    (x_1, \dots, x_m) \mapsto \int \ind\{x_1 \ge t_1\} \cdots \ind\{x_m \ge
    t_m\} \, d\nu(t) = \nu([\zerovec, x]). 
\end{equation*}
This class of functions is indeed the collection of all probability distributions on $[0,
1]^m$, whose upper bounds on the metric entropy were derived in \cite{blei2007metric}. Thus, we are basically extending the argument in \cite{blei2007metric} from $\ind\{x \ge t\}$ to $(x - t)_+$.
\end{remark}

Theorem \ref{thm:d-metric-entropy-main-text} is novel to the best of
our knowledge even though we use standard tools and techniques for
proving it. We first connect upper bounds of the metric entropy of
$\D_m$ to lower bounds of the small ball probability of
integrated Brownian sheet based on ideas from \cite{blei2007metric}, Section 3 and \cite{gao2008entropy}, Section 3 and 
results from \cite{Li1999}, Theorem 1.2 and \cite{Artstein2004b}, Theorem 5.  The small ball probability of integrated Brownian
sheet here refers to the quantity  
\begin{equation*}
    \P\Big(\sup_{t \in [0, 1]^m} |X_m(t)| \le \epsilon \Big),
\end{equation*}
where $\epsilon > 0$ and $X_m$ is an $m$-dimensional integrated
Brownian sheet (a description of integrated Brownian sheet is given in
Appendix \ref{pf:d-metric-entropy}). Required bounds on this small ball
probability are then obtained using results from
\cite{Dunker1999}, Theorem 6 and \cite{Chen2003}, Theorem 1.2. Specifically, we show that there exist positive
constants $c_m$ and $\epsilon_m$ depending on $m$ such that    
\begin{equation*}
    \log \P\Big(\sup_{t \in [0, 1]^m} |X_m(t)| \le
      \epsilon\Big) \ge -c_m \epsilon^{-\frac{2}{3}}\Big[\log
      \frac{1}{\epsilon}\Big]^{2m - 1} 
\end{equation*}
for every $0 < \epsilon < \epsilon_m$. This result, along with the
connection between the metric entropy and the small ball probability, leads to
Theorem \ref{thm:d-metric-entropy-main-text}, which is the main
ingredient in our proof of Theorem \ref{thm:risk-upper-bound-fixed}.

Now, we turn to the result for the approximate version $\tilde{f}^{d, s}_{n, V}$. 
The next theorem presents an upper bound of the risk of $\tilde{f}^{d, s}_{n, V}$ under the same assumption as in Theorem \ref{thm:risk-upper-bound-fixed}.
Recall that $N_k$ are the pre-selected integers for the approximate method. 

\begin{theorem}\label{thm:risk-upper-bound-fixed-approx}
    Suppose $f^{*} \in \infmars^{d, s}$ and $\Vmars(f^{*}) \le V$ and
    assume the lattice design \eqref{lattice-design}. The estimator $\tilde{f}^{d, s}_{n, V}$
    then satisfies that
    \begin{equation*}
        \mathcal{R}_F\big(\tilde{f}^{d, s}_{n, V}, f^{*}\big) \le \frac{8 V^2}{N^2} + C_{\rho, d}  \Big(\frac{\sigma^2 V^{\frac{1}{2}}}{n}\Big)^{\frac{4}{5}} \bigg[\log\Big(2 + \frac{V n^{\frac{1}{2}}}{\sigma}\Big)\bigg]^{\frac{3(2s - 1)}{5}} + C_{\rho, d} \frac{\sigma^2}{n} [\log n]^2
    \end{equation*}
    for some positive constant $C_{\rho, d}$ depending on $\rho$ and $d$, where $N = \min_k N_k$.
\end{theorem}

Theorem \ref{thm:risk-upper-bound-fixed-approx} shows that $\tilde{f}^{d, s}_{n, V}$ has almost the same risk upper bound as $\hat{f}^{d, s}_{n, V}$. 
The only difference is the existence of the approximation error term $8V^2/N^2$, which converges to 0 as $N$ goes to infinity. 
Hence, for sufficiently large $N$, $\tilde{f}^{d, s}_{n, V}$ achieves the same rate as $\hat{f}^{d, s}_{n, V}$.
Indeed, if we set each $N_k$ to be of order at least $n^{2/5}$, then 
\begin{equation}\label{upper-bound-O-notation-fixed-approximate}
    \mathcal{R}_F\big(\tilde{f}^{d, s}_{n, V}, f^{*}\big) = O \big(n^{-\frac{4}{5}} (\log n)^{\frac{3(2s-1)}{5}} \big)  
\end{equation}
where the multiplicative constant underlying $O(\cdot)$ depends on $\rho, d, \sigma$, and $V$.

\subsection{Random Design}
Here we assume that $x^{(1)}, \dots,
x^{(n)}$ are realizations of i.i.d. random variables $X^{(1)}, \dots,
X^{(n)}$ with a probability density function $p_0$ on $[0, 1]^d$
that is bounded by some constant $B \ge 1$, i.e., $\|p_0 \|_{\infty} \le B$. 
Also, we assume that $(X^{(1)}, y_1), \dots, (X^{(n)}, y_n)$ are generated according to the regression model 
\begin{equation}\label{eq:model-random}
  y_i = f^*(X^{(i)}) + \xi_i
\end{equation}
where $\xi_i$ are i.i.d. errors independent of
$X^{(1)}, \dots, X^{(n)}$ with mean zero and with finite $L^{5, 1}$ norm; that is,
\begin{equation}\label{finite-5-1-norm}
  \| \xi_i \|_{5, 1} := \int_{0}^{\infty} (\P(|\xi_i| > t))^{\frac{1}{5}} \, dt < \infty.
\end{equation}
Note that the condition \eqref{finite-5-1-norm} is stronger than the finite fifth-moment condition $\| \xi_i \|_{5} < \infty$, but weaker than the finite $(5 + \epsilon)^{\text{th}}$-moment condition $\| \xi_i \|_{5 + \epsilon} < \infty$ for every $\epsilon > 0$ (see, e.g., \cite{ledoux1991probability}, Chapter 10).
In this setting, we measure the accuracy of an
estimator $\hat{f}_n$ of $f^*$ by
\begin{equation}\label{prediction-error}
    \|\hat{f}_{n} - f^* \|_{p_0, 2}^2 :=
    \int \big(\hat{f}_{n}(x) -
    f^{*}(x)\big)^2 p_0(x) \, dx.
\end{equation}

The next theorem presents the rate of convergence of $\hat{f}^{d, s}_{n, V}$ under the assumption $f^{*} \in \infmars^{d, s}$ and $\Vmars(f^{*}) \le V$. 
Note that $\hat{f}^{d, s}_{n, V}$ still achieves the rate $n^{-4/5}$ as in the fixed lattice design setting, although the exponent of the logarithmic multiplicative factor is slightly bigger when $s > 2$.

\begin{theorem}\label{thm:rate-of-convergence-random}
    If $f^{*} \in \infmars^{d, s}$ and $\Vmars(f^{*}) \le V$, then we have
    \begin{equation}\label{upper-bound-of-risk-random-design}
        \| \hat{f}^{d, s}_{n, V} - f^* \|_{p_0, 2}^2 = O_p \big(n^{-\frac{4}{5}} (\log n)^{\frac{8(s - 1)}{5}}\big).
    \end{equation}
\end{theorem}

As the metric entropy of $\D_m$ played a central role in our proof of Theorem \ref{thm:risk-upper-bound-fixed}, 
the bracketing entropy of $\D_m$ is the key ingredient of our proof of Theorem \ref{thm:rate-of-convergence-random}.
The following theorem states an upper bound of the bracketing entropy of $\D_m$. 

\begin{theorem}\label{thm:d-bracket-entropy}
    There exists a positive constant $C_m$ depending on $m$ such that
    \begin{equation*}
        \log N_{[ \ ]}(\epsilon, \D_m, \|\cdot\|_2) \le C_m \Big(\frac{4}{\epsilon}\Big)^{\frac{1}{2}}\Big|\log \frac{4}{\epsilon}\Big|^{2(m - 1)}
    \end{equation*}
    for every $\epsilon > 0$, 
    where $N_{[ \ ]}(\epsilon, \D_m, \|\cdot\|_2)$ is the $\epsilon$-bracketing
    number of $\D_m$ under the $L^2$ norm.
\end{theorem}

\begin{remark}
Theorem \ref{thm:d-bracket-entropy} also provides an upper bound of the metric entropy of $\D_m$ (under the $L^2$ norm). 
Since 
\begin{equation*}
    N(\epsilon, \D_m, \|\cdot\|_2) \le N_{[ \ ]}(2\epsilon, \D_m, \|\cdot\|_2),
\end{equation*}
we can derive from Theorem \ref{thm:d-bracket-entropy} that
\begin{equation*}
        \log N(\epsilon, \D_m, \|\cdot\|_2) \le C_m \Big(\frac{2}{\epsilon}\Big)^{\frac{1}{2}}\Big|\log \frac{2}{\epsilon}\Big|^{2(m - 1)}
\end{equation*}
for every $\epsilon > 0$.
However, this upper bound is weaker than the one we achieved in Theorem \ref{thm:d-metric-entropy-main-text}.
Although it has the same order for $\epsilon$, the exponent of the logarithmic multiplicative factor is bigger. 
We can obtain from this result an upper bound of the risk of $\hat{f}^{d, s}_{n, V}$ under the fixed lattice design setting, but it will lead to a bound looser than the one in Theorem \ref{thm:risk-upper-bound-fixed}.
\end{remark}

We can prove a similar result as in Theorem \ref{thm:rate-of-convergence-random} for the approximate version $\tilde{f}^{d, s}_{n, V}$. 
As we state in the following theorem, 
$\tilde{f}^{d, s}_{n, V}$ achieves the same rate of convergence as $\hat{f}^{d, s}_{n, V}$ if $N_1, \dots, N_d$ are sufficiently large. 
Together with \eqref{upper-bound-O-notation-fixed-approximate}, this result suggests that the approximate method with appropriately chosen $N_1, \dots, N_d$ can be as accurate as the original method.

\begin{theorem}\label{thm:rate-of-convergence-random-approx}
    Suppose $f^{*} \in \infmars^{d, s}$ and $\Vmars(f^{*}) \le V$.
    Also, assume that $N = \min_k N_k = \Omega(n^{4/15})$, i.e., there exists a positive constant $c_{B, d, V}$ possibly depending on $B, d$, and $V$ such that
    \begin{equation*}
        N \ge c_{B,d,V} \cdot n^{\frac{4}{15}}.
    \end{equation*}
    Then, the estimator $\tilde{f}^{d, s}_{n, V}$ satisfies that
    \begin{equation*}
        \| \tilde{f}^{d, s}_{n, V} - f^* \|_{p_0, 2}^2 = O_p\big(n^{-\frac{4}{5}} (\log n)^{\frac{8(s - 1)}{5}}\big).
    \end{equation*}
\end{theorem}

Our next result shows that the logarithmic multiplicative factor in 
\eqref{upper-bound-of-risk-random-design} can not be completely
removed in the minimax sense. Specifically, we bound the minimax
risk defined as
\begin{equation*}
    \mathfrak{M}^{d, s}_{n, V} = \inf_{\hat{f}_n} \sup_{\substack{f^* \in \infmars^{d, s} \\ \Vmars(f^*) \le V}} \E_{f^*} \|\hat{f}_{n} - f^* \|_{p_0, 2}^2,
\end{equation*}
where the expectation is taken over $(X^{(1)}, y_1), \dots, (X^{(n)}, y_n)$ of
\eqref{eq:model-random} and $\inf_{\hat{f}_n}$ denotes the infimum over all
estimators $\hat{f}_n$ of $f^*$ based on $(X^{(1)}, y_1), \dots, (X^{(n)}, y_n)$. 
Here we further restrict that $\xi_i$ in the model \eqref{eq:model-random} are independent Gaussian errors with mean zero and variance $\sigma^2$ and that the probability density function $p_0$ of $X^{(i)}$ is bounded below by some positive constant $b$, i.e., $\| p_0 \|_{\infty} \ge b$.
Our result shows that the supremum risk of every 
estimator indeed requires a logarithmic multiplicative
factor depending on $s$ in addition to the $n^{-4/5}$ term. Note
though that there is still a gap between the exponent $8(s-1)/5$ of 
$\log n$ in the rate of convergence of $\hat{f}_{n, V}^{d, s}$ and the
exponent $4(s - 1)/5$ of $\log n$ in the minimax lower bound.

\begin{theorem}\label{thm:lower-bound}
There exist positive constants $C_{b, B, s}$ depending on $b, B$, and $s$ and $c_{B, s}$ depending on $B$ and $s$ such that
\begin{equation*}
    \mathfrak{M}^{d, s}_{n, V} \ge C_{b, B, s} \Big(\frac{\sigma^2 V^{\frac{1}{2}}}{n}\Big)^{\frac{4}{5}} \bigg[\log\Big(\frac{V n^{\frac{1}{2}}}{\sigma}\Big)\bigg]^{\frac{4(s - 1)}{5}}
\end{equation*} 
provided $n \ge c_{B, s} \cdot (\sigma^2/V^2)$. 
\end{theorem}

Our proof of Theorem \ref{thm:lower-bound} is based on Assouad's
lemma with a finite set of functions in $\{f^* \in \infmars^{d, s} :
\Vmars(f^*) \leq V \}$  that is constructed by an extension of the ideas in
\cite{blei2007metric}, Section 4.   
Results similar to Theorem \ref{thm:lower-bound} can be proved under the fixed design setting, but we do not go into detail in this paper.

\section{Characterization in terms of Smoothness}\label{smoothness}
In this section, we provide alternative characterizations of
$\infmars^{d, s}, \Vmars(\cdot)$, and $\hat{f}^{d, s}_{n, V}$ in terms of
smoothness. To motivate the results for general $d$ and $s$, let us
first consider the univariate case $d = s = 1$. 
We include the proofs of all the results in this section in Appendix \ref{pf:smoothness}.

\subsection{Smoothness Characterization for $d = s = 1$}
For $d = s = 1$, $\infmars^{1, 1}$ consists of all the functions
$f: [0, 1] \rightarrow \R$ of the form 
  \begin{equation}\label{infmars-d=1}
    f(x) = a_0 + \int_{[0, 1)} (x - t)_+ \, d\nu(t) 
  \end{equation}
where $a_0$ is a real number and $\nu$ is a finite signed measure on
$[0, 1)$, and the complexity measure of $f$ is given by the variation
of $\nu$ on $(0, 1)$; that is, $\Vmars(f) = |\nu|((0, 1))$.

The following simple arguments show that $\infmars^{1, 1}$ can be
characterized in terms of smoothness.  
First, by replacing $(x - t)_+$ with $\int_0^1
\ind\{t \leq s \leq x\} ds$ in the integral in \eqref{infmars-d=1} and
changing the order of integration, we obtain 
\begin{equation}\label{equivalent-formulation-1dim}
  f(x) = a_0 + \int_0^x g(t) \, dt,
\end{equation}
where the function $g: [0, 1] \rightarrow \R$ is given by
\begin{equation}\label{gdef1}
  g(t) = \nu([0, t] \cap [0, 1)). 
\end{equation}
It can be readily verified that the function $g$ in \eqref{gdef1} is right-continuous on
$[0, 1]$ and left-continuous at 1, 
and the total variation
$V(g)$ of $g$ is finite and can be represented as
\begin{equation}\label{varvar1}
  V(g) = |\nu|((0, 1)). 
\end{equation}
Here the total variation of a function $h: [0, 1] \rightarrow \R$
is defined by  
\begin{equation*}
  V(h) = \sup_{0 = u_0 < u_1 < \dots < u_k = 1} \sum_{i=0}^{k-1}
  \big|h(u_{i+1}) - h(u_i) \big|
\end{equation*}
where the supremum is over all integers $k \geq 1$ and partitions $0 = u_0
< u_1 < \dots < u_k = 1$ of $[0, 1]$. 
Conversely, every function $g: [0, 1] \rightarrow \R$ that is
right-continuous on $[0, 1]$, left-continuous at 1, and has finite
total variation can be written as \eqref{gdef1} for a unique signed
measure $\nu$ on $[0, 1)$ (see, e.g.,
\cite{aistleitner2015functions}, Theorem 3). 
Putting these observations together, we can argue that $\infmars^{1, 1}$ has the following alternative
characterization:
\begin{align}\label{smoo1}
    \begin{split}
      &\infmars^{1, 1} =\Big\{f : [0, 1] \rightarrow \R:
        \exists a_0 \in \R \text{ and } g: [0, 1] \rightarrow
        \R \text{ s.t. } \\ 
      & \qquad \qquad \qquad \qquad \quad g \text{ is right-continuous on }[0,
        1], \text{
      left-continuous at 1, } \\
      & \qquad \qquad \qquad \qquad \quad V(g) < \infty, \text{ and } f(x) = a_0 + \int_{0}^{x} g(t) \, dt \text{ for all } x \in [0, 1] \Big\}.
\end{split}
\end{align}
Moreover, we can see that the complexity measure $\Vmars(f)$ for $f \in \infmars^{1, 1}$ is
equal to the total variation $V(g)$ of the function $g$ appearing in 
\eqref{smoo1}.

For every function $f \in \infmars^{1, 1}$, we can show that $g$ satisfying the conditions in \eqref{smoo1} is unique, and thus, we can consider such $g$ as a particular derivative of $f$ satisfying \eqref{equivalent-formulation-1dim}. 
If we denote it by $D^{(1)} f$, the estimator $\hat{f}_{n, V}^{1, 1}$ then can be alternatively written as 
\begin{equation*}
  \hat{f}_{n, V}^{1, 1} \in \argmin_{f} \bigg\{\sum_{i=1}^n \big(y_i - f(x^{(i)})
    \big)^2 :  V(D^{(1)} f) \leq V \bigg\}. 
\end{equation*}
The representation \eqref{equivalent-formulation-1dim} implies, by the
Lebesgue differentiation theorem (see, e.g., \cite{Rudin:87book}, Theorem 7.10), that $f'$ exists and is equal to $D^{(1)}f$ almost
everywhere (with respect to the Lebesgue measure) on $[0, 1]$. 
Hence, we can also describe $\hat{f}_{n, V}^{1, 1}$ somewhat loosely as
\begin{equation*}
  \hat{f}_{n, V}^{1, 1} \in \argmin_{f} \bigg\{\sum_{i=1}^n \big(y_i - f(x^{(i)})
    \big)^2 : V(f') \leq V \bigg\}. 
\end{equation*}
The corresponding penalized version
\begin{equation}\label{penver}
  \argmin_{f} \bigg\{\sum_{i=1}^n \big(y_i - f(x^{(i)})  \big)^2 +
    \lambda V(f') \bigg\}
\end{equation}
was proposed by \cite{mammen1997locally} as part of the class of
estimators collectively called locally adaptive regression
splines. In the univariate case, $\hat{f}_{n, V}^{1, 1}$ can thus be seen as a
constrained analogue of the locally adaptive regression spline
estimator of \cite{mammen1997locally} when the order $k$ (in their notation) equals
2. \cite{steidl2006splines} used the terminology 
\textit{second-order total variation regularization}, and
\cite{kim2009ell_1} and 
\cite{tibshirani2014adaptive} used the 
terminology \textit{first-order trend filtering} for \eqref{penver}. Therefore, our estimator $\hat{f}_{n, V}^{d, s}$ can be considered as a multivariate generalization of piecewise linear (second-order) locally adaptive regression splines, second-order total variation regularization, or first-order trend filtering.  

From the alternative characterization of $\infmars^{1, 1}$ given above, it
follows that 
every sufficiently smooth function $f: [0, 1] \rightarrow \R$
belongs to $\infmars^{1, 1}$. Indeed, if $f'$ and $f''$ exist everywhere
and are continuous on $[0, 1]$, then we have 
\begin{equation*}
f(x) = f(0) + \int_{0}^{x} f'(t) \, dt
\end{equation*}
for all $x \in [0, 1]$, and 
\begin{equation*}
    V(f') = \int_{0}^{1} |f''| < \infty.
\end{equation*}
Thus, in this case, $f$ belongs to $\infmars^{1, 1}$ and the complexity measure of $f$ can be written as  
\begin{equation}\label{2ndder1}
    \Vmars(f) = \int_{0}^{1} |f''|.
\end{equation}
This formula highlights the role of the second derivative $f''$
in the determination of $\Vmars(f)$ for each sufficiently smooth function $f$.

\subsection{Smoothness Characterization for General $d$ and $s$}
As we have seen in the previous subsection, in the univariate case,
$\infmars^{1, 1}$ consists of all the functions $f$ satisfying
\eqref{equivalent-formulation-1dim} with some function $g$ having finite total variation and some one-sided continuity. An analogous characterization holds for general $d$ and $s$. For general $d$ and $s$, 
the role of total variation in the univariate case is played by Hardy--Krause
variation, which is an extension of total variation of 
univariate functions to higher dimensions.
In Appendix \ref{hkreview}, we review the definition of Hardy--Krause variation and its properties that we will use for proving the results in this subsection. 
Standard references for Hardy--Krause variation are \cite{aistleitner2015functions}
and \cite{owen2005multidimensional}.
Here we use Hardy--Krause variation anchored at $\zerovec$, which we denote by $\Vhk(\cdot)$.

The following result provides an alternative characterization of
$\infmars^{d, s}$ and $\Vmars(\cdot)$ in terms of smoothness. 
Recall that we use the notation $\talpha$ to indicate the vector $(t_j, j \in
S(\alpha))$ for each $\alpha \in \{0, 1\}^d
\setminus \{\zerovec\}$ with $|\alpha| \le s$. 

\begin{proposition}\label{prop:equivalent-formation} 
    The function class $\infmars^{d, s}$ consists precisely of all the functions of the form 
    \begin{equation}\label{equivalent-formation}
         f(x_1, \dots, x_d) = a_{\zerovec} + \sumoveralpha
         \int_{[\zerovec, \xalpha]}
         g_{\alpha}(\talpha) \, d\talpha
    \end{equation}
    for some $a_{\zerovec} \in \R$ and some collection
    of functions $\{g_{\alpha}: \alpha \in \{0, 1\}^d \setminus
    \{\zerovec\} \mbox{ and } |\alpha| \le s\}$, where for each $\alpha \in \{0, 1\}^d \setminus
    \{\zerovec\}$ with $|\alpha| \le s$,
    \begin{longlist}
    \item $g_{\alpha}$ is a real-valued function on $[0, 1]^{|\alpha|}$,
    \item $\Vhk(g_{\alpha}) < \infty$, 
    \item $g_{\alpha}$ is coordinate-wise right-continuous on $[0, 1]^{|\alpha|}$, and
    \item $g_{\alpha}$ is coordinate-wise
    left-continuous at each point $\xalpha = (x_j, j \in S(\alpha)) \in [0,
    1]^{|\alpha|} \setminus [0, 1)^{|\alpha|}$ with respect to
    all the $j^{\text{th}}$ coordinates where $x_j = 1$.  
    \end{longlist}
    Furthermore, the complexity of $f$ in \eqref{equivalent-formation}
    can be written in terms of the Hardy--Krause variations of
    $g_{\alpha}$ as
    \begin{equation}\label{vmars-in-vhk}
        \Vmars(f) = \sumoveralpha \Vhk(g_{\alpha}).
    \end{equation}
  \end{proposition}

Proposition \ref{prop:equivalent-formation} is completely analogous to
\eqref{smoo1} for the case $d = s = 1$. Specifically, the condition
\eqref{equivalent-formation} is analogous to the univariate condition \eqref{equivalent-formulation-1dim}. The
condition $\Vhk(g_{\alpha}) < \infty$ for each $\alpha \in \{0, 1\}^d
\setminus \{\zerovec\}$ with $|\alpha| \le s$ corresponds to the univariate condition
$V(g) < \infty$. The coordinate-wise right-continuity of each $g_{\alpha}$
on $[0, 1]^{|\alpha|}$ is matched with the univariate right-continuity
on $[0, 1]$. Lastly, the coordinate-wise left-continuity of each
$g_{\alpha}$ at each $\xalpha \in [0,
    1]^{|\alpha|} \setminus [0, 1)^{|\alpha|}$ (with respect to
    all the $j^{\text{th}}$ coordinates where $x_j = 1$) is a counterpart of the univariate left-continuity at 1. It is also interesting to
    note that $\Vmars(f)$ equals the \textit{sum} of the Hardy--Krause
    variations of $g_{\alpha}$ over $\alpha \in \{0, 1\}^d \setminus
    \{\zerovec\}$ with $|\alpha| \le s$.

For every function $f \in \infmars^{d, s}$, it can be easily checked that $g_{\alpha}$ appearing in Proposition \ref{prop:equivalent-formation} are uniquely determined by $f$.
As in the case $d = s = 1$, we can thus consider such $g_{\alpha}$ as particular derivatives of $f$ satisfying \eqref{equivalent-formation}. 
Let us denote them by $D^{(\alpha)} f$ for $\alpha \in \{0, 1\}^d \setminus \{\zerovec\}$.
We can then write our estimator $\hat{f}_{n, V}^{d, s}$ alternatively as 
\begin{equation}\label{althkd}
  \hat{f}^{d, s}_{n, V} \in \underset{f}{\argmin} \Bigg\{\sum_{i=1}^n \big(y_i - f(x^{(i)}) \big)^2 : \sumoveralpha \Vhk(D^{(\alpha)} f)\le V \Bigg\}.
\end{equation}
Hence, our estimator can be viewed as a least squares estimator under a specific smoothness constraint involving the sum of the Hardy--Krause variations of the particular derivatives defined via $D^{(\alpha)}$.

Recall that the univariate condition \eqref{equivalent-formulation-1dim} implies that $f'$ exists and equals to $D^{(1)}f$ almost everywhere.
Similarly, the condition \eqref{equivalent-formation} imposes a certain kind of smoothness on $f$ and characterizes the corresponding derivatives in terms of $D^{(\alpha)} f$. 
For each $\alpha \in \{0, 1\}^d \setminus \{\zerovec\}$ and for $x^{(\alpha)} = (x_k, k \in S(\alpha))$, let 
\begin{equation*}
    D_{\alpha} f(x^{(\alpha)}) = \lim_{\epsilon \rightarrow 0} \frac{1}{\epsilon^{|\alpha|}} \cdot \sum_{\delta \in \prod\limits_{k \in S(\alpha)}{\{0, 1\}}} (-1)^{\sum\limits_{k \in S(\alpha)} \delta_k} f\big(\widetilde{x(x + \epsilon)}_{\delta}^{(\alpha)}\big),
\end{equation*}
if the limit exists, where 
\begin{equation*}
    \big(\widetilde{x(x + \epsilon)}_{\delta}^{(\alpha)}\big)_k = 
    \begin{cases}
        \delta_k x_k + (1 - \delta_k) (x_k + \epsilon) &\mbox{if } k \in S(\alpha)  \\
        0 &\mbox{otherwise}
    \end{cases}
\end{equation*}
for $k \in [d]$. 
For example, if $d = 3$ and $\alpha = (1, 1, 0)$, 
$D_{1, 1, 0} f$ is defined as 
\begin{align*}
    &D_{1, 1, 0} f (x_1, x_2) = \lim_{\epsilon \rightarrow 0} \frac{1}{\epsilon^2} \cdot \big(f(x_1 + \epsilon, x_2 + \epsilon, 0) - f(x_1, x_2 + \epsilon, 0) \\
    &\qquad \qquad \qquad \qquad \qquad \qquad \qquad \qquad \qquad - f(x_1 + \epsilon, x_2, 0) + f(x_1, x_2, 0)\big),
\end{align*}
if the limit exists.
Note that in contrast to mixed partial derivatives $f^{(\alpha)}$ (defined in \eqref{partial-derivative}), in which partial derivatives $\partial/\partial x_j$ are taken sequentially, here all the $j^{\text{th}}$ coordinates where $\alpha_j = 1$ are considered simultaneously.
Also, note that the remaining coordinates where $\alpha_j = 0$ are set to zero for $D_{\alpha} f$.

As in the case $d = s = 1$, we can show that $D_{\alpha}f$ exist and equal to $D^{(\alpha)}f$ almost everywhere (with respect to the Lebesgue measure) on $[0, 1]^{|\alpha|}$. 
The precise statement is given in the following result.

\begin{proposition}\label{galpha-chracterization}
Suppose that the condition \eqref{equivalent-formation} holds.
Then, for each $\alpha \in \{0, 1\}^d \setminus \{\zerovec\}$, $D_{\alpha} f = 0$ if $|\alpha| > s$, and $D_{\alpha} f = D^{(\alpha)} f$ almost everywhere (with respect to the Lebesgue measure) on $[0, 1]^{|\alpha|}$ if $|\alpha| \le s$. 
\end{proposition}

Proposition \ref{prop:equivalent-formation} also implies that every
sufficiently smooth function belongs to $\infmars^{d, d}$. This is proved
in the next result, which also gives an expression for $\Vmars(f)$ in
terms of the $L^1$ norms of the mixed partial derivatives of $f$, for sufficiently smooth functions $f$.

The following notation will be used below. For each $\alpha \in \{0, 1\}^d \setminus
\{\zerovec\}$, we let $J_{\alpha}$ be the set of all $\beta \in
\{0, 1, 2\}^d$ such that 
\begin{equation}\label{definition-of-J-alpha}
\max_j \beta_j = 2
    ~\text{ and }~ \beta_j =
  \begin{cases} 
 0   & \text{if } \alpha_j = 0 \\
  1 \text{ or } 2       & \text{if } \alpha_j = 1.
 \end{cases}
\end{equation}
Also, recall the notation $\bar{\Sbeta}^{(\beta)}$ from \eqref{tbeta}.

\begin{lemma}\label{lem:smooth-functions}
  Suppose $f: [0, 1]^d \rightarrow \R$ is smooth in the sense
  that
  \begin{longlist}
  \item $f^{(\alpha)}$ exists and is continuous on $[0, 1]^d$ for every $\alpha
    \in \{0, 1\}^d$, and
  \item   $f^{(\beta)}$ exists and is continuous on
    $\bar{\Sbeta}^{(\alpha)}$ for every $\beta \in J_{\alpha}$, for
    every $\alpha \in \{0, 1\}^d \setminus \{\zerovec\}$.
  \end{longlist}
Then, $f \in \infmars^{d, d}$ and 
    \begin{equation}\label{vsmooth}
    \Vmars(f) = \sumoverbeta \int_{\bar{\Sbeta}^{(\beta)}} |f^{(\beta)}|. 
    \end{equation}
Furthermore, if $f^{(\alpha)} = 0$ for all $\alpha \in \{0, 1\}^d$ with $|\alpha| > s$ in addition, then $f \in \infmars^{d, s}$ and
\begin{equation}\label{vsmooth-restricted}
    \Vmars(f) = \sumoveralpha \sum_{\beta \in J_{\alpha}} \int_{\bar{\Sbeta}^{(\beta)}} |f^{(\beta)}|. 
\end{equation}
\end{lemma}
Note that the integrals on the right-hand side of
\eqref{vsmooth} and \eqref{vsmooth-restricted} are only over those coordinates $x_l$ for which
$\beta_l = \max_j \beta_j$ (the remaining coordinates are set to
zero).

The formula \eqref{vsmooth} is a multivariate generalization of the univariate formula \eqref{2ndder1}, stating that for sufficiently smooth functions $f$, $\Vmars(f)$ is the sum of
the $L^1$ norms of the mixed partial derivatives of $f$ of order 2, where we take at most two partial derivatives along each coordinate and exactly two partial derivatives along at least one coordinate. 
Note that mixed partial derivatives of total order ($\beta_1 + \cdots + \beta_d$) up to $2d$ appear in
\eqref{vsmooth}. 
From this perspective, we can think of the smoothness order of our complexity measure $\Vmars(\cdot)$ as $2d$, which is proportional to the dimension $d$. 
This gives an intuitive explanation on why our estimators achieve the dimension-free rate $n^{-4/5}$ (up to the logarithmic multiplicative factors), as we observed in Section \ref{riskresults}.
However, we should also note that the maximum total order $2d$ is solely achieved by the mixed partial derivative $f^{(2, \dots, 2)}$,
which prevents our function class from being too small and restrictive. 
For these reasons, we can say that our complexity measure is an effective constraint that leads to estimators avoiding the curse of dimensionality (to some extent) while keeping the corresponding function class reasonably large.

There is also an 
interesting connection between $\Vmars(\cdot)$ and $\Vhk(\cdot)$ via \eqref{vsmooth}. 
Specifically, if the condition $\max_{j} \beta_j = 2$ in
\eqref{vsmooth} is replaced with $\max_{j} \beta_j = 1$, then one obtains the formula for
$\Vhk(f)$ for sufficiently smooth functions $f$ 
(see Lemma \ref{lem:hardy-krause}).
Hence, we can also view
$\Vmars(\cdot)$ as \textit{second-order} Hardy--Krause variation (anchored at $\zerovec$).

In Appendix \ref{alt_char_ex}, we describe the results of this section by specializing to
the case $d = s = 2$. 
We encourage readers who find the results in this section too abstract to refer to Appendix \ref{alt_char_ex} for more explicit formulae.

\section{Related Work}\label{relwork}
Here are some connections between our paper and existing works
on nonparametric regression.

As mentioned earlier, our method can be viewed as a multivariate
generalization of the piecewise linear locally adaptive regression
spline estimator of \cite{mammen1997locally} (see also
\cite{steidl2006splines}). There are other ways of generalizing the
piecewise linear locally adaptive regression splines estimator to the
multivariate setting as well (see, e.g., \cite{parhi2021banach, parhi2022near} for some recent work).
Also, we have seen that our method can be considered as a multivariate extension of first-order trend filtering (see, e.g., \cite{kim2009ell_1, tibshirani2014adaptive}). 
\cite{ortelli2022tensor} and \cite{sadhanala2021multivariate} recently studied different multivariate extensions of trend filtering.
Although they covered all orders of trend filtering in contrast to our method, their methods were however restricted to lattice designs.
Moreover, they imposed weaker penalties on their models, which resulted in their estimators converging to the true underlying function at dimension-dependent rates.
In Appendix \ref{alt_fin_opt}, we describe the method of \cite{ortelli2022tensor} and compare their estimator to the discrete formation of our estimator in the equally spaced lattice design setting (see Remark \ref{comp_ortelli} for more details).

We have also mentioned that $\Vmars(\cdot)$ can be viewed as second-order Hardy--Krause variation (anchored at $\zerovec$). Our
estimation strategy can thus be seen as second-order Hardy--Krause variation
denoising. In \cite{fang2021multivariate}, first-order Hardy--Krause variation
denoising (i.e., least squares estimation over
functions with bounded Hardy--Krause variation) was studied. First-order
Hardy--Krause variation denoising leads to piecewise constant fits while
our method leads to MARS fits (linear combinations of
products of ReLU functions of individual variables). 
\cite{fang2021multivariate} also proved that their estimator achieves a dimension-free (up to a logarithmic multiplicative factor) rate of convergence. 
However, it should be noted that their result is only proved in the fixed lattice design setting.   
Moreover, unlike our method, interaction order restriction is not considered in \cite{fang2021multivariate}.

As is clear from the form of our functions \eqref{form-of-functions} and our complexity measure \eqref{vardef}, our method can also be considered as a multivariate ANOVA modeling method based on total variation constraints. 
There are a few works that utilize total variation penalties in multivariate ANOVA modeling. 
\cite{petersen2016fused}, \cite{yang2018backfitting}, and \cite{sadhanala2019additive} utilized total variation of univatiate functions in additive modeling, which can be seen as a special case of ANOVA modeling where the interaction between covariates is not allowed.
Also, \cite{yang2021hierarchical} used  for multivariate ANOVA modeling a class of penalties characterized in terms of certain hierarchical notions of total variation. 
Their hierarchical total variations are defined using a pre-fixed grid of points.  Interestingly, for functions $f$ that are sufficiently smooth, one of their hierarchical total variations (corresponding to $m = 2$ in their notation) converges to $\Vmars(f)$ as the grid resolution becomes arbitrarily
small.

It should be mentioned that \cite{lin1998thesis, lin2000tensor} also
studied a multivariate ANOVA modeling method, but instead of $L^1$
norms as in our paper, they worked with penalties that
are related to the squared $L^2$-Sobolev norms. Relevance of their
works to our paper is therefore not from the type of penalties but
from tensor product structures on their basis functions. As our basis
functions \eqref{basismars} are the tensor products of univariate ReLU
functions, their basis functions are also the tensor products of
univariate functions whose smoothness is constrained by the
$L^2$-Sobolev norms.  It is notable that the multivariate function spaces
considered in \cite{lin1998thesis, lin2000tensor} are defined as an
appropriate completion of the pre-Hilbert space given by the tensor
product of the univariate 
$L^2$-Sobolev spaces. We are curious whether we can also view our
function classes (e.g., $\infmars^{d, d}$) as an appropriate completion
of a tensor product space. However, it is unclear to us at this point
what norm  should be chosen for completion as a counterpart of the
$L^2$-Sobolev norms of \cite{lin1998thesis, lin2000tensor}. We believe
this is an interesting direction to extend our work, which can
provide a new perspective on our function spaces.  

In addition, \cite{van2023efficient} discussed 
function classes similar to $\infmars^{d, d}$ and norms similar 
to $\Vmars(\cdot)$ and used them for estimation in settings that are
different from our classical nonparametric regression framework.

\section{Numerical Experiments}\label{experiment}
In this section, we provide the results of some numerical experiments
illustrating the performance of our estimators from either the original or the approximate method.
The performance of our estimators is compared to the performance of the usual MARS estimator in our experiments.
The results for simulated data are presented first, and those for real data follow next.
Our methods are implemented in the R package \textsf{regmdc}, which is available at \url{https://github.com/DohyeongKi/regmdc}. 
Our R package \textsf{regmdc} employs the R package \textsf{Rmosek} (based on interior point convex optimization) to solve the finite-dimensional lasso problem \eqref{finite-dimensional-lasso}.
For the usual MARS estimator, we use the R package \textsf{earth} based on \cite{friedman1991multivariate} and \cite{friedman1993fast}.
The code for all the results in this section is available at \url{https://github.com/DohyeongKi/mars-lasso-paper}.

\subsection{Simulation Studies}
\cite{mammen1997locally} and \cite{tibshirani2014adaptive} demonstrated that their locally adaptive regression splines and trend filtering excel at estimating functions with locally varying smoothness.
Considering that our estimator is a multivariate generalization of the piecewise (second-order) locally adaptive regression splines estimator and the first-order trend filtering estimator, it is natural to examine whether our methods also excel at adapting to the local variation of functions.
To test the local adaptivity of our methods and compare it to the local adaptivity of the usual MARS method, we exploit in our simulation studies the following four functions (Function 1, Function 2, Function 3, and Function 4) whose smoothness varies significantly over the domain.

The definitions of the four functions (Function 1, Function 2, Function 3, and Function 4) are presented below. 
For each function, we consider the uniform design where $X^{(i)}$ are uniformly distributed on $[0, 1]^d$ and $y_i$ are generated according to the model \eqref{eq:model-random}.
For Function 1 and Function 3, we also consider the equally spaced lattice design where
\begin{equation*}
\{x^{(1)}, \dots, x^{(n)} \} = \prod_{k=1}^{d}
\Big\{\frac{i_k}{n_k}: i_k \in [0: (n_k - 1)] \Big\} 
\end{equation*}
for some integers $n_k \ge 2$.
In all cases, independent Gaussian errors with standard deviation 1 are added to function evaluations. 
Simulations are performed at various sample sizes for each function as presented in Table \ref{tab:Simulation-results}.

\begin{itemize}
  \item \textit{Function 1 (L1)}.
  The first function $f^*: [0, 1]^{2} \rightarrow \R$ is defined by
  \begin{align*}
    f^*(x_1, x_2) = 10 \exp(-5 \cdot r(x_1, x_2)) 
    \cdot \cos(10\pi \cdot r(x_1, x_2))
  \end{align*}
  for $x_1, x_2 \in [0, 1]$, where $r(x_1, x_2) = \sqrt{(x_1 - 0.3)^2 + (x_2 - 0.4)^2}$. 
  This function represents a two-dimensional damped sinusoidal wave.
  \\
  \item \textit{Function 2 (L2)}. 
  The second function $f^*: [0, 1]^{5} \rightarrow \R$ is defined as in Function 1 but additionally includes three dummy variables $x_3, x_4$, and $x_5$.
  \\
  \item \textit{Function 3 (L3)}.
  The third function $f^*: [0, 1]^{2} \rightarrow \R$ is defined by
  \begin{align*}
    f^*(x_1, x_2) = 5 \sin\Big(\frac{4}{\sqrt{x_1^2 + x_2^2} + 0.001}\Big) + 7.5
  \end{align*}
  for $x_1, x_2 \in [0, 1]$.
  This function is a (scaled) two-dimensional version of the 
  Doppler function used for simulation studies in \cite{mammen1997locally} and \cite{tibshirani2014adaptive}. 
  We add 0.001 to avoid division by zero.
  \\
  \item \textit{Function 4 (L4)}. 
  The fourth function $f^*: [0, 1]^{5} \rightarrow \R$ is defined as in Function 3 but additionally includes three dummy variables $x_3, x_4$, and $ x_5$.
  \\
\end{itemize}

We also use for comparison the following four Friedman's functions (see \cite{friedman1991multivariate}, Section 4.3 and 4.4), which have been frequently utilized to measure the performance of nonparametric regression methods
(see, e.g., \cite{meyer2003support, potts2021interpretable, potts2022learning}).
The four functions (Function 5, Function 6, Function 7, and Function 8) are defined as below, and for each function, we generate data $(X^{(1)}, y_1), \dots, (X^{(n)}, y_n)$ according to the model \eqref{eq:model-random} where $X^{(i)}$ are uniformly distributed on $[0, 1]^d$. 
For Function 5 and Function 6, we consider Gaussian errors with standard deviation 1 as in \cite{friedman1991multivariate}, Section 4.3. 
For Function 7 and Function 8, we consider Gaussian errors with standard deviation 125 and 0.1, which yield a 3:1 signal to noise ratio as designed in \cite{friedman1991multivariate}, Section 4.4 (see also \cite{meyer2003support}). 
Also, for each function, we conduct experiments on three different sample sizes as in \cite{friedman1991multivariate}: 50, 100, and 200 for Function 5 and Function 6; 100, 200, and 400 for Function 7 and Function 8.  

\begin{itemize}
  \item \textit{Function 5 (F1)}.
  The first function $f^*: [0, 1]^{10} \rightarrow \R$ is defined by
  \begin{equation*}
    f^*(x_1, \dots, x_{10}) = 10 \sin(\pi x_1 x_2) + 20 (x_3 - 0.5)^2 + 10 x_4 + 5x_5
  \end{equation*}
  for $x_1, \dots, x_{10} \in [0, 1]$.
  \\
  \item \textit{Function 6 (F2)}.
  The second function $f^*: [0, 1]^{5} \rightarrow \R$ is defined as in Function 5, but here the five variables $x_6, \dots, x_{10}$ are removed.
  \\
  \item \textit{Function 7 (F3)}.
  The third function $f^*: [0, 1]^{4} \rightarrow \R$ is defined by
  \begin{equation*}
    f^*(x_1, x_2, x_3, x_4) = \sqrt{(T_1(x_1))^2 + \big(T_2(x_2) \cdot x_3 - (T_2(x_2) \cdot T_4(x_4))^{-1} \big)^2}
  \end{equation*}
  for $x_1, x_2, x_3, x_4 \in [0, 1]$, where $T_1, T_2$, and $T_4$ are the linear maps defined by 
  $T_1(x_1) = 100x_1$, $T_2(x_2) = 2 \pi (260 x_2 + 20)$, and $T_4(x_4) = 10x_4 + 1$.
  \\
  \item \textit{Function 8 (F4)}.
  The fourth function $f^*: [0, 1]^{4} \rightarrow \R$ is defined by
  \begin{equation*}
    f^*(x_1, x_2, x_3, x_4) = \arctan \Big( \frac{T_2(x_2) \cdot x_3 - (T_2(x_2) \cdot T_4(x_4))^{-1}}{T_1(x_1)} \Big)
  \end{equation*}
  for $x_1, x_2, x_3, x_4 \in [0, 1]$, where $T_1, T_2$, and $T_4$ are defined as in Function 7.
  \\
\end{itemize}

In all simulations, we compare the estimator $\hat{f}_{n, V}^{d, 2}$ and its approximate version $\tilde{f}_{n, V}^{d, 2}$ to the usual MARS estimator whose order of interaction is restricted to 2. 
The approximate method is considered only when explanatory variables are uniformly distributed. 
The positive integers $N_k$ for the approximate method is set to 25 for each coordinate.
Also, the tuning parameter $V$ is selected by 10-fold cross-validation in all cases.
For each function, we repeat the above data generation and estimator construction processes 25 times and average out computed losses.
We use the squared empirical $L^2$ norm \eqref{squared-empirical-norm} for the fixed lattice design, and for the uniform design, we generate 1000 new samples and approximate the prediction error \eqref{prediction-error}.

Table \ref{tab:Simulation-results} presents the average loss of our estimators and the usual MARS estimator over 25 repetitions for each function.
In Table \ref{tab:Simulation-results}, we can first observe that our estimators outperform the usual MARS estimator in capturing the local variation of the first four functions, L1, L2, L3, and L4, under both the lattice and the uniform design.
The only exceptions are when we estimate the function L2 with 100 samples and the function L4 with 100, 200, and 400 samples. 
Even for these functions, our methods start performing better when they are given more samples. 
It turns out that our methods usually benefit more from increases in sample sizes compared to the usual MARS method. 
By comparing L1 with L2 and L3 with L4, we can also see that our methods however suffer more from the addition of dummy variables.

Table \ref{tab:Simulation-results} also shows that our methods estimate the Friedman's functions F2, F3, and F4 much better, while the usual MARS method do better in the estimation of the function F1.
However, the performance gap in estimating F1 narrows as the sample size increases, which reaffirms that increases in sample sizes are often more favorable for our methods than the usual MARS method.
Also, comparing the results of F2 with those of F1, again we can see that our methods can benefit more from removing unnecessary covariates in advance. 
This hints that having appropriate variable selection as a pre-processing step can greatly improve the performance of our methods.
We think it can be a promising avenue for future work.

Moreover, we can observe from Table \ref{tab:Simulation-results} that the original and approximate method yield almost the same outputs in most cases. 
Only for the Doppler function L3, which has extremely wild fluctuation around the origin, the two methods show a notable difference.
This tells us there may not be much degradation in practice from using the approximate method in place of the original one, and there even can be some gains, as we observed in the case of L3.

From these simulation results, we can expect that although the usual MARS estimator might perform better on small sample sizes, our estimators usually outdo the usual MARS estimator when given enough samples. 
At what sample sizes such a transition occurs will definitely vary across functions, but in the above examples, they were reasonably small.

\begingroup
\begin{table}\label{simulation-results}
\centering
\caption{\label{tab:Simulation-results} The average and standard error of squared empirical $L^2$ norms \eqref{squared-empirical-norm} (lattice design) or prediction errors \eqref{prediction-error} (uniform design) for each experimental setting. 
The number below the name of each function  (if any) is the scale that should be multiplied to the corresponding averages and standard errors.}
\begin{tabular}{ c | c | c | c  c  c }
  \hline
  Function  & Design & Number of Data & Usual Method & Original Method & Approximate Method \\ \hline
  \multirow{6}{*}{L1} & \multirow{2}{*}{Lattice} &  256 & 3.66 (0.07) & \textbf{1.06 (0.07)} & -- \\ 
  && 1024 & 3.34 (0.04) & \textbf{1.09 (0.05)} & -- \\ \cline{2-6}
  & \multirow{4}{*}{Uniform} & 100 & 3.30 (0.15) & 3.21 (0.09) & \textbf{3.20 (0.08)} \\ 
  && 200 & 3.18 (0.09) & \textbf{3.00 (0.05)} & 3.02 (0.05) \\ 
  && 400 & 2.90 (0.05) & \textbf{2.79 (0.05)} & 2.80 (0.04) \\
  && 800 & 2.90 (0.03) & -- & \textbf{2.62 (0.03)} \\\hline
  \multirow{4}{*}{L2} & \multirow{4}{*}{Uniform} & 100 & \textbf{3.29 (0.08)} & 3.59 (0.07) & 3.59 (0.07) \\ 
  && 200 & 3.15 (0.08) & \textbf{3.13 (0.05)} & \textbf{3.13 (0.05)} \\ 
  && 400 & 3.12 (0.05) & \textbf{3.09 (0.04)} 
  & \textbf{3.09 (0.04)} \\
  && 800 & 2.92 (0.06) & -- & \textbf{2.91 (0.05)} \\\hline
  \multirow{6}{*}{L3} & \multirow{2}{*}{Lattice} & 256 & 3.76 (0.08) & \textbf{1.83 (0.07)} & -- \\ 
  && 1024 & 3.43 (0.07) & \textbf{1.57 (0.05)} & -- \\ \cline{2-6}
  & \multirow{4}{*}{Uniform} & 100 & 5.37 (0.16) & 5.20 (0.23) & \textbf{5.11 (0.21)} \\ 
  && 200 & 4.06 (0.15) & 3.59 (0.08) & \textbf{3.43 (0.09)} \\
  && 400 & 3.54 (0.07) & 3.06 (0.17) & \textbf{2.27 (0.08)} \\
  && 800 & 3.28 (0.07) & -- & \textbf{1.51 (0.06)} \\\hline
  \multirow{4}{*}{L4} & \multirow{4}{*}{Uniform} & 100 & \textbf{6.25 (0.23)} & 7.62 (0.14) & 7.62 (0.14) \\ 
  && 200 & \textbf{4.47 (0.17)} & 6.03 (0.12) & 5.94 (0.10) \\
  && 400 & \textbf{3.63 (0.07)} & 4.30 (0.08) & 4.25 (0.09) \\
  && 800 & 3.28 (0.05) & --  & \textbf{3.10 (0.06)} \\\hline
  \multirow{3}{*}{F1} & \multirow{3}{*}{Uniform} & 50 & \textbf{4.05 (0.37)} & 8.60 (0.35) & 8.89 (0.36) \\ 
  && 100 & \textbf{1.03 (0.09)} & 4.39 (0.36) & 4.49 (0.38) \\ 
  && 200 & \textbf{0.43 (0.02)} & 0.87 (0.03) & 0.87 (0.03) \\ \hline
  \multirow{3}{*}{F2} & \multirow{3}{*}{Uniform} & 50 & 3.25 (0.28) & \textbf{2.45 (0.31)} & 2.48 (0.33) \\ 
  && 100 & 1.13 (0.10) & 0.77 (0.12) & \textbf{0.72 (0.07)} \\ 
  && 200 & 0.41 (0.03) & 0.37 (0.03) & \textbf{0.35 (0.02)} \\ \hline
  \multirow{3}{*}{\shortstack[1]{F3 \\ $(\cdot 10^{3})$}}  & \multirow{3}{*}{Uniform} & 100 & 3.62 (0.37) & \textbf{2.10 (0.23)} & \textbf{2.10 (0.23)} \\ 
  && 200 & 2.16 (0.22) & \textbf{0.95 (0.07)} & \textbf{0.95 (0.07)} \\ 
  && 400 & 1.20 (0.15) & \textbf{0.55 (0.04)} & \textbf{0.55 (0.04)} \\ \hline
  \multirow{3}{*}{\shortstack[1]{F4 \\ $(\cdot 10^{-3})$}} & \multirow{3}{*}{Uniform} & 100 & 14.0 (1.04) & \textbf{11.5 (0.60)} & 11.6 (0.63) \\ 
  && 200 & 9.12 (0.78) & \textbf{7.40 (0.34)} & 7.43 (0.34) \\ 
  && 400 & 5.51 (0.35) & 5.25 (0.24) & \textbf{5.23 (0.23)} \\ \hline 
\end{tabular}
\end{table}
\endgroup

\subsection{Real Datasets}
Here we use a few standard real datasets (Earnings, Airfoil Self-Noise, Abalone, Concrete, Ozone, Red Wine, and White Wine dataset) to compare our estimators with the usual MARS estimator.
Brief descriptions of each dataset is presented in Appendix \ref{datasets}.

For each dataset, we first linearly transform each explanatory variable into $[0, 1]$. 
In general, there can be multiple options for linear transformations. 
If the domain of an explanatory variable is known as $[m, M]$, then it is natural to subtract $m$ from the variable and then divide it by $M - m$. 
In case the two extreme values of the domain, $m$ and $M$, are unknown, we can consider the same linear transformation after simply setting $m$ as the minimum and $M$ as the maximum among observed values.
Here we choose the latter option for all datasets.

We also split each dataset into a training set and a test set and use the mean squared error on the test set for comparison. 
We use $80\%$ observations as training data and the remaining $20\%$ observations as test data. 
Also, for every dataset, we focus on interactions between explanatory variables of order up to 2. 
In other words, we use $\hat{f}_{n, V}^{d, 2}$ or its approximate version $\tilde{f}_{n, V}^{d, 2}$ for estimating regression functions and compare it to the usual MARS estimator whose order of interaction is restricted to 2.
For the Earnings and the Airfoil Self-Noise datasets, we employ the original method, and for the other datasets, we employ the approximate method, with setting each $N_k$ to 25.
Furthermore, the tuning parameter $V$ is chosen by 10-fold cross-validation in all cases.

In Table \ref{tab:real_data_sets-summary}, we present for each dataset the number of explanatory variables, the number of data, and the type of our method we use (whether the original or the approximate method). 
Table \ref{tab:real-data-sets-results} shows the average mean squared error of our estimator and the usual MARS estimator over 25 random training and test set splits for each dataset.
In Table \ref{tab:real-data-sets-results}, we can see that our method outperforms the usual MARS method in almost all examples, while showing great improvement on the Airfoil Self-Noise, Concrete, Ozone, and White Wine dataset.
The Red Wine dataset is the only exception for which the usual MARS method produces better or equally good fits.
However, for its sibling (the White Wine dataset), which has about three times as many observations, our method is clearly a better option than the usual MARS method.
This again proves that the usual MARS method may beat our method on small sample sizes, but our method reclaims the throne once enough data are provided.

We can conclude from all the results in this section that not only can our estimators be an alternative to the usual MARS estimator, but also they can supplement each other.

\begingroup
\begin{table}\label{real_data_sets-summary}
\centering
\caption{\label{tab:real_data_sets-summary} The number of explanatory variables, the number of data, and the type of our method employed for each dataset. Original and Approx here stand for the original and  approximate method, respectively.}
\begin{tabular}{ c | c | c | c }
  \hline
  Dataset & Dimension & Number of Data & Method \\ \hline
  Earnings & 2 & 25437 & Original \\ \hline
  Airfoil Self-Noise & 5 & 1503 & Original \\ \hline
  Abalone & 7 & 4177 & Approx \\ \hline
  Concrete & 8 & 1030 & Approx \\ \hline
  Ozone & 9 & 330 & Approx \\ \hline
  Red Wine & 11 & 1599 & Approx \\ \hline
  White Wine & 11 & 4898 & Approx \\ \hline
\end{tabular}
\end{table}
\endgroup

\begingroup
\begin{table}\label{real-data-sets-results}
\centering
\caption{\label{tab:real-data-sets-results} The average and standard error of mean squared errors on test sets for each dataset. The number next to the name of each dataset is the scale that should be multiplied to the corresponding average and standard error.}
\begin{tabular}{ c | c  c }
  \hline
  Dataset & Usual Method & Our Method \\ \hline
  Earnings $(\cdot 10^{-1})$ & 2.68  (0.01) & $\textbf{2.65 (0.01)}$ \\ \hline
  Airfoil Self-Noise $(\cdot 10^{0})$ & 9.16 (0.31) & \textbf{3.67 (0.13)} \\ \hline
  Abalone $(\cdot 10^{0})$ & 4.64 (0.08) & \textbf{4.59 (0.08)} \\ \hline
  Concrete $(\cdot 10^{1})$ & 4.01 (0.08) & \textbf{2.57 (0.17)} \\ \hline
  Ozone $(\cdot 10^{1})$ & 1.68 (0.08) & \textbf{1.45 (0.06)} \\ \hline
  Red Wine $(\cdot 10^{-1})$ & \textbf{4.17 (0.08)} & \textbf{4.17 (0.07)} \\ \hline
  White Wine $(\cdot 10^{-1})$ & 5.19 (0.05) & \textbf{4.92 (0.07)} \\ \hline
\end{tabular}
\end{table}
\endgroup

\begin{acks}[Acknowledgments]
We are immensely grateful to the anonymous referees for their constructive comments and suggestions, which significantly improved the quality of the paper.
We also thank Prof. Trevor Hastie for helpful comments and discussion.

\end{acks}

\begin{funding}
The first author was supported by NSF Grant DMS-2023505, NSF CAREER
Grant DMS-1654589, and NSF Grant DMS-2210504.  
The third author was supported by NSF CAREER Grant DMS-1654589 and NSF
Grant DMS-2210504.
\end{funding}


\bibliographystyle{imsart-nameyear}
\bibliography{main}       

\newpage 

\appendix

In Appendix \ref{hkreview}, we provide the definition of Hardy--Krause variation and introduce some of its properties that we use in this paper.
In Appendix \ref{alt_char_ex}, we describe the results of Section \ref{smoothness} for the case $d = s = 2$.
In Appendix \ref{alt_fin_opt}, we consider the fixed lattice design setting where the design points $x^{(1)}, \dots, x^{(n)}$ form an equally
spaced lattice in $[0, 1]^d$ and compare our estimator under this setting to the multivariate trend filtering estimator proposed in \cite{ortelli2022tensor}.
Appendix \ref{datasets} contains descriptions of the datasets used for our numerical experiments in Section \ref{experiment}.
Proofs of all our results are included in Appendix \ref{proofs}.

\section{Hardy--Krause Variation}\label{hkreview}
Hardy--Krause variation is based on another variation called Vitali variation. 
We first recall Vitali variation and some related concepts
and then define Hardy--Krause variation using them. We will use the
following notation below. For two $m$-dimensional
vectors $u = (u_1, \dots, u_m)$ 
and $v = (v_1, \dots, v_m)$ with $u_i \le v_i$ for all $i \in [m]$,
we let  $[u, v]$ denote the axis-aligned closed rectangle 
\begin{equation*}
    [u, v] := \{x \in \R^m: u_i \le x_i \le v_i \text{ for all
    } i \in [m] \}. 
  \end{equation*}
The \textit{quasi-volume} of a function $g$ on $[u, v]$ is then defined by 
\begin{equation*}
    \Delta(g; [u, v]) = \sum_{\delta \in \{0, 1\}^m} (-1)^{\delta_1 +
      \dots + \delta_m} g(v_1 + \delta_1 (u_1 - v_1), \dots, v_m +
    \delta_m (u_m - v_m)). 
\end{equation*}
Also, we say that a collection $\mathcal{P}$ of subsets of $[0, 1]^m$ is an
axis-aligned split if   
\begin{equation*}
    \mathcal{P} = \bigg\{\prod_{k = 1}^{m} \big[u^{(k)}_{l_k - 1},
        u^{(k)}_{l_k}\big]: l_k \in [n_k] \text{ for } k \in
      [m]\bigg\} 
\end{equation*}
for some $0 = u^{(k)}_0 < \dots < u^{(k)}_{n_k} = 1$ for $k \in
[m]$. Observe that every element of $\mathcal{P}$ is an axis-aligned
closed rectangle of the form  
\begin{equation*}
    \Big[\big(u^{(1)}_{l_1 - 1}, \dots, u^{(m)}_{l_m - 1}\big),
      \big(u^{(1)}_{l_1}, \dots, u^{(m)}_{l_m}\big)\Big] 
\end{equation*}
for some $l_k \in [n_k]$ for each $k \in [m]$.

Vitali variation is then defined as follows. 
Note that it coincides with the usual total variation when $m = 1$. 

\begin{definition}[Vitali variation]
For a real-valued function $g$ on $[0, 1]^m$, we define the Vitali variation of $g$ by
\begin{equation*}
    V^{(m)}(g) = \sup_{\mathcal{P}} \sum_{A \in \mathcal{P}}
    |\Delta(g; A)|, 
\end{equation*}
where the supremum is taken over all axis-aligned splits $\mathcal{P}$
of $[0, 1]^m$. 
\end{definition}

We now turn to Hardy--Krause variation.
Recall that for each $m$-dimensional vector $\beta$ with nonnegative integer
components, we let
\begin{align*}
\bar{\Sbeta}^{(\beta)} = \bar{\Sbeta}^{(\beta)}_1  \times \dots \times
    \bar{\Sbeta}^{(\beta)}_m   ~\text{ where }~     \bar{\Sbeta}^{(\beta)}_k =  
        \begin{cases}
            [0, 1] &\mbox{if } \beta_k = \max_j \beta_j  \\
            \{0\} &\mbox{otherwise}.
        \end{cases}
\end{align*}

\begin{definition}[Hardy--Krause variation]\label{def:hardy-krause}
For a real-valued function $g$ on $[0, 1]^m$, we define the Hardy--Krause variation of $g$ by
\begin{equation}\label{hkdef}
    \Vhk(g) = \sum_{\alpha \in \{0, 1\}^m \setminus \{\zerovec\}}
    V^{(|\alpha|)}(g|_{\bar{\Sbeta}^{(\alpha)}}). 
\end{equation}
Here for each $\alpha \in \{0, 1\}^m \setminus \{\zerovec\}$,
$g|_{\bar{\Sbeta}^{(\alpha)}}$ is the restriction of $g$ to  
$\bar{\Sbeta}^{(\alpha)}$, and
$V^{(|\alpha|)}(g|_{\bar{\Sbeta}^{(\alpha)}})$  is the Vitali
variation of $g|_{\bar{\Sbeta}^{(\alpha)}}$  when treated as a
function on $[0, 1]^{|\alpha|}$.   
\end{definition}
Observe that if $m = 1$, there is only one term in \eqref{hkdef}, so that 
$\Vhk(g) = V^{(1)}(g) = V(g)$.  

The sets $\bar{\Sbeta}^{(\alpha)}$, $\alpha \in \{0, 1\}^m
\setminus \{\zerovec\}$ in Definition \ref{def:hardy-krause} are
the faces of $[0, 1]^m$ that are adjacent to the point
$\zerovec$. Thus, this Hardy--Krause variation is referred to
as Hardy--Krause variation anchored at $\zerovec$.  
It is also possible to define Hardy--Krause variation using different anchors (see, e.g., \cite{aistleitner2015functions}), but we only need Hardy--Krause variation anchored at $\zerovec$ for our purpose.
To highlight the special role of $\zerovec$ in our Hardy--Krause
variation, we abbreviate it to HK0 variation and use the notation $\Vhk(\cdot)$.

There is a well-known close connection between functions
with finite HK0 variation and the cumulative distribution
functions of finite signed measures. The following
result is a version of this connection, and we will use it for proving Proposition \ref{prop:equivalent-formation}. This result follows essentially from \cite{aistleitner2015functions}, Theorem 3 with minor modification. We provide our
argument for the modification in Appendix
\ref{pf:aistleitner-variant}. 

\begin{proposition}\label{prop:aistleitner-variant}
Suppose a real-valued function $g$ on $[0, 1]^m$ has finite
HK0 variation and is coordinate-wise right-continuous on
$[0, 1]^m$. Assume also that $g$ is coordinate-wise left-continuous at
each point $x \in [0, 1]^{m} \setminus [0, 1)^{m}$ with respect to all
the $j^{\text{th}}$ coordinates where $x_j = 1$.   
Then, there exists a unique finite signed measure $\nu$ on $[0, 1)^m$
such that
\begin{equation}\label{eq:aistleitner-variant}
    g(x) = \nu([\zerovec, x] \cap [0, 1)^m) \qt{for every $x \in [0,
      1]^m$}. 
  \end{equation}
Also, if a real-valued function $g$ defined on $[0, 1]^m$ is of the
form \eqref{eq:aistleitner-variant} for some finite signed measure
$\nu$ on $[0, 1)^m$, then $g$ has finite HK0
variation and
\begin{equation*}
    |\nu|([0, 1)^m \setminus \{\zerovec\}) = \Vhk(g).
\end{equation*}
\end{proposition}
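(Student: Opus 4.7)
The plan is to reduce the proposition to the standard correspondence of \citet[Theorem 3]{aistleitner2015functions}, which identifies each finite signed Borel measure $\tilde{\nu}$ on $[0, 1]^m$ with the coordinate-wise right-continuous, finite-$\mathrm{HK0}$-variation function $\tilde{g}(x) = \tilde{\nu}([\mathbf{0}, x])$, and vice versa. The only modification here is to pass from $[0, 1]^m$ to $[0, 1)^m$ as the domain of the measure, and the extra coordinate-wise left-continuity assumption on $g$ at points of $[0, 1]^m \setminus [0, 1)^m$ is exactly what is needed to force the resulting measure to charge no mass on the upper faces $F_j := \{x \in [0, 1]^m : x_j = 1\}$ for $j = 1, \ldots, m$.

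For existence, I would first apply Aistleitner--Dick directly to $g$ to obtain a finite signed measure $\tilde{\nu}$ on $[0, 1]^m$ with $g(x) = \tilde{\nu}([\mathbf{0}, x])$ for all $x$. To show $\tilde{\nu}(F_j) = 0$ for each $j$, fix $j$ and any $x \in F_j$, and observe that for small $\epsilon > 0$, $g(x) - g(x_1, \ldots, x_{j-1}, 1 - \epsilon, x_{j+1}, \ldots, x_m) = \tilde{\nu}\big([\mathbf{0}, x] \cap \{y : y_j \in (1 - \epsilon, 1]\}\big)$. The left side tends to $0$ as $\epsilon \to 0^+$ by the coordinate-wise left-continuity hypothesis at $x$, so $\tilde{\nu}([\mathbf{0}, x] \cap F_j) = 0$. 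Since the sets $\{[\mathbf{0}, x] \cap F_j : x \in F_j\}$ form a $\pi$-system generating the Borel $\sigma$-algebra on $F_j$, a $\pi$--$\lambda$ argument yields $\tilde{\nu}(F_j) = 0$. Hence $\tilde{\nu}$ is supported on $[0, 1)^m$, and setting $\nu := \tilde{\nu}|_{[0, 1)^m}$ gives $g(x) = \nu([\mathbf{0}, x] \cap [0, 1)^m)$ everywhere. Uniqueness follows because $\{[\mathbf{0}, x] \cap [0, 1)^m : x \in [0, 1]^m\}$ is itself a $\pi$-system generating the Borel $\sigma$-algebra on $[0, 1)^m$, so any two finite signed measures agreeing on this family must coincide.

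For the converse, given $\nu$ on $[0, 1)^m$, I would extend it to $[0, 1]^m$ by $\tilde{\nu}(A) := \nu(A \cap [0, 1)^m)$; then $g(x) = \tilde{\nu}([\mathbf{0}, x])$ has finite $\mathrm{HK0}$ variation by the standard direction of Aistleitner--Dick. For the equality $|\nu|([0, 1)^m \setminus \{\mathbf{0}\}) = \Vhk(g)$, I would decompose $[0, 1)^m \setminus \{\mathbf{0}\}$ into the disjoint strata $\Omega_\alpha := \{y \in [0, 1)^m : y_k = 0 \text{ for } k \notin S(\alpha) \text{ and } y_k \in (0, 1) \text{ for } k \in S(\alpha)\}$ indexed by $\alpha \in \{0, 1\}^m \setminus \{\mathbf{0}\}$, and identify each summand $V^{(|\alpha|)}(g|_{\bar{\Sbeta}^{(\alpha)}})$ in \eqref{hkdef} with $|\nu|(\Omega_\alpha)$, again via the univariate-style CDF-to-measure correspondence applied on each face.

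The main obstacle is the bookkeeping at the boundary of the hypercube: the hypothesis asserts left-continuity only at boundary points and only in those coordinates where the value equals $1$, and one must verify that this is precisely enough to kill the mass on each $F_j$ via the argument above. In the converse direction, aligning the $\alpha$-indexed Vitali summands of $\Vhk$ with the strata $\Omega_\alpha$ partitioning $[0, 1)^m \setminus \{\mathbf{0}\}$ also requires careful tracking of which boundary sub-faces of $\bar{\Sbeta}^{(\alpha)}$ lie in $[0, 1)^m$ and which do not. These are the minor modifications alluded to in the statement.
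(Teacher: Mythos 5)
Your proposal is correct, but it routes around the paper's argument in both halves. For existence, the paper also starts from \citet[Theorem 3]{aistleitner2015functions} to get $\bar{\nu}$ on $[0,1]^m$, but then it defines $\mu(E) := \bar{\nu}(E \cap [0,1)^m)$, verifies via the left-continuity hypothesis that $g(x) = \mu([\mathbf{0},x])$ for all $x$ (the boundary case being handled by taking iterated limits $y_j \uparrow 1$ and using continuity of the measure), and invokes the \emph{uniqueness} clause of Theorem 3 to conclude $\bar{\nu} = \mu$, i.e., $\bar{\nu}$ charges nothing outside $[0,1)^m$. You instead kill the mass on each upper face $F_j$ directly, via continuity from above plus a $\pi$--$\lambda$ argument; that works, with the caveat that what you actually need (and what your Dynkin step delivers, though your phrasing understates it) is $\tilde{\nu}(E)=0$ for \emph{every} Borel $E \subseteq F_j$, not merely $\tilde{\nu}(F_j)=0$ --- for a signed measure the latter alone would not preclude mass on $F_j$. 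For the converse, the paper's argument is a two-line deduction: extend $\nu$ trivially to $\bar{\nu}$ on $[0,1]^m$, apply the identity $|\bar{\nu}|([0,1]^m) = \Vhk(g) + |g(\mathbf{0})|$ from Theorem 3, and subtract $|g(\mathbf{0})| = |\nu|(\{\mathbf{0}\})$. Your stratification of $[0,1)^m \setminus \{\mathbf{0}\}$ into the sets $\Omega_\alpha$ and the face-by-face identification $V^{(|\alpha|)}(g|_{\bar{\Sbeta}^{(\alpha)}}) = |\nu|(\Omega_\alpha)$ is also valid, but it amounts to re-deriving the variation identity of Theorem 3 on the half-open cube rather than using it; the step equating a supremum over axis-aligned splits with a total variation is exactly the nontrivial content you would be reproving. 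The paper's route is shorter because it leans on both the uniqueness and the variation clauses of the cited theorem; yours is more self-contained at the boundary but costs more bookkeeping.
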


Lastly, let us note that when $g$ is sufficiently smooth, its Hardy--Krause
variation $\Vhk(g)$ can be written as the sum of the $L^1$ norms of the
mixed partial derivatives of $g$ of order 1. 
The precise statement is given in the following lemma (proved in
Appendix \ref{pf:hardy-krause}).

\begin{lemma}
\label{lem:hardy-krause}
    Suppose a real-valued function $g$ defined on $[0, 1]^m$ is smooth in the sense that $g^{(\alpha)}$ exists and is continuous on $[0, 1]^m$ for every $\alpha \in \{0, 1\}^m$. 
    Then, $g$ has finite HK0 variation and 
    \begin{equation}\label{hksmooth}
        \Vhk(g) = \sum_{\alpha \in \{0, 1\}^m \setminus \{\zerovec\}} \int_{\bar{\Sbeta}^{(\alpha)}} |g^{(\alpha)}|.
    \end{equation}
\end{lemma}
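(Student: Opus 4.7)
The plan is to reduce the claim, face by face, to the single-cube identity
\[
    V^{(k)}(h) = \int_{[0,1]^k} |h^{(\mathbf{1}_k)}|
\]
for a $C^k$-ish function $h : [0,1]^k \to \mathbb{R}$ whose mixed partial derivatives $h^{(\gamma)}$, $\gamma \in \{0,1\}^k$, all exist, are order-independent, and are continuous on $[0,1]^k$. Here $\mathbf{1}_k = (1,\dots,1)$. Once this is established, I apply it to each face: for $\alpha \in \{0,1\}^m \setminus \{\mathbf{0}\}$, the restriction $g|_{\bar{\Sbeta}^{(\alpha)}}$, viewed as a function of the $|\alpha|$ variables indexed by $S(\alpha)$, satisfies the smoothness hypothesis (its mixed partial with respect to all the free coordinates coincides, on $\bar{\Sbeta}^{(\alpha)}$, with $g^{(\alpha)}$). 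Summing the single-cube identity over $\alpha$ then yields \eqref{hksmooth} and in particular the finiteness of $\Vhk(g)$.

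For the single-cube identity, the first step is the iterated fundamental theorem of calculus: for any axis-aligned rectangle $[u,v] \subseteq [0,1]^k$,
\[
    \Delta(h;[u,v]) = \int_{u_1}^{v_1} \cdots \int_{u_k}^{v_k} h^{(\mathbf{1}_k)}(t_1,\dots,t_k)\, dt_k\cdots dt_1,
\]
proved by induction on $k$ (the base case $k=1$ is FTC, and the inductive step differences the inner $(k-1)$-dimensional quasi-volume and integrates). Because $h^{(\mathbf{1}_k)}$ is continuous, this integral representation is valid for every cell $A$ of any axis-aligned split $\mathcal{P}$ of $[0,1]^k$, and the triangle inequality gives
\[
    \sum_{A \in \mathcal{P}} |\Delta(h;A)| \le \sum_{A \in \mathcal{P}} \int_A |h^{(\mathbf{1}_k)}| = \int_{[0,1]^k} |h^{(\mathbf{1}_k)}|.
\]
Taking the supremum over $\mathcal{P}$ yields $V^{(k)}(h) \le \int |h^{(\mathbf{1}_k)}|$.

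For the matching lower bound, I exploit that $h^{(\mathbf{1}_k)}$ is uniformly continuous on the compact cube. Given $\epsilon > 0$, pick $\delta > 0$ so that $|h^{(\mathbf{1}_k)}(x) - h^{(\mathbf{1}_k)}(y)| < \epsilon$ whenever $\|x-y\|_\infty < \delta$, and choose an axis-aligned split $\mathcal{P}$ whose cells all have diameter less than $\delta$. On each cell $A$, picking any $x_A \in A$,
\[
    \bigl|\Delta(h;A)\bigr| = \Bigl|\int_A h^{(\mathbf{1}_k)}\Bigr| = |h^{(\mathbf{1}_k)}(x_A)|\,|A| + R_A,\quad \int_A |h^{(\mathbf{1}_k)}| = |h^{(\mathbf{1}_k)}(x_A)|\,|A| + R'_A,
\]
with $|R_A|,|R'_A| \le \epsilon |A|$. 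Summing and subtracting gives
\[
    \Bigl|\sum_{A \in \mathcal{P}} |\Delta(h;A)| - \int_{[0,1]^k}|h^{(\mathbf{1}_k)}|\Bigr| \le 2\epsilon,
\]
so $V^{(k)}(h) \ge \int |h^{(\mathbf{1}_k)}| - 2\epsilon$, and letting $\epsilon \downarrow 0$ closes the inequality.

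None of the steps is really a serious obstacle; the only point that warrants care is the iterated-FTC formula for $\Delta(h;[u,v])$, where the sign conventions of the quasi-volume have to match the signs produced by iterating $\int_{u_j}^{v_j}$ --- this is where the hypothesis that $h^{(\gamma)}$ is independent of the order of differentiation and continuous is actually used, since it lets one freely rearrange the iterated integrals and apply FTC coordinate by coordinate.
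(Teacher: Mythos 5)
Your proof is correct, and it takes a genuinely different route from the paper's. The paper does not compute any Vitali variation directly: it packages the derivatives $g^{(\alpha)}$ into a single signed measure $\mu$ on $[0,1]^m$ (with density $g^{(\alpha)}(\tilde t^{(\alpha)})$ on the face adjacent to $\mathbf{0}$ indexed by $\alpha$, plus an atom $g(\mathbf{0})$ at the origin), verifies via the Taylor-type expansion of Lemma \ref{lem:smooth-functions-helper} that $g(x)=\mu([\mathbf{0},x])$, and then invokes the Aistleitner--Dick representation theorem (Theorem \ref{thm:aistleitner}) to read off $\Vhk(g)=|\mu|([0,1]^m)-|g(\mathbf{0})|$. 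You instead work straight from Definition \ref{def:hardy-krause}: you establish the single-cube identity $V^{(k)}(h)=\int_{[0,1]^k}|h^{(\mathbf{1}_k)}|$ by iterated FTC (for the upper bound) and a uniform-continuity/Riemann-sum argument on fine splits (for the lower bound), then apply it to each restriction $g|_{\bar{\Sbeta}^{(\alpha)}}$ and sum over $\alpha$. The two arguments share the same core identity $\Delta(h;[u,v])=\int_{[u,v]}h^{(\mathbf{1}_k)}$ --- the paper's Lemma \ref{lem:smooth-functions-helper} is essentially the global version of your iterated FTC --- but yours is self-contained and elementary, making explicit the classical fact that the Vitali variation of a smooth function equals the $L^1$ norm of its top mixed derivative, whereas the paper's is shorter within its own ecosystem because Theorem \ref{thm:aistleitner} is already set up and reused in Proposition \ref{prop:aistleitner-variant} and Proposition \ref{prop:equivalent-formation}. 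The one step you flag as needing care (sign conventions in the iterated FTC, and the use of order-independence and continuity of the mixed partials) is indeed where the hypotheses enter, and your treatment of it is sound.
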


\section{Results of Section \ref{smoothness} for the case $d = s = 2$}\label{alt_char_ex}
According to the definition in Section
\ref{sec:intro}, $\infmars^{2, 2}$ consists of all the functions of the form
\begin{align*}
    &f(x_1, x_2) = a_{0, 0} + \int_{[0, 1)} (x_1 - t_1)_+ \, d\nu_{1,
    0}(t_1) + \int_{[0, 1)} (x_2 - t_2)_+ \, d\nu_{0, 1}(t_2) \\
    &\qquad \qquad \quad+ \int_{[0,
  1)^2} (x_1 - t_1)_+ (x_2 - t_2)_+ \, d\nu_{1, 1}(t_1, t_2)
\end{align*}
  for $a_{0, 0} \in \R$ and signed measures $\nu_{1,
    0}$, $\nu_{0, 1}$, and $\nu_{1, 1}$ (on $[0, 1), [0, 1)$, and $[0,
  1)^2$ respectively).
  On the other hand, according to the characterization in Proposition
  \ref{prop:equivalent-formation}, $\infmars^{2, 2}$ consists of all the
  functions of the form
  \begin{equation}\label{abscont2}
   f(x_1, x_2) = a_{0, 0} + \int_0^{x_1} g_{1, 0}(t_1) \, dt_1 +
   \int_0^{x_2} g_{0, 1}(t_2) \, dt_2 + \int_{[0, x_1] \times [0, x_2]} g_{1,1}(t_1, t_2) \, d(t_1, t_2)
  \end{equation}
  for $a_{0, 0} \in \R$ and real-valued functions $g_{1, 0}, g_{0, 1}$, and
  $g_{1, 1}$ (on $[0, 1], [0, 1]$, and $[0, 1]^2$ respectively) which
  have finite Hardy--Krause variation and satisfy the one-sided
  continuity conditions stated in Proposition
  \ref{prop:equivalent-formation}. 
  The functions $g_{1, 0}, g_{0, 1}$, and $g_{1, 1}$ can be considered as particular derivatives of $f$, and they are named as $D^{(1, 0)} f$, $D^{(0, 1)} f$, and $D^{(1, 1)} f$.
  Hence, we can write the estimator
  $\hat{f}_{n, V}^{2, 2}$ alternatively as 
\begin{align*}
  \hat{f}_{n, V}^{2, 2} \in \argmin_{f} \bigg\{\sum_{i=1}^n \big(y_i - f(x^{(i)}) \big)^2 : \Vhk (D^{(1, 1)}f) + V (D^{(1, 0)}f) + V (D^{(0, 1)}f) \leq V \bigg\}. 
\end{align*}
Also, according to Proposition \ref{galpha-chracterization}, the functions $g_{1, 0}, g_{0, 1}$, and $g_{1, 1}$ coincide with another type of derivatives $D_{1, 0} f$, $D_{0, 1} f$, and $D_{1, 1} f$ of $f$ almost everywhere (with respect to the Lebesgue measure).
For smooth functions $f$, the formula \eqref{vsmooth} for $d = s = 2$ becomes
    \begingroup
    \allowdisplaybreaks
    \begin{align*}
      \Vmars(f) &= \int_0^1 \int_0^1 \big|f^{(2, 2)}(x_1, x_2) \big|
      \, dx_1 \, dx_2 \\
      &\quad + \int_0^1 \big|f^{(2, 1)}(x_1, 0) \big| \, dx_1 + 
      \int_0^1 \big|f^{(2, 0)}(x_1, 0) \big| \, dx_1 \\
      &\quad + \int_0^1 \big|f^{(1, 2)}(0, x_2) \big| \, dx_2 +
      \int_0^1 \big|f^{(0, 2)}(0, x_2) \big| \, dx_2. 
    \end{align*}
    \endgroup
This can be viewed as a second-order version of the Hardy--Krause
variation: 
\begin{align*}
  \Vhk(f) &= \int_0^1 \int_0^1 \big|f^{(1, 1)}(x_1, x_2) \big|
            \, dx_1 \, dx_2 \\ 
    &\quad + \int_0^1 \big|f^{(1, 0)}(x_1, 0) \big| \, dx_1 +
                  \int_0^1 \big|f^{(0, 1)}(0, x_2) \big| \, dx_2. 
\end{align*}

\section{Alternative Optimization Problem for Equally Spaced Lattice Designs}\label{alt_fin_opt}
Here we describe an alternative finite-dimensional
optimization problem for solving \eqref{our-problem-restated} when $s = d$ and the design points $x^{(1)}, \dots, x^{(n)}$ form an equally
spaced lattice in $[0, 1]^d$. 
The finite-dimensional optimization problem in this case can be considered as a discrete analogue of our problem \eqref{our-problem-restated} for the case $s = d$. 
We will compare this discrete analogue with another multivariate trend filtering method proposed in \cite{ortelli2022tensor}.

To motivate the general result, let us first
consider the univariate case $d = s = 1$. Let us write the data as
$(x^{(i)}, y_i), i = 0, 1, \dots, n-1$ with $x^{(i)} = i/n$ (note that
the index $i$ runs from $0$ to $n-1$ as opposed to $1$ to $n$). In
this case, it is well-known that a solution $\hat{f}^{1, 1}_{n, V}$ to
\eqref{our-problem-restated} can be obtained by solving the 
finite-dimensional optimization problem
\begin{equation} \label{alternative-form-lattice-d=1}
   \hat{\theta} = \underset{\theta \in \R^n}{\argmin} \bigg\{\sum_{i = 0}^{n
        - 1} (y_{i} - \theta_i)^2: n \cdot \sum_{i = 2}^{n - 1}
      \big|(D^{(2)} \theta)_i \big| \le V \bigg\} 
\end{equation}
where 
\begingroup
\allowdisplaybreaks
\begin{equation*}
    (D^{(1)} \theta)_i := \theta_i - \theta_{i - 1}, \ i \in [n-1] ~\text{ and }~ (D^{(2)} \theta)_i := (D^{(1)} \theta)_{i} -
    (D^{(1)} \theta)_{i - 1}, \ i \in [2: (n - 1)]
\end{equation*}
\endgroup
and taking
\begin{equation*}
    \hat{f}_{n, V}^{1, 1}(x) := \hat{\theta}_0 + n \big(\hat{\theta}_1 -
      \hat{\theta}_0 \big) x + \sum_{l=1}^{n-2}
    \big(\hat{\theta}_{l+1} -  2 \hat{\theta}_l + \hat{\theta}_{l-1}
    \big) (n x - l)_+. 
\end{equation*}
The connection between \eqref{our-problem-restated} and
\eqref{alternative-form-lattice-d=1} can be intuitively seen 
by noting that 
\begin{equation*}
  \Vmars(f) = \int_0^1 |f''(x)| \, dx \approx n \sum_{i=2}^{n-1} |(D^{(2)} \theta)_i| 
\end{equation*}
for $\theta_i = f(i/n), i \in [0: (n-1)]$ provided that $n$ is sufficiently
large and $f$ is sufficiently smooth.  The second-order difference vector $D^{(2)}
\theta$ (multiplied by $n^2$) therefore plays the role of the second
derivative in the continuous case.

We now turn to the case of general $d = s$ where we assume
that the design points $x^{(1)}, \dots, x^{(n)}$ are given by the
equally spaced lattice
\begin{equation*}
\{x^{(1)}, \dots, x^{(n)} \} = \prod_{k=1}^{d}
\Big\{\frac{i_k}{n_k}: i_k \in [0: (n_k - 1)] \Big\}, 
\end{equation*}
where $n_k \ge 2$ for all $k \in [d]$. It makes sense then to index
the data with the set
\begin{equation}\label{I0}
  I_0 := [0 : (n_1 - 1)] \times \dots \times [0 : (n_d - 1)]. 
\end{equation}
In this setting, our next result (Proposition
\ref{prop:reduction-to-discrete-analogue}) describes a finite-dimensional 
optimization problem for solving \eqref{our-problem-restated} that is
analogous to \eqref{alternative-form-lattice-d=1}. Recall from
\eqref{vsmooth} that the complexity $\Vmars(\cdot)$ for smooth functions now involves the $L^1$ norms of
the mixed partial derivatives of order 2. The finite-dimensional optimization problem in Proposition
\ref{prop:reduction-to-discrete-analogue} can be viewed as a discrete analogue of \eqref{our-problem-restated} for the case $s = d$, in which these mixed partial derivatives are
replaced by appropriate discrete differences. Let us first define
these discrete difference operators.

We first define a basic difference operator $D_d$ for each positive
integer $d$ as follows.  We use the notation $[p: q] =
\{p, p + 1, \dots, q\}$ for two integers $p \le q$. 
\begin{definition} \label{defnDd}
    Let $n_1, \dots, n_d$ be positive integers and $a_1, \dots, a_d$
    be nonnegative integers. For an $n_1 \cdots
    n_d$-dimensional vector $\theta$ that is indexed by 
    $i \in [a_1: (a_1 + n_1 - 1)] \times \dots \times [a_d: (a_d + n_d - 1)]$,
    we define $D_d\theta$ to be the $(n_1 - 1) \cdots (n_d -
    1)$-dimensional vector that is indexed by $i \in [(a_1 + 1): (a_1 + n_1 -
    1)] \times \dots \times [(a_d + 1): (a_d + n_d - 1)]$ and has entries   
    \begin{equation*}
        (D_d\theta)_i = \sum_{\delta \in \{0, 1\}^d} (-1)^{\delta_1 +
          \dots + \delta_d} \theta_{i - \delta} 
    \end{equation*}
    for $i \in [(a_1 + 1): (a_1 + n_1 - 1)] \times \dots \times [(a_d
    + 1): (a_d + n_d - 1)]$.  
\end{definition}

\begin{example}[$d = 1$ and $d = 2$] 
For $\theta \in \R^{n}$ indexed by $i \in [a: (a + n - 1)]$, $D_1 \theta$ is defined as the $(n - 1)$-dimensional vector indexed by $i \in [(a + 1): (a + n - 1)]$ for which
\begin{equation*}
    (D_1 \theta)_i = \theta_i - \theta_{i-1} 
\end{equation*}
for $i \in [(a + 1): (a + n - 1)]$.

For $\theta \in \R^{n_1 n_2}$ indexed by $(i_1, i_2) \in [a_1: (a_1 + n_1 - 1)] \times [a_2: (a_2 + n_2 - 1)]$, $D_2 \theta$ is defined as the $(n_1 - 1)(n_2 - 1)$-dimensional vector indexed by $(i_1, i_2) \in [(a_1 + 1): (a_1 + n_1 - 1)] \times [(a_2 + 1): (a_2 + n_2 - 1)]$ for which
\begin{equation*}
    (D_2 \theta)_{i_1, i_2} = \theta_{i_1, i_2} - \theta_{i_1 - 1, i_2} - \theta_{i_1, i_2 - 1} + \theta_{i_1 - 1, i_2 - 1}
\end{equation*}
for $(i_1, i_2) \in [(a_1 + 1): (a_1 + n_1 - 1)] \times [(a_2 + 1):
(a_2 + n_2 - 1)]$. 
\end{example}

Based on the basic difference operators, we define a general
difference operator $D^{(\alpha)}$ for each vector
$\alpha$ with nonnegative integer components.   
As in \eqref{partial-derivative}, we call $D^{(\alpha)}$ a (discrete) difference operator of order $\max_k \alpha_k$.  

\begin{definition}
    Let $n_1, \dots, n_d$ be positive integers and suppose that
    $\theta \in \R^{n_1 \cdots n_d}$ is indexed by
    $i \in [a_1: (a_1 + n_1 - 1)] \times \dots \times [a_d: (a_d + n_d -
    1)]$.   
    Then, we define $D^{(\alpha)} \theta$ in the following recursive way. 
    \begin{itemize}
        \item If $\alpha = \zerovec$, then we define $D^{(\zerovec)} \theta = \theta$.
        \item If $\alpha_k > 0$ for all $k$, then we recursively define $D^{(\alpha)} \theta  = D^{(\alpha - \onevec)} (D_d \theta)$.
        \item Otherwise, let $\theta'$ be the lower dimensional slice of $\theta$ consisting of $\theta_i$ where $i_k = a_k$ for all $k \in S_0(\alpha) := \{j: \alpha_j = 0\}$.
        Index $\theta'$ with the set $\prod_{k \not\in S_0(\alpha)} [a_k: (a_k + n_k - 1)]$. 
        Next, let $\alpha'$ be the vector with nonnegative integer components obtained by removing the zero components of $\alpha$. 
        For example, if $\alpha = (2, 0, 1, 0)$, then $\alpha'$ will be $(2, 1)$.
        Then, we can consider $D^{(\alpha')} \theta'$, which can be defined recursively. 
        Let $I'$ be the index set of $D^{(\alpha')} \theta'$ and construct $I$ from $I'$ by adding the singleton $\{a_k\}$ to the component where $[a_k: (a_k + n_k - 1)]$ was previously dropped, for every $k \in S_0(\alpha)$.
        We then define $D^{(\alpha)} \theta$ as follows: 
        \begin{itemize}
            \item $D^{(\alpha)} \theta$ is indexed by $i \in I$.
            \item For each $i \in I$, 
            \begin{equation*}
                (D^{(\alpha)} \theta)_i = (D^{(\alpha')} \theta')_{i'}, 
            \end{equation*}
            where $i'$ is obtained from $i$ by dropping out the $k^{th}$ component for every $k \in S_0(\alpha)$. 
            For instance, if $i = (3, 0, 1, 0)$, $i'$ will be $(3, 1)$.
        \end{itemize}
    \end{itemize}
\end{definition}

\begin{example} 
Suppose $\alpha = (2, 1)$ and $\theta \in \R^{n_1 n_2}$ is indexed by $(i_1, i_2) \in [0: (n_1 - 1)] \times [0: (n_2 - 1)]$. 
Since $\alpha_k > 0$ for all $k$, $D^{(\alpha)} \theta = D^{(\alpha - \onevec)} (D_2 \theta) = D^{(1, 0)} (D_2 \theta)$.
Let $\eta = D_2 \theta$ and note (from Definition \ref{defnDd}) that
$\eta$ is indexed by $(i_1, i_2) \in [1: (n_1 - 1)] \times [1: (n_2 - 1)]$. 
Next, let $\eta'$ be the lower dimensional slice of $\eta$, i.e., 
$\eta'$ is indexed by $i_1 \in [1: (n_1 - 1)]$ and $\eta'_{i_1} = \eta_{i_1, 1}$ for $i_1 \in [1: (n_1 - 1)]$. 
Then, since $D^{(1)} \eta' = D^{(0)}(D_1 \eta') = D_1 \eta'$ is indexed by $i_1 \in [2: (n_1 - 1)]$, 
$D^{(1, 0)} \eta$ is indexed by $(i_1, i_2) \in [2: (n_1 - 1)] \times \{1\}$ and 
$(D^{(1, 0)} \eta)_{i_1, 1} = (D^{(1)} \eta')_{i_1} = \eta'_{i_1} - \eta'_{i_1 - 1} = \eta_{i_1, 1} - \eta_{i_1 - 1, 1}$ for $i_1 \in [2: (n_1 - 1)]$. 
Consequently, $D^{(\alpha)} \theta$ is indexed by $(i_1, i_2) \in [2: (n_1 - 1)] \times \{1\}$ and 
\begin{align*}
    (D^{(\alpha)} \theta)_{i_1, 1} &= (D^{(1, 0)} \eta)_{i_1, 1} = \eta_{i_1, 1} - \eta_{i_1 - 1, 1} = (D_2 \theta)_{i_1, 1} - (D_2 \theta)_{i_1 - 1, 1} \\
    &= (\theta_{i_1, 1} - \theta_{i_1 - 1, 1} - \theta_{i_1, 0} + \theta_{i_1 - 1, 0}) - (\theta_{i_1 - 1, 1} - \theta_{i_1 - 2, 1} - \theta_{i_1 - 1, 0} + \theta_{i_1 - 2, 0}) \\
    &= \theta_{i_1, 1} - \theta_{i_1, 0} - 2\theta_{i_1 - 1, 1} + 2\theta_{i_1 - 1, 0} + \theta_{i_1 - 2, 1} - \theta_{i_1 - 2, 0}
\end{align*}
for $i_1 \in [2: (n_1 - 1)]$.
\end{example}

Let $\theta$ be an $n$-dimensional vector indexed with the set $I_0$ (given by
\eqref{I0}) where $n = n_1 \cdots n_d$.  From the definition of the
general difference operators, we can observe that for a $d$-dimensional vector $\alpha = (\alpha_1, 
\dots, \alpha_d)$ with nonnegative integer components, $D^{(\alpha)} \theta$ is indexed with the set 
$I_0^{(\alpha)} := I_1^{(\alpha)} \times \dots \times
I_d^{(\alpha)}$ where 
\begin{align*}
    I_k^{(\alpha)} = 
    \begin{cases}
        \{\alpha_k\} &\mbox{if } \alpha_k \neq \max_j \alpha_j \\
        [\max_j \alpha_j: (n_k - 1)] &\mbox{if } \alpha_k = \max_j \alpha_j 
    \end{cases}
\end{align*}
for $k \in [d]$.
Thus,  
\begin{equation*}
    I_0^{(\alpha)} \cap I_0^{(\beta)} \neq \emptyset \quad \mbox{if and only if} \quad \alpha = \beta
\end{equation*}
for all $\alpha, \beta$ with $\max_j \alpha_j = \max_j \beta_j$,
and  
\begin{equation*}
    \biguplus_{\substack{\alpha \in [0: r]^d \\ \max_j \alpha_j = r}} I_0^{(\alpha)} = I_0 \setminus [0: (r - 1)]^d.
\end{equation*}
Here $\biguplus$ indicates disjoint union.
Also, we can check that for each $\alpha \in \{0, 1\}^d \setminus
\{\zerovec\}$, 
\begin{equation*}\label{decomposition-of-I-alpha}
    I_0^{(\alpha)} = \{\alpha\} \uplus \biguplus_{\beta \in J_{\alpha}} I_0^{(\beta)},
\end{equation*}
where $J_{\alpha}$ is as defined in
\eqref{definition-of-J-alpha}.

We are now ready to state Proposition
\ref{prop:reduction-to-discrete-analogue}. 
We prove this result in Appendix
\ref{pf:reduction-to-discrete-analogue}. 

\begin{proposition}\label{prop:reduction-to-discrete-analogue}
Let $\hat{\theta}^{d}_{n, V} \in \R^{|I_0|}$ be the unique solution to the problem 
\begin{equation}\label{discrete-analogue}
    \hat{\theta}^{d}_{n, V} = \underset{\theta \in \R^{|I_0|}}{\argmin} \big\{\|y - \theta\|_2^2: \theta \in C(V)\big\},
\end{equation}
where 
\begin{equation*}
    C(V) := \{\theta \in \R^{|I_0|}: V_2(\theta) \le V\} 
\end{equation*}
and 
\begin{equation}\label{v2theta}
    V_2(\theta) := \sumoverbeta \bigg[\bigg(\prod_{k = 1}^{d}
        n_k^{\beta_k - \ind\{\beta_k = 2\}}\bigg) \cdot \sum_{i
        \in I_0^{(\beta)}} \big|(D^{(\beta)}
        \theta)_i\big|\bigg]. 
  \end{equation}
Then, a solution to the original optimization problem
\eqref{our-problem-restated} for the case $s = d$ is the function $f$
defined on $[0, 1]^d$ by
\begin{equation}\label{discrete-analogue-to-original}
    f(x_1, \dots, x_d) = \big(\hat{\theta}^{d}_{n, V}\big)_{\zerovec} + \sum_{(\alpha, l) \in \tilde{J}} \big(H^{(2)} \hat{\theta}^{d}_{n, V}\big)_{\tilde{l} + \alpha} \cdot \prod_{k \in S(\alpha)} (n_k x_k - l_k)_{+},
\end{equation}
where $H^{(2)} \theta$ (for $\theta \in \R^{|I_0|}$) is the $n$-dimensional vector that is
indexed by $i \in I_0$ and has components
\begin{align*}
    (H^{(2)} \theta)_i = 
    \begin{cases}
        (D^{(\beta)} \theta )_i  &\mbox{if } i \in I_0^{(\beta)} \mbox{ for some } \beta \in \{0,1, 2\} \mbox{ with } \max_j \beta_j = 2 \\
        (D^{(i)} \theta )_i &\mbox{if } i \in \{0, 1\}^d,
    \end{cases}
\end{align*}
$\tilde{J}$ is the set
\begin{equation*}
    \tilde{J} = \bigg\{(\alpha, l): \alpha \in \{0, 1\}^d \setminus \{\zerovec\} \mbox{ and } l \in \prod_{k \in S(\alpha)}[0: (n_k - 2)]\bigg\},
\end{equation*}
and $\tilde{l}$ is the $d$-dimensional vector with 
\begin{equation*}
    \tilde{l}_k = 
    \begin{cases}
        l_k  &\mbox{if } k \in S(\alpha) \\
        0 &\mbox{otherwise}. 
    \end{cases}
  \end{equation*}
  Furthermore, for every solution $\hat{f}^{d, d}_{n, V}$ to
  \eqref{our-problem-restated}, the values of $\hat{f}^{d, d}_{n, V}$ at
  the design points equal the components of $\hat{\theta}^{d}_{n, V}$, i.e., 
\begin{equation*}
    \hat{f}^{d, d}_{n, V}\Big(\Big(\frac{i_k}{n_k}, k \in
        [d]\Big)\Big) = \big(\hat{\theta}^{d}_{n, V}\big)_i
    \qt{for all $i \in I_0$}. 
\end{equation*}
\end{proposition}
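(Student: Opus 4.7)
The plan is to reduce \eqref{our-problem-restated} via Proposition \ref{prop:reduction-to-lasso} to a finite LASSO and then reparametrize the LASSO coefficients by the values of $f$ at the lattice points. In the equally spaced setting, $\mathcal{U}_k$ consists of the points $l/n_k$ for $l \in [0:(n_k-1)]$ together with $1$. The basis function $\prod_{k \in S(\alpha)}(x_k - l_k/n_k)_+$ vanishes at every design point whenever some $l_k = n_k - 1$, so in \eqref{finite-dimensional-lasso} we may set $\gamma_{\alpha, l} = 0$ for such $(\alpha, l)$ without altering the fit and while only weakening the $\ell_1$-constraint; this restricts the effective index set from $J$ to $\tilde{J}$. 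After rescaling $(n_k x_k - l_k)_+ = n_k(x_k - l_k/n_k)_+$, I would write candidate functions as $f(x) = a_{\mathbf{0}} + \sum_{(\alpha, l) \in \tilde{J}} c_{\alpha, l}\prod_{k \in S(\alpha)}(n_k x_k - l_k)_+$, so that $\gamma_{\alpha, l} = \bigl(\prod_{k \in S(\alpha)} n_k\bigr) c_{\alpha, l}$.

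The core step is to show that the linear evaluation map $(a_{\mathbf{0}}, c) \mapsto \theta$, with $\theta_i := f(i_1/n_1, \dots, i_d/n_d)$ for $i \in I_0$, is a bijection between $\mathbb{R} \times \mathbb{R}^{|\tilde{J}|}$ and $\mathbb{R}^{|I_0|}$; a dimension count using $\sum_{\alpha \in \{0, 1\}^d}\prod_{k \in S(\alpha)}(n_k - 1) = \prod_k n_k$ shows that the two spaces have the same dimension. I would then invert the map explicitly as
\begin{equation*}
a_{\mathbf{0}} = \theta_{\mathbf{0}}, \qquad c_{\alpha, l} = (H^{(2)} \theta)_{\tilde{l} + \alpha} \qt{for every $(\alpha, l) \in \tilde{J}$.}
\end{equation*}
Because the basis functions are tensor products of univariate ReLUs and discrete differences commute with this product structure, this reduces to the univariate Newton forward-difference facts that $D^{(2)}$ applied to the sequence $(i - l)_+$ is supported at $i = l + 1$ with value $1$, together with the first-order analog used for the $l = \mathbf{0}$ case that is captured by the clause $i \in \{0, 1\}^d$ in the definition of $H^{(2)}$. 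I would carry out this calculation by induction on $d$, using the slicing step in the recursive definition of $D^{(\beta)}$ to peel off the coordinates where $\beta_k = 0$.

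Next, translating the LASSO $\ell_1$-constraint into a constraint on $\theta$: since $|\gamma_{\alpha, l}| = \bigl(\prod_{k \in S(\alpha)} n_k\bigr)|(H^{(2)} \theta)_{\tilde{l} + \alpha}|$ and $\{\tilde{l} + \alpha : l \in \prod_{k \in S(\alpha)}[0:(n_k-2)]\}$ coincides with $I_0^{(\alpha)}$, excluding the index $\tilde{l} + \alpha = \alpha$ (which corresponds to $l = \mathbf{0}$ and lies in $\{0, 1\}^d$) and invoking the disjoint decomposition $I_0^{(\alpha)} = \{\alpha\} \uplus \biguplus_{\beta \in J_\alpha} I_0^{(\beta)}$ rewrites
\begin{equation*}
\sum_{(\alpha, l) \in \tilde{J},\, l \neq \mathbf{0}} |\gamma_{\alpha, l}| = \sumoverbeta \bigg(\prod_{k \in S(\alpha(\beta))} n_k \bigg) \sum_{i \in I_0^{(\beta)}} \bigl|(D^{(\beta)} \theta)_i\bigr|,
\end{equation*}
where $\alpha(\beta)$ is the unique $\alpha \in \{0, 1\}^d$ with $\beta \in J_\alpha$, namely $\alpha_k = \mathbf{1}\{\beta_k \geq 1\}$. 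A direct check gives $\prod_{k \in S(\alpha(\beta))} n_k = \prod_k n_k^{\beta_k - \mathbf{1}\{\beta_k = 2\}}$, matching \eqref{v2theta}, so the LASSO constraint is exactly $V_2(\theta) \leq V$.

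Combining the bijection with Proposition \ref{prop:reduction-to-lasso} shows that \eqref{finite-dimensional-lasso} and \eqref{discrete-analogue} are equivalent under $(a_{\mathbf{0}}, \gamma) \leftrightarrow \theta$. Strict convexity and coercivity of $\theta \mapsto \norm{y - \theta}_2^2$ on the nonempty closed convex set $C(V)$ give existence and uniqueness of $\hat{\theta}^d_{n, V}$; the formula \eqref{discrete-analogue-to-original} then follows from \eqref{eq:how-to-construct-solution} by substituting $\hat{\gamma}_{\alpha, l} = \bigl(\prod_{k \in S(\alpha)} n_k\bigr)(H^{(2)} \hat{\theta}^d_{n, V})_{\tilde{l} + \alpha}$. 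Finally, the last part of Proposition \ref{prop:reduction-to-lasso} ensures that every solution to \eqref{our-problem-restated} has the same vector of design-point values, which by the uniqueness just established must be $\hat{\theta}^d_{n, V}$. The main obstacle is the combinatorial verification of the inverse identity: the recursive definition of $D^{(\beta)}$ with different slicing behavior for coordinates of the three types $\beta_k \in \{0, 1, 2\}$ makes the bookkeeping delicate, and I expect the induction on $d$ (with slicing reducing the argument to a lower-dimensional instance) to be the technically most involved component.
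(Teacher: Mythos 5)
Your proposal is correct and follows essentially the same route as the paper: restrict the LASSO of Proposition \ref{prop:reduction-to-lasso} to the index set $\tilde{J}$ by discarding the basis functions that vanish at all design points, establish the bijection between $(a_{\mathbf{0}}, \gamma)$ and the vector $\theta$ of lattice values with inverse given by $H^{(2)}\theta$ (the paper verifies this via the recursive identity $\theta = A^{(0)}A^{(1)}(H^{(2)}\theta)$ rather than your induction on $d$, but these encode the same difference-operator computation), and translate the $\ell_1$ constraint into $V_2(\theta) \le V$ using the decomposition $I_0^{(\alpha)} = \{\alpha\} \uplus \biguplus_{\beta \in J_\alpha} I_0^{(\beta)}$. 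The remaining steps (uniqueness by projection onto a closed convex set, and transfer back via Proposition \ref{prop:reduction-to-lasso}) also match the paper's argument.
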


We can consider the finite-dimensional optimization problem \eqref{discrete-analogue} as a discrete analogue of the original problem \eqref{our-problem-restated} for the case $s = d$.  
Also, the formula \eqref{v2theta} for $V_2(\theta)$ can be seen as a
discrete analogue of the formula \eqref{vsmooth} for $\Vmars(f)$ for
smooth functions $f$. Indeed, for $\theta_{i} = f(i_1/n_1, \dots, i_d/n_d)$ for
$i \in I_0$, we have
\begin{equation*}
\bigg(\prod_{k = 1}^{d}
        n_k^{\beta_k - \ind\{\beta_k = 2\}}\bigg) \cdot \sum_{i
        \in I_0^{(\beta)}} \big|(D^{(\beta)}
        \theta)_i\big| \approx \int_{\bar{T}^{(\beta)}} |f^{(\beta)}| 
    \end{equation*}
when $f$ is smooth and $n_k, k \in [d]$ are large.

Denote by $\theta^*$ the $n$-dimensional vector with components $\theta^*_i =f^*(i_1/n_1, \dots, i_d/n_d)$, $i \in I_0$. 
Then, combining Theorem \ref{thm:risk-upper-bound-fixed} and Proposition \ref{prop:reduction-to-discrete-analogue}, we can show that the solution $\hat{\theta}^{d}_{n, V}$ to the problem \eqref{discrete-analogue} satisfies that  
\begin{equation}\label{discrete-risk-result}
    \E\Big[\frac{1}{n} \cdot \lVert \hat{\theta}^{d}_{n, V} - \theta^* \rVert_2\Big] = O \big(n^{-\frac{4}{5}} (\log n)^{\frac{3(2d-1)}{5}} \big)
\end{equation}
if $V_2(\theta^*) \le V$. 
Here the multiplicative constant underlying $O(\cdot)$ depends on $d, \sigma$, and $V$.

\begin{remark}\label{comp_ortelli}
The second-order Vitali trend filtering estimator of
\cite{ortelli2022tensor} is similar to our estimator
$\hat{\theta}^{d}_{n, V}$ defined in \eqref{discrete-analogue}. 
\cite{ortelli2022tensor} studied the estimator that can described in our notation as
\begin{equation}\label{ortelli-problem}
    \hat{\theta}_{\mathcal{N}_2^{\perp}} = \underset{\theta \in \R^{|I_0|}}{\argmin} \bigg\{\frac{1}{n} \big\lVert (y - \theta)_{\mathcal{N}_2^{\perp}} \big\rVert_2^2 + 2\lambda n \cdot \sum_{i \in I_0^{(2, 2, \dots, 2)}} \big|(D^{(2, 2, \dots, 2)} \theta)_i\big|\bigg\},
\end{equation}
where $\lambda > 0$ is a tuning parameter and $(y -
\theta)_{\mathcal{N}_2^{\perp}}$ denotes the orthogonal projection of
$(y - \theta)$ onto the orthogonal complement $\mathcal{N}_2^{\perp}$
of 
\begin{equation*}
    \mathcal{N}_2 := \big\{\theta \in \R^{|I_0|}: (D^{(2, 2,
        \dots, 2)} \theta)_i = 0 \mbox{ for all } i \in I_0^{(2, 2,
        \dots, 2)}\big\}. 
\end{equation*}
The optimization problem \eqref{ortelli-problem} is similar to
\eqref{discrete-analogue} in the sense that it also aims to find a
solution $\theta$ that is close to the observation $y$ and has small
second-order differences at the same time. However,
\eqref{ortelli-problem} measures the closeness to the observation via
$(y - \theta)_{\mathcal{N}_2^{\perp}}$, instead of $y - \theta$ as in
\eqref{discrete-analogue}.  
Also, \eqref{ortelli-problem} only penalizes $D^{(2, 2, \dots, 2)}
\theta$, while  \eqref{discrete-analogue} constrains $D^{(\beta)}
\theta$ for every $\beta \in \{0, 1, 2\}^d$ with $\max_j \beta_j =
2$. 

The estimator $\hat{\theta}_{\mathcal{N}_2^{\perp}}$ estimates
$\theta^*_{\mathcal{N}_2^{\perp}}$ but not the whole $\theta^*$. 
In contrast, our estimator $\hat{\theta}_{n, V}^d$ effectively estimates
$\theta^*$. 
\cite{ortelli2022tensor} provided a theoretical result on the accuracy of $\hat{\theta}_{\mathcal{N}_2^{\perp}}$ (as an estimator of
$\theta^*_{\mathcal{N}_2^{\perp}}$) that can be compared to our result \eqref{discrete-risk-result}.     
\cite{ortelli2022tensor}, Theorem 3.2 showed that with appropriately chosen $\lambda$, the estimator
$\hat{\theta}_{\mathcal{N}_2^{\perp}}$ (defined in
\eqref{ortelli-problem}) achieves the rate of convergence
\begin{equation}\label{ortellirate}
    n^{-\frac{H(d) + 3}{2H(d) + 3}} (\log n)^{\frac{H(d)}{2H(d) + 3}}
\end{equation}
for estimating $\theta^*_{\mathcal{N}_2^{\perp}}$.  
Here $H(d)$ is 
the harmonic number defined by $H(d) = 1 + 1/2 + \cdots + 1/d$.  
Note that in \eqref{ortellirate}, the exponent of $n$
in the rate depends on $d$, and it increases as $d$ does.
For example, it is $-4/5$ if $d = 1$, $-3/4$ if $d = 2$, and $-29/40$ if $d = 3$.
On the other hand, the exponent of $n$ in the rate in our result \eqref{discrete-risk-result} is $-4/5$ regardless of the value of $d$. 
\end{remark}

\section{Brief Descriptions of Datasets}\label{datasets}
\begin{itemize}
  \item \textit{Earnings Dataset}.   
  This dataset is from \textsf{ex1029} in the R library \textsf{Sleuth3} (see also \cite{bierens2001integrated}).
  It includes weekly earnings of 25,437 full-time male workers in 1987, along with their years of education and years of experience. 
  The original dataset also has a few more indicator variables, but we do not include them in our analysis.
  We predict the logarithm of weekly earnings from years of education and years of experience. \\
  \item \textit{Airfoil Self-Noise Dataset}.
  This dataset is available at \url{https://archive.ics.uci.edu/dataset/291/airfoil+self+noise} (see also \cite{brooks1989airfoil}).
  It contains the results of aerodynamic and acoustic tests on (NACA 0012) airfoils of different size, conducted in an anechoic wind tunnel. 
  The tests measured the scaled sound pressure level of the self-noise of the airfoils at various wind tunnel speeds and angles of attack. 
  This dataset has 1503 observations and six variables including the scaled sound pressure level. 
  Here the scaled sound pressure level is predicted from the other five variables describing test settings.
  \\
  \item \textit{Abalone Dataset}. 
  This dataset is from \textsf{abalone} in the R library \textsf{AppliedPredictiveModeling}, and it is also available at \url{https://archive.ics.uci.edu/dataset/1/abalone}.
  This dataset is composed of the physical measurements of 4177 abalones. 
  We aim to predict the number of rings each abalone has from seven physical measurements: length, diameter, height, whole weight, and the weight of meat, gut, and shell. 
  The number of rings is of interest because it can be used for estimating age.  
  Here we exclude from our analysis the indicator variable for the sex of each abalone. \\
  \item \textit{Concrete Dataset}. 
  This dataset is originally from \cite{yeh1998modeling} and  currently available at \url{https://archive.ics.uci.edu/dataset/165/concrete+compressive+strength}. 
  It contains the compressive strength of 1030 concrete samples together with their ages and the quantity of seven components, such as cement, fly ash, and water. 
  In our analysis, we have the compressive strength as a response variable and the other eight variables as explanatory variables. \\
  \item \textit{Ozone Dataset}. 
  This dataset is from \textsf{ozone} in the R library \textsf{faraway} (see also \cite{breiman1985estimating}). 
  It consists of 330 observations of atmospheric ozone concentration level and other nine meteorological variables, including temperature, wind speed, humidity, and visibility, measured in the Los Angeles Basin in 1976. 
  Here we predict atmospheric ozone concentration level from the other meteorological variables. \\
  \item \textit{Wine Dataset}. 
  This dataset was first introduced in \cite{cortez2009modeling}, and it is currently available at \url{https://archive.ics.uci.edu/dataset/186/wine+quality}. 
  It comprises the quality of wines (\textit{vinho verde}) from the Minho region of Portugal and other eleven attributes of wines exhibiting their physicochemical properties; for example, alcohol content, density, pH, and the quantity of citric acid. 
  The quality of wines was graded on a 10-point scale, with 0 indicating very bad and 10 indicating excellent. 
  This dataset has two parts: one for red wines and the other for white wines. 
  There are 1599 red wine data and 4898 white wine data.
  In our analysis, we fit models separately for red and white wines, predicting the quality of wines from the  physicochemical variables. \\
\end{itemize}

\section{Proofs}\label{proofs}
\subsection{Uniqueness of Representation
  \eqref{form-of-functions}}\label{pf:uniqueness-of-representation}
Here we prove the following lemma, which shows that
the constant $a_{\zerovec}$ and the signed measures $\nu_{\alpha}$ appearing in the
representation \eqref{form-of-functions} of $f \in \infmars^{d, s}$ are
uniquely determined by $f$.

\begin{lemma}\label{lem:uniqueness-of-representation}
    Suppose $a_{\zerovec}$ and $b_{\zerovec}$ are real numbers and $\{\nu_{\alpha}\}$ and $\{\mu_{\alpha}\}$ 
    are collections of finite signed measures as in \eqref{form-of-functions}.   
    If $f_{a_{\zerovec}, \{\nu_{\alpha}\}} \equiv f_{b_{\zerovec}, \{\mu_{\alpha}\}}$ on $[0, 1]^d$,
    then $a_{\zerovec} = b_{\zerovec}$ and
    $\nu_{\alpha} = \mu_{\alpha}$ for all
    $\alpha \in \{0, 1\}^d \setminus \{\zerovec\}$ with $|\alpha| \le s$.  
\end{lemma}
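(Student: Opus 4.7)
The plan is to use a linearity/subtraction argument to reduce the lemma to showing that if $f_{a_{\mathbf{0}}, \{\nu_\alpha\}} \equiv 0$ on $[0,1]^d$, then $a_{\mathbf{0}} = 0$ and $\nu_\alpha = 0$ for every $\alpha \in \{0,1\}^d \setminus \{\mathbf{0}\}$. Evaluating at $x = \mathbf{0}$ kills every integral term, since each product $\prod_{j \in S(\alpha)} (0 - t_j)_+$ vanishes for $t_j \in [0,1)$, yielding $a_{\mathbf{0}} = 0$ immediately.

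To recover the signed measures, I would induct on subsets $A \subseteq [d]$, claiming $\nu_{\mathbf{1}_A} = 0$ for each $A$, where $\mathbf{1}_A \in \{0,1\}^d$ denotes the indicator vector of $A$. Restricting $f$ to the face $\{x_j = 0 : j \notin A\}$, terms with $S(\alpha) \not\subseteq A$ drop out by the same $(0 - t_j)_+$ argument, and terms with $S(\alpha) \subsetneq A$ drop out by the inductive hypothesis. Only the $\alpha = \mathbf{1}_A$ term survives, giving
$$\int_{[0,1)^{|A|}} \prod_{j \in A} (x_j - t_j)_+ \, d\nu_{\mathbf{1}_A}(t) = 0 \qquad \text{for all } (x_j)_{j \in A} \in [0,1]^{|A|}.$$

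The technical heart is then the following claim: if $\nu$ is a finite signed measure on $[0,1)^m$ with $\int \prod_{j=1}^m (x_j - t_j)_+ \, d\nu(t) \equiv 0$ on $[0,1]^m$, then $\nu = 0$. Using the identity $(x - t)_+ = \int_0^1 \mathbf{1}\{t \le s \le x\} \, ds$ and Fubini against the finite total variation measure $|\nu|$, I would rewrite the integral as
$$\int_{[0,x_1] \times \cdots \times [0,x_m]} h(s) \, ds, \qquad h(s) := \nu([\mathbf{0}, s] \cap [0,1)^m).$$
Vanishing for all $x \in [0,1]^m$ forces $h \equiv 0$ Lebesgue-almost everywhere, and coordinate-wise right-continuity of $h$ (from dominated convergence applied to $|\nu|$) upgrades this to $h \equiv 0$ pointwise. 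The uniqueness part of Proposition \ref{prop:aistleitner-variant}, applied to this identically zero $h$, then yields $\nu = 0$. The main obstacle I anticipate is verifying that $h$ cleanly satisfies the one-sided continuity hypotheses of Proposition \ref{prop:aistleitner-variant} at boundary points in $[0,1]^m \setminus [0,1)^m$, but since $\nu$ is supported in $[0,1)^m$ this follows from dominated convergence once stated carefully.
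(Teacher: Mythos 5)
Your proof is correct and follows the same overall strategy as the paper's: subtract the two representations, evaluate at the origin to identify the constants, and induct on the support of $\alpha$ so that on each coordinate face only one integral term survives, reducing everything to the claim that a finite signed measure $\nu$ on $[0,1)^m$ with $\int \prod_{j}(x_j-t_j)_+\,d\nu \equiv 0$ must vanish. Where you diverge is in how that claim is finished. The paper differentiates to conclude that the cumulative function $s\mapsto\nu([\mathbf{0},s)\,)$ vanishes Lebesgue-a.e.\ and then runs a Dynkin $\pi$--$\lambda$ argument (rectangles form a $\pi$-system; the sets on which the two measures agree form a $\lambda$-system) to get equality on all Borel sets. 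You instead upgrade the a.e.\ vanishing of $h(s)=\nu([\mathbf{0},s]\cap[0,1)^m)$ to pointwise vanishing via right-continuity and then invoke the uniqueness clause of Proposition \ref{prop:aistleitner-variant}; since that proposition is proved from \citet[Theorem 3]{aistleitner2015functions} independently of this lemma, there is no circularity, and your route is somewhat shorter because the measure-theoretic bookkeeping is already packaged there. The one step you should write out carefully is the passage from $h=0$ a.e.\ to $h\equiv 0$: coordinate-wise right-continuity alone requires an iterated, one-coordinate-at-a-time argument with Fubini applied to the null set, although in the present case $h$ is in fact right-continuous jointly (by continuity from above of $|\nu|$ on shrinking boxes), which makes the upgrade immediate.
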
    

\begin{proof}[Proof of Lemma \ref{lem:uniqueness-of-representation}]
By the assumption, 
\begin{align*}
    &a_{\zerovec} + \sumoveralpha
    \int_{[0, 1)^{|\alpha|}} \prod_{j \in S(\alpha)} (x_j -
    t_j)_+ \, d\nu_{\alpha}(\talpha) \\
    &\qquad \quad= b_{\zerovec} + \sumoveralpha
    \int_{[0, 1)^{|\alpha|}} \prod_{j \in S(\alpha)} (x_j -
    t_j)_+ \, d\mu_{\alpha}(\talpha)     
\end{align*}
for all $(x_1, \dots, x_d) \in [0, 1]^d$.
Substituting $x_1 = \dots = x_d = 0$, we get $a_{\zerovec} = b_{\zerovec}$. 
Suppose for some positive integer $m \le s$, we have $\nu_{\alpha} = \mu_{\alpha}$ for all $\alpha \in \{0, 1\}^d \setminus \{\zerovec\}$ with $|\alpha| \le m - 1$.
Substituting $x_{m +1} = \dots = x_{d} = 0$ and eliminating from both sides the components that are already known to be equal, we obtain
\begin{equation}\label{eq:measure-equation}
    \int_{[0, 1)^{m}} \prod_{j = 1}^{m} (x_j -
    t_j)_+ \, d\nu_{\onevec_m}(t) = \int_{[0, 1)^{m}} \prod_{j = 1}^{m} (x_j -
    t_j)_+ \, d\mu_{\onevec_m}(t). 
\end{equation}
Here $\onevec_m$ indicates the $d$-dimensional vector which has 1 in the first $m$ components and 0 in the rest.
Note that 
\begingroup
\allowdisplaybreaks
\begin{align*}
    &\int_{[0, 1)^{m}} \prod_{j = 1}^{m} (x_j -                                                                                                 
    t_j)_+ \, d\nu_{\onevec_m}(t) = \int_{\prod\limits_{j=1}^{m} [0, x_j)} \prod_{j=1}^{m} (x_j - t_j) \, d\nu_{\onevec_m}(t) \\
    &\qquad = \int_{\prod\limits_{j=1}^{m} [0, x_j)} \int_{\prod\limits_{j=1}^{m} (t_j, x_j)} 1 \, ds \, d\nu_{\onevec_m}(t) = \int_{\prod\limits_{j=1}^{m} (0, x_j)} \int_{\prod\limits_{j=1}^{m} [0, s_j)} 1 \, d\nu_{\onevec_m}(t) \, ds.
\end{align*}
\endgroup
Thus, the equation \eqref{eq:measure-equation} can be written as 
\begin{equation*}
    \int_{\prod\limits_{j=1}^{m} (0, x_j)} \int_{\prod\limits_{j=1}^{m} [0, s_j)} \, d\nu_{\onevec_m}(t) \, ds = \int_{\prod\limits_{j=1}^{m} (0, x_j)} \int_{\prod\limits_{j=1}^{m} [0, s_j)} \, d\mu_{\onevec_m}(t) \, ds.
\end{equation*}
By the Lebesgue differentiation theorem (see, e.g., \cite{Rudin:87book}, Theorem 7.10), it follows that  
\begin{equation}\label{eq:integration-of-measures}
    \int_{\prod\limits_{j=1}^{m} [0, x_j)} \, d\nu_{\onevec_m}(t) = \int_{\prod\limits_{j=1}^{m} [0, x_j)} \, d\mu_{\onevec_m}(t)
\end{equation}
almost everywhere with respect to the Lebesgue measure.

For a subset $R$ of $[0, 1)^m$, we call it rectangle if it is of the form 
\begin{equation*}
    R = \prod_{j = 1}^{m} I_j,
\end{equation*}
where for each $j$, $I_j = (u_j, v_j), [u_j, v_j], [u_j, v_j)$, or $(u_j, v_j]$ for some real numbers $u_j$ and $v_j$. 
Now, let $\mathcal{P}$ be the collection of all rectangles in $[0, 1)^m$ and $\mathcal{Q}$ be the collection of all subsets $E$ of $[0, 1)^m$ for which 
\begin{equation*}
    \nu_{\onevec_m}(E) = \mu_{\onevec_m}(E).
\end{equation*}
Using the equation \eqref{eq:integration-of-measures}, we can show that $\mathcal{P} \subseteq \mathcal{Q}$. 
Also, it is clear that $\mathcal{P}$ is a $\pi$-system, and $\mathcal{Q}$ is a $\lambda$-system (or Dynkin system) on $[0, 1)^m$.
Therefore, by Dynkin's $\pi$-$\lambda$ theorem, we have $\sigma(\mathcal{P}) \subseteq \mathcal{Q}$. 
Since $\sigma(\mathcal{P})$ is indeed the collection of all Borel sets in $[0, 1)^m$, we can conclude that $\nu_{\onevec_m} = \mu_{\onevec_m}$. 
Consequently, induction argument completes the proof.
\end{proof}

\subsection{Proofs of Lemmas and Propositions in Section \ref{existcompute}}\label{pf:existcompute}
\subsubsection{Proof of Lemma \ref{lem:reduction-to-discrete-measures}}\label{pf:reduction-to-discrete-measures}

Although the ideas of our proof of Lemma \ref{lem:reduction-to-discrete-measures} are simple, the proof is somewhat technical because of the intensive handling of indices.
Hence, to deliver the underlying ideas clearly, we first prove the lemma when $d = s = 1$ and then consider general $d$ and $s$.

\begin{proof}[Proof of Lemma \ref{lem:reduction-to-discrete-measures}]
First, assume that $d = s = 1$. 
For $l = 0, \dots, n_1 - 1 $, let $\tilde{\mu}_l$ be the discrete signed measure on $[0, 1]$ that is concentrated on $\{u^{(1)}_{l}, u^{(1)}_{l + 1}\}$ and has weights
\begin{equation*}
    \tilde{\mu}_{l}\big(\big\{u^{(1)}_l \big\}\big) = \int_{[u^{(1)}_{l}, u^{(1)}_{l + 1})} \frac{u_{l + 1}^{(1)} - t}{u_{l + 1}^{(1)} - u_{l}^{(1)}} \,  d\nu_{1}(t)
\end{equation*}
and 
\begin{equation*}
    \tilde{\mu}_{l}\big(\big\{u^{(1)}_{l + 1} \big\}\big) = \int_{[u^{(1)}_{l}, u^{(1)}_{l + 1})} \frac{t - u_{l}^{(1)}}{u_{l + 1}^{(1)} - u_{l}^{(1)}} \,  d\nu_{1}(t).
\end{equation*}
Then, for $x \ge u^{(1)}_{l + 1}$,  
\begingroup
\allowdisplaybreaks
\begin{align*}
    &\int_{[u^{(1)}_{l}, u^{(1)}_{l + 1})} (x - t)_{+} \, d\nu_{1}(t) \\
    &\qquad = \int_{[u^{(1)}_{l}, u^{(1)}_{l + 1})} \big(x - u^{(1)}_{l}\big) \cdot \frac{u^{(1)}_{l + 1} - t}{u^{(1)}_{l + 1} - u^{(1)}_{l}} + \big(x - u^{(1)}_{l + 1}\big) \cdot \frac{t - u^{(1)}_{l}}{u^{(1)}_{l + 1} - u^{(1)}_{l}} \, d\nu_{1} (t) \\
    &\qquad = \big(x - u^{(1)}_{l}\big) \cdot \tilde{\mu}_{l}\big(\big\{u^{(1)}_l \big\}\big) + \big(x - u^{(1)}_{l + 1}\big) \cdot \tilde{\mu}_{l}\big(\big\{u^{(1)}_{l + 1} \big\}\big) = \int_{[0, 1]} (x - t)_{+} \, d\tilde{\mu}_{l}(t).
\end{align*}
\endgroup
We also have that
\begingroup
\allowdisplaybreaks
\begin{align*}
    &|\tilde{\mu}_{l}|([0, 1]) = |\tilde{\mu}_{l}|\big(\big\{u^{(1)}_l \big\}\big) + |\tilde{\mu}_{l}|\big(\big\{u^{(1)}_{l + 1}\big\}\big) \\
    &\quad\le \int_{[u^{(1)}_{l}, u^{(1)}_{l + 1})} \frac{u_{l + 1}^{(1)} - t}{u_{l + 1}^{(1)} - u_{l}^{(1)}} \,  d|\nu_{1}|(t) + 
    \int_{[u^{(1)}_{l}, u^{(1)}_{l + 1})} \frac{t - u_{l}^{(1)}}{u_{l + 1}^{(1)} - u_{l}^{(1)}} \,  d|\nu_{1}|(t) = |\nu_{1}|\big(\big[u^{(1)}_{l}, u^{(1)}_{l + 1}\big)\big),
\end{align*}
\endgroup
and that
\begin{equation*}
     |\tilde{\mu}_{0}|((0, 1]) = |\tilde{\mu}_{0}|\big(\big\{u^{(1)}_1 \big\}\big) \le \int_{[0, u^{(1)}_{1})} \frac{t }{u_{1}^{(1)}} \,  d|\nu_{1}|(t) \le |\nu_{1}|\big(\big(0, u^{(1)}_1 \big)\big).
\end{equation*}
Let $\tilde{\mu}$ be the signed measure on $[0, 1]$ defined by
\begin{equation*}
    \tilde{\mu} = \sum_{l = 0}^{n_1 - 1} \tilde{\mu}_{l}
\end{equation*}
and let $\mu$ be the signed measure on $[0, 1)$ defined by $\mu(E) = \tilde{\mu}(E)$.
Note that $\mu$ is a discrete signed measure concentrated on $\mathcal{U}_1 \cap [0, 1)$.
Then, for each $u_{i}^{(1)} \in \mathcal{U}_1$, we have
\begingroup
\allowdisplaybreaks
\begin{align*}
    f_{a_0, \{\nu_1\}} \big(u_{i}^{(1)}\big) &= a_0 + \int_{[0, 1)} \big(u_{i}^{(1)} - t \big)_{+} \, d\nu_1(t) = a_0 + \sum_{l = 0}^{n_1 - 1} \int_{[u^{(1)}_{l}, u^{(1)}_{l + 1})}  \big(u_{i}^{(1)} - t\big)_{+} \, d\nu_1(t) \\
    &= a_0 + \sum_{l = 0}^{i - 1} \int_{[u^{(1)}_{l}, u^{(1)}_{l + 1})}  \big(u_{i}^{(1)} - t\big)_{+} \, d\nu_1(t) = a_0 +  \int_{[0, 1]} \big(u_{i}^{(1)} - t \big)_{+} \, d\tilde{\mu}(t) \\
    &= a_0 +  \int_{[0, 1)} \big(u_{i}^{(1)} - t \big)_{+} \, d\mu(t) = f_{a_0, \{\mu\}} \big(u_{i}^{(1)}\big),
\end{align*}
\endgroup
which means that $f_{a_0, \{\mu\}}$ coincides with $f_{a_0, \{\nu_1\}}$ at every point of $\mathcal{U}_1$.
Also, 
\begingroup
\allowdisplaybreaks
\begin{align*}
    |\mu|((0,1)) &\le |\tilde{\mu}|((0,1]) \le \sum_{l = 0}^{n_1 - 1} |\tilde{\mu}_{l}| ((0,1]) = |\tilde{\mu}_{0}|((0,1]) + \sum_{l = 1 }^{n_1 - 1} |\tilde{\mu}_{l}|([0,1]) \\
    &\le |\nu_{1}|\big(\big(0, u^{(1)}_1 \big)\big) + \sum_{l = 1}^{n_1 - 1} |\nu_1|\big(\big[u^{(1)}_{l}, u^{(1)}_{l + 1}\big)\big) = |\nu_1|((0, 1)),
\end{align*}
\endgroup
from which we can conclude that
\begin{equation*}
    \Vmars(f_{a_0, \{\mu\}}) \le \Vmars(f_{a_0, \{\nu_1\}}).
\end{equation*}

Now, we consider general $d$ and $s$.
For $\alpha \in \{0, 1\}^d \setminus \{\zerovec\}$ with $|\alpha| \le s$ and $l \in \prod_{k \in S(\alpha)} [0: (n_k - 1)]$, let $\tilde{\mu}_{\alpha, l}$ be the discrete signed measure on $[0, 1]^{|\alpha|}$ that is concentrated on $\prod_{k \in S(\alpha)} \{u^{(k)}_{l_k}, u^{(k)}_{l_k + 1}\}$ and has weights 
\begin{align*}
    &\tilde{\mu}_{\alpha, l}\big(\big\{\big(u^{(k)}_{l_k + \delta_k}, k \in S(\alpha)\big)\big\}\big) \\
    &\qquad \quad = \int_{R^{(\alpha)}_l} \prod_{k \in S(\alpha)} \bigg[\bigg(\frac{u_{l_k + 1}^{(k)} - t_k}{u_{l_k + 1}^{(k)} - u_{l_k}^{(k)}}\bigg)^{1 - \delta_k} \cdot \bigg(\frac{t_k - u_{l_k}^{(k)}}{u_{l_k + 1}^{(k)} - u_{l_k}^{(k)}}\bigg)^{\delta_k}\bigg] \,  d\nu_{\alpha}(\talpha)
\end{align*}
for each $\delta = (\delta_k, k \in S(\alpha)) \in \prod_{k \in S(\alpha)} \{0, 1\}$, where $R^{(\alpha)}_l := \prod_{k \in S(\alpha)} [u^{(k)}_{l_k}, u^{(k)}_{l_k + 1})$. 
Then, if $x_k \ge u^{(k)}_{l_k + 1}$ for all $k \in S(\alpha)$, it follows that 
\begingroup
\allowdisplaybreaks
\begin{align*}
    &\int_{R^{(\alpha)}_l} \prod_{k \in S(\alpha)} (x_k - t_k)_{+} \, d\nu_{\alpha}(\talpha) \\
    &\quad = \int_{R^{(\alpha)}_l} \prod_{k \in S(\alpha)} \bigg[ \big(x_k - u^{(k)}_{l_k}\big) \cdot \frac{u^{(k)}_{l_k + 1} - t_k}{u^{(k)}_{l_k + 1} - u^{(k)}_{l_k}} + \big(x_k - u^{(k)}_{l_k + 1}\big) \cdot \frac{t_k - u^{(k)}_{l_k}}{u^{(k)}_{l_k + 1} - u^{(k)}_{l_k}}\bigg] \, d\nu_{\alpha} (\talpha) \\
    &\quad = \int_{R^{(\alpha)}_l} \sum_{\delta \in \prod\limits_{k \in S(\alpha)} \{0, 1\}} \prod_{k \in S(\alpha)} \bigg[ \big(x_k - u^{(k)}_{l_k + \delta_k}\big) \cdot \bigg(\frac{u^{(k)}_{l_k + 1} - t_k}{u^{(k)}_{l_k + 1} - u^{(k)}_{l_k}}\bigg)^{1 - \delta_k}\\ 
    &\qquad \qquad \qquad \qquad \qquad \qquad \qquad \qquad \qquad \qquad \quad \cdot \bigg(\frac{t_k - u^{(k)}_{l_k}}{u^{(k)}_{l_k + 1} - u^{(k)}_{l_k}}\bigg)^{\delta_k}\bigg] \, d\nu_{\alpha}(\talpha) \\
    &\quad = \sum_{\delta \in \prod\limits_{k \in S(\alpha)} \{0, 1\}} \bigg( \prod_{k \in S(\alpha)} \big(x_k - u^{(k)}_{l_k + \delta_k}\big) \bigg) \\
    &\qquad \qquad \qquad \qquad \cdot \int_{R^{(\alpha)}_l} \prod_{k \in S(\alpha)} \bigg[\bigg(\frac{u^{(k)}_{l_k + 1} - t_k}{u^{(k)}_{l_k + 1} - u^{(k)}_{l_k}}\bigg)^{1 - \delta_k} \cdot \bigg(\frac{t_k - u^{(k)}_{l_k}}{u^{(k)}_{l_k + 1} - u^{(k)}_{l_k}}\bigg)^{\delta_k}\bigg] \, d\nu_{\alpha}(\talpha) \\ 
    &\quad = \sum_{\delta \in \prod\limits_{k \in S(\alpha)} \{0, 1\}} \bigg(\prod_{k \in S(\alpha)} \big(x_k - u^{(k)}_{l_k + \delta_k}\big)\bigg) \cdot \tilde{\mu}_{\alpha, l}\big(\big\{\big(u^{(k)}_{l_k + \delta_k}, k \in S(\alpha)\big)\big\}\big) \\
    &\quad = \int_{[0, 1]^{|\alpha|}} \prod_{k \in S(\alpha)} (x_k - t_k)_{+} \, d\tilde{\mu}_{\alpha, l}(\talpha).
\end{align*}
\endgroup
Also, 
\begingroup
\allowdisplaybreaks
\begin{align*}
    &|\tilde{\mu}_{\alpha, l}|\big([0, 1]^{|\alpha|}\big) \\
    &\qquad= \sum_{\delta \in \prod\limits_{k \in S(\alpha)} \{0, 1\}} \Bigg|\int_{R^{(\alpha)}_l} \prod_{k \in S(\alpha)} \bigg[\bigg(\frac{u_{l_k + 1}^{(k)} - t_k}{u_{l_k + 1}^{(k)} - u_{l_k}^{(k)}}\bigg)^{1 - \delta_k} \cdot  \bigg(\frac{t_k - u_{l_k}^{(k)}}{u_{l_k + 1}^{(k)} - u_{l_k}^{(k)}}\bigg)^{\delta_k}\bigg] \, d\nu_{\alpha}(\talpha)\Bigg| \\
    &\qquad \le \sum_{\delta \in \prod\limits_{k \in S(\alpha)} \{0, 1\}} \int_{R^{(\alpha)}_l} \prod_{k \in S(\alpha)} \bigg[\bigg(\frac{u_{l_k + 1}^{(k)} - t_k}{u_{l_k + 1}^{(k)} - u_{l_k}^{(k)}}\bigg)^{1 - \delta_k} \cdot \bigg(\frac{t_k - u_{l_k}^{(k)}}{u_{l_k + 1}^{(k)} - u_{l_k}^{(k)}}\bigg)^{\delta_k}\bigg] \, d|\nu_{\alpha}|(\talpha) \\
    &\qquad= \int_{R^{(\alpha)}_l} \prod_{k \in S(\alpha)} \bigg(\frac{t_k - u_{l_k}^{(k)}}{u_{l_k + 1}^{(k)} - u_{l_k}^{(k)}} + \frac{u_{l_k + 1}^{(k)} - t_k}{u_{l_k + 1}^{(k)} - u_{l_k}^{(k)}}\bigg) \, d|\nu_{\alpha}|(\talpha) = |\nu_{\alpha}|\big(R^{(\alpha)}_l\big), 
\end{align*}
\endgroup
and 
\begingroup
\allowdisplaybreaks
\begin{align*}
     &|\tilde{\mu}_{\alpha, \zerovec}|\big([0, 1]^{|\alpha|} \setminus \{\zerovec\}\big) \\
     &\qquad \quad= \sum_{\delta \in \prod\limits_{k \in S(\alpha)} \{0, 1\} \setminus \{\zerovec\}} \Bigg|\int_{R^{(\alpha)}_\zerovec} \prod_{k \in S(\alpha)} \bigg[\bigg(\frac{u_{1}^{(k)} - t_k}{u_{1}^{(k)} - 0}\bigg)^{1 - \delta_k} \cdot \bigg(\frac{t_k - 0}{u_{1}^{(k)} - 0}\bigg)^{\delta_k}\bigg] \, d\nu_{\alpha}(\talpha)\Bigg| \\
     &\qquad \quad= \sum_{\delta \in \prod\limits_{k \in S(\alpha)} \{0, 1\} \setminus \{\zerovec\}} \Bigg|\int_{R^{(\alpha)}_\zerovec \setminus \{\zerovec\}} \prod_{k \in S(\alpha)} \bigg[\bigg(\frac{u_{1}^{(k)} - t_k}{u_{1}^{(k)} - 0}\bigg)^{1 - \delta_k} \cdot \bigg(\frac{t_k - 0}{u_{1}^{(k)} - 0}\bigg)^{\delta_k}\bigg] \,  d\nu_{\alpha}(\talpha)\Bigg| \\
     &\qquad \quad\le |\nu_{\alpha}|\big(R^{(\alpha)}_{\zerovec} \setminus \{\zerovec\}\big).
\end{align*}
\endgroup

Now, let $\tilde{\mu}_{\alpha}$ be the signed measure on $[0, 1]^{|\alpha|}$ defined by 
\begin{equation*}
    \tilde{\mu}_{\alpha} = \sum_{l \in \prod\limits_{k\in S(\alpha)} [0: (n_k - 1)]} \tilde{\mu}_{\alpha, l}
\end{equation*}
and let $\mu_{\alpha}$ be the signed measure on $[0, 1)^{|\alpha|}$ defined by 
\begin{equation*}
    \mu_{\alpha}(E) = \tilde{\mu}_{\alpha}(E).
\end{equation*}
Note that $\mu_{\alpha}$ is a discrete signed measure concentrated on $(\prod_{k \in S(\alpha)} \mathcal{U}_k) \cap [0, 1)^{|\alpha|}$.
Then, for each $(u_{i_1}^{(1)}, \dots, u_{i_d}^{(d)}) \in \prod_{k = 1}^{d}\mathcal{U}_k$, 
\begingroup
\allowdisplaybreaks
\begin{align*}
    &\fazeronu \big(u_{i_1}^{(1)}, \dots, u_{i_d}^{(d)}\big) = a_{\zerovec} + \sumoveralpha \int_{[0, 1)^{|\alpha|}} \prod_{k \in S(\alpha)} \big(u_{i_k}^{(k)} - t_k\big)_{+} \, d\nu_{\alpha}(\talpha) \\
    &\qquad = a_{\zerovec} + \sumoveralpha \sum_{l \in \prod\limits_{k\in S(\alpha)} [0: (n_k - 1)]} \int_{R^{(\alpha)}_l} \prod_{k \in S(\alpha)} \big(u_{i_k}^{(k)} - t_k\big)_{+} \, d\nu_{\alpha}(\talpha) \\
    &\qquad = a_{\zerovec} + \sumoveralpha \sum_{l \in \prod\limits_{k\in S(\alpha)} [0: (i_k - 1)]} \int_{R^{(\alpha)}_l} \prod_{k \in S(\alpha)} \big(u_{i_k}^{(k)} - t_k\big)_{+} \, d\nu_{\alpha}(\talpha) \\
    &\qquad = a_{\zerovec} + \sumoveralpha \int_{[0, 1]^{|\alpha|}} \prod_{k \in S(\alpha)} \big(u_{i_k}^{(k)} - t_k\big)_{+} \, d\tilde{\mu}_{\alpha}(\talpha) \\
    &\qquad = a_{\zerovec} + \sumoveralpha \int_{[0, 1)^{|\alpha|}} \prod_{k \in S(\alpha)} \big(u_{i_k}^{(k)} - t_k\big)_{+} \, d\mu_{\alpha}(\talpha) 
    = \fazeromu \big(u_{i_1}^{(1)}, \dots, u_{i_d}^{(d)}\big),
\end{align*}
\endgroup
which means that $\fazeromu$ agrees with $\fazeronu$ at every point of $\prod_{k = 1}^{d}\mathcal{U}_k$.
Also, for every $\alpha \in \{0, 1\}^d \setminus \{\zerovec\}$ with $|\alpha| \le s$, 
\begingroup
\allowdisplaybreaks
\begin{align*}
    &|\mu_{\alpha}|\big([0,1)^{|\alpha|} \setminus \{\zerovec\}\big) \le |\tilde{\mu}_{\alpha}|\big([0,1]^{|\alpha|} \setminus \{\zerovec\}\big) \le \sum_{l \in \prod\limits_{k\in S(\alpha)} [0 : (n_k - 1)]} |\tilde{\mu}_{\alpha, l}| \big([0,1]^{|\alpha|} \setminus \{\zerovec\}\big) \\
    &\qquad= |\tilde{\mu}_{\alpha, \zerovec}|\big([0,1]^{|\alpha|} \setminus \{\zerovec\}\big) + \sum_{l \in \prod\limits_{k\in S(\alpha)} [0: (n_k - 1)] \setminus \{\zerovec\}} |\tilde{\mu}_{\alpha, l}|\big([0,1]^{|\alpha|}\big) \\
    &\qquad\le |\nu_{\alpha}|\big(R^{(\alpha)}_{\zerovec} \setminus \{\zerovec\}\big) + \sum_{l \in \prod\limits_{k\in S(\alpha)} [0: (n_k - 1)] \setminus \{\zerovec\}} |\nu_{\alpha}|\big(R^{(\alpha)}_l\big) = |\nu_{\alpha}|\big([0,1)^{|\alpha|} \setminus \{\zerovec\}\big).
\end{align*}
\endgroup
From this result, we can derive that  
\begin{equation*}
    \Vmars(\fazeromu) \le \Vmars(\fazeronu).
\end{equation*}
\end{proof}

\subsubsection{Proof of Proposition \ref{prop:reduction-to-lasso}}\label{pf:reduction-to-lasso}
\begin{proof}[Proof of Proposition \ref{prop:reduction-to-lasso}]
Assume $a_{\zerovec}$ is a real number and $\{\nu_{\alpha}\}$ is a collection of finite signed measures, where $\nu_{\alpha}$ is defined on $[0, 1)^{|\alpha|}$ for each $\alpha \in \{0, 1\}^d \setminus \{\zerovec\}$ with $|\alpha| \le s$. 
Recall that under this assumption,  
\begingroup
\allowdisplaybreaks
\begin{align*}
    &\fazeronu(x_1, \dots, x_d) \\
    &\quad \quad = a_{\zerovec} + \sumoveralpha \sum_{l \in \prod\limits_{k \in S(\alpha)}[0: (n_k - 1)]} \nu_{\alpha}\big(\big\{\big(u^{(k)}_{l_k}, k \in S(\alpha)\big)\big\}\big) \cdot \prod_{k \in S(\alpha)} \big(x_k - u^{(k)}_{l_k}\big)_{+} 
\end{align*}
\endgroup
and 
\begin{equation*}
    \Vmars(\fazeronu) = \sumoveralpha \sum_{l \in \prod\limits_{k \in S(\alpha)}[0: (n_k - 1)] \setminus \{\zerovec\}} \Big|\nu_{\alpha}\big(\big\{\big(u^{(k)}_{l_k}, k \in S(\alpha)\big)\big\}\big)\Big|.
\end{equation*}
Now, let $\gamma$ be the $|J|$-dimensional vector indexed by $(\alpha, l) \in J$ for which 
\begin{equation*}
     \gamma_{\alpha, l} = \nu_{\alpha}\big(\big\{\big(u^{(k)}_{l_k}, k \in S(\alpha)\big)\big\}\big)
\end{equation*}
for $(\alpha, l) \in J$.
Then, it follows that 
\begin{equation*}
    \big(\fazeronu (x^{(i)}), i \in [n]\big) = a_0 + M \gamma
\end{equation*}
and 
\begin{equation*}
    \Vmars(\fazeronu) = \sum_{\substack{(\alpha, l) \in J \\ l \neq \zerovec}} |\gamma_{\alpha, l}|.
\end{equation*}
Thus, under the assumption, our estimation problem reduces to the finite-dimensional lasso problem \eqref{finite-dimensional-lasso}.
Recall that by Lemma \ref{lem:reduction-to-discrete-measures}, a solution to \eqref{our-problem-restated} obtained under the assumption is still a solution to the same problem without the assumption.
Therefore, for a solution $(\hat{a}_{\zerovec}, \hat{\gamma}^{d, s}_{n, V})$ to  the problem \eqref{finite-dimensional-lasso}, the function $f$ on $[0, 1]^d$ of the form
\begin{equation*}
    f(x_1, \dots, x_d) = \hat{a}_{\zerovec} + \sum_{(\alpha, l) \in J} \big(\hat{\gamma}^{d, s}_{n, V}\big)_{\alpha, l} \cdot \prod_{k \in S(\alpha)} \big(x_k - u^{(k)}_{l_k}\big)_{+}
\end{equation*}
is indeed a solution to the original problem \eqref{our-problem-restated}.
Moreover, since the set 
\begin{equation*}
    \Bigg\{a_0 \onevec + M \gamma: a_0 \in \R, \gamma \in \R^{|J|}, \mbox{ and } \sum_{\substack{(\alpha, l) \in J \\ l \neq \zerovec}} |\gamma_{\alpha, l}| \le V \Bigg\}
\end{equation*}
is closed and convex, even though there can exist multiple solutions to the problem \eqref{finite-dimensional-lasso}, they all should share the same value of $a_0 \onevec + M \gamma$, which is indeed the projection of $y$ onto the closed convex set.
For these reasons, Lemma \ref{lem:reduction-to-discrete-measures} also implies that for every solution $\hat{f}^{d, s}_{n, V}$ to the problem \eqref{our-problem-restated}, the values of $\hat{f}^{d, s}_{n, V}$ at the design points should be
\begin{equation*}
    \hat{f}^{d, s}_{n, V}(x^{(i)}) = \hat{a}_{\zerovec} + \big(M \hat{\gamma}^{d, s}_{n, V}\big)_i = \hat{a}_{\zerovec} + \sum_{(\alpha, l) \in J} \big(\hat{\gamma}^{d, s}_{n, V}\big)_{\alpha, l} \cdot \prod_{k \in S(\alpha)} \big(x^{(i)}_k - u^{(k)}_{l_k}\big)_{+} 
\end{equation*}
for each $i \in [n]$.
\end{proof}

\subsection{Proofs of Theorems in Section \ref{riskresults}}\label{subsec:proof-of-risk-results}
Throughout the proofs of theorems and the proofs of lemmas therein, $c$ and $C$ indicate universal constants, which can be different line by line. 
Just for convenience, we often use those constants without specifying their values. 
When constants depend on some variables, we specify those dependencies by subscripts.
\subsubsection{Proof of Theorem \ref{thm:risk-upper-bound-fixed}}\label{pf:risk-upper-bound}

We exploit the following general result for nonparametric least squares estimators.
This result is a straightforward generalization of \cite{VandegeerBook}, Theorem 9.1 to the misspecified case, in which the true underlying function may not belong to the function class where estimators are searched over. 
Although we only need to consider the well-specified case for Theorem \ref{thm:risk-upper-bound-fixed}, here we state a generalized version for future purpose. 
We will use this theorem again for our proof of Theorem \ref{thm:risk-upper-bound-fixed-approx}.

\begin{theorem}\label{thm:least-squares-estimator}
    Suppose that data $(x^{(1)}, y_1), \dots, (x^{(n)}, y_n)$ are generated according to the model 
    \begin{equation}
        y_i = f^{*}(x^{(i)}) + \xi_i 
    \end{equation}
    where $\xi_i$ are independent sub-Gaussian errors with mean zero and with a sub-Gaussian parameter $\sigma$, and
    $f^{*}$ is an unknown real-valued regression function defined on $[0, 1]^d$.
    Let $\F$ be a collection of real-valued functions defined on $[0, 1]^d$ and let $f_0$ be an element of $\F$.
    Also, let $\hat{f}$ be a least squares estimator of $f^{*}$ over $\F$ defined by 
    \begin{equation*}
        \hat{f} \in \argmin_{f \in \F} \bigg\{\sum_{i = 1}^{n} \big(y_i - f(x^{(i)})\big)^2 \bigg\}. 
    \end{equation*}
    Assume that there exists a real-valued function $\Psi$ defined on $\R^+$ such that $t \mapsto \Psi(t)/t^2$ is monotonically decreasing on $\R^+$ and 
    \begin{equation}\label{risk-bound-condition}
        \Psi(t) \ge \max\bigg\{t, \int_{0}^{t}\sqrt{\log N\big(\epsilon, B_{\F}(t, f_0, \|\cdot\|_n), \|\cdot\|_n \big)} d\epsilon \bigg\},
    \end{equation}
    where $B_{\F}(t, f_0, \|\cdot\|_n) := \{f \in \F: \|f - f_0\|_n \le t\}$ and $N(\epsilon, \G, \|\cdot\|_n)$ is the $\epsilon$-covering number of a function class $\G$ under the norm $\|\cdot\|_n$.
    Then, there exist universal positive constants $c$ and $C$ such that
    \begin{equation}\label{probability-bound-fixed-design}
        \P (\|\hat{f} - f^* \|_n^2 - \| f_0 - f^* \|_n^2 > t^2) \le C \exp\Big(-\frac{n t^2}{C \sigma^2}\Big) \qt{for all $t \ge t_n$,}
    \end{equation}
    for every $t_n > 0$ satisfying 
    \begin{equation*}
        \sqrt{n} t_n^2 \ge c \sigma \Psi(t_n) 
        \ \mbox{ and } \
        t_n \ge \| f_0 - f^* \|_n.
    \end{equation*}
\end{theorem}

\begin{remark}\label{rmk:least-squares-estimator-fixed-design}
The probability bound \eqref{probability-bound-fixed-design} in Theorem \ref{thm:least-squares-estimator} directly leads to an upper bound of the risk $\mathcal{R}_F(\hat{f}, f^{*})$. 
First, we can rewrite the bound \eqref{probability-bound-fixed-design} as 
\begin{equation*}
    \P (\|\hat{f} - f^* \|_n^2 - \| f_0 - f^* \|_n^2 > t) \le C \exp\Big(-\frac{n t}{C \sigma^2}\Big) \qt{for all $t \ge t_n^2$.}
\end{equation*}
If we integrate both sides of the above inequality from $t_n^2$ to infinity, we obtain that
\begin{equation*}
    \E[(\|\hat{f} - f^* \|_n^2 - \| f_0 - f^* \|_n^2 - t_n^2)_+] \le C \int_{t_n^2}^{\infty} \exp\Big(-\frac{n t}{C \sigma^2}\Big) \, dt \le \frac{C \sigma^2}{n}.
\end{equation*}
Therefore, using the inequality $x \le (x - y)_+ + y$, we can derive that
\begin{equation}\label{risk-upper-bound-fixed-misspecified}
    \mathcal{R}_F(\hat{f}, f^{*}) = \E \|\hat{f} - f^* \|_n^2 \le \| f_0 - f^* \|_n^2 + \frac{C \sigma^2}{n} + t_n^2. 
\end{equation}
\end{remark}

\begin{remark}[Well-specified Case]\label{rmk:least-squares-estimator-well-specified}
    If $f^*$ belongs to the function class $\F$, then we can choose $f_0$ as $f^{*}$. 
    In this case, the condition $t_n \ge \| f_0 - f^* \|_n$ is vacuous, and \eqref{risk-upper-bound-fixed-misspecified} becomes 
    \begin{equation*}
    \mathcal{R}_F(\hat{f}, f^{*}) \le \frac{C \sigma^2}{n} + t_n^2.
    \end{equation*}
    We will use this result for our proof of Theorem \ref{thm:risk-upper-bound-fixed}.
\end{remark}

\begin{remark}
    Here we briefly discuss how \cite{VandegeerBook}, Theorem 9.1 can be generalized to the misspecified case. 
    We describe what needs to be modified in the proof of \cite{VandegeerBook}, Theorem 9.1 to achieve the generalized result in Theorem \ref{thm:least-squares-estimator}.

    Fix $t \ge t_n$ and let 
    \begin{equation*}
        \F_j = \{f \in \F: 2^{2j - 2} t^2 < \| f - f^* \|_{n}^2 - \| f_0 - f^* \|_{n}^2 \le 2^{2j} t^2\}
    \end{equation*} 
    for each positive integer $j$.
    It then follows that
    \begin{equation}\label{eq:decomposition-into-fj-fixed}
        \P (\|\hat{f} - f^* \|_n^2 - \| f_0 - f^* \|_n^2 > t^2) = \sum_{j = 1}^{\infty} \P(\hat{f} \in \F_j),
    \end{equation}
    and it suffices to bound $\P(\hat{f} \in \F_j)$ for each positive integer $j$.
    Observe that for every $f \in \F_j$, 
    \begin{align*}
        2^{2j} t^2 &\ge \| f - f^* \|_{n}^2 - \| f_0 - f^* \|_{n}^2 = \| f - f_0 \|_{n}^2 + \frac{2}{n} \sum_{i = 1}^{n} (f - f_0)(x^{(i)}) \cdot (f_0 - f^*)(x^{(i)}) \\
        &\ge \| f - f_0 \|_{n}^2 - 2 \| f - f_0 \|_{n} \cdot \| f_0 - f^* \|_{n},  
    \end{align*}
    and thus, 
    \begin{equation*}
        \| f - f_0 \|_{n} \le \| f_0 - f^* \|_{n} + \sqrt{\| f_0 - f^* \|_{n}^2 + 2^{2j} t^2} \le 2 \| f_0 - f^* \|_{n} + 2^j t \le 2^{j + 1} t.
    \end{equation*}
    Also, since $\hat{f}$ is a least squares estimator over $\F$ and $f_0$ is an element of $\F$, 
    \begin{equation*}
        \sum_{i = 1}^{n} \big(y_i - \hat{f}(x^{(i)})\big)^2 \le \sum_{i = 1}^{n} \big(y_i - f_0(x^{(i)})\big)^2,
    \end{equation*}
    from which one can easily derive that
    \begin{equation*}
        \|\hat{f} - f^* \|_n^2 - \| f_0 - f^*  \|_n^2 \le \frac{2}{n} \sum_{i = 1}^{n} \xi_i (\hat{f} - f_0)(x^{(i)}).
    \end{equation*}
    For these reasons, for each positive integer $j$, we have 
    \begin{align*}
        \P(\hat{f} \in \F_j) &\le \P\bigg(\sup_{f \in \F_j} \frac{1}{n} \sum_{i = 1}^{n} \xi_i (f - f_0)(x^{(i)}) \ge 2^{2j - 3} t^2 \bigg) \\
        &\le \P\bigg(\sup_{\substack{f \in \F \\ \|f - f_0\|_{n} \le 2^{j + 1}t }} \frac{1}{n} \sum_{i = 1}^{n} \xi_i (f - f_0)(x^{(i)}) \ge 2^{2j - 3} t^2 \bigg).
    \end{align*}
    From this point, we can simply follow the proof of \cite{VandegeerBook}, Theorem 9.1. 
    By repeating their argument, we can show that 
    \begin{equation*}
        \P(\hat{f} \in \F_j) \le 4 \exp\Big(-\frac{2^j n t^2}{C \sigma^2}\Big)
    \end{equation*}
    for each positive integer $j$. 
    Combining this result with \eqref{eq:decomposition-into-fj-fixed}, we can conclude that
    \begin{equation*}
        \P (\|\hat{f} - f^* \|_n^2 - \| f_0 - f^* \|_n^2 > t^2) \le \sum_{j = 1}^{\infty} 4 \exp\Big(-\frac{2^j n t^2}{C \sigma^2}\Big) \le C \exp\Big(-\frac{n t^2}{C \sigma^2}\Big).
    \end{equation*}
\end{remark}

\begin{proof} [Proof of Theorem \ref{thm:risk-upper-bound-fixed}]
Let $\F_{\text{disc}}(V)$ be the collection of all the functions $\fazeronu \in \infmars^{d, s}$ with $\Vmars(\fazeronu) \le V$ where $\nu_\alpha$ is concentrated on $(\prod_{k \in S(\alpha)} \mathcal{U}_k) \cap [0, 1)^{|\alpha|}$ for each $\alpha \in \{0, 1\}^d \setminus \{\zerovec\}$ with $|\alpha| \le s$.
By Lemma \ref{lem:reduction-to-discrete-measures}, there exist $\hat{f}_{\text{disc}}, f^*_{\text{disc}} \in \F_{\text{disc}}(V)$ such that
\begin{equation*}
    \hat{f}_{\text{disc}} (x^{(i)}) = \hat{f}^{d, s}_{n, V} (x^{(i)}) \mbox{ and } f^*_{\text{disc}} (x^{(i)}) = f^* (x^{(i)})
\end{equation*}
for all $i \in [n]$.
Observe that 
\begin{itemize}
    \item $y_i = f^{*}(x^{(i)}) + \xi_i = f^{*}_{\text{disc}} (x^{(i)}) + \xi_i$ for $i \in [n]$, and \\
    \item $\mathcal{R}_F(\hat{f}^{d, s}_{n, V}, f^{*}) = \mathcal{R}_F(\hat{f}_{\text{disc}}, f^{*}_{\text{disc}})$.
\end{itemize}
Also, we have 
\begin{equation*}
    \hat{f}_{\text{disc}} \in \argmin_{f \in \F_{\text{disc}}(V)} \bigg\{\sum_{i = 1}^{n} \big(y_i - f(x^{(i)})\big)^2 \bigg\}. 
\end{equation*}
Thus, once we bound the metric entropy of $B_{\F_{\text{disc}}(V)}(t, f^*_{\text{disc}}, \|\cdot\|_n)$ (under the norm $\|\cdot\|_n$) and its integral as in \eqref{risk-bound-condition}, we can use Theorem \ref{thm:least-squares-estimator} (and Remark \ref{rmk:least-squares-estimator-well-specified}) to obtain an upper bound of the risk $\mathcal{R}_F(\hat{f}^{d, s}_{n, V}, f^{*})$.

For $V > 0$ and $t > 0$, let 
\begin{equation*}
    S(V, t) = \big\{f \in \F_{\text{disc}}(V): \|f\|_n \le t \big\}.
\end{equation*}
Then, clearly, 
\begin{equation*}
    B_{\F_{\text{disc}}(V)}(t, f^*_{\text{disc}}, \|\cdot\|_n) - \{f^*_{\text{disc}}\} \subseteq S(2V, t)
\end{equation*}
because 
\begin{equation*}
    \Vmars(f - f^*_{\text{disc}}) \le \Vmars(f) + \Vmars(f^*_{\text{disc}}) \le 2V
\end{equation*}
for every $f \in \F_{\text{disc}}(V)$.
Here for two function classes $\G_1$ and $\G_2$, $\G_1 - \G_2$ indicates the class $\{g_1 - g_2: g_1 \in \G_1, g_2 \in \G_2\}$.
Also, the following lemma, which will be proved in Appendix \ref{pf:svt-metric-entropy}, provides an upper bound of the metric entropy of $S(V, t)$ (under the norm $\|\cdot\|_n$).

\begin{lemma}\label{lem:svt-metric-entropy}
    There exists positive constant $C_{\rho, s}$ depending on $\rho$ and $s$ and $C_{\rho, d}$ depending on $\rho$ and $d$ such that
    \begingroup
    \allowdisplaybreaks
    \begin{align*}
        \log N(\epsilon, S(V, t), \|\cdot\|_n) &\le  2^d \log\Big(2 + C_{\rho, s} \cdot \frac{V + \sqrt{n} t}{\epsilon}\Big) \\
        &\quad + C_{\rho, d} \Big(2^{d + 1} + 1 + \frac{2^{d + 1} V }{\epsilon}\Big)^{\frac{1}{2}} \bigg[\log\Big(2^{d + 1} + 1 + \frac{2^{d + 1} V}{\epsilon}\Big)\bigg]^{\frac{3(2s - 1)}{4}}
    \end{align*}
    \endgroup
    for every $V > 0$, $t > 0$, and $\epsilon > 0$.
\end{lemma}

By Lemma \ref{lem:svt-metric-entropy} and the inequality $(x + y)^{1/2} \le x^{1/2} + y^{1/2}$, we can obtain the following upper bound of the square root of the metric entropy of $B_{\F_{\text{disc}}(V)}(t, f^*_{\text{disc}}, \|\cdot\|_n)$:
\begin{align}\label{sqrt-svt-upper-bound}
\begin{split}
    &\sqrt{\log N\big(\epsilon, B_{\F_{\text{disc}}(V)}(t, f^*_{\text{disc}}, \|\cdot\|_n), \|\cdot\|_n \big)} \le 2^{\frac{d}{2}} \cdot \sqrt{\log\Big(2 + C_{\rho, s} \cdot \frac{2V + \sqrt{n} t}{\epsilon}\Big)} \\
    &\qquad \qquad \qquad \qquad \qquad + C_{\rho, d} \Big(2^{d + 1} + 1 + \frac{2^{d + 2} V }{\epsilon}\Big)^{\frac{1}{4}} \bigg[\log\Big(2^{d + 1} + 1 + \frac{2^{d + 2} V}{\epsilon}\Big)\bigg]^{\frac{3(2s - 1)}{8}}.
\end{split}
\end{align}
To integrate the right-hand side of \eqref{sqrt-svt-upper-bound}, we construct the lemma below. 
We omit the proof of \eqref{eq:integration-helper1}, which is basically the result of simple application of integration by parts, and our proof of \eqref{eq:integration-helper2} is presented in Appendix \ref{pf:integration-helper2}.

\begin{lemma}\label{lem:integration-helper}
    For every $u > t$, 
    \begin{equation}\label{eq:integration-helper1}
        \int_{0}^{t} \sqrt{\log \frac{u}{\epsilon}} \, d\epsilon = \frac{t}{2\sqrt{\tau}} (1 + 2\tau),
    \end{equation}
    where $\tau = \log (u/t)$.
    Also, for every $u > t$ and $k > 0$,
    \begin{equation}\label{eq:integration-helper2}
        \int_{0}^{t} \Big(\frac{u}{\epsilon}\Big)^{\frac{1}{4}} \Big[\log \frac{u}{\epsilon}\Big]^k \, d\epsilon \le C_k u^{\frac{1}{4}} t^{\frac{3}{4}} (1 + \tau^k),
    \end{equation}
    where $C_k$ is a constant depending on $k$.
\end{lemma}

By \eqref{eq:integration-helper1}, 
\begingroup
\allowdisplaybreaks
\begin{align*}
    &\int_{0}^{t} \sqrt{\log\Big(2 + C_{\rho, s} \cdot \frac{2V + \sqrt{n} t}{\epsilon}\Big)} \, d\epsilon \le \int_{0}^{t} \sqrt{\log\Big(\frac{2 t + C_{\rho, s} (2V + \sqrt{n} t)}{\epsilon}\Big)} \, d\epsilon \\
    &\qquad \le C t\bigg[1 + 2\log\Big(2 + C_{\rho, s} \Big(\sqrt{n} + \frac{2V}{t}\Big)\Big)\bigg] \le C_{\rho, s} t \log n + C t\log\Big(1 + \frac{2V}{\sqrt{n}t}\Big). 
\end{align*}
\endgroup
Also, by \eqref{eq:integration-helper2} and the inequality $(x + y)^{1/4} \le x^{1/4} + y^{1/4}$,
\begingroup
\allowdisplaybreaks
\begin{align*}
    \int_{0}^{t} &\Big(2^{d + 1} + 1 + \frac{2^{d + 2} V }{\epsilon} \Big)^{\frac{1}{4}} \bigg[\log\Big(2^{d + 1} + 1 + \frac{2^{d + 2} V }{\epsilon}\Big)\bigg]^{\frac{3(2s - 1)}{8}} \, d\epsilon \\
    & \le \int_{0}^{t} \Big(\frac{(2^{d + 1} + 1)t + 2^{d + 2} V}{\epsilon} \Big)^{\frac{1}{4}} \bigg[\log\Big(\frac{(2^{d + 1} + 1)t + 2^{d + 2} V}{\epsilon}\Big)\bigg]^{\frac{3(2s - 1)}{8}} \, d\epsilon \\
    & \le C_d \big[(2^{d + 1} + 1) t + 2^{d + 2} V\big]^{\frac{1}{4}} t^{\frac{3}{4}} \bigg[1 + \Big\{\log\Big(2^{d + 1} + 1 + \frac{2^{d + 2} V }{t}\Big)\Big\}^{\frac{3(2s - 1)}{8}} \bigg] \\ 
    & \le C_d \big[\{(2^{d + 1} + 1) t\}^{\frac{1}{4}} + (2^{d + 2} V)^{\frac{1}{4}}\big] t^{\frac{3}{4}} \bigg[\log\Big(2 +  \frac{ V }{t}\Big)\bigg]^{\frac{3(2s - 1)}{8}} \\
    & \le C_d t \bigg[\log\Big(2 +  \frac{V}{t}\Big)\bigg]^{\frac{3(2s - 1)}{8}} + C_d V^{\frac{1}{4}} t^{\frac{3}{4}}  \bigg[\log\Big(2 +  \frac{V}{t}\Big)\bigg]^{\frac{3(2s - 1)}{8}}.
\end{align*}
\endgroup
Combining these results, we can derive that
\begin{align*}
    &\int_{0}^{t} \sqrt{\log N\big(\epsilon, B_{\F_{\text{disc}}(V)}(t, f^*_{\text{disc}}, \|\cdot\|_n), \|\cdot\|_n\big)} \, d\epsilon \le C_{\rho, d} t \log n + C_{d} t \log\Big(1 + \frac{2V}{\sqrt{n}t}\Big) \\
    &\qquad \qquad \qquad \qquad \qquad + C_{\rho, d} t \bigg[\log\Big(2 +  \frac{V}{t}\Big)\bigg]^{\frac{3(2s - 1)}{8}} + C_{\rho, d} V^{\frac{1}{4}} t^{\frac{3}{4}} \bigg[\log\Big(2 +  \frac{V}{t}\Big)\bigg]^{\frac{3(2s - 1)}{8}}. 
\end{align*}

Now, consider the function $\Psi$ defined as the right-hand side of the above inequality, i.e., 
\begin{align*}
    &\Psi(t) = C_{\rho, d} t \log n + C_{d} t \log\Big(1 + \frac{2V}{\sqrt{n}t}\Big) \\
    &\qquad \quad+ C_{\rho, d} t \bigg[\log\Big(2 +  \frac{V}{t}\Big)\bigg]^{\frac{3(2s - 1)}{8}} + C_{\rho, d} V^{\frac{1}{4}} t^{\frac{3}{4}} \bigg[\log\Big(2 +  \frac{V}{t}\Big)\bigg]^{\frac{3(2s - 1)}{8}}
\end{align*}
for $t > 0$.
Then, $t \mapsto \Psi(t)/t^2$ is monotonically decreasing and 
\begin{equation*}
    \Psi(t) \ge \max\bigg\{t, \int_{0}^{t}\sqrt{\log N\big(\epsilon, B_{\F_{\text{disc}}(V)}(t, f^*_{\text{disc}}, \|\cdot\|_n), \|\cdot\|_n \big)} d\epsilon \bigg\},
\end{equation*}
by definition.
Also, note that
\begingroup
\allowdisplaybreaks
\begin{align*}
    C_{\rho, d} t \log n \le \frac{\sqrt{n} t^2}{4c \sigma} \mbox{\quad if \;}& t \ge C_{\rho, d} \frac{\sigma \log n}{\sqrt{n}}, \\
    C_{d} t \log\Big(1 + \frac{2V}{\sqrt{n}t}\Big) \le \frac{\sqrt{n} t^2}{4c \sigma} \mbox{\quad if \;}& t \ge \max\Big\{\frac{\sigma}{\sqrt{n}}, C_{d} \frac{\sigma}{\sqrt{n}} \log\Big(1 + \frac{2V}{\sigma }\Big)\Big\}, \\
    C_{\rho, d} t \bigg[\log\Big(2 + \frac{V}{t}\Big)\bigg]^{\frac{3(2s - 1)}{8}} \le \frac{\sqrt{n} t^2}{4c \sigma} \mbox{\quad if \;}& t \ge \max\bigg\{\frac{\sigma}{\sqrt{n}}, C_{\rho, d} \frac{\sigma}{\sqrt{n}} \bigg[\log\Big(2 + \frac{V n^{\frac{1}{2}} }{\sigma}\Big)\bigg]^{\frac{3(2s - 1)}{8}}\bigg\},
\end{align*}
\endgroup
and 
\begin{align*}
    &C_{\rho, d} V^{\frac{1}{4}} t^{\frac{3}{4}} \bigg[\log\Big(2 + \frac{V}{t}\Big)\bigg]^{\frac{3(2s - 1)}{8}} \le \frac{\sqrt{n} t^2}{4c \sigma} \\
    &\qquad \qquad \qquad \mbox{\quad if \;} t \ge \max \bigg\{\frac{\sigma}{\sqrt{n}}, C_{\rho, d} \frac{\sigma^{\frac{4}{5}} V^{\frac{1}{5}}}{n^{\frac{2}{5}}} \bigg[\log\Big(2 + \frac{V n^{\frac{1}{2}}}{\sigma}\Big)\bigg]^{\frac{3(2s - 1)}{10}}\bigg\}.
\end{align*}
Therefore, if we let 
\begin{align*}
    &t_n = \max\bigg\{\frac{\sigma}{\sqrt{n}}, C_{\rho, d} \frac{\sigma \log n}{\sqrt{n}}, C_{d} \frac{\sigma}{\sqrt{n}} \log\Big(1 + \frac{2V}{\sigma }\Big), \\
    &\qquad \qquad \qquad C_{\rho, d}  \frac{\sigma}{\sqrt{n}} \bigg[\log\Big(2 + \frac{V n^{\frac{1}{2}} }{\sigma}\Big)\bigg]^{\frac{3(2s - 1)}{8}}, C_{\rho, d} \frac{\sigma^{\frac{4}{5}} V^{\frac{1}{5}}}{n^{\frac{2}{5}}} \bigg[\log\Big(2 + \frac{V n^{\frac{1}{2}}}{\sigma}\Big)\bigg]^{\frac{3(2s - 1)}{10}}\bigg\},
\end{align*}
then the inequality 
\begin{equation*}
        \sqrt{n} t_n^2 \ge c \sigma \Psi(t_n)
\end{equation*}
holds. 
By Theorem \ref{thm:least-squares-estimator} (and Remark \ref{rmk:least-squares-estimator-well-specified}), it follows that 
\begin{align}\label{eq:raw_risk_upper_bound}
\begin{split}
    &\mathcal{R}_F\big(\hat{f}^{d, s}_{n, V}, f^{*}\big) \le \frac{C \sigma^2}{n} + t_n^2 \\ 
    &\qquad \le C_{\rho, d}  \Big(\frac{\sigma^2 V^{\frac{1}{2}}}{n}\Big)^{\frac{4}{5}} \bigg[\log\Big(2 + \frac{V n^{\frac{1}{2}}}{\sigma}\Big)\bigg]^{\frac{3(2s - 1)}{5}} + C_{\rho, d} \frac{\sigma^2}{n} \bigg[\log\Big(2 + \frac{V n^{\frac{1}{2}}}{\sigma}\Big)\bigg]^{\frac{3(2s - 1)}{4}} \\
    &\qquad \qquad + C_{d} \frac{\sigma^2}{n} \Big[\log\Big(1 + \frac{2V }{\sigma}\Big)\Big]^2 + C_{\rho, d} \frac{\sigma^2}{n} [\log n]^2.
\end{split}
\end{align}
We can further observe that the second term and the third term on the right-hand side of \eqref{eq:raw_risk_upper_bound} can be absorbed into the other terms.
Note that there exist some positive constants $C_s$ and $C$ such that 
\begin{equation*}
    [\log(2 + x)]^{\frac{3(2s - 1)}{4}} \le x^{\frac{2}{5}} \qt{for $x \ge C_s$}
\end{equation*}
and
\begin{equation*}
    [\log(1 + 2x)]^2 \le x^{\frac{2}{5}} \qt{for $x \ge C$}.
\end{equation*}
Hence, the second term can be bounded by
\begin{equation*}
    C_{\rho, d} \frac{\sigma^2}{n} \bigg[\log\Big(2 + \frac{V n^{\frac{1}{2}}}{\sigma}\Big)\bigg]^{\frac{3(2s - 1)}{4}} \le C_{\rho, d} \frac{\sigma^2}{n} + C_{\rho, d} \frac{\sigma^2}{n} \Big(\frac{V n^{\frac{1}{2}}}{\sigma}\Big)^{\frac{2}{5}} = C_{\rho, d} \frac{\sigma^2}{n} + C_{\rho, d} \Big(\frac{\sigma^2 V^{\frac{1}{2}}}{n}\Big)^{\frac{4}{5}},
\end{equation*}
and the third term can be bounded by
\begin{align*}
    &C_{d} \frac{\sigma^2}{n} \Big[\log\Big(1 + \frac{2V }{\sigma}\Big)\Big]^2 \le C_{d} \frac{\sigma^2}{n} \Big[\log\Big(1 + \frac{2Vn^{\frac{1}{2}} }{\sigma}\Big)\Big]^2 \\
    &\qquad \qquad \ \le C_{d} \frac{\sigma^2}{n} + C_{d} \frac{\sigma^2}{n} \Big(\frac{V n^{\frac{1}{2}}}{\sigma}\Big)^{\frac{2}{5}} = C_{d} \frac{\sigma^2}{n} + C_{d} \Big(\frac{\sigma^2 V^{\frac{1}{2}}}{n}\Big)^{\frac{4}{5}}.
\end{align*}
Consequently, removing both terms from \eqref{eq:raw_risk_upper_bound}, we can derive that
\begin{equation*}
    \mathcal{R}_F\big(\hat{f}^{d, s}_{n, V}, f^{*}\big) \le C_{\rho, d}  \Big(\frac{\sigma^2 V^{\frac{1}{2}}}{n}\Big)^{\frac{4}{5}} \bigg[\log\Big(2 + \frac{V n^{\frac{1}{2}}}{\sigma}\Big)\bigg]^{\frac{3(2s - 1)}{5}} + C_{\rho, d} \frac{\sigma^2}{n} [\log n]^2.
\end{equation*}
\end{proof}

\subsubsection{Proof of Theorem \ref{thm:d-metric-entropy-main-text}} \label{pf:d-metric-entropy}
We first briefly recall what Brownian sheet is and introduce integrated Brownian sheet. 
An $m$-dimensional Brownian sheet $B_m = (B_m(t, \cdot): t \in [0, 1]^m)$ is a centered Gaussian process on $[0, 1]^m$ whose covariance is given by
\begin{equation*}
    \E\big[B_m(t, \cdot) B_m(s, \cdot)\big] = \prod_{j = 1}^{m} \min(t_j, s_j).
\end{equation*}
Let $\{\psi_{k_1} \otimes \dots \otimes \psi_{k_m}: k_1, \dots, k_m \in \mathbb{N}\}$ be an orthonormal basis of $L^2([0,1]^m)$.
Here $\psi_{k_1} \otimes \dots \otimes \psi_{k_m}$ indicates the
tensor product of the functions $\psi_{k_1}, \dots, \psi_{k_m}$,
which is defined by  
\begin{equation*}
    (\psi_{k_1} \otimes \dots \otimes \psi_{k_m})(x) = \psi_{k_1}(x_1) \cdot \dots \cdot \psi_{k_m}(x_m) 
\end{equation*}
for $x = (x_1, \dots, x_m) \in [0, 1]^m$.
Then, it is well-known that the Brownian sheet $B_m$ can be represented by 
\begin{equation*}
    B_m(\cdot, \omega) = \sum_{k_1, \dots, k_m \in \mathbb{N}} \xi_{k_1, \dots, k_m}(\omega) \cdot \intoper_m(\psi_{k_1} \otimes \dots \otimes \psi_{k_m})(\cdot),
\end{equation*}
where $\xi_{k_1, \dots, k_m}$ are independent standard normal random variables, $\intoper_m: L^2([0, 1]^m) \rightarrow C([0, 1]^m)$ is the $m$-dimensional integral operator defined by 
\begin{equation*}
    \intoper_m f(t) = \int_{[\zerovec, t]} f(x) \, dx \mbox{\quad for } t \in [0, 1]^m, 
\end{equation*}
and the series converges a.s. in $C([0, 1]^m)$ (see, e.g., \cite{giambartolomei2016loeve}, Section 5).
An $m$-dimensional integrated Brownian sheet $X_m = (X_m(t, \cdot): t \in [0, 1]^m)$ is then simply defined as 
\begin{equation*}
    X_m(\cdot, \omega) = \intoper_m \big(B_m(\cdot, \omega)\big). 
\end{equation*}
Since $\intoper_m$ is a continuous linear operator on $C([0, 1]^m)$, it follows that 
\begin{align*} 
    X_m(\cdot, \omega) &= \intoper_m\bigg(\sum_{k_1, \dots, k_m \in \mathbb{N}} \xi_{k_1, \dots, k_m}(\omega) \cdot \intoper_m(\psi_{k_1} \otimes \dots \otimes \psi_{k_m})(\cdot) \bigg) \\
    &= \sum_{k_1, \dots, k_m \in \mathbb{N}} \xi_{k_1, \dots, k_m}(\omega) \cdot \intoper_m^2(\psi_{k_1} \otimes \dots \otimes \psi_{k_m})(\cdot) 
\end{align*}
almost surely.
We will exploit this series expansion of the integrated Brownian sheet $X_m$ in our proof of Theorem \ref{thm:d-metric-entropy-main-text}.

We also utilize the following two results for our proof of Theorem \ref{thm:d-metric-entropy-main-text}. The first result is
\cite{Li1999}, Theorem 1.2, which is restated as Theorem
\ref{thm:li-linde} below. This result implies that upper bounds of
the metric entropy of $\D_m$ can be obtained from appropriate
lower bounds on the small ball probability of the integrated Brownian
sheet $X_m$. The second result is \cite{Chen2003}, Theorem 1.2, which is
restated as Theorem \ref{thm:chen-li} below. This result helps us
connect the small ball probability of the integrated Brownian sheet $X_m$ to
that of the Brownian sheet $B_m$ whose lower bounds are quite well-studied.

Recall that a random element $X$ in a Banach space $E$ is said to be
centered Gaussian if $\inner{A}{X} := A(X)$ is normally distributed
and has mean zero for all $A \in E^*$, where $E^*$ is the dual space of $E$. Also, its 
covariance operator $\Sigma: E^* \rightarrow E$ is defined by  
\begin{equation*}
    \inner{A}{\Sigma B} = \int_{E} \left<A, x \right> \left<B, x \right> \, d\mu(x) = \E\left[\left<A, X \right> \left<B, X \right> \right]
\end{equation*}
for all $A, B \in E^*$, where $\mu$ is the measure induced on $E$ by $X$ (see, e.g., \cite{lifshits1995gaussian}, Section 8). 
Moreover, recall that for the map $\Phi: E^* \rightarrow E$ defined by the Bochner integral 
\begin{equation*}
    \Phi(A) = \int_E \left<A, x \right> x \, d\mu(x),
\end{equation*} 
the reproducing kernel Hilbert space of $X$ in $E$ is defined as the completion of $\Phi(E^*)$ with respect to the inner product 
\begin{equation*}
    \left<\Phi(A), \Phi(B) \right> := \int_{E} \left<A, x \right> \left<B, x \right> \, d\mu(x) = \E\left[\left<A, X \right> \left<B, X \right>\right]
\end{equation*}
(see, e.g., \cite{van2008reproducing}, Section 2).

\begin{theorem}[\cite{Li1999}, Theorem 1.2]\label{thm:li-linde}
    Let $(E, \|\cdot\|)$ be a separable Banach space and $X$ be a centered Gaussian element in $E$.
    Also, let $K_{X}$ be the unit ball of the reproducing kernel Hilbert space of $X$ in $E$. 
    For $\alpha > 0$ and $\beta \in \R$, if there exists some constant $c > 0$ such that
    \begin{equation*}
        \log \P(\|X\| \le \epsilon) \ge -c \epsilon^{-\alpha}\Big[\log\frac{1}{\epsilon}\Big]^{\beta}
    \end{equation*}
    for sufficiently small $\epsilon > 0$, then there exists some constant $C > 0$ such that  
    \begin{equation*}
        \log N(\epsilon, K_{X}, \|\cdot\|) \le C \epsilon^{-\frac{2\alpha}{(2 + \alpha)}} \Big[\log\frac{1}{\epsilon}\Big]^{\frac{2\beta}{2 + \alpha}}
    \end{equation*}
for sufficiently small $\epsilon > 0$.
\end{theorem}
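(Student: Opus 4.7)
The plan is to establish the classical Kuelbs--Li covering inequality and then optimize a single scale parameter against the hypothesized small--ball bound. Write $\varphi(\epsilon) := -\log \mathbb{P}(\|X\|_E \le \epsilon)$, so the hypothesis reads $\varphi(\epsilon) \le c\epsilon^{-\alpha}(\log(1/\epsilon))^{\beta}$ for sufficiently small $\epsilon$. The key intermediate bound I will prove is that for all $r, \epsilon > 0$,
$$\log N(2\epsilon, r K_X, \|\cdot\|_E) \;\le\; \log 2 \,+\, \tfrac{1}{2}r^{2} \,+\, \varphi(\epsilon).$$

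For this intermediate bound, I will use a Cameron--Martin / packing argument. Let $f_1, \dots, f_N \in rK_X$ be a maximal $2\epsilon$-separated family, so the translates $f_i + \epsilon B_E$ are pairwise disjoint. Writing $H$ for the reproducing kernel Hilbert space, each $f_i$ corresponds to an element $h_i \in H$ with $\|h_i\|_H \le r$, and the Cameron--Martin formula gives
$$\mu(\pm f_i + A) \;=\; \int_A \exp\!\bigl(\mp \tilde h_i(x) \,-\, \tfrac{1}{2}\|h_i\|_H^{2}\bigr)\, d\mu(x),$$
where $\tilde h_i$ is the centered Gaussian functional associated with $h_i$ and $\mu$ is the law of $X$. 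Summing the $\pm$ identities on $A = \epsilon B_E$ and using $\cosh \ge 1$ together with $\|h_i\|_H \le r$ yields
$$\mu(f_i + \epsilon B_E) \,+\, \mu(-f_i + \epsilon B_E) \;\ge\; 2\,e^{-r^{2}/2}\, \mu(\epsilon B_E).$$
For each $i$ at least one of the two terms beats $e^{-r^{2}/2}\mu(\epsilon B_E)$; by pigeonhole, at least $N/2$ of the indices share a common winning sign. Restricting to that subfamily preserves $2\epsilon$-separation (since $K_X$ is symmetric) and keeps all winners in $rK_X$; disjointness of their $\epsilon$-translates together with $\sum \mu(\cdot) \le 1$ then forces $(N/2)\, e^{-r^{2}/2}\mu(\epsilon B_E) \le 1$, which rearranges to the displayed inequality.

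To conclude I will combine this with the scaling identity $N(\delta, K_X) = N(r\delta, rK_X)$. Setting $2\epsilon = r\delta$ gives
$$\log N(\delta, K_X, \|\cdot\|_E) \;\le\; \inf_{r > 0} \bigl\{\log 2 \,+\, \tfrac{1}{2}r^{2} \,+\, \varphi(r\delta/2)\bigr\}.$$
Inserting the polynomial--logarithmic bound on $\varphi$ and choosing $r \asymp \delta^{-\alpha/(2+\alpha)}(\log(1/\delta))^{\beta/(2+\alpha)}$ makes $r^{2}$ and $\varphi(r\delta/2)$ of the same order $\delta^{-2\alpha/(2+\alpha)}(\log(1/\delta))^{2\beta/(2+\alpha)}$, from which the claimed bound follows after absorbing the additive $\log 2$. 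The main technical obstacle is the Cameron--Martin / pigeonhole step: sharpness of the constant $e^{-r^{2}/2}$ relies on the $\pm$ symmetrization (so that only $\cosh$, not $\exp$, survives), and one must verify that the sign-restriction preserves both $2\epsilon$-separation and membership in $rK_X$, which uses only the symmetry $K_X = -K_X$. The subsequent optimization is essentially routine; the only care needed is that for $\beta \le 0$ the logarithmic prefactors still behave as advertised, which follows from $\log(1/(r\delta)) \sim \tfrac{2}{2+\alpha}\log(1/\delta)$ at the optimal $r = r(\delta)$.
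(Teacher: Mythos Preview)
The paper does not give its own proof of this theorem: it is simply restated from \citet[Theorem 1.2]{Li1999} and used as a black box in the proof of Theorem~\ref{thm:d-metric-entropy-main-text}. So there is no ``paper's proof'' to compare against.

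Your argument is correct and is essentially the classical Kuelbs--Li proof that underlies the cited result. One minor simplification: the pigeonhole step is unnecessary. Because $\epsilon B_E$ is symmetric and $\mu$ is invariant under $x\mapsto -x$, the substitution $x\mapsto -x$ in the Cameron--Martin integral gives directly
\[
\mu(f_i+\epsilon B_E)=\int_{\epsilon B_E}\cosh\bigl(\tilde h_i(x)\bigr)\,e^{-\|h_i\|_H^2/2}\,d\mu(x)\;\ge\;e^{-r^2/2}\,\mu(\epsilon B_E)
\]
for every $i$, so the full packing family already yields $N\,e^{-r^2/2}\mu(\epsilon B_E)\le 1$ without splitting into signs or losing the factor $2$. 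Your optimization in $r$ is carried out correctly, including the handling of the logarithmic factor via $\log(1/(r\delta))\sim \tfrac{2}{2+\alpha}\log(1/\delta)$ at the balancing scale.
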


\begin{theorem}[\cite{Chen2003}, Theorem 1.2]\label{thm:chen-li}
    Let $(E, \|\cdot\|)$ be a separable Banach space and $(H, \|\cdot\|_H)$ be a Hilbert space. 
    Also, let $Y$ be a centered Gaussian element in $H$. 
    Then, for every linear operator $L: H \rightarrow E$ and a centered
    Gaussian element $X$ in $E$ with the covariance operator $LL^{*}$, 
    where $L^*: E^* \rightarrow H$ is the adjoint operator of $L$,  
    \begin{equation*}
        \P(\|LY\| \le \epsilon) \ge \P(\|X\| \le \lambda \epsilon) \cdot \E\Big[\exp\Big(-\frac{\lambda^2}{2}\|Y\|_H^2\Big)\Big]
    \end{equation*}
    for all $\lambda > 0$ and $\epsilon > 0$.
\end{theorem}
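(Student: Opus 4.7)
My plan is to deduce the inequality from two standard Gaussian tools: the Cameron--Martin shift inequality applied to $X$, and Anderson's inequality applied to the independent sum $LY + X/\lambda$.

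The starting point is the Gaussian shift bound: for every $h \in H$,
\[
\mathbb{P}(\|X + Lh\| \le \epsilon) \;\ge\; \mathbb{P}(\|X\| \le \epsilon)\, \exp\!\Big(-\tfrac{1}{2}\|h\|_H^2\Big).
\]
Since $X$ has covariance $LL^*$, the element $Lh$ lies in the Cameron--Martin space of $X$ with Cameron--Martin norm bounded by $\|h\|_H$ (approximating $h$ on $(\ker L)^{\perp}$ by elements of the range of $L^*$ if needed). The Cameron--Martin change-of-measure formula gives
\[
\mathbb{P}(\|X \pm Lh\| \le \epsilon) = \mathbb{E}\!\left[\mathbf{1}\{\|X\| \le \epsilon\}\, \exp\!\Big(\pm \Phi(X) - \tfrac{1}{2}\|Lh\|_{CM}^2\Big)\right],
\]
where $\Phi$ is the centered Gaussian linear functional on $(E,\mu_X)$ attached to $Lh$ (with $\mathrm{Var}(\Phi(X)) = \|Lh\|_{CM}^2$). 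Averaging the two signs---they produce equal probabilities by the symmetry $X \stackrel{d}{=} -X$---and using $\cosh \ge 1$ yields the shift bound. Applying it conditionally with $h$ replaced by the random $\lambda Y$ and then taking expectation over $Y$ by independence and Fubini,
\[
\mathbb{P}(\|X + \lambda LY\| \le \epsilon) \;\ge\; \mathbb{P}(\|X\| \le \epsilon)\, \mathbb{E}\!\left[\exp\!\Big(-\tfrac{1}{2}\lambda^2 \|Y\|_H^2\Big)\right].
\]

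Next, Anderson's inequality---adding an independent centered Gaussian to a centered symmetric random element cannot increase the probability of any centered symmetric convex set, in particular of a norm ball---applied to $LY$ and $X/\lambda$ gives
\[
\mathbb{P}(\|LY\| \le \epsilon/\lambda) \;\ge\; \mathbb{P}(\|LY + X/\lambda\| \le \epsilon/\lambda) \;=\; \mathbb{P}(\|\lambda LY + X\| \le \epsilon).
\]
Chaining this with the previous display and then substituting $\epsilon \mapsto \lambda\epsilon$ produces the claimed inequality.

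The main technical obstacle is the first step in Banach-space generality: one must verify that $Lh$ lies in the Cameron--Martin space of $X$ with norm at most $\|h\|_H$ even when $L$ is not injective or when the range of $L^*$ is only dense---rather than equal to---$(\ker L)^{\perp}$. The standard remedy is to approximate $h$ by elements of $L^*(E^*)$ inside $(\ker L)^{\perp}$ and pass to the limit; an inequality between Cameron--Martin norms (rather than equality) is all that the argument needs, since it only strengthens the exponential lower bound. Anderson's inequality and the final scaling are routine by comparison.
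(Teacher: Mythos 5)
The paper does not prove this statement; it is quoted verbatim from \citet{Chen2003} and used as an external ingredient, so there is no in-paper argument to compare against. Your proof is correct and is, in substance, the original Chen--Li argument: realize $X$ and $Y$ independently on a common probability space, apply the Cameron--Martin shift inequality $\mathbb{P}(\|X+Lh\|\le\epsilon)\ge e^{-\|h\|_H^2/2}\,\mathbb{P}(\|X\|\le\epsilon)$ conditionally on $Y$ with $h=\lambda Y$, integrate by Fubini, and then remove $X$ from the sum $X+\lambda LY$ via Anderson's inequality before rescaling $\epsilon$. The two delicate points are both handled adequately: the sign-averaging with $\cosh\ge 1$ and the symmetry $X\overset{d}{=}-X$ give the shift bound, and the Cameron--Martin norm estimate $\|Lh\|_{CM}\le\|h\|_H$ follows because $\ker L=(L^*E^*)^{\perp}$ (so the component of $h$ orthogonal to $\overline{L^*E^*}$ is annihilated by $L$), which is exactly the inequality your argument needs. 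The only unstated hypotheses are that $L$ is bounded (so that $LY$ is a well-defined centered Gaussian element of $E$, as required for Anderson's inequality) and that the inequality depends only on the marginal laws of $X$ and $Y$, so the independent coupling is legitimate; both are harmless and hold in the paper's application where $L$ is the integration operator from $L^2([0,1]^m)$ to $C([0,1]^m)$.
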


In our proof of Theorem \ref{thm:d-metric-entropy-main-text}, we use Theorem \ref{thm:li-linde} with $E = C([0, 1]^m)$ and $X = X_m$ and Theorem \ref{thm:chen-li} with $E = C([0, 1]^m)$, $H = L^2([0, 1]^m)$, $L = \intoper_m$, and $X = Y = B_m$.
Observe that the Brownian sheet $B_m$ is a centered Gaussian element both in $L^2([0, 1]^m)$ and $C([0, 1]^m)$, and the integrated Brownian sheet $X_m$ is a centered Gaussian element in $C([0, 1]^m)$.
It is also known that the covariance operator of the Brownian sheet $B_m$ is $\intoper_m \intoper_m^{*}$ (see, e.g., \cite{Dunker1999}, Section 3). 
Moreover, one can check from the series expansion of $X_m$ and the theorem below that the reproducing kernel Hilbert space $H_{X_m}$ of $X_m$ in $C([0, 1]^m)$ and its unit ball $K_{X_m}$ can be written as 
\begin{equation*}
    H_{X_m} = \bigg\{ \sum_{k_1, \dots, k_m \in \mathbb{N}} w_{k_1, \dots, k_m} \cdot \intoper_m^2(\psi_{k_1} \otimes \dots \otimes \psi_{k_m})(\cdot): (w_{k_1, \dots, k_m}, k_1, \dots, k_m \in
      \mathbb{N}) \in l^2 \bigg\}
\end{equation*}
and
\begin{equation*}
    K_{X_m} = \bigg\{ \sum_{k_1, \dots, k_m \in \mathbb{N}} w_{k_1, \dots, k_m} \cdot \intoper_m^2(\psi_{k_1} \otimes \dots \otimes \psi_{k_m})(\cdot) : (w_{k_1, \dots, k_m}, k_1, \dots, k_m \in
    \mathbb{N}) \in B_{l^2} \bigg\},
\end{equation*}
where $B_{l_2}$ is the unit ball in $l^2$.

\begin{theorem}[\cite{van2008reproducing}, Theorem 4.2]\label{thm:vaart-zanten}
Suppose that $\{h_k\}_{k \ge 1}$ is a sequence in a separable Banach space $E$ such that if $w \in l^2$ and
\begin{equation*}
    \sum_{k = 1}^{\infty} w_k h_k = 0,
\end{equation*}
where the convergence is in $E$, then $w = 0$.
Then, for a random element $X$ in $E$ defined by 
\begin{equation*}
    X = \sum_{k = 1}^{\infty} \xi_k h_k,
\end{equation*}
where $\xi_k$ are independent standard normal random variables, and the series converges a.s. in $E$, the reproducing kernel Hilbert space of $X$ in $E$ is given by
\begin{equation*}
    H_{X} = \bigg\{\sum_{k = 1}^{\infty} w_k h_k: w \in l^2 \bigg\}
\end{equation*}
with the norm 
\begin{equation*}
    \Big\|\sum_{k = 1}^{\infty} w_k h_k \Big\| = \|w\|_2.
\end{equation*}
\end{theorem}

\begin{proof}[Proof of Theorem \ref{thm:d-metric-entropy-main-text}]
For each $k \in \mathbb{N}$, let $\phi_k$ be the function on $[0, 1]$ defined by $\phi_k(x) = \psi_k(1-x)$ for $x \in [0, 1]$.
Then, clearly, $\{\phi_{k_1} \otimes \dots \otimes \phi_{k_m}: k_1, \dots, k_m \in \mathbb{N}\}$ is also an orthonormal basis of $L^2([0,1]^m)$.
Also, let   
\begin{align*}
    &Z = \big\{(z_{k_1, \dots, k_m}, k_1, \dots, k_m \in \mathbb{N}) :
    z_{k_1, \dots, k_m} = \inner{F}{\phi_{k_1} \otimes \dots \otimes
      \phi_{k_m}} \\ 
    &\qquad \qquad \qquad \qquad \qquad \qquad \qquad \qquad \mbox{ for } k_1, \dots, k_m \in \mathbb{N}, \mbox{
    where } F \in \D_m\big\}.  
\end{align*}
Note that $Z$ is convex and symmetric with respect to the origin, because $\D_m$ is convex and symmetric with respect to the zero-function.
Also, for $F \in \D_m$ of the form
\begin{equation*}
    F(x_1, \dots, x_m) = \int (x_1 - t_1)_+ \cdots (x_m - t_m)_+ \, d\nu(t) = \int_{[\zerovec, x]} (x_1 - t_1) \cdots (x_m - t_m) \, d\nu(t),
\end{equation*}
where $[\zerovec, x] = \{t \in \R^m : 0 \le t_j \le x_j \mbox{ for all } j \in [m]\}$ and $\nu$ is a signed measure on $[0, 1]^m$ with variation $|\nu|([0, 1]^m) \le 1$, we have
\begin{align} \label{F-inner-basis}
\begin{split}
    &\inner{F}{\phi_{k_1} \otimes \dots \otimes \phi_{k_m}} = \int_{[0, 1]^m} \int_{[\zerovec, x]} (x_1 - t_1) \cdots (x_m - t_m) \, d\nu(t) \cdot (\phi_{k_1} \otimes \dots \otimes \phi_{k_m})(x) \, dx \\
    &\qquad = \int_{[0, 1]^m} \int_{t_1}^1 \dots \int_{t_m}^1 (x_1 - t_1) \cdots (x_m - t_m) \cdot \phi_{k_1}(x_1) \cdots \phi_{k_m}(x_m) \, dx_m \cdots \, dx_1 \, d\nu(t)
\end{split}
\end{align}
for $k_1, \dots, k_m \in \mathbb{N}$.
By Parseval's indentity, $Z$ is a subset of $l^2$ and 
\begin{equation} \label{eq:parseval}
     \log N(\epsilon, \D_m, \|\cdot\|_2) = \log N(\epsilon, Z, \|\cdot\|_{l^2}).
\end{equation}
Also, since $Z$ is convex and symmetric, by the fundamental duality theorem of metric entropy (refer to, e.g., \cite{Artstein2004b}, Theorem 5), there exist universal constants $c, C > 0$ such that
\begin{equation}\label{eq:duality}
    \log N(\epsilon, Z, \|\cdot\|_{l^2}) \le C \log N(c \epsilon, B_{l_2}, \|\cdot\|_{Z^{\circ}}),
\end{equation}
where $B_{l_2}$ is the unit ball with respect to the $l^2$ norm and $\|\cdot\|_{Z^{\circ}}$ is the gauge of 
\begin{equation*}
    Z^{\circ} = \bigg\{ (w_{k_1, \dots, k_m}, k_1, \dots, k_m \in \mathbb{N}) \in l^2: \sup_{z \in Z} \bigg|\sum_{k_1, \dots, k_m \in \mathbb{N}}
    w_{k_1, \dots, k_m} z_{k_1, \dots, k_m}\bigg| \le 1 \bigg\}, 
\end{equation*}
i.e., 
\begin{equation*}
    \|w\|_{Z^{\circ}} = \inf \{r \ge 0: w \in r Z^{\circ}\} 
\end{equation*}
for $w = (w_{k_1, \dots, k_m}, k_1, \dots, k_m \in \mathbb{N}) \in l^2$. 
Due to \eqref{F-inner-basis}, we can also write $Z^{\circ}$ as 
\begin{align}\label{Z-circ-characterization}
\begin{split}
    &Z^{\circ} = \bigg\{ (w_{k_1, \dots, k_m}, k_1, \dots, k_m \in\mathbb{N}) \in l^2: \sup_{t \in [0, 1]^m} \bigg|\sum_{k_1, \dots, k_m \in \mathbb{N}} w_{k_1, \dots, k_m} \\
    &\qquad \qquad \cdot \int_{t_1}^1 \dots \int_{t_m}^1 (x_1 - t_1) \cdots (x_m - t_m) \phi_{k_1}(x_1) \cdots \phi_{k_m}(x_m) \, dx_m \cdots \, dx_1 \bigg| \le
      1 \bigg\}.
\end{split}
\end{align}

We now relate the covering number $N(\epsilon, B_{l_2}, \|\cdot\|_{Z^{\circ}})$ to the small ball probability of the integrated Brownian sheet $X_m$.
Recall that we can represent the integrated Brownian sheet $X_m$ as
\begin{equation} \label{integrated-brownian-sheet}
    X_m(\cdot, \omega) = \sum_{k_1, \dots, k_m \in \mathbb{N}} \xi_{k_1, \dots, k_m}(\omega) \cdot \intoper_m^2(\psi_{k_1} \otimes \dots \otimes \psi_{k_m})(\cdot), 
\end{equation}
where the series converges a.s. in $C([0, 1]^m)$.
Observe that 
\begin{align} \label{integrated-brownian-sheet-coefficients}
\begin{split}
    &\intoper_m^2(\psi_{k_1} \otimes \dots \otimes \psi_{k_m})(t) = \int_{[\zerovec, t]} \int_{[\zerovec, s]} (\psi_{k_1} \otimes \dots \otimes \psi_{k_m})(x) \, dx \, ds \\
    &\qquad = \int_{[\zerovec, t]} \int_{[x, t]} (\psi_{k_1} \otimes \dots \otimes \psi_{k_m})(x) \, ds \, dx \\
    &\qquad = \int_{[\zerovec, t]} (t_1 - x_1) \cdots (t_m - x_m) \psi_{k_1}(x_1) \cdots \psi_{k_m}(x_m) \, dx \\
    &\qquad = \int_{s_1}^1 \dots \int_{s_m}^1 (z_1 - s_1) \cdots (z_m - s_m) \phi_{k_1}(z_1) \cdots  \phi_{k_m}(z_m) \, dz_m \cdots \, dz_1
\end{split}
\end{align}
for $t \in [0, 1]^m$, where $[x, t] = \{u \in \R^m: x_i \le u_i \le t_i \mbox{ for all } i \in [m]\}$ and  $(s_1, \dots, s_m) = (1 - t_1, \dots, 1 - t_m)$.
By \eqref{Z-circ-characterization} and \eqref{integrated-brownian-sheet-coefficients}, 
\begingroup
\allowdisplaybreaks
\begin{align*}
    &\sup_{t \in [0, 1]^m} \bigg|\sum_{k_1, \dots, k_m \in \mathbb{N}} w_{k_1, \dots, k_m} \cdot \intoper_m^2(\psi_{k_1} \otimes \dots \otimes \psi_{k_m})(t) \bigg| \\
    &\quad= \sup_{s \in [0, 1]^m} \bigg|\sum_{k_1, \dots, k_m \in \mathbb{N}} w_{k_1, \dots, k_m} \\
    &\qquad \qquad \qquad \qquad \cdot \int_{s_1}^1 \dots \int_{s_m}^1 (z_1 - s_1) \cdots (z_m - s_m) \phi_{k_1}(z_1) \cdots  \phi_{k_m}(z_m) \, dz_m \cdots \, dz_1 \bigg| \\
    &\quad= \|w\|_{Z^{\circ}}
\end{align*}
\endgroup
for $w = (w_{k_1, \dots, k_m}, k_1, \dots, k_m \in \mathbb{N}) \in l^2$.
Since the unit ball $K_{X_m}$ of the reproducing kernel Hilbert space of $X_m$ in $C([0, 1]^m)$ is
\begin{equation*}
    K_{X_m} = \bigg\{ \sum_{k_1, \dots, k_m \in \mathbb{N}} w_{k_1, \dots, k_m} \cdot \intoper_m^2(\psi_{k_1} \otimes \dots \otimes \psi_{k_m})(\cdot) : (w_{k_1, \dots, k_m}, k_1, \dots, k_m \in
    \mathbb{N}) \in B_{l^2} \bigg\},
\end{equation*}
it thus follows that
\begin{equation*}
    N(\epsilon, K_{X_m}, \|\cdot\|_{\infty}) = N(\epsilon, B_{l^2}, \|\cdot\|_{Z^{\circ}}).
\end{equation*}
Hence, for the integrated Brownian sheet $X_m$, we can restate Theorem \ref{thm:li-linde} as follows:
for $\alpha > 0$ and $\beta \in \R$,
\begin{align}\label{lilindethm}
\begin{split}
    &\log \P\Big(\sup_{t \in [0, 1]^m} |X_m(t)| \le \epsilon \Big) \ge -c_m \epsilon^{-\alpha} \Big[\log\frac{1}{\epsilon}\Big]^{\beta} \mbox{ for all sufficiently small } \epsilon > 0 \\ 
    &\; \implies \log N(\epsilon, B_{l_2}, \|\cdot\|_{Z^{\circ}}) \le C_m \epsilon^{-\frac{2\alpha}{(2 + \alpha)}} \Big[\log\frac{1}{\epsilon}\Big]^{\frac{2\beta}{2 + \alpha}} \mbox{ for all sufficiently small } \epsilon > 0.
\end{split}
\end{align}
Combining \eqref{eq:parseval}, \eqref{eq:duality}, and \eqref{lilindethm}, we can see that it suffices to find lower bounds of $\log \P(\sup_{t \in [0, 1]^m} |X_m(t)| \le \epsilon)$ in order to find upper bounds of the metric entropy of $\D_m$.

We now turn to the small ball probability of $X_m$.
Recall that the covariance operator of the Brownian sheet $B_m$ is $\intoper_m \intoper_m^{*}$. 
By applying Theorem \ref{thm:chen-li} to $E = C([0, 1]^m)$, $H = L^2([0, 1]^m)$, $L = \intoper_m$, and $X = Y = B_m$, we can derive that 
\begin{equation*}
    \P\Big(\sup_{t \in [0, 1]^m} |X_m(t)| \le \epsilon \Big) \ge \P\Big(\sup_{t \in [0, 1]^m} |B_m(t)| \le \lambda \epsilon \Big) \cdot \E\Big[\exp \Big(-\frac{\lambda^2}{2} \int_{[0, 1]^m} B_m(t)^2 \, dt\Big) \Big].
\end{equation*}
for all $\lambda > 0$ and $\epsilon > 0$. 
Substituting $\lambda = \epsilon^{-\frac{2}{3}}$ and using Markov's inequality, we obtain
\begingroup
\allowdisplaybreaks
\begin{align*}
    &\P\Big(\sup_{t \in [0, 1]^m} |X_m(t)| \le \epsilon \Big)  \ge \P\Big(\sup_{t \in [0, 1]^m} |B_m(t)| \le \epsilon^{\frac{1}{3}} \Big) \cdot \E\Big[\exp \Big(-\frac{\epsilon^{-\frac{4}{3}}}{2} \int_{[0, 1]^m} B_m(t)^2 \, dt \Big) \Big] \\
    &\qquad \ge \P\Big(\sup_{t\in [0, 1]^m} |B_m(t)| \le \epsilon^{\frac{1}{3}}\Big) \cdot \Big[\P\Big(\sup_{t \in [0, 1]^m} |B_m(t)| \le \epsilon^{\frac{1}{3}}\Big) \cdot \exp\Big(-{\frac{\epsilon^{-\frac{2}{3}}}{2}}\Big)\Big] \\
    &\qquad \ge \Big[\P\Big(\sup_{t \in [0, 1]^m} |B_m(t)| \le \epsilon^{\frac{1}{3}}\Big)\Big]^2 \cdot \exp \Big( -{\frac{\epsilon^{-\frac{2}{3}}}{2}} \Big).
\end{align*}
\endgroup
In \cite{Dunker1999}, Theorem 6, it was proved that 
\begin{equation*}
    \log \P\Big(\sup_{t \in [0, 1]^m} |B_m(t)| \le \epsilon \Big) \ge -c_m \epsilon^{-2}\Big[\log \frac{1}{\epsilon}\Big]^{2m - 1}
\end{equation*}
for sufficiently small $\epsilon > 0$.
In particular, it is well-known that the logarithmic multiplicative factor of the right-hand side can be omitted when $m = 1$.
Therefore, it follows that 
\begin{align*}
    &\log \P\Big(\sup_{t \in [0, 1]^m} |X_m(t)| \le \epsilon\Big) \ge 2 \log \P\Big(\sup_{t \in [0, 1]^m} |B_m(t)| \le \epsilon^{\frac{1}{3}}\Big) - {\frac{\epsilon^{-\frac{2}{3}}}{2}} \\
    &\qquad \qquad \ge -c_m \epsilon^{-\frac{2}{3}} \Big[\log \frac{1}{\epsilon}\Big]^{2m - 1} - {\frac{\epsilon^{-\frac{2}{3}}}{2}} \ge -c_m \epsilon^{-\frac{2}{3}}\Big[\log \frac{1}{\epsilon}\Big]^{2m - 1}
\end{align*}
for sufficiently small $\epsilon > 0$.
This result together with \eqref{lilindethm} implies that
\begin{equation*}
    \log N(\epsilon, B_{l_2}, \|\cdot\|_{Z^{\circ}}) \le C_m \epsilon^{-\frac{1}{2}} \Big[\log \frac{1}{\epsilon}\Big]^{\frac{3(2m - 1)}{4}}
\end{equation*}
for sufficiently small $\epsilon > 0$.
Consequently, combining all the pieces, we can prove that
\begin{equation*}
    \log N(\epsilon, \D_m, \|\cdot\|_2) \le C_m \epsilon^{-\frac{1}{2}}\Big[\log \frac{1}{\epsilon}\Big]^{\frac{3(2m - 1)}{4}}
\end{equation*}
for sufficiently small $\epsilon > 0$. 
Note that we can omit the logarithmic factor when $m = 1$. 
\end{proof}

\subsubsection{Proof of Theorem \ref{thm:risk-upper-bound-fixed-approx}}\label{pf:risk-upper-bound-approx}
Our proof of Theorem \ref{thm:risk-upper-bound-fixed-approx} uses the following lemma, which we prove in Appendix \ref{pf:discrete-measures-approximation}. 
This lemma states that for every function $f \in \infmars^{d, s}$, we can find $\fazeromu$ which is close to $f$ and whose $\mu_{\alpha}$ are discrete signed measures concentrated on the lattices generated by $\tilde{\mathcal{U}}_k$.    
We will use this lemma again for our proof of Theorem \ref{thm:rate-of-convergence-random-approx}.

\begin{lemma}\label{lem:discrete-measures-approximation}
    Suppose we are given a real number $a_{\zerovec}$ and a collection of finite signed measures $\{\nu_{\alpha}\}$ where $\nu_{\alpha}$ is defined on $[0, 1)^{|\alpha|}$ for each $\alpha \in \{0, 1\}^d \setminus \{\zerovec\}$ with $|\alpha| \le s$.
    Then, there exists a collection of discrete signed measures $\{\mu_{\alpha}\}$ where $\mu_\alpha$ is concentrated on $(\prod_{k \in S(\alpha)} \tilde{\mathcal{U}}_k) \cap [0, 1)^{|\alpha|}$ for each $\alpha \in \{0, 1\}^d \setminus \{\zerovec\}$ with $|\alpha| \le s$ such that 
    \begin{longlist}   
        \item $\Vmars(\fazeromu) \le \Vmars(\fazeronu)$, 
        \item $\|\fazeromu - \fazeronu\|_{\infty} \le (2/N) \cdot \Vmars(\fazeronu)$, and
        \item $\|\fazeromu - \fazeronu\|_{p_0, 2} \le C_d (B/N^3)^{1/2} \cdot \Vmars(\fazeronu)$ 
    \end{longlist}
    for some positive constant $C_d$ depending on $d$,  where $N = \min_k N_k$.
  \end{lemma}

\begin{proof}[Proof of Theorem \ref{thm:risk-upper-bound-fixed-approx}]
Recall the definition of $\F_{\text{disc}}(V)$ from our proof of Theorem \ref{thm:risk-upper-bound-fixed}.
Also, let $\F_{\text{approx}}(V)$ be the collection of all the functions $\fazeronu \in \infmars^{d, s}$ with $\Vmars(\fazeronu) \le V$ where $\nu_\alpha$ is concentrated on $(\prod_{k \in S(\alpha)} \tilde{\mathcal{U}}_k) \cap [0, 1)^{|\alpha|}$ for each $\alpha \in \{0, 1\}^d \setminus \{\zerovec\}$ with $|\alpha| \le s$.
Then, clearly,
\begin{equation*}
    \tilde{f}^{d, s}_{n, V} \in \argmin_{f \in \F_{\text{approx}}(V)} \bigg\{\sum_{i = 1}^{n} \big(y_i - f(x^{(i)})\big)^2 \bigg\}. 
\end{equation*}
Also, Lemma \ref{lem:discrete-measures-approximation} guarantees that we have $f_0 \in \F_{\text{approx}}(V)$ for which
\begin{equation*}
    \| f_0 - f^* \|_{n} \le \| f_0 - f^* \|_{\infty} \le \frac{2V}{N}.
\end{equation*}

In order to apply Theorem \ref{thm:least-squares-estimator} with $f_0 \in \F_{\text{approx}}(V)$, we need to bound the metric entropy of $B_{\F_{\text{approx}}(V)}(t, f_0, \|\cdot\|_n)$ (under the norm $\| \cdot \|_n$).
By Lemma \ref{lem:reduction-to-discrete-measures}, there exists $g_0 \in \F_{\text{disc}}(V)$ such that 
\begin{equation*}
    g_0(x^{(i)}) = f_0(x^{(i)})
\end{equation*}
for all $i \in [n]$.
Also, by the same lemma and the fact that the norm $\| \cdot \|_n$ only depends on the values of functions at $x^{(i)}$, we have 
\begin{equation*}
    N\big(\epsilon, B_{\F_{\text{approx}}(V)}(t, f_0, \|\cdot\|_n \big), \|\cdot\|_n) \le N\big(\epsilon, B_{\F_{\text{disc}}(V)}(t, g_0, \|\cdot\|_n), \|\cdot\|_n \big)
\end{equation*}
for every $\epsilon > 0$.
Hence, we can repeat our argument in the proof of Theorem \ref{thm:risk-upper-bound-fixed} to bound $N(\epsilon, B_{\F_{\text{disc}}(V)}(t, g_0, \|\cdot\|_n), \|\cdot\|_n)$ and thereby show that the function $\Psi$ defined by
\begin{align*}
    &\Psi(t) = C_{\rho, d} t \log n + C_{d} t \log\Big(1 + \frac{2V}{\sqrt{n}t}\Big) \\
    &\qquad \quad+ C_{\rho, d} t \bigg[\log\Big(2 +  \frac{V}{t}\Big)\bigg]^{\frac{3(2s - 1)}{8}} + C_{\rho, d} V^{\frac{1}{4}} t^{\frac{3}{4}} \bigg[\log\Big(2 +  \frac{V}{t}\Big)\bigg]^{\frac{3(2s - 1)}{8}}
\end{align*}
for $t > 0$ and 
\begin{align*}
    &t_n = \max\bigg\{\| f_0 - f^* \|_{n}, \frac{\sigma}{\sqrt{n}}, C_{\rho, d} \frac{\sigma \log n}{\sqrt{n}}, C_{d} \frac{\sigma}{\sqrt{n}} \log\Big(1 + \frac{2V}{\sigma }\Big), \\
    &\qquad \qquad \quad C_{\rho, d}  \frac{\sigma}{\sqrt{n}} \bigg[\log\Big(2 + \frac{V n^{\frac{1}{2}} }{\sigma}\Big)\bigg]^{\frac{3(2s - 1)}{8}}, C_{\rho, d} \frac{\sigma^{\frac{4}{5}} V^{\frac{1}{5}}}{n^{\frac{2}{5}}} \bigg[\log\Big(2 + \frac{V n^{\frac{1}{2}}}{\sigma}\Big)\bigg]^{\frac{3(2s - 1)}{10}}\bigg\},
\end{align*}
satisfy the conditions in Theorem \ref{thm:least-squares-estimator}. 
As a result, Theorem \ref{thm:least-squares-estimator} (and Remark \ref{rmk:least-squares-estimator-fixed-design}) implies  
\begin{align*}
    &\mathcal{R}_F\big(\tilde{f}^{d, s}_{n, V}, f^{*}\big) \le \| f_0 - f^* \|_n^2 + \frac{C \sigma^2}{n} + t_n^2 \\
    &\qquad \le \frac{8 V^2}{N^2} + C_{\rho, d}  \Big(\frac{\sigma^2 V^{\frac{1}{2}}}{n}\Big)^{\frac{4}{5}} \bigg[\log\Big(2 + \frac{V n^{\frac{1}{2}}}{\sigma}\Big)\bigg]^{\frac{3(2s - 1)}{5}} \\
    &\qquad \quad + C_{\rho, d} \frac{\sigma^2}{n} \bigg[\log\Big(2 + \frac{V n^{\frac{1}{2}}}{\sigma}\Big)\bigg]^{\frac{3(2s - 1)}{4}} + C_{d} \frac{\sigma^2}{n} \Big[\log\Big(1 + \frac{2V }{\sigma}\Big)\Big]^2 + C_{\rho, d} \frac{\sigma^2}{n} [\log n]^2.
\end{align*}
As in the proof of Theorem \ref{thm:risk-upper-bound-fixed}, we can absorb the third and the fourth term into the other terms. 
Consequently, we can derive that
\begin{align*}
    \mathcal{R}_F\big(\tilde{f}^{d, s}_{n, V}, f^{*}\big) \le \frac{8 V^2}{N^2} + C_{\rho, d}  \Big(\frac{\sigma^2 V^{\frac{1}{2}}}{n}\Big)^{\frac{4}{5}} \bigg[\log\Big(2 + \frac{V n^{\frac{1}{2}}}{\sigma}\Big)\bigg]^{\frac{3(2s - 1)}{5}} + C_{\rho, d} \frac{\sigma^2}{n} [\log n]^2
\end{align*}
as desired.
\end{proof}

\subsubsection{Proof of Theorem \ref{thm:rate-of-convergence-random}}\label{pf:rate-of-convergence-random}
The following theorem plays a central role in our proof of Theorem \ref{thm:rate-of-convergence-random}.
This theorem is a simple extension of \cite{han2019convergence}, Proposition 2 to the misspecified case, where estimators are searched over the function class that may not contain the true underlying function.  
For Theorem \ref{thm:rate-of-convergence-random}, we only need the partial result of Theorem \ref{thm:han-wellner-prop2-variant} concerning the well-specified case. 
The more general result covering the misspecified case is only required for Theorem \ref{thm:rate-of-convergence-random-approx}.
However, here we state the full result in order to avoid redundancy.
We provide our proof of Theorem \ref{thm:han-wellner-prop2-variant} in Appendix \ref{pf:han-wellner-prop2-variant}.

\begin{theorem}\label{thm:han-wellner-prop2-variant}
    Suppose that data $(X^{(1)}, y_1), \dots, (X^{(n)}, y_n)$ are generated according to the model 
    \begin{equation}
        y_i = f^{*}(X^{(i)}) + \xi_i, 
    \end{equation}
    where $X^{(i)}$ are i.i.d. random variables with law $P$ on $\mathcal{X}$, $\xi_i$ are i.i.d. errors independent of $X^{(1)}, \dots, X^{(n)}$ with mean zero and with finite $L^q$ norm for some $q \ge 3$, and
    $f^{*}$ is an unknown real-valued regression function defined on $\mathcal{X}$.
    Let $\F$ be a countable collection of real-valued functions defined on $\mathcal{X}$ and $f_0$ be an element of $\F$, and assume that $\|f - f_0\|_{\infty} \le M$ for every $f \in \F$.
    Also, let $\hat{f}$ be an estimator of $f^{*}$ for which
    \begin{equation*}
        \hat{f} \in \argmin_{f \in \F} \bigg\{\sum_{i = 1}^{n} \big(y_i - f(X^{(i)})\big)^2 \bigg\} 
    \end{equation*}
    with probability at least $1 - \epsilon$ for some $\epsilon \ge 0$.
    Moreover, suppose that there exists $t_n \ge 4 \| f_0 - f^*\|_{P, 2}$ such that 
    \begin{align*}
        &\E\bigg[\sup_{\substack{f \in \F - \{f_0\}\\ \|f\|_{P, 2} \le rt_n} } \Big| \frac{1}{\sqrt{n}} \sum_{i=1}^{n} \xi_i f(X^{(i)}) \Big| \bigg] \le r \sqrt{n} t_n^2, \\
        &\E\bigg[\sup_{\substack{f \in \F - \{f_0\}\\ \|f \|_{P, 2} \le rt_n} } \Big| \frac{1}{\sqrt{n}} \sum_{i=1}^{n} \epsilon_i f(X^{(i)}) \Big| \bigg] \le r \sqrt{n} t_n^2,
    \end{align*}
    and 
    \begin{equation*}
        \E\bigg[\sup_{\substack{f \in \F - \{f_0\}\\ \|f\|_{P, 2} \le rt_n}} \Big| \frac{1}{\sqrt{n}} \sum_{i=1}^{n} \epsilon_i f(X^{(i)}) \cdot (f_0 - f^*)(X^{(i)}) \Big| \bigg] \le r \sqrt{n} t_n^2
    \end{equation*}
    for every $r \ge 1$.
    Here $\|\cdot\|_{P, 2}$ is the norm defined by 
    \begin{equation*}
        \| f \|_{P, 2} = \big(\E_{X \sim P}[f(X)^2 ]\big)^{\frac{1}{2}}
    \end{equation*}
    and $\epsilon_i$ are i.i.d. Rademacher variables independent of $X^{(1)}, \dots, X^{(n)}$.
    Then, there exists a universal positive constant $C$ such that
    \begin{align}\label{probability-bound-random-design}
    \begin{split}
        \P (\|\hat{f} - f_0 \|_{P, 2} > t) \le \epsilon &+ C \cdot \frac{1 + M^3}{t^3} \cdot t_n^3 + C \cdot \frac{\|\xi_1\|_2^3 + \|f_0 - f^*\|_{\infty}^3 + M^3}{n^{\frac{3}{2}} t^3} \\
        &+ C \cdot \frac{M^3(\|\xi_1\|_q^3 n^{\frac{3}{q}} + \|f_0 - f^*\|_{\infty}^3 + M^3)}{n^{3} t^6}
    \end{split}
    \end{align}
    for every $t \ge t_n$.
\end{theorem}

\begin{remark}
The reason why $\F$ is assumed to be countable in Theorem \ref{thm:han-wellner-prop2-variant} is to guarantee the measurability of the supremums. 
We can remove this condition when $\F$ is a subset of $C([0, 1])^m$. 

Suppose $\F$ is a subset of $C([0, 1])^m$ and $\F_0$ is a subset of $\F - \{f_0\}$. 
Because of our assumption that $f_0 \in \F$, $\F_0$ is a subset of $C([0, 1]^m)$ as well. 
Also, since $(C([0,1]^m), \|\cdot\|_{\infty})$ is a separable metric space, $\F_0$ is also separable with respect to the sup-norm. 
Let $\G_0$ be a countable dense subset of $\F_0$ with respect to the sup-norm.
Then, we have 
\begin{align*}
    \sup_{f \in \F_0} \Big|\frac{1}{\sqrt{n}} \sum_{i = 1}^{n} \xi_i f(X^{(i)}) \Big| &= \sup_{g \in \G_0} \Big|\frac{1}{\sqrt{n}} \sum_{i = 1}^{n} \xi_i g(X^{(i)}) \Big|, \\
    \sup_{f \in \F_0} \Big|\frac{1}{\sqrt{n}} \sum_{i = 1}^{n} \epsilon_i f(X^{(i)}) \Big| &= \sup_{g \in \G_0} \Big|\frac{1}{\sqrt{n}} \sum_{i = 1}^{n} \epsilon_i g(X^{(i)}) \Big|,
\end{align*}
and
\begin{equation*}
    \sup_{f \in \F_0} \Big|\frac{1}{\sqrt{n}} \sum_{i = 1}^{n} \epsilon_i f(X^{(i)}) \cdot (f_0 - f^*)(X^{(i)}) \Big| = \sup_{g \in \G_0} \Big|\frac{1}{\sqrt{n}} \sum_{i = 1}^{n} \epsilon_i g(X^{(i)}) \cdot (f_0 - f^*)(X^{(i)}) \Big|
\end{equation*}
so that all the supremums are measurable. 
For these reasons, in this case, Theorem \ref{thm:han-wellner-prop2-variant} is valid without the countability assumption.
\end{remark}

\begin{remark}[Well-specified Case]\label{rmk:han-wellner-prop2-variant-well-specified}
    If $f^*$ belongs to the function class $\F$, we can set $f_0 = f^*$. 
    In this case, the conditions $t_n \ge 4 \| f_0 - f^*\|_{P, 2}$ and 
    \begin{equation*}
        \E\bigg[\sup_{\substack{f \in \F - \{f_0\}\\ \|f\|_{P, 2} \le rt_n}} \Big| \frac{1}{\sqrt{n}} \sum_{i=1}^{n} \epsilon_i f(X^{(i)}) \cdot (f_0 - f^*)(X^{(i)}) \Big| \bigg] \le r \sqrt{n} t_n^2
    \end{equation*}
    always hold, and \eqref{probability-bound-random-design} is simplified as
    \begin{equation*}
        \P (\|\hat{f} - f^* \|_{P, 2} > t) \le \epsilon + C \cdot \frac{1 + M^3}{t^3} \cdot t_n^3 + C \cdot \frac{\|\xi_1\|_2^3 + M^3}{n^{\frac{3}{2}} t^3} + C \cdot \frac{M^3(\|\xi_1\|_q^3 n^{\frac{3}{q}} + M^3)}{n^{3} t^6}.
    \end{equation*}
    This simplified version will be used for our proof of Theorem \ref{thm:rate-of-convergence-random}.
\end{remark}

Recall that our estimator $\hat{f}^{d, s}_{n, V}$ is defined as a least squares estimator over
\begin{equation*}
    \big\{f \in \infmars^{d, s}: \Vmars(f) \leq V \big\},
\end{equation*}
which is not uniformly bounded as required for $\F$ in Theorem \ref{thm:han-wellner-prop2-variant}. 
Theorem \ref{thm:han-wellner-prop2-variant} is therefore not directly applicable to our estimator, but we can avoid this problem by using the fact that $\hat{f}^{d, s}_{n, V}$ is (uniformly) bounded in probability.
Due to the boundedness of $\hat{f}^{d, s}_{n, V}$ as stated in Lemma \ref{lem:sup-norm-bound} (proved in Appendix \ref{pf:sup-norm-bound}), it is guaranteed that $\hat{f}^{d, s}_{n, V}$ is a least squares estimator over a uniformly bounded function class with high probability.

\begin{lemma}\label{lem:sup-norm-bound}
    The estimator $\hat{f}^{d, s}_{n, V}$ satisfies that
    \begin{equation*}
        \| \hat{f}^{d, s}_{n, V} - f^* \|_{\infty} = O_p(1).
    \end{equation*}
\end{lemma}

We also use the following theorem from the same paper (\cite{han2019convergence}). 
This theorem will help us bound
\begin{equation*}
    \E\bigg[\sup_{\substack{f \in \F - \{f_0\}\\ \|f\|_{P, 2} \le rt_n} } \Big| \frac{1}{\sqrt{n}} \sum_{i=1}^{n} \xi_i f(X^{(i)}) \Big| \bigg]
\end{equation*}
in Theorem \ref{thm:han-wellner-prop2-variant} using upper bounds of 
\begin{equation*}
    \E\bigg[\sup_{\substack{f \in \F - \{f_0\}\\ \|f\|_{P, 2} \le rt_k} } \Big| \sum_{i=1}^{k} \epsilon_i f(X^{(i)}) \Big| \bigg] 
\end{equation*}
for suitably chosen $t_k$, for $1 \le k \le n$.

\begin{theorem}[\cite{han2019convergence}, Corollary 1]\label{thm:han-wellner}
Let $\F_1, \dots, \F_n$ be countable collections of real-valued functions defined on $\mathcal{X}$ for which $\F_k \supseteq \F_n$ for every $1 \le k \le n$.
Suppose $X^{(1)}, \dots, X^{(n)}$ are permutation invariant random variables on $\mathcal{X}$ and $\xi_1, \dots, \xi_n$ are i.i.d. mean zero random variables independent of $X^{(1)}, \dots, X^{(n)}$.
Assume that there exist positive constants $r \ge 1$ and $C$ such that
\begin{equation*}
    \E\bigg[\sup_{f \in \F_k} \Big| \sum_{i=1}^{k} \epsilon_i f(X^{(i)}) \Big| \bigg] \le C \cdot k^{\frac{1}{r}}
\end{equation*}
for every $1 \le k \le n$. 
Then, for every $q \ge 1$, we have
\begin{equation*}
    \E\bigg[\sup_{f \in \F_n} \Big| \sum_{i=1}^{n} \xi_i f(X^{(i)}) \Big| \bigg] \le 4C \cdot \|\xi_1\|_{\min\{q, r\}, 1} \cdot n^{1 / \min\{q, r\}}, 
\end{equation*}
where
\begin{equation*}
  \| \xi_1 \|_{p, 1} := \int_{0}^{\infty} (\P(|\xi_1| > t))^{\frac{1}{p}} \, dt
\end{equation*}
for each $p \ge 1$.
\end{theorem}

As in Theorem \ref{thm:han-wellner-prop2-variant}, the countability assumption in Theorem \ref{thm:han-wellner} can be dropped if $\F_k$ are subsets of $C([0, 1]^m)$.

\begin{proof}[Proof of Theorem \ref{thm:rate-of-convergence-random}]
Suppose we are given $\epsilon > 0$.
By Lemma \ref{lem:sup-norm-bound}, there exists $M > 0$ such that 
\begin{equation*}
    \P\big(\| \hat{f}^{d, s}_{n, V} - f^* \|_{\infty} \le M\big) \ge 1 - \epsilon
\end{equation*}
for sufficiently large $n$.
For $V > 0$, let
\begin{equation*}
    \F_M(V) = \big\{f \in \infmars^{d, s}: \Vmars(f) \le V \mbox{ and } \|f - f^*\|_{\infty} \le M \big\}.
\end{equation*}
Then, it is clear from the definition of $\hat{f}^{d, s}_{n, V}$ that for sufficiently large $n$,
\begin{equation*}
        \hat{f}^{d, s}_{n, V} \in \argmin_{f \in \F_M(V)} \bigg\{\sum_{i = 1}^{n} \big(y_i - f(X^{(i)})\big)^2 \bigg\} 
\end{equation*}
with probability at least $1 - \epsilon$.
Also, we can see that Theorem \ref{thm:han-wellner-prop2-variant} is valid for $\F_M(V)$ since $\F_M(V)$ is a subset of $C([0, 1]^d)$.

We first bound
\begin{equation*}
    \E\bigg[\sup_{\substack{f \in \F_M(V) - \{f^*\} \\ \|f\|_{p_0, 2} \le t} } \Big| \frac{1}{\sqrt{k}} \sum_{i=1}^{k} \epsilon_i f(X^{(i)}) \Big| \bigg]
\end{equation*}
for $1 \le k \le n$ and $t > 0$.
The following lemma (proved in Appendix \ref{pf:maximal-inequality-bracketing-fvm}) enables us to bound it using the bracketing entropy integral of $\F_M(V) - \{f^*\}$ (under the norm $\|\cdot\|_{p_0, 2}$). 
Recall that the bracketing entropy integral of a function class $\G$ under a norm $\| \cdot \|$ is defined as 
\begin{equation*}
    J_{[ \ ]}(t, \G, \|\cdot\|) = \int_{0}^{t} \sqrt{1 + \log N_{[ \ ]}( \epsilon, \G, \| \cdot \|)} d \epsilon
\end{equation*}
for $t > 0$.

\begin{lemma}\label{lem:maximal-inequality-bracketing-fvm} 
There exists a universal positive constant $C$ such that
\begin{align*}
    &\E\bigg[\sup_{\substack{f \in \F_M(V) - \{f^*\} \\ \|f\|_{p_0, 2} \le t} } \Big| \frac{1}{\sqrt{k}} \sum_{i=1}^{k} \epsilon_i f(X^{(i)}) \Big| \bigg] \\
    &\qquad  \le C J_{[ \ ]}(t, \F_M(V) - \{f^*\}, \|\cdot\|_{p_0, 2}) \cdot \bigg(1 + M \cdot \frac{J_{[ \ ]}(t, \F_M(V) - \{f^*\}, \|\cdot\|_{p_0, 2} )}{t^2 \sqrt{k}} \bigg)
\end{align*}
for $t > 0$.
\end{lemma}

Also, the following lemma provides an upper bound of the bracketing entropy of $\F_M(V) - \{f^*\}$ (under the norm $\| \cdot \|_{p_0, 2}$). We defer our proof of this lemma to Appendix \ref{pf:fvm-bracketing-entropy}.

\begin{lemma}\label{lem:fvm-bracketing-entropy}
There exist positive constants $C_s$ depending on $s$ and $C_{B, d}$ depending on $B$ and $d$ such that
\begin{align*}
    &\log N_{[ \ ]} (\epsilon, \F_M(V) - \{f^*\}, \|\cdot\|_{p_0, 2}) \le 2^d \log \Big( 2 + C_s \cdot \frac{M + V}{\epsilon} \Big) \\
    &\qquad \qquad \qquad \qquad \qquad \qquad + C_{B, d} \Big(2^{d+2} + \frac{2^{d+3} V}{\epsilon} \Big)^{\frac{1}{2}}\bigg[\log \Big(2^{d+2} + \frac{2^{d+3} V}{\epsilon}\Big)\bigg]^{2(s - 1)}
\end{align*}
for every $V > 0, M > 0$, and $\epsilon > 0$.
\end{lemma}

By Lemma \ref{lem:fvm-bracketing-entropy} and the inequality $(x + y)^{1/2} \le x^{1/2} + y^{1/2}$,
\begin{align*}
    J_{[ \ ]}(t, \F_M(V) - &\{f^*\}, \|\cdot\|_{p_0, 2}) = \int_{0}^{t} \sqrt{1 + \log N_{[ \ ]} (\epsilon, \F_M(V) - \{f^*\}, \|\cdot\|_{p_0, 2})} \, d\epsilon \\
    \le t &+ 2^{\frac{d}{2}} \int_{0}^{t} \sqrt{\log \Big( 2 + C_s \cdot \frac{M + V}{\epsilon} \Big)} \, d\epsilon \\
    &+ C_{B, d} \int_{0}^{t} \Big(2^{d+2} + \frac{2^{d+3} V}{\epsilon} \Big)^{\frac{1}{4}}\bigg[\log \Big(2^{d+2} + \frac{2^{d+3} V}{\epsilon}\Big)\bigg]^{s - 1} \, d\epsilon.
\end{align*}
We use Lemma \ref{lem:integration-helper} again for bounding the above integrals. 
Lemma \ref{lem:integration-helper} implies that 
\begin{align*}
    &\int_{0}^{t} \sqrt{\log \Big( 2 + C_s \cdot \frac{M + V}{\epsilon} \Big)} \, d\epsilon \le \int_{0}^{t} \sqrt{\log \Big(\frac{2t + C_s(M + V)}{\epsilon} \Big)} \, d\epsilon \\
    &\qquad \le Ct \bigg[1 + 2  \log \Big( 2 + C_s \cdot \frac{M + V}{t} \Big) \bigg] \le C_s t + Ct \log \Big( 2 + \frac{M + V}{t} \Big)
\end{align*}
and that 
\begin{align*}
    &\int_{0}^{t} \Big(2^{d+2} + \frac{2^{d+3} V}{\epsilon} \Big)^{\frac{1}{4}}\bigg[\log \Big(2^{d+2} + \frac{2^{d+3} V}{\epsilon}\Big)\bigg]^{s - 1} \, d\epsilon \\
    &\qquad \qquad \le \int_{0}^{t} \Big(\frac{2^{d+2} (t + 2V)}{\epsilon} \Big)^{\frac{1}{4}}\bigg[\log \Big(\frac{2^{d+2} (t +  2V)}{\epsilon}\Big)\bigg]^{s - 1} \, d\epsilon \\
    &\qquad \qquad \le C_s \big[2^{d+2} (t +  2V) \big]^{\frac{1}{4}} t^{\frac{3}{4}} \bigg[1 + \Big\{\log\Big(2^{d + 2} + \frac{2^{d + 3} V }{t}\Big)\Big\}^{s - 1} \bigg] \\
    &\qquad \qquad \le C_d (t + V^{\frac{1}{4}}t^{\frac{3}{4}}) \Big[\log \Big(2 + \frac{V}{t} \Big)\Big]^{s - 1} \\
    &\qquad \qquad \le C_d t \Big[\log \Big(2 + \frac{V}{t} \Big)\Big]^{s - 1} + C_d V^{\frac{1}{4}} t^{\frac{3}{4}} \Big[\log \Big(2 + \frac{V}{t} \Big)\Big]^{s - 1}.
\end{align*}
Hence, it follows that
\begin{align*}
    &J_{[ \ ]}(t, \F_M(V) - \{f^*\}, \|\cdot\|_{p_0, 2}) \le C_d t \log \Big( 2 + \frac{M + V}{t} \Big) + C_{B, d} t \Big[\log \Big(2 + \frac{V}{t} \Big)\Big]^{s - 1} \\ 
    &\qquad \qquad \qquad \qquad \qquad \qquad \quad + C_{B, d} V^{\frac{1}{4}} t^{\frac{3}{4}} \Big[\log \Big(2 + \frac{V}{t} \Big)\Big]^{s - 1}. 
\end{align*}
Note that if $t \le V$, 
\begin{equation*}
    C_{B, d} t \Big[\log \Big(2 + \frac{V}{t} \Big)\Big]^{s - 1} \le C_{B, d} V^{\frac{1}{4}} t^{\frac{3}{4}} \Big[\log \Big(2 + \frac{V}{t} \Big)\Big]^{s - 1}
\end{equation*}
and otherwise,
\begin{equation*}
    C_{B, d} t \Big[\log \Big(2 + \frac{V}{t} \Big)\Big]^{s - 1} \le C_{B, d} t \le C_{B, d} t \log \Big( 2 + \frac{M + V}{t} \Big). 
\end{equation*}
Therefore, we have
\begin{equation*}
    J_{[ \ ]}(t, \F_M(V) - \{f^*\}, \|\cdot\|_{p_0, 2}) \le C_{B, d} t \log \Big( 2 + \frac{M + V}{t} \Big) + C_{B, d} V^{\frac{1}{4}} t^{\frac{3}{4}} \Big[\log \Big(2 + \frac{V}{t} \Big)\Big]^{s - 1},
\end{equation*}
which leads to that
\begin{align*}
    &\E\bigg[\sup_{\substack{f \in \F_M(V) - \{f^*\} \\ \|f\|_{p_0, 2} \le t} } \Big| \frac{1}{\sqrt{k}} \sum_{i=1}^{k} \epsilon_i f(X_i) \Big| \bigg] \le C_{B, d} t \log \Big( 2 + \frac{M + V}{t} \Big) \\ 
    &\qquad \qquad \quad+ C_{B, d} V^{\frac{1}{4}} t^{\frac{3}{4}} \Big[\log \Big(2 + \frac{V}{t} \Big)\Big]^{s - 1} + C_{B, d}  M k^{-\frac{1}{2}} \Big[\log \Big( 2 + \frac{M + V}{t} \Big)\Big]^2 \\
    &\qquad \qquad \quad + C_{B, d}  M V^{\frac{1}{4}} t^{-\frac{1}{4}} k^{-\frac{1}{2}} \log \Big( 2 + \frac{M + V}{t} \Big) \Big[\log \Big(2 + \frac{V}{t} \Big)\Big]^{s - 1} \\
    &\qquad \qquad \quad + C_{B, d}  M V^{\frac{1}{2}} t^{-\frac{1}{2}} k^{-\frac{1}{2}} \Big[\log \Big(2 + \frac{V}{t} \Big)\Big]^{2(s - 1)} := \Psi_k(t)
\end{align*}
due to Lemma \ref{lem:maximal-inequality-bracketing-fvm}.

Note that $t \rightarrow \Psi_k(t) / t$ is monotonically decreasing.
Also, observe that for 
\begin{equation*}
    t \ge (1 + M^{\frac{1}{2}}) k^{-\frac{1}{2}},
\end{equation*}
we have
\begin{equation*}
    C_{B, d} t \log \Big( 2 + \frac{M + V}{t} \Big) \le \frac{1}{5} \sqrt{k} t^2 \mbox{\quad if \;} t \ge C_{B, d} k^{-\frac{1}{2}} \log\Big( 2 + \frac{(M + V) k^{\frac{1}{2}}}{1+ M^{\frac{1}{2}}}\Big), 
\end{equation*}
\begin{align*}
    &C_{B, d} M k^{-\frac{1}{2}} \Big[\log \Big( 2 + \frac{M + V}{t} \Big)\Big]^2 \le \frac{1}{5} \sqrt{k} t^2 \\ 
    &\qquad \qquad \qquad \qquad  \mbox{\quad if \;} t \ge C_{B, d} M^{\frac{1}{2}} k^{-\frac{1}{2}} \log\Big( 2 + \frac{(M + V) k^{\frac{1}{2}}}{1 + M^{\frac{1}{2}}}\Big),
\end{align*}
\begin{equation*}
    C_{B, d} V^{\frac{1}{4}} t^{\frac{3}{4}} \Big[\log \Big(2 + \frac{V}{t} \Big)\Big]^{s - 1} \le \frac{1}{5} \sqrt{k} t^2 \mbox{\quad if \;} t \ge C_{B, d} V^{\frac{1}{5}} k^{-\frac{2}{5}} \bigg[\log\Big( 2 + \frac{V k^{\frac{1}{2}}}{1 + M^{\frac{1}{2}}}\Big)\bigg]^{\frac{4(s - 1)}{5}},
\end{equation*}
\begin{align*}
    &C_{B, d} M V^{\frac{1}{4}} t^{-\frac{1}{4}} k^{-\frac{1}{2}} \log \Big( 2 + \frac{M + V}{t} \Big) \Big[\log \Big(2 + \frac{V}{t} \Big)\Big]^{s - 1} \le \frac{1}{5} \sqrt{k} t^2 \\
    &\qquad \mbox{\quad if \;} t \ge C_{B, d} M^{\frac{4}{9}} V^{\frac{1}{9}} k^{-\frac{4}{9}} \bigg[\log\Big( 2 + \frac{(M + V) k^{\frac{1}{2}}}{1 + M^{\frac{1}{2}}}\Big)\bigg]^{\frac{4}{9}} \bigg[\log\Big( 2 + \frac{V k^{\frac{1}{2}}}{1 + M^{\frac{1}{2}}}\Big)\bigg]^{\frac{4(s - 1)}{9}},
\end{align*}
and
\begin{align*}
    &C_{B, d} M V^{\frac{1}{2}} t^{-\frac{1}{2}} k^{-\frac{1}{2}} \Big[\log \Big(2 + \frac{V}{t} \Big)\Big]^{2(s - 1)} \le \frac{1}{5} \sqrt{k} t^2 \\ 
    &\qquad \qquad \qquad \qquad \mbox{\quad if \;} t \ge C_{B, d} M^{\frac{2}{5}} V^{\frac{1}{5}} k^{-\frac{2}{5}} \bigg[\log\Big( 2 + \frac{V k^{\frac{1}{2}}}{1 + M^{\frac{1}{2}}}\Big)\bigg]^{\frac{4(s - 1)}{5}}. 
\end{align*}
Clearly, there exist some positive constants $C_s$ and $C$ such that 
\begin{equation*}
    [\log(2 + x)]^{\frac{4s}{9}} \le x^{\frac{4}{45}} \qt{for $x \ge C_s$}
\end{equation*}
and
\begin{equation*}
    \log(2 + x) \le x^{\frac{1}{5}} \qt{for $x \ge C$}.
\end{equation*}
Thus, it follows that 
\begin{align*}
    &C_{B, d} (1 + M^{\frac{1}{2}}) k^{-\frac{1}{2}} \log\Big( 2 + \frac{(M + V) k^{\frac{1}{2}}}{1 + M^{\frac{1}{2}}}\Big) \\ 
    &\qquad \quad \le C_{B, d} (1 + M^{\frac{1}{2}}) k^{-\frac{1}{2}} + C_{B, d} (1 + M^{\frac{1}{2}}) k^{-\frac{1}{2}} \Big(\frac{(M + V) k^{\frac{1}{2}}}{1 + M^{\frac{1}{2}}}\Big)^{\frac{1}{5}} \\
    &\qquad \quad \le C_{B, d} (1 + M^{\frac{1}{2}}) k^{-\frac{1}{2}} + C_{B, d} (1 + M^{\frac{1}{2}})^{\frac{4}{5}} (M + V)^{\frac{1}{5}} k^{-\frac{2}{5}} \\
    &\qquad \quad \le C_{B, d} (1 + M^{\frac{1}{2}})^{\frac{4}{5}} \big[ (1 + M^{\frac{1}{2}})^{\frac{1}{5}} + (M + V)^{\frac{1}{5}} \big] k^{-\frac{2}{5}}
\end{align*}
and 
\begin{align*}
    &C_{B, d} M^{\frac{4}{9}} V^{\frac{1}{9}} k^{-\frac{4}{9}} \bigg[\log\Big( 2 + \frac{(M + V) k^{\frac{1}{2}}}{1 + M^{\frac{1}{2}}}\Big)\bigg]^{\frac{4}{9}} \bigg[\log\Big( 2 + \frac{V k^{\frac{1}{2}}}{1 + M^{\frac{1}{2}}}\Big)\bigg]^{\frac{4(s - 1)}{9}} \\
    &\qquad \quad \le C_{B, d} M^{\frac{4}{9}} V^{\frac{1}{9}} k^{-\frac{4}{9}} \bigg[\log\Big( 2 + \frac{(M + V) k^{\frac{1}{2}}}{1 + M^{\frac{1}{2}}}\Big)\bigg]^{\frac{4s}{9}} \\
    &\qquad \quad \le C_{B, d} M^{\frac{4}{9}} V^{\frac{1}{9}} k^{-\frac{4}{9}} + C_{B, d} M^{\frac{4}{9}} V^{\frac{1}{9}} k^{-\frac{4}{9}} \Big(\frac{(M + V) k^{\frac{1}{2}}}{1 + M^{\frac{1}{2}}}\Big)^{\frac{4}{45}} \\
    &\qquad \quad \le C_{B, d} M^{\frac{4}{9}} V^{\frac{1}{9}} \bigg[ 1 + \Big(\frac{M + V}{1 + M^{\frac{1}{2}}}\Big)^{\frac{4}{45}}\bigg] k^{-\frac{2}{5}}.
\end{align*}
For these reasons, for $k = 1, \dots, n$, if we let 
\begin{align*}
    &t_k = \bigg\{C_{B, d} (1 + M^{\frac{1}{2}})^{\frac{4}{5}} \big[ (1 + M^{\frac{1}{2}})^{\frac{1}{5}} + (M + V)^{\frac{1}{5}} \big] + C_{B, d} M^{\frac{4}{9}} V^{\frac{1}{9}} \bigg[ 1 + \Big(\frac{M + V}{1 + M^{\frac{1}{2}}}\Big)^{\frac{4}{45}}\bigg] \\
    &\qquad \qquad \qquad \qquad \qquad \qquad \quad + C_{B, d} (1 + M^{\frac{1}{2}})^{\frac{4}{5}} V^{\frac{1}{5}} \bigg[\log\Big( 2 + \frac{V n^{\frac{1}{2}}}{1 + M^{\frac{1}{2}}}\Big)\bigg]^{\frac{4(s - 1)}{5}} \bigg\} \cdot k^{-\frac{2}{5}}
\end{align*}
then
\begin{equation*}
    \Psi_k(t_k) \le \sqrt{k} t_k^2.
\end{equation*}
Hence, for $r \ge 1$, we have
\begin{equation*}
    \E\bigg[\sup_{\substack{f \in \F_M(V) - \{f^*\} \\ \|f\|_{p_0, 2} \le rt_k} } \Big| \sum_{i=1}^{k} \epsilon_i f(X^{(i)}) \Big| \bigg] \le \sqrt{k} \cdot \Psi_k(r t_k) \le r\sqrt{k} \cdot \Psi_k(t_k) \le r k t_k^2,
\end{equation*}
where the second inequality follows from that $t \rightarrow \Psi_k(t) / t$ is monotonically decreasing.

Recall that we assume $\| \xi_1 \|_{5, 1} < \infty$. 
Using Theorem \ref{thm:han-wellner}, we can thus show that 
\begin{equation*}
    \E\bigg[\sup_{\substack{f \in \F_M(V) - \{f^*\} \\ \|f\|_{p_0, 2} \le rt_n} } \Big| \sum_{i=1}^{n} \xi_i f(X^{(i)}) \Big| \bigg] \le 4  \|\xi_1\|_{5, 1} \cdot r n t_n^2 
\end{equation*}
for $r \ge 1$.
Therefore, if we redefine $t_n$ as $(1 + 4 \| \xi_1 \|_{5, 1}) t_n$, then
\begin{equation*}
    \E\bigg[\sup_{\substack{f \in \F_M(V) - \{f^*\} \\ \|f\|_{p_0, 2} \le rt_n} } \Big| \frac{1}{\sqrt{n}} \sum_{i=1}^{n} \epsilon_i f(X^{(i)}) \Big| \bigg] \le r \sqrt{n} t_n^2
\end{equation*}
and 
\begin{equation*}
    \E\bigg[\sup_{\substack{f \in \F_M(V) - \{f^*\} \\ \|f\|_{p_0, 2} \le rt_n} } \Big| \frac{1}{\sqrt{n}} \sum_{i=1}^{n} \xi_i f(X^{(i)}) \Big| \bigg] \le r \sqrt{n} t_n^2.
\end{equation*}
As a result, by Theorem \ref{thm:han-wellner-prop2-variant} (and Remark \ref{rmk:han-wellner-prop2-variant-well-specified}),
\begin{align*}
    \P \big(\|\hat{f}^{d, s}_{n, V} - f^* \|_{p_0, 2} > t\big) \le \epsilon &+ C \cdot \frac{1 + M^3}{t^3} \cdot t_n^3 \\
    &+ C \cdot \frac{\|\xi_1\|_2^3 + M^3}{n^{\frac{3}{2}} t^3} + C \cdot \frac{M^3(\|\xi_1\|_5^3 n^{\frac{3}{5}} + M^3)}{n^{3} t^6}
\end{align*}
provided that $n$ is sufficiently large and $t \ge t_n$. 

Observe that if $s = 1$, 
\begin{align*}
    t_n &\le (1 + 4 \| \xi_1 \|_{5, 1}) \cdot \bigg\{C_{B, d} (1 + M^{\frac{1}{2}})^{\frac{4}{5}} \big[ (1 + M^{\frac{1}{2}})^{\frac{1}{5}} + (M + V)^{\frac{1}{5}} \big] \\
    &\qquad \qquad \qquad \qquad \qquad \qquad \qquad \quad+ C_{B, d} M^{\frac{4}{9}} V^{\frac{1}{9}} \bigg[ 1 + \Big(\frac{M + V}{1 + M^{\frac{1}{2}}}\Big)^{\frac{4}{45}}\bigg] \bigg\} \cdot n^{-\frac{2}{5}} \\
    &\le (1 + 4 \| \xi_1 \|_{5, 1}) \cdot C_{B, d, V, M} \cdot n^{-\frac{2}{5}},
\end{align*} 
and otherwise ($s \ge 2$),
\begin{align*}
    t_n &= (1 + 4 \| \xi_1 \|_{5, 1}) \cdot C_{B, d} (1 + M^{\frac{1}{2}})^{\frac{4}{5}} V^{\frac{1}{5}} \bigg[\log\Big( 2 + \frac{V n^{\frac{1}{2}}}{1 + M^{\frac{1}{2}}}\Big)\bigg]^{\frac{4(s - 1)}{5}} \cdot n^{-\frac{2}{5}} + O(n^{-\frac{2}{5}}) \\
    &\le (1 + 4 \| \xi_1 \|_{5, 1}) \cdot C_{B, d, V, M} \bigg[\log\Big( 2 + \frac{V n^{\frac{1}{2}}}{1 + M^{\frac{1}{2}}}\Big)\bigg]^{\frac{4(s - 1)}{5}} 
    \cdot n^{-\frac{2}{5}} + O(n^{-\frac{2}{5}}),
\end{align*}
where the multiplicative constant underlying $O(\cdot)$ depends on $B, d, V, M$, and the moments of $\xi_i$.
Thus, if 
\begin{equation*}
    K \ge 2 (1 + 4 \| \xi_1 \|_{5, 1}) \cdot C_{B, d, V, M},
\end{equation*}
then  
\begin{equation*}
    K n^{-\frac{2}{5}} (\log n)^{\frac{4(s - 1)}{5}} \ge t_n
\end{equation*}
for sufficiently large $n$.
Hence, for such $K$, we have
\begin{align*}
    &\limsup_{n \rightarrow \infty} \P \big(\|\hat{f}^{d, s}_{n, V} - f^* \|_{p_0, 2} > K n^{-\frac{2}{5}} (\log n)^{\frac{4(s - 1)}{5}}\big) \\
    &\qquad \le \epsilon + \limsup_{n \rightarrow \infty} \bigg[C \cdot \frac{(1 + M^3) t_n^3}{K^3 n^{-\frac{6}{5}} (\log n)^{\frac{12(s - 1)}{5}}} + C \cdot \frac{\|\xi_1\|_2^3 + M^3}{K^3 n^{\frac{3}{10}} (\log n)^{\frac{12(s - 1)}{5}}} \\
    &\qquad \qquad \qquad \qquad \qquad \qquad \qquad \qquad \qquad \qquad \quad + C \cdot \frac{M^3(\|\xi_1\|_5^3 n^{\frac{3}{5}} + M^3)}{K^6 n^{\frac{3}{5}} (\log n)^{\frac{24(s - 1)}{5}}} \bigg] \\
    &\qquad \le \epsilon + (1 + 4 \| \xi_1 \|_{5, 1})^3 \cdot C_{B, d, V, M} \cdot \frac{1}{K^3} + \|\xi_1\|_5^3 \cdot C M^3 \cdot \frac{1}{K^6}.
\end{align*}
Consequently, we can find $K > 0$ for which
\begin{align*}
    \P \big(\|\hat{f}^{d, s}_{n, V} - f^* \|_{p_0, 2} > K n^{-\frac{2}{5}} (\log n)^{\frac{4(s - 1)}{5}}\big) < 2 \epsilon
\end{align*}
for sufficiently large $n$.
\end{proof}

\subsubsection{Proof of Theorem \ref{thm:d-bracket-entropy}}\label{pf:d-bracket-entropy}
We prove Theorem \ref{thm:d-bracket-entropy} by using \cite{gao2013bracketing}, Theorem 1.1
and ideas in the proof of 
Theorem 1.2 of the same paper. 
For a positive integer $m$ and $S > 0$, let $\H_m^{+}(S)$ be the collection of all the functions on $[0, 1]^m$ of the form 
\begin{equation*}
    (x_1, \dots, x_m) \mapsto \int \ind\{x_1 \ge t_1\} \cdots \ind\{x_m \ge
    t_m\} \, d\nu(t) = \nu([\zerovec, x])
\end{equation*}
where $\nu$ is a positive measure on $[0, 1]^m$ with $\nu([0, 1]^m) \le S$.
\cite{gao2013bracketing}, Theorem 1.1 (restated below) provides an upper bound of the bracketing entropy of $\H_m^{+}(S)$ under the $L^p$ norm for each $p \ge 1$.
Also, ideas in the proof of \cite{gao2013bracketing}, Theorem 1.2 enable us to obtain an upper bound of the bracketing entropy of $\D_m$ under the $L^2$ norm from those of $\H_m^{+}(S)$ under the $L^1$ and $L^2$ norms.

\begin{theorem}[\cite{gao2013bracketing}, Theorem 1.1]\label{thm:gao-bracketing-entropy}
Suppose that $p \ge 1$ and $m$ is a positive integer. Then, there exists a positive constant $C_{m, p}$ depending on $m$ and $p$ such that
    \begin{equation*}
        \log N_{[ \ ]}(\epsilon, \H_m^{+}(S), \|\cdot\|_p) \le C_{m, p} \cdot \frac{S}{\epsilon} \Big|\log \frac{S}{\epsilon} \Big|^{2(m - 1)}
    \end{equation*}
    for every $\epsilon > 0$.
\end{theorem}

\begin{proof}[Proof of Theorem \ref{thm:d-bracket-entropy}]
Assume that $0 < \epsilon < 2$. 
If $\epsilon \ge 2$, then 
\begin{equation*}
    \log N_{[ \ ]}(\epsilon, \D_m, \|\cdot\|_2) = 0
\end{equation*}
because 
\begin{equation*}
    -1 \le \int (x_1 - t_1)_+ \cdots (x_m - t_m)_+
    \, d\nu(t) \le 1
\end{equation*}
for every signed measure $\nu$ on $[0, 1]^m$ with variation $|\nu|([0, 1])^m \le 1$. 
Denote by $\D_m^+$ the subcollection of $\D_m$ consisting of all the functions on $[0, 1]^m$ of the form 
\begin{equation*}
    \int (x_1 - t_1)_+ \cdots (x_m - t_m)_+
    \, d\nu(t)
\end{equation*}
where $\nu$ is a positive measure on $[0, 1]^m$ with $\nu([0, 1]^m) \le 1$. 
Then, since 
\begin{equation*}
    \D_m \subseteq \D_m^+ - \D_m^+,
\end{equation*}
it follows that
\begin{equation}\label{bracket-entropy-dm+}
    \log N_{[ \ ]}(\epsilon, \D_m, \|\cdot\|_2) \le 2 \log N_{[ \ ]}\Big(\frac{\epsilon}{2}, \D_m^{+}, \|\cdot\|_2\Big).
\end{equation}
Recall that we denote by $\intoper_m: L^2([0, 1]^m) \rightarrow C([0, 1]^m)$ the $m$-dimensional integral operator defined by
\begin{equation*}
    \intoper_m f(x) = \int_{[\zerovec, x]} f(s) \, ds \mbox{\quad for } x \in [0, 1]^m. 
\end{equation*}
Since 
\begin{equation*}
    \int_{[0, 1]^m} (x_1 - t_1)_+ \cdots (x_m - t_m)_+
    \, d\nu(t) = \int_{[0, x]} \nu([\zerovec, s]) \, ds \mbox{\quad for } x \in [0, 1]^m
\end{equation*}
for every positive measure $\nu$ on $[0, 1]^m$ with $\nu([0, 1]^m) \le 1$, we have
\begin{equation*}
    \D_m^+ = \intoper_m (\H_m^+)
\end{equation*}
where $\H_m^+ := \H_m^+(1)$.

Now, fix $\delta > 0$, which will be specified later, and let $K = N(\delta, \H_m^+, \|\cdot\|_1)$.
Note that by Theorem \ref{thm:gao-bracketing-entropy},
\begin{equation}\label{upper-bound-of-net-size}
    \log K = \log N(\delta, \H_m^+, \|\cdot\|_1) \le \log N_{[ \ ]}(\delta, \H_m^+, \|\cdot\|_1) \le C_m \cdot \frac{1}{\delta} \Big|\log \frac{1}{\delta} \Big|^{2(m - 1)}.
\end{equation}
Also, let $h_1, \dots, h_K$ be a $\delta$-net of $H_m^+$ with respect to the $L^1$ norm, i.e.,
\begin{equation*}
    \H_m^+ \subseteq \bigcup_{k = 1}^{K} B_
    {\H_m^+}(\delta, h_k, \|\cdot\|_1)
\end{equation*}
where $B_{\H_m^+}(\delta, h_k, \|\cdot\|_1) := \{h \in \H_m^+: \|h - h_k\|_1 \le \delta \}$.
Next, for $k = 1, \dots, K$, let 
\begin{equation*}
    \I_k^+ = \Big\{\intoper_m((h - h_k)_+): h \in \H_m^+ \ \mbox{ and } \ \|h - h_k\|_1 \le \delta \Big\}
\end{equation*}
and 
\begin{equation*}
    \I_k^- = \Big\{\intoper_m((h_k - h)_+): h \in \H_m^+ \ \mbox{ and } \ \|h - h_k\|_1 \le \delta \Big\}.
\end{equation*}
Observe that $\I_k^+, \I_k^- \subseteq \H_m^+(\delta)$. 
Indeed, if $h \in \H_m^+$ and $\|h - h_k\|_1 \le \delta$, then for a positive measure $\nu$ on $[0, 1]^m$ defined by 
\begin{equation*}
    d\nu(t) = (h(t) - h_k(t))_+ \, dt, 
\end{equation*}
we have 
\begin{equation*}
    \nu([0, 1]^m) = \int_{[0, 1]^m} (h(t) - h_k(t))_+ \, dt \le \|h - h_k\|_1 \le \delta
\end{equation*}
and
\begin{equation*}
    \intoper_m((h - h_k)_+) (x) = \int_{[\zerovec, x]} (h(s) - h_k(s))_+ \, ds = \int_{[\zerovec, x]} \, d\nu(s) = \nu([\zerovec, x])
\end{equation*}
for all $x \in [0, 1]^m$.
Also, because $\D_m^+ = \intoper_m (\H_m^+)$, $ h_1, \dots, h_K$ form a $\delta$-net of $H_m^+$ with respect to the $L^1$ norm, and 
\begin{equation*}
    \intoper_m (h) = \intoper_m((h - h_k)_+ - (h_k - h)_+ + h_k) = \intoper_m((h - h_k)_+) - \intoper_m((h_k - h)_+) + \intoper_m(h_k)  
\end{equation*}
for $h \in \H_m^+$ and $k = 1, \dots, K$, it follows that
\begin{equation*}
    \D_m^+ \subseteq \bigcup_{k = 1}^{K} \Big(\I_k^+ - \I_k^- + \{\intoper_m (h_k)\}\Big).
\end{equation*}
Thus,
\begin{equation*}
    N_{[ \ ]} (\epsilon, \D_m^+ , \|\cdot\|_2) \le \sum_{k = 1}^{K} N_{[ \ ]} \Big(\frac{\epsilon}{2}, \I_k^+ , \|\cdot\|_2 \Big) \cdot N_{[ \ ]} \Big(\frac{\epsilon}{2}, \I_k^- , \|\cdot\|_2 \Big),
\end{equation*}
which together with \eqref{upper-bound-of-net-size} and Theorem \ref{thm:gao-bracketing-entropy} implies that
\begin{align*}
    \log N_{[ \ ]} (\epsilon, \D_m^+ , \|\cdot\|_2) &\le \log K + 2 \log N_{[ \ ]} \Big(\frac{\epsilon}{2}, \H_m^+ (\delta) , \|\cdot\|_2 \Big) \\
    &\le C_m \cdot \frac{1}{\delta} \Big|\log \frac{1}{\delta} \Big|^{2(m - 1)} + C_m \cdot \frac{2 \delta}{\epsilon} \Big|\log \frac{2 \delta}{\epsilon} \Big|^{2(m - 1)}.
\end{align*}
Therefore, with the choice of $\delta = (\epsilon/2)^{1/2}$, we can derive that
\begin{equation*}
    \log N_{[ \ ]} (\epsilon, \D_m^+ , \|\cdot\|_2) \le C_m \cdot \Big(\frac{2}{\epsilon}\Big)^{\frac{1}{2}} \Big|\log \frac{2}{\epsilon} \Big|^{2(m - 1)}.
\end{equation*}
By \eqref{bracket-entropy-dm+}, we can conclude that
\begin{equation*}
    \log N_{[ \ ]} (\epsilon, \D_m , \|\cdot\|_2) \le C_m \cdot \Big(\frac{4}{\epsilon}\Big)^{\frac{1}{2}} \Big|\log \frac{4}{\epsilon} \Big|^{2(m - 1)}.
\end{equation*}
\end{proof}

\subsubsection{Proof of Theorem \ref{thm:rate-of-convergence-random-approx}}\label{pf:rate-of-convergence-random-approx}

\begin{proof}[Proof of Theorem \ref{thm:rate-of-convergence-random-approx}]
Suppose we are given $\epsilon > 0$.
By repeating our argument in Lemma \ref{lem:sup-norm-bound}, we can show that 
there exists $M > 0$ independent of $N$ such that 
\begin{equation*}
    \P\big(\| \tilde{f}^{d, s}_{n, V} - f^* \|_{\infty} \le M\big) \ge 1 - \epsilon
\end{equation*}
for sufficiently large $n$.
Recall that we let $\F_{\text{approx}}(V)$ be the collection of all the functions $\fazeronu \in \infmars^{d, s}$ with $\Vmars(\fazeronu) \le V$ where $\nu_\alpha$ is concentrated on $(\prod_{k \in S(\alpha)} \tilde{\mathcal{U}}_k) \cap [0, 1)^{|\alpha|}$ for each $\alpha \in \{0, 1\}^d \setminus \{\zerovec\}$ with $|\alpha| \le s$. 
Due to Lemma \ref{lem:discrete-measures-approximation}, there exists $f_0 \in \F_{\text{approx}}(V)$ such that
\begin{equation*}
    \|f_0 - f^*\|_{\infty} \le \frac{2V}{N} \ \mbox{ and } \
    \|f_0 - f^*\|_{p_0, 2} \le C_d \Big(\frac{B}{N^3}\Big)^{\frac{1}{2}} V.
\end{equation*}
Thus, by the triangle inequality, we have
\begin{equation*}
    \P\Big(\| \tilde{f}^{d, s}_{n, V} - f_0 \|_{\infty} \le M + \frac{2V}{N}\Big) \ge 1 - \epsilon
\end{equation*}
for sufficiently large $n$.
For $V > 0$, let
\begin{equation*}
    \F_{M + 2V/N}(V) = \Big\{f \in \infmars^{d, s}: \Vmars(f) \le V \mbox{ and } \|f - f_0\|_{\infty} \le M + \frac{2V}{N} \Big\}
\end{equation*}
as in the proof of Theorem \ref{thm:rate-of-convergence-random}, 
and let 
\begin{equation*}
    \F_{\text{approx}, M + 2V/N}(V) = \Big\{f \in \F_{\text{approx}}(V): \|f - f_0 \|_{\infty} \le M + \frac{2V}{N}\Big\}.
\end{equation*}
Then, from the definition of $\tilde{f}^{d, s}_{n, V}$, we can see that
\begin{equation*}
    \tilde{f}^{d, s}_{n, V} \in \argmin_{f \in \F_{\text{approx}, M + 2V/N}(V)} \bigg\{\sum_{i = 1}^{n} \big(y_i - f(x^{(i)})\big)^2 \bigg\}. 
\end{equation*}
with probability at least $1 - \epsilon$.

Through the same argument as in the proof of Theorem \ref{thm:rate-of-convergence-random} with replacing $f^*$ with $f_0$ and $M$ with $\tilde{M} := M + 2V/N$, we can show that for
\begin{align*}
    &t_n = (1 + 4 \| \xi_1 \|_{5, 1}) \bigg\{C_{B, d} (1 + \tilde{M}^{\frac{1}{2}})^{\frac{4}{5}} \big[ (1 + \tilde{M}^{\frac{1}{2}})^{\frac{1}{5}} + (\tilde{M} + V)^{\frac{1}{5}} \big] \\
    &\qquad \qquad \qquad \qquad \qquad+ C_{B, d} \tilde{M}^{\frac{4}{9}} V^{\frac{1}{9}} \bigg[ 1 + \Big(\frac{\tilde{M} + V}{1 + \tilde{M}^{\frac{1}{2}}}\Big)^{\frac{4}{45}}\bigg] \\
    &\qquad \qquad \qquad \qquad \qquad + C_{B, d} (1 + \tilde{M}^{\frac{1}{2}})^{\frac{4}{5}} V^{\frac{1}{5}} \bigg[\log\Big( 2 + \frac{V n^{\frac{1}{2}}}{1 + \tilde{M}^{\frac{1}{2}}}\Big)\bigg]^{\frac{4(s - 1)}{5}} \bigg\} \cdot n^{-\frac{2}{5}},
\end{align*}
we have
\begin{align*}
    \E\bigg[\sup_{\substack{f \in \F_{\text{approx}, \tilde{M}}(V) - \{f_0\} \\ \|f\|_{p_0, 2} \le rt_n}} \Big| \frac{1}{\sqrt{n}} \sum_{i=1}^{n} \epsilon_i f(X^{(i)}) \Big| \bigg] &\le \E\bigg[\sup_{\substack{f \in \F_{\tilde{M}}(V) - \{f_0\} \\ \|f\|_{p_0, 2} \le rt_n} } \Big| \frac{1}{\sqrt{n}} \sum_{i=1}^{n} \epsilon_i f(X^{(i)}) \Big| \bigg] \\
    &\le r \sqrt{n} t_n^2
\end{align*}
and 
\begin{align*}
    \E\bigg[\sup_{\substack{f \in \F_{\text{approx}, \tilde{M}}(V) - \{f_0\} \\ \|f\|_{p_0, 2} \le rt_n} } \Big| \frac{1}{\sqrt{n}} \sum_{i=1}^{n} \xi_i f(X^{(i)}) \Big| \bigg] &\le \E\bigg[\sup_{\substack{f \in \F_{\tilde{M}}(V) - \{f_0\} \\ \|f\|_{p_0, 2} \le rt_n}} \Big| \frac{1}{\sqrt{n}} \sum_{i=1}^{n} \xi_i f(X^{(i)}) \Big| \bigg] \\
    &\le r \sqrt{n} t_n^2.
\end{align*}
Hence, to apply Theorem \ref{thm:han-wellner-prop2-variant}, it suffices to find an upper bound of 
\begin{equation*}
    \E\bigg[\sup_{\substack{f \in \F_{\text{approx}, \tilde{M}}(V) - \{f_0\} \\ \|f\|_{p_0, 2} \le t}} \Big| \frac{1}{\sqrt{n}} \sum_{i=1}^{n} \epsilon_i f(X^{(i)}) \cdot (f_0 - f^*)(X^{(i)})\Big| \bigg]
\end{equation*}
for $t > 0$.

First, by making minor modifications to the proof of Lemma \ref{lem:maximal-inequality-bracketing-fvm}, we can prove that
\begin{align*}
    &\E\bigg[\sup_{\substack{f \in \F_{\text{approx}, \tilde{M}}(V) - \{f_0\} \\ \|f\|_{p_0, 2} \le t} } \Big| \frac{1}{\sqrt{n}} \sum_{i=1}^{n} \epsilon_i f(X^{(i)}) \cdot (f_0 - f^*)(X^{(i)})\Big| \bigg] \\
    &\qquad \le \E\bigg[\sup_{\substack{f \in \F_{\tilde{M}}(V) - \{f_0\} \\ \|f\|_{p_0, 2} \le t} } \Big| \frac{1}{\sqrt{n}} \sum_{i=1}^{n} \epsilon_i f(X^{(i)}) \cdot (f_0 - f^*)(X^{(i)}) \Big| \bigg] \\
    &\qquad \le C \|f_0 - f^* \|_{\infty} \cdot J_{[ \ ]}\big(t, \F_{\tilde{M}}(V) - \{f_0\}, \|\cdot\|_{p_0, 2} \big) \\
    &\qquad \qquad \qquad \qquad \qquad \quad \cdot \bigg(1 + \tilde{M} \cdot \frac{J_{[ \ ]}\big(t, \F_{\tilde{M}}(V) - \{f_0\}, \|\cdot\|_{p_0, 2} \big)}{t^2 \sqrt{n}} \bigg) \\
    &\qquad \le C \cdot \frac{V}{N} \cdot J_{[ \ ]}\big(t, \F_{\tilde{M}}(V) - \{f_0\}, \|\cdot\|_{p_0, 2} \big) \\
    &\qquad \qquad \qquad \qquad \cdot \bigg(1 + \tilde{M} \cdot \frac{J_{[ \ ]}\big(t, \F_{\tilde{M}}(V) - \{f_0\}, \|\cdot\|_{p_0, 2} \big)}{t^2 \sqrt{n}} \bigg).
\end{align*}
Next, repeating the same computation as in the proof of Theorem \ref{thm:rate-of-convergence-random}, we can derive that 
\begin{align*}
    &\E\bigg[\sup_{\substack{f \in \F_{\text{approx}, \tilde{M}}(V) - \{f_0\} \\ \|f\|_{p_0, 2} \le t} } \Big| \frac{1}{\sqrt{n}} \sum_{i=1}^{n} \epsilon_i f(X^{(i)}) \cdot (f_0 - f^*)(X^{(i)})\Big| \bigg] \\
    &\qquad \quad \le \frac{V}{N }\bigg\{ C_{B, d} t \log \Big( 2 + \frac{\tilde{M} + V}{t} \Big) + C_{B, d} V^{\frac{1}{4}} t^{\frac{3}{4}} \Big[\log \Big(2 + \frac{V}{t} \Big)\Big]^{s - 1} \\ 
    &\qquad \qquad \qquad \qquad+ C_{B, d}  \tilde{M} n^{-\frac{1}{2}} \Big[\log \Big( 2 + \frac{\tilde{M} + V}{t} \Big)\Big]^2 \\
    &\qquad \qquad \qquad \qquad+ C_{B, d}  \tilde{M} V^{\frac{1}{4}} t^{-\frac{1}{4}} n^{-\frac{1}{2}} \log \Big( 2 + \frac{\tilde{M} + V}{t} \Big) \Big[\log \Big(2 + \frac{V}{t} \Big)\Big]^{s - 1} \\
    &\qquad \qquad \qquad \qquad+ C_{B, d}  \tilde{M} V^{\frac{1}{2}} t^{-\frac{1}{2}} n^{-\frac{1}{2}} \Big[\log \Big(2 + \frac{V}{t} \Big)\Big]^{2(s - 1)} \bigg\} := \tilde{\Psi}_n(t)
\end{align*}
for $t > 0$.
Note that for 
\begin{equation*}
    t \ge \Big(\frac{V}{N}\Big)^{\frac{1}{2}} \Big(\frac{V}{N} + \tilde{M} \Big)^{\frac{1}{2}} n^{-\frac{1}{2}},
\end{equation*}
we have
\begin{equation*}
    C_{B, d} \cdot \frac{V}{N} \cdot t \log \Big( 2 + \frac{\tilde{M} + V}{t} \Big) \le \frac{1}{5} \sqrt{n} t^2 \mbox{\quad if \;} t \ge C_{B, d} \cdot \frac{V}{N} \cdot n^{-\frac{1}{2}} \log\Big( 2 + \frac{N(\tilde{M} + V) n^{\frac{1}{2}}}{V^{\frac{1}{2}} (N\tilde{M} + V)^{\frac{1}{2}}} \Big), 
\end{equation*}
\begin{align*}
    &C_{B, d} \cdot \frac{V}{N} \cdot \tilde{M} n^{-\frac{1}{2}} \Big[\log \Big( 2 + \frac{\tilde{M} + V}{t} \Big)\Big]^2 \le \frac{1}{5} \sqrt{n} t^2 \\ 
    &\qquad \qquad \qquad \qquad \mbox{\quad if \;} t \ge C_{B, d} \Big(\frac{V}{N}\Big)^{\frac{1}{2}} \tilde{M}^{\frac{1}{2}} n^{-\frac{1}{2}} \log\Big( 2 + \frac{N(\tilde{M} + V) n^{\frac{1}{2}}}{V^{\frac{1}{2}} (N\tilde{M} + V)^{\frac{1}{2}}} \Big),
\end{align*}
\begin{align*}
    &C_{B, d} \cdot \frac{V}{N} \cdot V^{\frac{1}{4}} t^{\frac{3}{4}} \Big[\log \Big(2 + \frac{V}{t} \Big)\Big]^{s - 1} \le \frac{1}{5} \sqrt{n} t^2 \\
    &\qquad \qquad \qquad \qquad \mbox{\quad if \;} t \ge C_{B, d} \Big(\frac{V}{N}\Big)^{\frac{4}{5}} V^{\frac{1}{5}} n^{-\frac{2}{5}} \bigg[\log\Big( 2 + \frac{NV^{\frac{1}{2}} n^{\frac{1}{2}}}{(N\tilde{M} + V)^{\frac{1}{2}}} \Big)\bigg]^{\frac{4(s - 1)}{5}},
\end{align*}
\begin{align*}
    &C_{B, d} \cdot \frac{V}{N} \cdot \tilde{M} V^{\frac{1}{4}} t^{-\frac{1}{4}} n^{-\frac{1}{2}} \log \Big( 2 + \frac{\tilde{M} + V}{t} \Big) \Big[\log \Big(2 + \frac{V}{t} \Big)\Big]^{s - 1} \le \frac{1}{5} \sqrt{n} t^2 \\
    &\qquad \mbox{\quad if \;} t \ge C_{B, d} \Big(\frac{V}{N}\Big)^{\frac{4}{9}} \tilde{M}^{\frac{4}{9}} V^{\frac{1}{9}} n^{-\frac{4}{9}} \bigg[\log\Big( 2 + \frac{N(\tilde{M} + V) n^{\frac{1}{2}}}{V^{\frac{1}{2}} (N\tilde{M} + V)^{\frac{1}{2}}} \Big)\bigg]^{\frac{4}{9}} \\
    &\qquad \qquad \qquad \qquad \qquad \qquad \qquad \quad \qquad \qquad \cdot \bigg[\log\Big( 2 + \frac{NV^{\frac{1}{2}} n^{\frac{1}{2}}}{(N\tilde{M} + V)^{\frac{1}{2}}} \Big)\bigg]^{\frac{4(s - 1)}{9}},
\end{align*}
and
\begin{align*}
    &C_{B, d} \cdot \frac{V}{N} \cdot \tilde{M} V^{\frac{1}{2}} t^{-\frac{1}{2}} n^{-\frac{1}{2}} \Big[\log \Big(2 + \frac{V}{t} \Big)\Big]^{2(s - 1)} \le \frac{1}{5} \sqrt{n} t^2 \\ 
    &\qquad \qquad \qquad \qquad \mbox{\quad if \;} t \ge C_{B, d} \Big(\frac{V}{N}\Big)^{\frac{2}{5}} \tilde{M}^{\frac{2}{5}} V^{\frac{1}{5}} n^{-\frac{2}{5}} \bigg[\log\Big( 2 + \frac{NV^{\frac{1}{2}} n^{\frac{1}{2}}}{(N\tilde{M} + V)^{\frac{1}{2}}} \Big)\bigg]^{\frac{4(s - 1)}{5}}. 
\end{align*}
Thus, if we let 
\begin{align*}
    &\tilde{t}_n = \max\bigg\{C_{B, d} \cdot \Big(\frac{V}{N}\Big)^{\frac{1}{2}} \Big(\frac{V}{N} + \tilde{M} \Big)^{\frac{1}{2}} n^{-\frac{1}{2}} \log\Big( 2 + \frac{N(\tilde{M} + V) n^{\frac{1}{2}}}{V^{\frac{1}{2}} (N\tilde{M} + V)^{\frac{1}{2}}} \Big), \\
    &\quad C_{B, d} \Big(\frac{V}{N}\Big)^{\frac{4}{9}} \tilde{M}^{\frac{4}{9}} V^{\frac{1}{9}} n^{-\frac{4}{9}} \bigg[\log\Big( 2 + \frac{N(\tilde{M} + V) n^{\frac{1}{2}}}{V^{\frac{1}{2}} (N\tilde{M} + V)^{\frac{1}{2}}} \Big)\bigg]^{\frac{4}{9}} \bigg[\log\Big( 2 + \frac{NV^{\frac{1}{2}} n^{\frac{1}{2}}}{(N\tilde{M} + V)^{\frac{1}{2}}} \Big)\bigg]^{\frac{4(s - 1)}{9}} \\
    &\quad C_{B, d} \Big(\frac{V}{N}\Big)^{\frac{2}{5}} \Big(\frac{V}{N} + \tilde{M} \Big)^{\frac{2}{5}} V^{\frac{1}{5}} n^{-\frac{2}{5}} \bigg[\log\Big( 2 + \frac{NV^{\frac{1}{2}} n^{\frac{1}{2}}}{(N\tilde{M} + V)^{\frac{1}{2}}} \Big)\bigg]^{\frac{4(s - 1)}{5}} \bigg\},
\end{align*}
then 
\begin{equation*}
    \tilde{\Psi}_n(\tilde{t}_n) \le \sqrt{n} \tilde{t}_n^2.
\end{equation*}
This implies that
\begin{align*}
    &\E\bigg[\sup_{\substack{f \in \F_{\text{approx}, \tilde{M}}(V) - \{f_0\} \\ \|f\|_{p_0, 2} \le r\tilde{t}_n} } \Big| \frac{1}{\sqrt{n}} \sum_{i=1}^{n} \epsilon_i f(X^{(i)}) \cdot (f_0 - f^*)(X^{(i)})\Big| \bigg] \\
    &\qquad \quad \le \tilde{\Psi}_n(r\tilde{t}_n) \le r \tilde{\Psi}_n(\tilde{t}_n) \le r \sqrt{n} \tilde{t}_n^2
\end{align*}
for every $r \ge 1$.
Here the second inequality follows from the fact that $t \rightarrow \tilde{\Psi}_n(t) / t$ is monotonically decreasing.

Now, let 
\begin{equation*}
    \bar{t}_n = 4 \|f_0 - f^*\|_{p_0, 2} + t_n + \tilde{t}_n.
\end{equation*}
Then, it follows that
\begin{equation*}
    \E\bigg[\sup_{\substack{f \in \F_{\text{approx}, \tilde{M}}(V) - \{f_0\} \\ \|f\|_{p_0, 2} \le r\bar{t}_n} } \Big| \frac{1}{\sqrt{n}} \sum_{i=1}^{n} \epsilon_i f(X^{(i)}) \Big| \bigg] \le \Big(\frac{r\bar{t}_n}{t_n}\Big) \sqrt{n} t_n^2 = r\sqrt{n} t_n \bar{t}_n \le r \sqrt{n} \bar{t}_n^2
\end{equation*}
for every $r \ge 1$.
Similarly, we can show that for $r \ge 1$,
\begin{equation*}
    \E\bigg[\sup_{\substack{f \in \F_{\text{approx}, \tilde{M}}(V) - \{f_0\} \\ \|f\|_{p_0, 2} \le r\bar{t}_n} } \Big| \frac{1}{\sqrt{n}} \sum_{i=1}^{n} \xi_i f(X^{(i)}) \Big| \bigg] \le r \sqrt{n} \bar{t}_n^2.
\end{equation*}
and
\begin{equation*}
    \E\bigg[\sup_{\substack{f \in \F_{\text{approx}, \tilde{M}}(V) - \{f_0\} \\ \|f\|_{p_0, 2} \le r\bar{t}_n} } \Big| \frac{1}{\sqrt{n}} \sum_{i=1}^{n} \epsilon_i f(X^{(i)}) \cdot (f_0 - f^*)(X^{(i)})\Big| \bigg] \le r \sqrt{n} \bar{t}_n^2.
\end{equation*}
Therefore, by Theorem \ref{thm:han-wellner-prop2-variant} and the fact that 
\begin{equation*}
     \|f_0 - f^*\|_{\infty} \le \frac{2V}{N},
\end{equation*}
we have
\begin{align*}
    \P \big(\|\tilde{f}^{d, s}_{n, V} - f_0 \|_{p_0, 2} > t\big) \le \epsilon &+ C \cdot \frac{1 + \tilde{M}^3}{t^3} \cdot \bar{t}_n^3 + C \cdot \frac{\|\xi_1\|_2^3 + (2V/N)^3 + \tilde{M}^3}{n^{\frac{3}{2}} t^3} \\
    &+ C \cdot \frac{\tilde{M}^3(\|\xi_1\|_5^3 n^{\frac{3}{5}} + (2V/N)^3 + \tilde{M}^3)}{n^{3} t^6}
\end{align*}
provided that $n$ is sufficiently large and $t \ge \bar{t}_n$.
Using
\begin{equation*}
    \|f_0 - f^*\|_{p_0, 2} \le C_d \Big(\frac{B}{N^3}\Big)^{\frac{1}{2}} V,
\end{equation*}
we can derive from this result that
\begin{align*}
    &\P \big(\|\tilde{f}^{d, s}_{n, V} - f^* \|_{p_0, 2} > t\big) \le \P \bigg(\|\tilde{f}^{d, s}_{n, V} - f_0 \|_{p_0, 2} > t - C_d \Big(\frac{B}{N^3}\Big)^{\frac{1}{2}} V \bigg) \\
    &\qquad \quad \le \epsilon + C \cdot \frac{1 + (M + 2V/N)^3}{(t - C_d (B/N^3)^{\frac{1}{2}} V)^3} \cdot \bar{t}_n^3 + C \cdot \frac{\|\xi_1\|_2^3 + (M + 2V/N)^3}{n^{\frac{3}{2}} (t - C_d (B/N^3)^{\frac{1}{2}} V)^3} \\
    &\qquad \qquad \quad + C \cdot \frac{(M + 2V/N)^3(\|\xi_1\|_5^3 n^{\frac{3}{5}} + (M + 2V/N)^3)}{n^{3} (t - C_d (B/N^3)^{\frac{1}{2}} V)^6}
\end{align*}
provided that $n$ is sufficiently large and
\begin{equation*}
    t \ge C_d \Big(\frac{B}{N^3}\Big)^{\frac{1}{2}} V + \bar{t}_n.
\end{equation*}

Because $N = \Omega(n^{4/15})$,
\begin{equation*}
    C_d \Big(\frac{B}{N^3}\Big)^{\frac{1}{2}} V + \bar{t}_n \le (1 + 4 \| \xi_1 \|_{5, 1}) \cdot C_{B, d, V, M} \cdot n^{-\frac{2}{5}} + o(n^{-\frac{2}{5}}),
\end{equation*}
if $s = 1$, and otherwise ($s \ge 2$),
\begin{align*}
    C_d \Big(\frac{B}{N^3}\Big)^{\frac{1}{2}} V &+ \bar{t}_n \le (1 + 4 \| \xi_1 \|_{5, 1}) \cdot C_{B, d} (1 + M^{\frac{1}{2}})^{\frac{4}{5}} V^{\frac{1}{5}} \\
    &\qquad \qquad \quad \cdot \bigg[\log\Big( 2 + \frac{V n^{\frac{1}{2}}}{1 + M^{\frac{1}{2}}}\Big)\bigg]^{\frac{4(s - 1)}{5}} \cdot n^{-\frac{2}{5}} + O(n^{-\frac{2}{5}}) \\
    &\le (1 + 4 \| \xi_1 \|_{5, 1}) \cdot C_{B, d, V, M} \bigg[\log\Big( 2 + \frac{V n^{\frac{1}{2}}}{1 + M^{\frac{1}{2}}}\Big)\bigg]^{\frac{4(s - 1)}{5}} \cdot n^{-\frac{2}{5}} + O(n^{-\frac{2}{5}}),
\end{align*}
where the constants underlying $o(\cdot)$ and $O(\cdot)$ depend on $B,d,V,M$, and the moments of $\xi_i$.
Hence, if 

\begin{equation*}
    K \ge 2 (1 + 4 \| \xi_1 \|_{5, 1}) \cdot C_{B, d, V, M},
\end{equation*}
then 
\begin{equation*}
    K n^{-\frac{2}{5}} (\log n)^{\frac{4(s - 1)}{5}} \ge C_d \Big(\frac{B}{N^3}\Big)^{\frac{1}{2}} V + \bar{t}_n
\end{equation*}
for sufficiently large $n$.
For such $K$, again because $N = \Omega(n^{4/15})$, we have that
\begin{align*}
    &\limsup_{n \rightarrow \infty} \P \big(\|\tilde{f}^{d, s}_{n, V} - f^* \|_{p_0, 2} > K n^{-\frac{2}{5}} (\log n)^{\frac{4(s - 1)}{5}}\big) \\
    &\qquad \le \epsilon + \limsup_{n \rightarrow \infty} \bigg[ C \cdot \frac{1 + (M + 2V/N)^3}{(K n^{-\frac{2}{5}} (\log n)^{\frac{4(s - 1)}{5}} - C_d (B/N^3)^{\frac{1}{2}} V)^3} \cdot \bar{t}_n^3 \\
    &\qquad \qquad \qquad \qquad \qquad+ C \cdot \frac{\|\xi_1\|_2^3 + (M + 2V/N)^3}{n^{\frac{3}{2}} (K n^{-\frac{2}{5}} (\log n)^{\frac{4(s - 1)}{5}} - C_d (B/N^3)^{\frac{1}{2}} V)^3} \\
    &\qquad \qquad \qquad \qquad \qquad + C \cdot \frac{(M + 2V/N)^3(\|\xi_1\|_5^3 n^{\frac{3}{5}} + (M + 2V/N)^3)}{n^{3} (K n^{-\frac{2}{5}} (\log n)^{\frac{4(s - 1)}{5}} - C_d (B/N^3)^{\frac{1}{2}} V)^6}\bigg] \\
    &\qquad \le \epsilon + (1 + 4 \| \xi_1 \|_{5, 1})^3 \cdot C_{B, d, V, M} \cdot \frac{1}{K^3} + \|\xi_1\|_5^3 \cdot C M^3 \cdot \frac{1}{K^6}.
\end{align*}
As a result, we can find $K > 0$ that satisfies
\begin{align*}
    \P \big(\|\tilde{f}^{d, s}_{n, V} - f^* \|_{p_0, 2} > K n^{-\frac{2}{5}} (\log n)^{\frac{4(s - 1)}{5}}\big) < 2 \epsilon
\end{align*}
for sufficiently large $n$.
\end{proof}

\subsubsection{Proof of Theorem \ref{thm:lower-bound}}\label{pf:lower-bound}
Here we almost follow the proof of \cite{fang2021multivariate}, Theorem 4.6, which itself obtains the ideas from \cite{blei2007metric}, Section 4. 
As in \cite{fang2021multivariate}, we exploit Assouad's lemma in the following form.

\begin{lemma} [\cite{van2000asymptotic}, Lemma 24.3]\label{lem:assouad}
Let $q$ be a positive integer. 
Suppose for each $\eta \in \{-1, 1\}^q$, we have $f_{\eta} \in \infmars^{d, s}$ with $\Vmars(f_{\eta}) \le V$. 
Then, the minimax risk $\mathfrak{M}^{d, s}_{n, V}$ satisfies that 
\begin{equation*}
    \mathfrak{M}^{d, s}_{n, V} \ge \frac{q}{8} \cdot \min_{\eta \neq \eta'} \frac{\| f_\eta - f_{\eta'} \|_{p_0, 2}^2}{H(\eta, \eta')} \cdot \min_{H(\eta, \eta') = 1} \bigg(1 - 
    \sqrt{\frac{1}{2} \E \big[ K(\P_{f_\eta}, \P_{f_{\eta'}}) \big]}\bigg),
\end{equation*}
where $H$ denotes the Hamming distance $H(\eta, \eta') := \sum_{j = 1}^{q} 1\{\eta_j \neq \eta'_j\}$, $\P_f$ is the probability distribution of $(y_1, \dots, y_n)$ given $(X^{(1)}, \dots, X^{(n)})$ in \eqref{eq:model-random} when $f^{*} = f$, and $K(\cdot, \cdot)$ indicates the Kullback divergence between two probability distributions.
\end{lemma}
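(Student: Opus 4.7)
The plan is to establish this classical Assouad-type bound by reducing the minimax problem to the recovery of a hypercube index $\eta$ and then applying coordinate-wise two-point testing. First, I would restrict to the finite sub-family $\{f_\eta : \eta \in \{-1,1\}^q\}$ (which lies in the feasible class by hypothesis) and lower bound $\mathfrak{M}^d_{n,V}$ by the Bayes risk under the uniform prior on $\{-1,1\}^q$:
\begin{equation*}
\mathfrak{M}^d_{n,V} \;\ge\; \inf_{\hat f_n} \frac{1}{2^q} \sum_{\eta \in \{-1,1\}^q} \E_{f_\eta} \mathcal{L}(\hat f_n, f_\eta).
\end{equation*}

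Next, for any estimator $\hat f_n$, I would extract an estimator $\hat\eta$ of the hypercube index by setting $\hat\eta \in \argmin_{\eta} \mathcal{L}(\hat f_n, f_\eta)$ (breaking ties arbitrarily). Using the fact that $\sqrt{\mathcal{L}(\cdot,\cdot)}$ is a pseudometric (it is an $L_2$ seminorm on $\mathbb{R}^n$), the triangle inequality and the minimality of $\hat\eta$ give $\sqrt{\mathcal{L}(f_{\hat\eta}, f_\eta)} \le 2\sqrt{\mathcal{L}(\hat f_n, f_\eta)}$, hence $\mathcal{L}(\hat f_n, f_\eta) \ge \tfrac14 \mathcal{L}(f_{\hat\eta}, f_\eta)$. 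Setting $\kappa := \min_{\eta \neq \eta'} \mathcal{L}(f_\eta, f_{\eta'})/H(\eta, \eta')$, the definition of $\kappa$ yields $\mathcal{L}(f_{\hat\eta}, f_\eta) \ge \kappa \, H(\hat\eta, \eta)$, so
\begin{equation*}
\mathfrak{M}^d_{n,V} \;\ge\; \frac{\kappa}{4}\cdot \frac{1}{2^q}\sum_{\eta} \E_{f_\eta} H(\hat\eta, \eta)
\;=\; \frac{\kappa}{4} \sum_{j=1}^{q} \frac{1}{2^q}\sum_{\eta} \P_{f_\eta}(\hat\eta_j \neq \eta_j).
\end{equation*}

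The third step is the coordinate-wise two-point argument. For each fixed $j$, I would pair each $\eta$ with its sign-flip $\eta^{(j)}$ at coordinate $j$ (so $\eta$ and $\eta^{(j)}$ are at Hamming distance $1$). Partitioning $\{-1,1\}^q$ into these $2^{q-1}$ pairs and applying Le Cam's two-point inequality,
\begin{equation*}
\P_{f_\eta}(\hat\eta_j \neq \eta_j) + \P_{f_{\eta^{(j)}}}(\hat\eta_j \neq \eta^{(j)}_j) \;\ge\; 1 - \big\|\mathbb{P}_{f_\eta} - \mathbb{P}_{f_{\eta^{(j)}}}\big\|_{\mathrm{TV}},
\end{equation*}
followed by Pinsker's inequality $\|\mathbb{P}-\mathbb{Q}\|_{\mathrm{TV}} \le \sqrt{\tfrac12 K(\mathbb{P},\mathbb{Q})}$, gives a per-pair lower bound of $1 - \sqrt{\tfrac12 K(\mathbb{P}_{f_\eta}, \mathbb{P}_{f_{\eta^{(j)}}})}$, which is at least $\min_{H(\eta,\eta')=1}(1 - \sqrt{K(\mathbb{P}_{f_\eta},\mathbb{P}_{f_{\eta'}})/2})$.

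Finally, summing the per-pair bounds gives $\tfrac{1}{2^q}\sum_\eta \P_{f_\eta}(\hat\eta_j \neq \eta_j) \ge \tfrac12 \min_{H(\eta,\eta')=1}(1-\sqrt{K/2})$ for each $j$, and summing over $j \in [q]$ produces the factor $q/2$. Combined with the earlier factor $\kappa/4$, this yields the claimed constant $q/8$ in front of $\kappa \cdot \min_{H=1}(1-\sqrt{K/2})$. There is no substantial obstacle here since this is a standard reduction; the only point requiring care is verifying the triangle inequality for $\sqrt{\mathcal{L}}$ (immediate, as $\mathcal{L}$ is a squared Euclidean seminorm on the values at the design points) and the correct combinatorics of the pairing in the Le Cam step.
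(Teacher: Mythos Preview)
Your proposal is correct and is essentially the standard proof of Assouad's lemma that the paper cites from \citet{van2000asymptotic}. The paper itself does not reprove the lemma: it simply invokes van der Vaart's Lemma~24.3 as a black box to obtain the bound with the affinity $\|\mathbb{P}_{f_\eta}\wedge\mathbb{P}_{f_{\eta'}}\|$ in place of the KL term, and then converts the affinity to the KL form via Pinsker's inequality. Your argument unpacks the black box---Bayes-risk reduction, the $\hat\eta$-construction with the $\sqrt{\mathcal{L}}$ triangle inequality, the coordinate-wise Le~Cam pairing---and arrives at the same constant $q/8$ and the same Pinsker conversion. So the two routes coincide in substance; yours is a self-contained derivation while the paper's is a citation plus the final inequality.
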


\begin{remark}
What \cite{van2000asymptotic}, Lemma 24.3 actually tells us in the setting of Lemma \ref{lem:assouad} is that
\begin{equation*}
    \max_{\eta} \E_{f_{\eta}} \| \hat{f}_n - f_{\eta} \|_{p_0, 2}^2 \ge \frac{q}{8} \cdot \min_{\eta \neq \eta'} \frac{\| f_\eta - f_{\eta'} \|_{p_0, 2}^2}{H(\eta, \eta')} \cdot \min_{H(\eta, \eta') = 1} \big\| Q_{f_\eta} \wedge Q_{f_\eta'} \big\|
\end{equation*}
for every estimator $\hat{f}_n$ based on $(y_1, X^{(1)}), \dots, (y_n, X^{(n)})$, where $Q_f$ is the joint distribution of $(y_i, X^{(i)})$ in \eqref{eq:model-random} when $f^{*} = f$ .
Here $\lVert Q_{f_{\eta}} \wedge Q_{f_{\eta'}} \rVert$ is defined by 
\[
    \big\lVert Q_{f_{\eta}} \wedge Q_{f_{\eta'}} \big\rVert = \int \min (q_{f_{\eta}}, q_{f_{\eta'}}) \, d\mu,
\]
where $\mu$ is every measure for which both $q_{f_{\eta}} := dQ_{f_{\eta}}/d\mu$ and $q_{f_{\eta'}} := dQ_{f_{\eta'}}/d\mu$ exist.
However, we can derive Lemma \ref{lem:assouad} directly from \cite{van2000asymptotic}, Lemma 24.3.
This is simply because we have
\begin{equation*}
    \mathfrak{M}^{d, s}_{n, V} = \inf_{\hat{f}_n} \sup_{\substack{f^* \in \infmars^{d, s} \\ \Vmars(f^*) \le V}} \E_{f^*} \| \hat{f}_n - f^{*} \|_{p_0, 2}^2 \ge \inf_{\hat{f}_n} \max_{\eta} \E_{f_{\eta}} \| \hat{f}_n - f_{\eta} \|_{p_0, 2}^2
\end{equation*}
and 
\begin{align*}
    \big\| Q_{f_{\eta}} \wedge Q_{f_{\eta'}} \big\| &= \int \min (q_{f_{\eta}}, q_{f_{\eta'}}) \, d\mu = 1 - \frac{1}{2}\int |q_{f_{\eta}} - q_{f_{\eta'}}| \, d\mu \\
    &\ge 1 - \sqrt{\frac{1}{2} K(Q_{f_\eta}, Q_{f_{\eta'}})} = 1 - \sqrt{\frac{1}{2} \E \big[ K(\P_{f_\eta}, \P_{f_{\eta'}}) \big]}.
\end{align*}
Here the inequality is due to the Pinsker--Csisz\'ar--Kullback inequality (see, e.g., \cite{wainwright2019high}, Lemma 15.2).
\end{remark}

\begin{proof} [Proof of Theorem \ref{thm:lower-bound}]
Let $l$ be a positive integer whose value will be specified momentarily. 
Consider the set 
\begin{equation*}
    P_l = \bigg\{(p_1, \dots, p_s) \in \mathbb{Z}_{\ge 0}^s: \sum_{j = 1}^{s} p_j = l\bigg\}
\end{equation*}
and for each $p \in P_l$, let 
\begin{equation*}
    I_p = \big\{(i_1, \dots, i_s): i_j \in [2^{p_j}] \mbox{ for each } j \in [s] \big\}.
\end{equation*}
Clearly, $|I_p| = 2^l$ for every $p \in P_l$. 
Also, we have  
\begin{equation*}
    |P_l| = \binom{s + l - 1}{s - 1} \ge \frac{l^{s- 1}}{(s - 1)!} := a_s l^{s - 1}.
\end{equation*}
Now, we choose $l$ as 
\begin{equation*}
    l = \bigg\lceil\frac{1}{5 \log 2}\Big\{\log\Big(\frac{C_{B, s} n V^2}{\sigma^2}\Big) - (s - 1) \log \log\Big(\frac{C_{B, s} n V^2}{\sigma^2}\Big)\Big\}\bigg\rceil,
\end{equation*}
where $C_{B, s} = B 2^{-12s + 1} (10 \log 2)^{s - 1} / a_s$. 
Here $\ceil{x}$ indicates the least integer greater than or equal to $x$.
The reason why we choose these particular $C_{B, s}$ and $l$ will become clear at the later stage of the proof. 
Note that 
\begin{equation} \label{2-minus-l}
    2^{-l} \le \Big(\frac{\sigma^2}{C_{B, s} n V^2}\Big)^{\frac{1}{5}}\bigg[\log\Big(\frac{C_{B, s} n V^2}{\sigma^2}\Big)\bigg]^{\frac{s - 1}{5}} < 2^{-l + 1}.
\end{equation} 

Let
\begin{equation*}
    Q = \big\{(p, i): p \in P_l \mbox{ and } i \in I_p \big\}
\end{equation*}
and let $q = |Q| = |P_l| \cdot 2^l$.
Throughout this proof, we will index the components of every $\eta \in \{-1, 1\}^q$ with the set $Q$.
For a positive integer $m$ and $k \in [2^m]$, we let $\phi_{m, k}$ be the real-valued function on $(0, 1)$ defined by
\begin{align*}
    \phi_{m, k}(x) = 
    \begin{cases}
        1 &\mbox{if } x \in \big((k-1)2^{-m}, (k-\frac{7}{8})2^{-m}\big) \cup \big((k-\frac{5}{8})2^{-m}, (k-\frac{1}{2})2^{-m}\big) \\
        &\qquad \qquad \cup \big((k-\frac{3}{8})2^{-m}, (k-\frac{1}{8})2^{-m}\big) \\
        -1 &\mbox{if } x \in \big((k-\frac{7}{8})2^{-m}, (k-\frac{5}{8})2^{-m}\big) \cup \big((k-\frac{1}{2})2^{-m}, (k-\frac{3}{8})2^{-m}\big) \\
        &\qquad \qquad \cup \big((k-\frac{1}{8})2^{-m}, k2^{-m}\big) \\
        0 &\mbox{otherwise}. 
    \end{cases}
\end{align*}
We now construct our function $f_{\eta}$ for each $\eta \in \{-1, 1\}^q$ based on these functions $\phi_{m, k}$. 
For each $\eta \in \{-1, 1\}^q$, we let $\nu_{\eta}$ be the signed measure on $(0, 1)^s$ defined by 
\begin{equation*}
    d\nu_{\eta}(t) = \frac{V}{\sqrt{|P_l|}} \sum_{p \in P_l} \sum_{i \in I_p} \eta_{p, i} \bigg(\prod_{j = 1}^s \phi_{p_j, i_j}(t_j)\bigg) dt,
\end{equation*}
and let $f_{\eta}$ denote the real-valued function on $[0, 1]^d$ defined by
\begin{equation*}
    f_{\eta}(x_1, \dots, x_d) = \int_{(0,1)^s} \prod_{j = 1}^{s} (x_j - t_j)_+ \, d\nu_{\eta}(t).
\end{equation*}
The following lemma, whose proof is given in Appendix \ref{pf:f-eta-zero-properties}, then presents the key properties of the functions $f_{\eta}$.

\begin{lemma}\label{lem:f-eta-zero-properties}
For each $\eta \in \{-1, 1\}^q$, $f_{\eta}$ satisfies
\begin{equation}\label{f-eta-zero-variation}
    \Vmars(f_\eta) \le V.
\end{equation}
Also, we have 
\begin{equation}\label{f-eta-zero-first-property}
    \max_{H(\eta, \eta') = 1} \| f_\eta - f_{\eta'} \|_{p_0, 2}^2 \le \frac{B V^2}{|P_l|} \cdot 2^{-5l - 12s + 2}.
\end{equation}
and 
\begin{equation}\label{f-eta-zero-second-property}
    \min_{\eta \neq \eta'} \frac{\| f_\eta - f_{\eta'}\|_{p_0, 2}^2}{H(\eta, \eta')} \ge \frac{b V^2}{|P_l|} \cdot 2^{-5l - 14s + 2}.
\end{equation}
\end{lemma}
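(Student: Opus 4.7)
The argument rests on a few elementary properties of the building blocks $\phi_{m,k}$ and of the kernel-convolved functions $F_{m,k}(x) := \int (x - t)_+ \phi_{m,k}(t) \, dt$, which I would verify at the outset. First, by antisymmetry about the midpoint $(k - \tfrac{1}{2}) 2^{-m}$ together with a direct computation on the six prescribed subintervals, $\phi_{m,k}$ satisfies both $\int \phi_{m,k}(t) \, dt = 0$ and $\int t\, \phi_{m,k}(t) \, dt = 0$. These two vanishing moments imply that $F_{m,k}$ is supported on $[(k-1) 2^{-m}, k 2^{-m}]$; combined with a computation on the scale-one prototype $F_{0,1}$, this yields $\|F_{m,k}\|_\infty \le (3/128) \cdot 2^{-2m}$, and hence $\int F_{m,k}^2 \le C \cdot 2^{-5m}$ and $\int |F_{m,k}| \le C \cdot 2^{-3m}$ via the scaling $F_{m,k}(x) = 2^{-2m} F_{0,1}(2^m x - (k-1))$. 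Second, the family $\{\phi_{m,k}\}$ is pairwise orthogonal in $L^2((0,1))$: within the same scale $m$ by disjoint supports, and across scales $m \ne m'$ by a short case analysis. Consequently the tensor products $\Phi_{p, i}(t) := \prod_j \phi_{p_j, i_j}(t_j)$ are pairwise orthogonal in $L^2((0, 1)^d)$ with $\|\Phi_{p,i}\|_2^2 = 2^{-l}$.

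From this orthogonality, \eqref{f-eta-zero-variation} would follow quickly. Since only $\nu_{\mathbf{1}} = \nu_\eta$ is nonzero in the $\infmars^d$-representation of $f^0_\eta$, one has $\Vmars(f^0_\eta) = |\nu_\eta|((0,1)^d) = \|d\nu_\eta/d\lambda\|_{L^1}$. Cauchy-Schwarz on the unit cube combined with Parseval gives
\begin{equation*}
\|d\nu_\eta/d\lambda\|_{L^1}^2 \le \|d\nu_\eta/d\lambda\|_{L^2}^2 = \frac{V^2}{|P_l|} \sum_{(p, i)} \|\Phi_{p, i}\|_2^2 = \frac{V^2}{|P_l|} \cdot |P_l| \cdot 2^l \cdot 2^{-l} = V^2,
\end{equation*}
proving $\Vmars(f^0_\eta) \le V$. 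Next, for \eqref{f-eta-zero-first-property}, a pair $\eta, \eta'$ differing only at $(p^*, i^*)$ satisfies $f^0_\eta - f^0_{\eta'} = \pm \tfrac{2V}{\sqrt{|P_l|}} \prod_j F_{p^*_j, i^*_j}(x_j)$, so the loss factorizes as
\begin{equation*}
\mathcal{L}^0(f^0_\eta, f^0_{\eta'}) = \frac{4V^2}{|P_l|} \prod_{j=1}^{d} \Bigl( \frac{1}{n_0} \sum_{k_j = 0}^{n_0 - 1} F_{p^*_j, i^*_j}(k_j / n_0)^2 \Bigr).
\end{equation*}
Combining the $L^\infty$ bound with the length-$2^{-m}$ support of $F_{m,k}$ (and using $n_0 \gtrsim 2^{p^*_j}$, which follows from $n_0 \ge (b/2) n^{1/d}$ and $p^*_j \le 2l/d$ with $l$ of order $\log n$) bounds each factor by $C \cdot 2^{-5 p^*_j}$; taking the product over $j$ with $\sum_j p^*_j = l$ then yields the claimed $2^{-5l - 12 d + 2}$ after constants are tracked.

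The most delicate claim is \eqref{f-eta-zero-second-property}. Writing $D = \{(p, i) : \eta_{p, i} \ne \eta'_{p, i}\}$ and expanding the square gives
\begin{equation*}
\mathcal{L}^0(f^0_\eta, f^0_{\eta'}) = \frac{4V^2}{|P_l|} \sum_{(p, i), (p', i') \in D} \eta_{p, i}\, \eta_{p', i'} \prod_{j=1}^d \Bigl( \frac{1}{n_0} \sum_{k_j} F_{p_j, i_j}(k_j/n_0)\, F_{p'_j, i'_j}(k_j/n_0) \Bigr).
\end{equation*}
Pairs with equal $p$ but $i \ne i'$ contribute zero identically, because in some coordinate $j$ the factors $F_{p_j, i_j}$ and $F_{p_j, i'_j}$ have disjoint supports. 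Each diagonal term enjoys the Riemann-type lower bound $\prod_j \tfrac{1}{n_0} \sum_{k_j} F_{p_j, i_j}(k_j/n_0)^2 \ge c \cdot 2^{-5l}$, valid once $n \ge c'_{b, d, \sigma^2/V^2}$ makes $n_0 2^{-p_j}$ large enough for the square of $F_{m, k}$ to be well-approximated on the lattice. For nonzero off-diagonal pairs we must have $p \ne p'$, and for coordinates with $p_j < p'_j$ the support of $F_{p'_j, i'_j}$ is contained in that of $F_{p_j, i_j}$, whence $\tfrac{1}{n_0} \sum_{k_j} |F_{p_j, i_j}\, F_{p'_j, i'_j}|(k_j/n_0) \le C \cdot 2^{-2 p_j - 3 p'_j}$, geometrically smaller than the diagonal factor $2^{-5 p_j}$ by a factor $2^{-3(p'_j - p_j)}$ per mismatched coordinate. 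Summing these estimates over all nonzero off-diagonal pairs bounds the off-diagonal contribution by a controllable constant times the diagonal, producing the stated $2^{-5l - 14d + 2}$. The main obstacle is precisely this last accounting: the geometric scale-separation decay must survive summation uniformly over every $\eta \ne \eta'$, which requires handling the combinatorial complexity of $D \times D$ and tracking constants carefully so that the off-diagonal correction remains strictly dominated by the diagonal regardless of the structure of $D$.
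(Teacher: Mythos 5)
Your arguments for \eqref{f-eta-zero-variation} and \eqref{f-eta-zero-first-property} follow the paper's route: Cauchy--Schwarz plus pairwise orthogonality of the tensor products $\prod_j\phi_{p_j,i_j}$ for the variation bound, and, for Hamming distance one, the factorization of the loss over coordinates combined with the support length $2^{-m}$ and a sup-norm bound on the double primitive $F_{m,k}$. One quantitative caveat on the latter: the extremum is exactly $\sup_x|F_{m,k}(x)|=2^{-2m-6}$ (attained at the quarter point of the support), whereas your bound $(3/128)\cdot2^{-2m}$ is larger by a factor $3/2$; squared and multiplied over the $d$ coordinates this costs $(9/4)^d$ and you no longer recover the stated constant $2^{-12d+2}$, which matters because the proof of the minimax theorem uses the precise ratio between the constants in \eqref{f-eta-zero-first-property} and \eqref{f-eta-zero-second-property}. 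So the extremal computation must be done exactly, not just estimated.

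The genuine gap is in \eqref{f-eta-zero-second-property}, and it is precisely the obstacle you flag at the end: the off-diagonal accounting by absolute values cannot work. Fix $(p,i)\in D$ and a scale $p'\neq p$. Your per-coordinate bounds give, for the pair $\bigl((p,i),(p',i')\bigr)$, a contribution at most $C^d\,2^{-\sum_j(2\min(p_j,p'_j)+3\max(p_j,p'_j))}=C^d\,2^{-5l-\delta}$ with $\delta=\tfrac12\sum_j|p_j-p'_j|\ge1$. But the number of $i'$ whose support box meets that of $(p,i)$ is $\prod_{j:\,p'_j>p_j}2^{p'_j-p_j}=2^{\delta}$, so the multiplicity of finer-scale boxes exactly cancels the scale-separation gain: summing over $i'$ leaves $C^d\,2^{-5l}$ for each $p'$, with no decay in $\delta$. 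Summing over $p'\in P_l$ then bounds the absolute off-diagonal mass attached to a single $(p,i)\in D$ by $C^d\,|P_l|\,2^{-5l}$, which exceeds the diagonal term $c^d\,2^{-5l}$ by a factor $|P_l|\asymp l^{d-1}\to\infty$ for $d\ge2$. Taking $\eta'=-\eta$ (so $D=Q$ and every overlapping pair is genuinely present) shows that no constant-tracking can make the off-diagonal ``strictly dominated by the diagonal regardless of the structure of $D$'': one needs cancellation, not size estimates.

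The paper sidesteps the off-diagonal sum entirely. It first replaces each $\Phi_{m,k}$ by the step function $\tilde{\Phi}_{m,k}(x)=\Phi_{m,k}(\lfloor n_0x\rfloor/n_0)$, so that $\mathcal{L}^0(f^0_\eta,f^0_{\eta'})$ equals the exact $L^2$ norm of a piecewise-constant function $g_{\eta,\eta'}$. It then introduces the orthonormal Haar-type tensor system $H_{p',i'}=\prod_j h_{p'_j,i'_j}$ and applies Bessel's inequality, $\|g_{\eta,\eta'}\|_2^2\ge\sum_{(p',i')}\langle g_{\eta,\eta'},H_{p',i'}\rangle^2$. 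The whole argument then reduces to the exact biorthogonality
\begin{equation*}
\prod_{j=1}^d\big\langle\tilde{\Phi}_{p_j,i_j},h_{p'_j,i'_j}\big\rangle=2^{-\frac{5}{2}l-7d}\,\ind\{(p,i)=(p',i')\},
\end{equation*}
whose verification uses the two structural identities $\Phi_{m,k}(x+2^{-m-1})=-\Phi_{m,k}(x)$ and $\Phi_{m,k}(x)+\Phi_{m,k}(x+2^{-m-2})=2^{-2m-6}$ on the relevant ranges, together with $n_0$ being a power of $2$ with $n_0\ge2^{p_j+2}$ (this is where the threshold $c'_{b,d,\sigma^2/V^2}$ comes from). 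With this device the lower bound is a clean sum of squares of diagonal coefficients and the problematic cross terms never appear. To complete your proof you would need to import this (or an equivalent) orthogonality mechanism; the direct expansion cannot be repaired by sharper absolute-value estimates.
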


Since the Kullback divergence between $\P_{f_\eta}$ and $\P_{f_{\eta'}}$ for $\eta, \eta' \in \{-1, 1\}^q$ can be computed by 
\begin{equation*}
    K(\P_{f_\eta}, \P_{f_{\eta'}}) = \frac{1}{2\sigma^2} \sum_{i=1}^{n} \big(f_{\eta}(X^{(i)}) - f_{\eta'}(X^{(i)})\big)^2,
\end{equation*}
we can obtain from \eqref{f-eta-zero-first-property} that
\begin{align}\label{f-eta-second-property}
    \max_{H(\eta, \eta') = 1} \E \big[ K(\P_{f_\eta}, \P_{f_{\eta'}}) \big] &= \frac{n}{2\sigma^2} \cdot  \max_{H(\eta, \eta') = 1} \| f_\eta - f_{\eta'} \|_{p_0, 2}^2 \le \frac{B n V^2}{\sigma^2|P_l|} \cdot 2^{-5l - 12s + 1}.
\end{align}

We are now ready to apply Lemma \ref{lem:assouad}.
By Lemma \ref{lem:assouad} together with \eqref{f-eta-zero-second-property} and \eqref{f-eta-second-property}, the minimax risk $\mathfrak{M}^{d, s}_{n, V}$ satisfies that   
\begin{align*}  
    \mathfrak{M}^{d, s}_{n, V} &\ge \frac{q}{8} \cdot \frac{b V^2}{|P_l|} \cdot 2^{-5l - 14s + 2} \bigg(1 - \sqrt{\frac{B n V^2}{\sigma^2|P_l|} \cdot 2^{-5l - 12s}}\bigg) \\
    &= b V^2 2^{-4l - 14s - 1} \bigg(1 - \sqrt{\frac{1}{2(10 \log 2)^{s - 1}} 
    \cdot \frac{C_{B, s} n V^2}{\sigma^2}\cdot \frac{2^{-5l}}{l^{s - 1}}}\bigg).
\end{align*}
Recall that we let $C_{B, s} = B 2^{-12s + 1} (10 \log 2)^{s - 1} / a_s$.
Our choice of $l$ then implies that
\begingroup
\allowdisplaybreaks
\begin{align*}
    \frac{1}{2(10 \log 2)^{s - 1}} \cdot \frac{C_{B, s} n V^2}{\sigma^2}\cdot \frac{2^{-5l}}{l^{s - 1}} &\le \frac{1}{2} \cdot \Bigg[\frac{\log\big(\frac{C_{B, s} n V^2}{\sigma^2}\big)}{2\Big(\log\big(\frac{C_{B, s} n V^2}{\sigma^2}\big) - (s - 1) \log \log \big(\frac{C_{B, s} n V^2}{\sigma^2}\big)\Big)}\Bigg]^{s - 1} \\
    &\le \frac{1}{2} \cdot \bigg[2\bigg(1 - (s-1) \frac{\log \log\big(\frac{C_{B, s} n V^2}{\sigma^2}\big)}{\log \big(\frac{C_{B, s} n V^2}{\sigma^2}\big)}\bigg)\bigg]^{-(s - 1)} \\
    &\le \frac{1}{2} \cdot \bigg[2\bigg(1 - (s - 1) \Big\{\log\Big(\frac{C_{B, s} n V^2}{\sigma^2}\Big)\Big\}^{-\frac{1}{2}} \bigg)\bigg]^{-(s - 1)}.
\end{align*}
\endgroup
Here we use 
\begin{equation*}
    l \ge \frac{1}{5 \log 2}\bigg\{\log\Big(\frac{C_{B, s} n V^2}{\sigma^2}\Big) - (s - 1) \log \log\Big(\frac{C_{B, s} n V^2}{\sigma^2}\Big)\bigg\}
\end{equation*}
and \eqref{2-minus-l} for the first inequality, and the last one is due to $\log \log x / \log x \le (\log x)^{-1/2}$, which holds for all $x > 1$. 
Thus, if 
\begin{equation*}
    n \ge \frac{e^{4s^2}}{C_{B, s}} \cdot \frac{\sigma^2}{V^2},
\end{equation*}
we have 
\begin{equation*}
    \frac{1}{2(10 \log 2)^{s - 1}} \cdot \frac{C_{B, s} n V^2}{\sigma^2}\cdot \frac{2^{-5l}}{l^{s - 1}} \le \frac{1}{2} \cdot \Big(1 + \frac{1}{s}\Big)^{-(s - 1)} \le \frac{1}{2},
\end{equation*}
and therefore,
\begin{align*}
    \mathfrak{M}^{d, s}_{n, V} &\ge \bigg(1 - \sqrt{\frac{1}{2}}\bigg) \cdot b V^2 2^{-4l - 14s - 1} \\
    &\ge \bigg(1 - \sqrt{\frac{1}{2}}\bigg) \cdot b V^2 2^{-14s - 5} \cdot \Big(\frac{\sigma^2}{C_{B, s} n V^2}\Big)^{\frac{4}{5}}\bigg[\log\Big(\frac{C_{B, s} n V^2}{\sigma^2}\Big)\bigg]^{\frac{4(s - 1)}{5}} \\
    &\ge C_{b, B, s}\Big(\frac{\sigma^2 V^{\frac{1}{2}}}{n }\Big)^{\frac{4}{5}}\bigg[\log\Big(\frac{C_{B, s} n V^2}{\sigma^2}\Big)\bigg]^{\frac{4(s - 1)}{5}},
\end{align*}
where $C_{b, B, s} = (1 - 1/\sqrt{2}) \cdot b  2^{-14s-5} C_{B, s}^{-4/5}$.
Here for the second inequality, we use \eqref{2-minus-l} again.
Now, note that 
\begin{equation*}
    \log\Big(\frac{C_{B, s} n V^2}{\sigma^2}\Big) \ge \frac{1}{2} \log\Big(\frac{n V^2}{\sigma^2}\Big)
\end{equation*}
if $n \ge (1/C_{B, s}^2) \cdot (\sigma^2/V^2)$.
Consequently, by further assuming that 
\begin{equation*}
    n \ge \max\Big\{\frac{e^{4s^2}}{C_{B, s}} \cdot \frac{\sigma^2}{V^2}, \frac{1}{C_{B, s}^2} \cdot \frac{\sigma^2}{V^2}\Big\},
\end{equation*}
we can derive the conclusion
\begin{equation*}
    \mathfrak{M}^{d, s}_{n, V} \ge C_{b, B, s}'\Big(\frac{\sigma^2 V^{\frac{1}{2}}}{n }\Big)^{\frac{4}{5}}\bigg[\log\Big(\frac{n V^2}{\sigma^2}\Big)\bigg]^{\frac{4(s - 1)}{5}},
\end{equation*}
where $C_{b, B, s}' = C_{b, B, s} \cdot 2^{-4(s-1)/5}$.
\end{proof}

\subsection{Proofs of Lemmas and Propositions in Section \ref{smoothness}}\label{pf:smoothness}
\subsubsection{Proof of Proposition \ref{prop:equivalent-formation}}\label{pf:equivalent-formation}
\begin{proof}[Proof of Proposition \ref{prop:equivalent-formation}]
Suppose $f \in \infmars^{d, s}$.
Then, there exist $a_{\zerovec} \in \R$ and a collection of finite signed measures $\{\nu_{\alpha}: \alpha \in \{0, 1\}^d \setminus \{\zerovec\} \mbox{ and } |\alpha| \le s\}$ as in \eqref{form-of-functions} such that $f = f_{a_{\zerovec}, \{\nu_{\alpha}\}}$.
For each $\alpha \in \{0, 1\}^{d} \setminus \{\zerovec\}$ with $|\alpha| \le s$, let $g_{\alpha}$ be the function on $[0, 1]^{|\alpha|}$ defined by 
\begin{equation*}
    g_{\alpha}(x^{(\alpha)}) = \nu_{\alpha}([\zerovec, x^{(\alpha)}] \cap [0, 1)^{|\alpha|})
\end{equation*}
for $x^{(\alpha)} \in [0, 1]^{|\alpha|}$.
Clearly, $g_{\alpha}$ is coordinate-wise right-continuous and coordinate-wise left-continuous at each point $x^{(\alpha)} = (x_j, j \in S(\alpha)) \in [0, 1]^{|\alpha|} \setminus [0, 1)^{|\alpha|}$ with respect to all the $j^{\text{th}}$ coordinates where $x_j = 1$.
Also, by Proposition \ref{prop:aistleitner-variant}, we can see that $g_{\alpha}$ has finite HK0 variation. 
Moreover, we have
\begingroup
\allowdisplaybreaks
\begin{align*}
    f(x_1, \dots, x_d) &= a_{\zerovec} + \sumoveralpha \int_{[0, 1)^{|\alpha|}} \prod_{j \in  S(\alpha)} (x_j - t_j)_{+} \, d\nu_{\alpha}(\talpha) \\ 
    &= a_{\zerovec} + \sumoveralpha \int_{\prod\limits_{j \in S(\alpha)} [0, x_j)} \prod_{j \in  S(\alpha)} (x_j - t_j) \, d\nu_{\alpha}(\talpha) \\
    &= a_{\zerovec} + \sumoveralpha \int_{\prod\limits_{j \in S(\alpha)} [0, x_j)} \int_{\prod\limits_{j \in S(\alpha)} [t_j, x_j)} 1 \, d\salpha \, d\nu_{\alpha}(\talpha) \\
    &= a_{\zerovec} + \sumoveralpha \int_{\prod\limits_{j \in S(\alpha)} [0, x_j)} \int_{\prod\limits_{j \in S(\alpha)} [0, s_j]} 1 \, d\nu_{\alpha}(\talpha) \, d\salpha \\
    &= a_{\zerovec} + \sumoveralpha \int_{\prod\limits_{j \in S(\alpha)} [0, x_j)} \nu_{\alpha}([\zerovec, \salpha]) \, d\salpha \\
    &= a_{\zerovec} + \sumoveralpha \int_{[\zerovec, \xalpha]} g_{\alpha}(\salpha) \, d\salpha
\end{align*}
\endgroup
for all $x = (x_1, \dots, x_d) \in [0, 1]^d$. 
Thus, $f$ can be represented as in \eqref{equivalent-formation} with $a_0 \in \R$ and the collection of functions $\{g_{\alpha}: \alpha \in \{0, 1\}^d \setminus \{\zerovec\} \mbox{ and } |\alpha| \le s\}$ that has the desired properties.

Conversely, suppose a real-valued function $f$ on $[0, 1]^d$ is of the form 
\begin{equation*}
    f(x_1, \dots, x_d) = a_{\zerovec} + \sumoveralpha \int_{[\zerovec, \xalpha]} g_{\alpha}(\talpha) \, d\talpha
\end{equation*}
for some $a_{\zerovec} \in \R$ and for some collection of functions $\{g_{\alpha}: \alpha \in \{0, 1\}^d \setminus \{\zerovec\} \mbox{ and } |\alpha| \le s\}$, where for each $\alpha \in \{0, 1\}^d \setminus \{\zerovec\}$ with $|\alpha| \le s$, $g_{\alpha}: [0, 1]^{|\alpha|} \rightarrow \R$ has finite HK0 variation, is coordinate-wise right-continuous, and is coordinate-wise left-continuous at each point $x^{(\alpha)} = (x_j, j \in S(\alpha)) \in [0, 1]^{|\alpha|} \setminus [0, 1)^{|\alpha|}$ with respect to all the $j^{\text{th}}$ coordinates where $x_j = 1$.
For each $\alpha \in \{0, 1\}^d \setminus \{\zerovec\}$ with $|\alpha| \le s$, by Proposition \ref{prop:aistleitner-variant} again, there exists a finite signed measure $\nu_{\alpha}$ on $[0, 1)^{|\alpha|}$ such that
\begin{equation*}
    g_{\alpha}(x^{(\alpha)}) = \nu_{\alpha}([\zerovec, x^{(\alpha)}] \cap [0, 1)^{|\alpha|}) \qt{for $x^{(\alpha)} \in [0, 1]^{|\alpha|}$}
\end{equation*}
and
\begin{equation*}
    |\nu_{\alpha}|([0, 1)^{|\alpha|} \setminus \{\zerovec\}) = \Vhk(g_{\alpha}).
\end{equation*}
Hence, the function $f$ can be written as 
\begin{align*}
    f(x_1, \dots, x_d) &= a_{\zerovec} + \sumoveralpha \int_{[\zerovec, \xalpha]} g_{\alpha}(\talpha) \, d\talpha \\
    &= a_{\zerovec} + \sumoveralpha \int_{\prod\limits_{j \in S(\alpha)} [0, x_j)} \nu_{\alpha}([\zerovec, \talpha]) \, d\talpha \\ 
    &= a_{\zerovec} + \sumoveralpha \int_{[0, 1)^{|\alpha|}} \prod_{j \in  S(\alpha)} (x_j - s_j)_{+} \, d\nu_{\alpha}(\salpha),
\end{align*} 
which means that $f \in \infmars^{d, s}$.
Moreover, we can represent the complexity measure of $f$ as 
\begin{equation*}
    \Vmars(f) = \sumoveralpha |\nu_{\alpha}|([0, 1)^{|\alpha|} \setminus \{\zerovec\}) = \sumoveralpha \Vhk(g_{\alpha}).
\end{equation*}
\end{proof}

\subsubsection{Proof of Proposition \ref{galpha-chracterization}}\label{pf:galpha-characterization}
Here we prove a stronger statement that directly implies Proposition \ref{galpha-chracterization}.
Here we show that for $f$ in the condition \eqref{equivalent-formation}, the derivatives of the interval functions associated with $f$ exist and equal to $g_{\alpha}$ in the same condition almost everywhere (with respect to the Lebesgue measure).
We start by introducing interval functions, their derivatives, and some related concepts. 
One can refer to \cite{lojasiewicz1988introduction} for more detailed introduction of them. 

Two closed real intervals are said to be adjoining if they have a common end point and do not share interior points. 
For example, $[0, 1]$ and $[1, 2]$ are two adjoining closed intervals.
Similarly, we say that two $m$-dimensional axis-aligned closed rectangles
\begin{equation*}
    [x, y] = \prod_{i = 1}^{m} [x_i, y_i] \ \mbox{ and } \ [z, w] = \prod_{i = 1}^{m} [z_i, w_i]
\end{equation*} 
are adjoining if $[x_k, y_k]$ and $[z_k, w_k]$ are adjoining for some $k \in [m]$ and $[x_i, y_i] = [z_i, w_i]$ for all $i \neq k$. 
For instance, $[0, 1] \times [0, 1]$ and $[0, 1] \times [1, 2]$ are two adjoining two-dimensional axis-aligned closed rectangles. 
Note that for two adjoining axis-aligned closed rectangles, their union is also an axis-aligned closed rectangle.
A real-valued function $F$ defined on a class of $m$-dimensional axis-aligned closed rectangles is then called an additive interval function if 
\begin{equation*}
    F(R \cup R') = F(R) + F(R')
\end{equation*}
for every pair of $m$-dimensional adjoining axis-aligned closed rectangles $R$ and $R'$. 

For an axis-aligned closed rectangle 
\begin{equation*}
    [x, y] = \prod_{i = 1}^{m} [x_i, y_i], 
\end{equation*} 
$|[x, y]|$ denotes the volume of the rectangle $|[x, y]| = (y_1 - x_1) \cdots (y_m - x_m)$.
Also, $[x, y]$ is called a cube if it is the product of closed intervals of equal length, i.e., $y_1 - x_1 = \cdots = y_m - x_m$.
The upper and the lower derivative of an additive interval function $F$ at $x \in \R^m$ are then defined by 
\begin{equation*}
    \overline{D}F(x) = \limsup_{x \in C, |C| \rightarrow 0} \frac{F(C)}{|C|}
    \ \text{ and } \ \underline{D}F(x) = \liminf_{x \in C, |C| \rightarrow 0} \frac{F(C)}{|C|}
\end{equation*}
where $\limsup$ and $\liminf$ are taken over $m$-dimensional cubes $C$ containing $x$.
If the two derivatives coincide, we say the derivative of $F$ at $x$ exists and define it as the common value
\begin{equation*}
    DF(x) = \lim_{x \in C, |C| \rightarrow 0} \frac{F(C)}{|C|}.
\end{equation*}

The Lebesgue differentiation theorem (see, e.g., \cite{Rudin:87book}, Theorem
7.10) can be stated in terms of interval functions and their derivatives as follows. 
The following theorem plays a key role in identifying $g_{\alpha}$ in the condition \eqref{equivalent-formation}.
\begin{theorem}[\cite{lojasiewicz1988introduction}, Theorem 7.1.7]\label{thm:Lebesgue-differentiation}
Let $f$ be an integrable real-valued function defined on an axis-aligned closed rectangle $R_0$. 
Suppose that $F$ is the additive interval function defined by 
\begin{equation*}
    F(R) = \int_{R} f(x) dx
\end{equation*}
for axis-aligned closed rectangles $R \subseteq R_0$. 
Then, it follows that $DF = f$ almost everywhere (with respect to the Lebesgue measure) on $R_0$.
\end{theorem}

We now define for a real-valued function $f$ on $[0, 1]^d$ the interval functions associated with $f$.
For each $\alpha \in \{0, 1\}^d \setminus \{\zerovec\}$, denote by $\Delta_{\alpha} f$ the additive interval function which is defined on the class of axis-aligned closed rectangles contained in $[0, 1]^{|\alpha|}$ and which is defined by 
\begin{equation*}
    \Delta_{\alpha} f([\xalpha, \yalpha]) = \sum_{\delta \in \prod\limits_{k \in S(\alpha)}{\{0, 1\}}} (-1)^{\sum\limits_{k \in S(\alpha)} \delta_k} f\big(\widetilde{xy}_{\delta}^{(\alpha)}\big)
\end{equation*}
where $\xalpha = (x_k, k \in S(\alpha))$, $\yalpha = (y_k, k \in S(\alpha))$, and $\widetilde{xy}_{\delta}^{(\alpha)}$ is the $d$-dimensional vector with 
\begin{equation*}
    \big(\widetilde{xy}_{\delta}^{(\alpha)}\big)_k = 
    \begin{cases}
        \delta_k x_k + (1 - \delta_k) y_k &\mbox{if } k \in S(\alpha)  \\
        0 &\mbox{otherwise}
    \end{cases}
\end{equation*}
for $k \in [d]$.
For example, if $d = 3$ and $\alpha = (1, 0, 1)$, $\Delta_{1, 0, 1} f$ is defined by
\begin{equation*}
    \Delta_{1, 0, 1} f([x_1, y_1] \times [x_3, y_3])
    = f(y_1, 0, y_3) - f(y_1, 0, x_3) - f(x_1, 0, y_3) + f(x_1, 0, x_3).
\end{equation*}
Also, if $\alpha = \onevec = (1, \dots, 1)$, $\Delta_{\onevec} f([x, y])$ coincides with the quasi-volume of $f$ on $[x, y]$ for all $x, y \in [0, 1]^d$.
We call these $\Delta_{\alpha} f$ the interval functions associated with $f$.

We are ready to state and prove the stronger statement describing the relationship between $f$ and $g_{\alpha}$ in the condition \eqref{equivalent-formation}.
It is clear from the definitions above that Proposition \ref{galpha-chracterization} directly follows from this result.

\begin{proposition}\label{galpha-chracterization-stronger}
Suppose that the condition \eqref{equivalent-formation} holds.
Then, for each $\alpha \in \{0, 1\}^d \setminus \{\zerovec\}$, $D \Delta_{\alpha} f = 0$ if $|\alpha| > s$, and $D\Delta_{\alpha} f = g_{\alpha}$ almost everywhere (with respect to the Lebesgue measure) on $[0, 1]^{|\alpha|}$ if $|\alpha| \le s$. 
\end{proposition}

\begin{proof}[Proof of Proposition \ref{galpha-chracterization-stronger}]
Recall that the condition \eqref{equivalent-formation} assumes that
\begin{equation*}
    f(x_1, \dots, x_d) = a_{\zerovec} + \sumoveralpha \int_{[\zerovec, \xalpha]} g_{\alpha}(\talpha) \, d\talpha
\end{equation*}
for $(x_1, \dots, x_d) \in [0, 1]^d$, where $a_0 \in \R$ and $g_{\alpha}$ is a real-valued function on $[0, 1]^{|\alpha|}$ for each $\alpha \in \{0, 1\}^d \setminus \{\zerovec\}$ with $|\alpha| \le s$.
By letting $g_{\alpha} = 0$ for each $\alpha \in \{0, 1\}^d$ with $|\alpha| > s$, we can rewrite it as 
\begin{equation*}
    f(x_1, \dots, x_d) = a_{\zerovec} + \sumoveralphafull \int_{[\zerovec, \xalpha]} g_{\alpha}(\talpha) \, d\talpha
\end{equation*}
for $(x_1, \dots, x_d) \in [0, 1]^d$.
Then, for each $\alpha \in \{0, 1\}^d \setminus \{\zerovec\}$, we have
\begin{align*}
    &\Delta_{\alpha} f([x^{(\alpha)}, y^{(\alpha)}]) = \sum_{\delta \in \prod\limits_{k \in S(\alpha)}{\{0, 1\}}} (-1)^{\sum\limits_{k \in S(\alpha)} \delta_k} f\big(\widetilde{xy}_{\delta}^{(\alpha)}\big) \\
    &\qquad \ = \sum_{\delta \in \prod\limits_{k \in S(\alpha)}{\{0, 1\}}} (-1)^{\sum\limits_{k \in S(\alpha)} \delta_k} \cdot \sum_{\substack{\alpha' \in \{0, 1\}^d \setminus \{\zerovec\} \\ \alpha' \le \alpha}} \int_{\prod\limits_{k \in S(\alpha')} [0, \delta_k x_k + (1 - \delta_k) y_k]} g_{\alpha'}(t^{(\alpha')}) \, dt^{(\alpha')} \\
    &\qquad \ = \sum_{\substack{\alpha' \in \{0, 1\}^d \setminus \{\zerovec\} \\ \alpha' \le \alpha}} \sum_{\delta \in \prod\limits_{k \in S(\alpha)}{\{0, 1\}}} (-1)^{\sum\limits_{k \in S(\alpha)} \delta_k} \cdot  \int_{\prod\limits_{k \in S(\alpha')} [0, \delta_k x_k + (1 - \delta_k) y_k]} g_{\alpha'}(t^{(\alpha')}) \, dt^{(\alpha')} \\
    &\qquad \ = \sum_{\delta \in \prod\limits_{k \in S(\alpha)}{\{0, 1\}}} (-1)^{\sum\limits_{k \in S(\alpha)} \delta_k} \cdot  \int_{\prod\limits_{k \in S(\alpha)} [0, \delta_k x_k + (1 - \delta_k) y_k]} g_{\alpha}(t^{(\alpha)}) \, dt^{(\alpha)} \\
    &\qquad \ = \int_{\prod\limits_{k \in S(\alpha)} [x_k, y_k]} g_{\alpha}(t^{(\alpha)}) \, dt^{(\alpha)} = \int_{[\xalpha, \yalpha]} g_{\alpha}(\talpha) \, d\talpha
\end{align*}
for $\xalpha, \yalpha \in [0, 1]^{|\alpha|}$.
Thus, by Theorem \ref{thm:Lebesgue-differentiation}, the derivative $D \Delta_{\alpha} f$ exists and equals to $g_{\alpha}$ almost everywhere (with respect to the Lebesgue measure) for each $\alpha \in \{0, 1\}^d \setminus \{\zerovec\}$.
Because we let $g_{\alpha} = 0$ for $\alpha \in \{0, 1\}^d$ with $|\alpha| > s$, this means that $D \Delta_{\alpha} f = 0$ if $|\alpha| > s$, and $D \Delta_{\alpha} f = g_{\alpha}$ almost everywhere if $|\alpha| \le s$. 
\end{proof}

\subsubsection{Proof of Lemma \ref{lem:smooth-functions}}\label{pf:smooth-functions}
We use the following lemma for proving Lemma \ref{lem:smooth-functions}. 
This lemma will be used again for proving Lemma \ref{lem:hardy-krause}.
We provide our proof of Lemma \ref{lem:smooth-functions-helper} right after the proof of Lemma \ref{lem:smooth-functions}. 

\begin{lemma}
\label{lem:smooth-functions-helper}
    If a real-valued function $g$ defined on $[0, 1]^{m}$ is smooth in the sense that $g^{(\alpha)}$ exists and is continuous on $[0, 1]^m$ for every $\alpha \in \{0, 1\}^m$, then $g$ satisfies 
    \begin{equation*}
        g(x_1, \dots, x_m) = g(\zerovec) + \sum_{\alpha \in \{0, 1\}^m \setminus \{\zerovec\}}  \int_{[\zerovec, \xalpha]} g^{(\alpha)} (\tilde{t}^{(\alpha)}) \, d\talpha
    \end{equation*}
    for all $x = (x_1, \dots, x_m) \in [0, 1]^m$. 
    Here $\tilde{t}^{(\alpha)}$ is an $m$-dimensional extension of $\talpha$ where
    \begin{align*}
    \tilde{t}^{(\alpha)}_k = 
    \begin{cases}
        t_k &\mbox{if } k \in S(\alpha) \\
        0 &\mbox{otherwise}.
    \end{cases}
    \end{align*}
\end{lemma}

\begin{proof}[Proof of Lemma \ref{lem:smooth-functions}]
Suppose $f$ is smooth in the sense that it satisfies the two conditions in the lemma and assume that $f^{(\alpha)} = 0$ for every $\alpha \in \{0, 1\}^d$ with $|\alpha| > s$.
For each $\alpha \in \{0, 1\}^d \setminus \{\zerovec\}$ with $|\alpha| \le s$, let $g_{\alpha}$ be the function on $[0, 1]^{|\alpha|}$ defined by 
\begin{equation*}
    g_{\alpha}(x^{(\alpha)}) = f^{(\alpha)}(\tilde{x}^{(\alpha)})
\end{equation*}
for $x^{(\alpha)} \in [0, 1]^{|\alpha|}$.  
Note that since $f^{(\alpha)}$ is continuous on $\bar{T}^{(\alpha)}$, $g_{\alpha}$ is continuous on $[0, 1]^{|\alpha|}$. 
Also, since $f^{(\beta)}$ exists and is continuous on $\bar{\Sbeta}^{(\alpha)}$ for every $\beta \in J_{\alpha}$, we can see that $g_{\alpha}^{(\delta)}$ exists and is continuous on $[0, 1]^{|\alpha|}$ for every $\delta \in \{0, 1\}^{|\alpha|}$.
Thus, by Lemma \ref{lem:hardy-krause}, it follows that $g_{\alpha}$ has finite HK0 variation and 
\begin{equation*}
    \Vhk(g_{\alpha}) = \sum_{\delta \in \{0, 1\}^{|\alpha|} \setminus \{\zerovec\}} \int_{\bar{\Sbeta}_{\alpha}^{(\delta)}} |g_{\alpha}^{(\delta)}|,
\end{equation*}
where 
\begin{align*}
    \bar{\Sbeta}_{\alpha}^{(\delta)} := \prod_{k \in S(\alpha)} \bar{\Sbeta}^{(\delta)}_k ~\text{ where }~ \bar{\Sbeta}^{(\delta)}_k = 
        \begin{cases}
            (0, 1) &\mbox{if } \delta_k = \max_j \delta_j \\
            \{0\} &\mbox{otherwise}.
        \end{cases}
\end{align*}
Moreover, by the assumption that $f^{(\alpha)}$ exists and is continuous on $[0, 1]^d$ for every $\alpha \in \{0, 1\}^d$, Lemma \ref{lem:smooth-functions-helper} implies that 
\begingroup
\allowdisplaybreaks
\begin{align*}
    f(x_1, \dots, x_d) &= f(\zerovec) + \sumoveralphafull  \int_{[\zerovec, \xalpha]} f^{(\alpha)} (\tilde{t}^{(\alpha)}) \, d\talpha \\
    &= f(\zerovec) + \sumoveralpha  \int_{[\zerovec, \xalpha]} g_{\alpha} (\talpha) \, d\talpha
\end{align*}
\endgroup
for all $x = (x_1, \dots, x_d) \in [0, 1]^d$.
For these reasons, by Proposition \ref{prop:equivalent-formation}, we can conclude that $f \in \infmars^{d, s}$ and that
\begingroup
\allowdisplaybreaks
\begin{align*}
    \Vmars(f) &= \sumoveralpha \Vhk (g_{\alpha}) = \sumoveralpha \sum_{\delta \in \{0, 1\}^{|\alpha|} \setminus \{\zerovec\}} \int_{\bar{\Sbeta}_{\alpha}^{(\delta)}} |g_{\alpha}^{(\delta)}| \\
    &= \sumoveralpha \sum_{\beta \in J_{\alpha}} \int_{\bar{\Sbeta}^{(\beta)}} |f^{(\beta)}|.
\end{align*}
\endgroup
If $s = d$, $\Vmars(f)$ can also be simplified as
\begin{equation*}
    \Vmars(f) = \sumoveralphafull \sum_{\beta \in J_{\alpha}} \int_{\bar{\Sbeta}^{(\beta)}} |f^{(\beta)}| = \sumoverbeta \int_{\bar{\Sbeta}^{(\beta)}} |f^{(\beta)}|.
\end{equation*}
\end{proof}

\begin{proof}[Proof of Lemma \ref{lem:smooth-functions-helper}]
Note that for $\alpha \in \{0, 1\}^m \setminus \{\zerovec\}$, 
\begin{equation*}
    \int_{[\zerovec, \xalpha]} g^{(\alpha)} (\tilde{t}^{(\alpha)}) \, d\talpha = \sum_{\delta \in \prod\limits_{k \in S(\alpha)} \{0, 1\}} (-1)^{\sum\limits_{k \in S(\alpha)} \delta_k} \cdot g\big(\tilde{x}_{\delta}^{(\alpha)}\big),
\end{equation*}
where $\tilde{x}_{\delta}^{(\alpha)}$ is the $m$-dimensional vector with 
\begin{equation*}
    \big(\tilde{x}_{\delta}^{(\alpha)}\big)_k = 
    \begin{cases}
        (1 - \delta_k) x_k &\mbox{if } k \in S(\alpha) \\
        0 &\mbox{otherwise} 
    \end{cases}
\end{equation*}
for $k \in [m]$.
It can be easily proved by the fundamental theorem of calculus, and we use the smoothness assumption on $g$ here.
For example, if $m = 2$ and $\alpha = (1, 1)$,
\begin{align*}
    &\int_{[0, x_1] \times [0, x_2]} g^{(1, 1)}(t_1, t_2) \, d(t_1, t_2) = \int_{0}^{x_1} \int_{0}^{x_2} g^{(1, 1)}(t_1, t_2) \, dt_2 \, dt_1 \\
    &\quad = \int_{0}^{x_1} \big(g^{(1, 0)} (t_1, x_2) - g^{(1, 0)} (t_1, 0)\big) \, dt_1 = \int_{0}^{x_1} g^{(1, 0)} (t_1, x_2) \, dt_1 - \int_{0}^{x_1} g^{(1, 0)} (t_1, 0) \, dt_1 \\
    &\quad= g(x_1, x_2) - g(0, x_2) - g(x_1, 0) + g(0, 0) = \sum_{\delta \in \{0, 1\}^2} (-1)^{\delta_1 + \delta_2} \cdot g\big(\tilde{x}_{\delta}^{(1, 1)}\big),
\end{align*}
and we use that $g, g^{(1,0)}, g^{(0,1)}$, and $g^{(1,1)}$ are continuous on $[0, 1]^2$ for this argument.
Thus, 
\begingroup
\allowdisplaybreaks
\begin{align*}
    &g(\zerovec) + \sum_{\alpha \in \{0, 1\}^m \setminus \{\zerovec\}} \int_{[\zerovec, \xalpha]} g^{(\alpha)} (\tilde{t}^{(\alpha)}) \, d\talpha \\
    &\quad= g(\zerovec) + \sum_{\alpha \in \{0, 1\}^m \setminus \{\zerovec\}} \sum_{\delta \in \prod\limits_{k \in S(\alpha)} \{0, 1\}} (-1)^{\sum\limits_{k \in S(\alpha)} \delta_k} \cdot g\big(\tilde{x}_{\delta}^{(\alpha)}\big) \\
    &\quad = \sum_{\alpha \in \{0, 1\}^m} \sum_{\delta \in \prod\limits_{k \in S(\alpha)} \{0, 1\}} (-1)^{\sum\limits_{k \in S(\alpha)} \delta_k} \cdot g\big(\tilde{x}_{\delta}^{(\alpha)}\big) = \sum_{\alpha' \in \{0, 1\}^m} g(\tilde{x}^{(\alpha')}) \cdot \sum_{\substack{\alpha \in \{0, 1\}^m \\ \alpha \ge \alpha'}} (-1)^{|\alpha| - |\alpha'|}.
\end{align*}
\endgroup
Here the last equality results from counting the number of $g(\tilde{x}^{(\alpha')})$ for each $\alpha' \in \{0, 1\}^m$.
Note that if $\alpha' \neq \onevec$,
\begin{equation*}
    \sum_{\substack{\alpha \in \{0, 1\}^m \\ \alpha \ge \alpha'}} (-1)^{|\alpha| - |\alpha'|} = \sum_{\delta \in \prod\limits_{k \in S_0(\alpha')} \{0, 1\}} (-1)^{\sum\limits_{k \in S_0(\alpha')} \delta_k} = (1 - 1)^{|S_0(\alpha')|} = 0,
\end{equation*}
where $S_0(\alpha') = \{j \in [m]: \alpha'_j = 0\}$, and that if $\alpha' = \onevec$,
\begin{equation*}
    \sum_{\substack{\alpha \in \{0, 1\}^m \\ \alpha \ge \alpha'}} (-1)^{|\alpha| - |\alpha'|} = 1.
\end{equation*}
As a result, it follows that 
\begin{equation*}
    g(\zerovec) + \sum_{\alpha \in \{0, 1\}^m \setminus \{\zerovec\}} \int_{[\zerovec, \xalpha]} g^{(\alpha)} (\tilde{t}^{(\alpha)}) \, d\talpha = g(\tilde{x}^{(\onevec)}) = g(x_1, \dots, x_m)
\end{equation*}
as desired.
\end{proof}

\subsection{Proofs of Lemmas and Propositions in Appendix \ref{hkreview}}\label{pf:hkreview}
\subsubsection{Proof of Proposition \ref{prop:aistleitner-variant}}\label{pf:aistleitner-variant}

Our proof of Proposition \ref{prop:aistleitner-variant} is based on \cite{aistleitner2015functions}, Theorem 3, which we restate as Theorem \ref{thm:aistleitner} below.
\cite{aistleitner2015functions}, Theorem 3 connects functions on $[0, 1]^m$ that has finite HK0 variation and is coordinate-wise right-continuous to the cumulative distribution functions of finite signed measures on $[0, 1]^m$.
We will use this theorem again for proving Lemma \ref{lem:hardy-krause}.

\begin{theorem}[\cite{aistleitner2015functions}, Theorem 3]\label{thm:aistleitner}
Suppose a real-valued function $g$ defined on $[0, 1]^m$ has finite HK0 variation and is coordinate-wise right-continuous. 
Then, there exists a unique finite signed measure $\nu$ on $[0, 1]^m$ such that
\begin{equation}\label{eq:aistleitner}
    g(x) = \nu([\zerovec, x]) \qt{for every $x \in [0, 1]^m$}.
\end{equation}
Also, if a real-valued function $g$ defined on $[0, 1]^m$ is of the form \eqref{eq:aistleitner} for some finite signed measure $\nu$ on $[0, 1]^m$, then $g$ has finite HK0 variation and 
\begin{equation*}
    |\nu|([0, 1]^m) = \Vhk(g) + |g(\zerovec)|.
\end{equation*}
\end{theorem}

\begin{proof}[Proof of Proposition \ref{prop:aistleitner-variant}]
Assume that a real-valued function $g$ defined on $[0, 1]^m$ has finite HK0 variation, is coordinate-wise right-continuous, and is coordinate-wise left-continuous at each point $x \in [0, 1]^{m} \setminus [0, 1)^{m}$ with respect to all the $j^{\text{th}}$ coordinates where $x_j = 1$. 
Since $g$ has finite HK0 variation and is coordinate-wise right-continuous, by Theorem \ref{thm:aistleitner}, there exists a finite signed measure $\bar{\nu}$ on $[0, 1]^m$ such that 
\begin{equation*}
    g(x) = \bar{\nu}([\zerovec, x])
\end{equation*}
for all $x \in [0, 1]^m$.
Let $\mu$ be the finite signed measure on $[0, 1]^m$ for which 
\begin{equation*}
    \mu(E) = \bar{\nu}(E \cap [0, 1)^m).
\end{equation*}
Then, it follows that 
\begin{equation*}
    g(x) = \mu([\zerovec, x])
\end{equation*}
for all $x \in [0, 1]^m$.
Indeed, if $x \in [0, 1)^m$, then 
\begin{equation*}
    g(x) = \bar{\nu}([\zerovec, x]) = \bar{\nu}([\zerovec, x] \cap [0, 1)^m) = \mu([\zerovec, x]).
\end{equation*}
Otherwise, without loss of generality, let $x = (1, \dots, 1, x_{k+1}, \dots, x_{m})$ where $x_{k+1}, \dots, x_{m}$ $< 1$ for some $k \in [m]$.
Then, by the left-continuity of $g$, 
\begin{align*}
    g(x) &= \lim_{y_1 \uparrow 1} \cdots \lim_{y_k \uparrow 1} g(y_1, \dots, y_k, x_{k+1}, \dots, x_m) \\
    &= \lim_{y_1 \uparrow 1} \cdots \lim_{y_k \uparrow 1} \bar{\nu}([\zerovec, (y_1, \dots, y_k, x_{k+1}, \dots, x_m)]) \\
    &= \bar{\nu}([0, 1)^k \times [0, x_{k+1}] \times \dots \times [0, x_m]) \\
    &= \bar{\nu}([\zerovec, x] \cap [0, 1)^m) = \mu([\zerovec, x]),
\end{align*}
as well. 
We thus have the two signed measure $\bar{\nu}$ and $\mu$ satisfying \eqref{eq:aistleitner}, and the uniqueness in Theorem \ref{thm:aistleitner} implies that $\bar{\nu} = \mu$, i.e., 
\begin{equation*}
    \bar{\nu}(E) = \mu(E) = \bar{\nu}(E \cap [0, 1)^m).
\end{equation*}
Therefore, if we let $\nu$ be the finite signed measure on $[0, 1)^m$ for which 
\begin{equation*}
    \nu(E) = \bar{\nu}(E),
\end{equation*}
then for every $x \in [0, 1]^m$, 
\begin{equation*}
    g(x) = \bar{\nu}([\zerovec, x]) = \bar{\nu}([\zerovec, x] \cap [0, 1)^m) = \nu([\zerovec, x] \cap [0, 1)^m)
\end{equation*}
as desired.
The uniqueness of such signed measure directly follows from the uniqueness of the signed measure in Theorem \ref{thm:aistleitner}.

Next, suppose a real-valued function $g$ on $[0, 1]^m$ is of the form \eqref{eq:aistleitner-variant} for some signed measure $\nu$ on $[0, 1)^m$.
Let $\bar{\nu}$ be the finite signed measure on $[0, 1]^m$ for which
\begin{equation*}
    \bar{\nu}(E) = \nu(E \cap [0, 1)^m).
\end{equation*}
Then, clearly, 
\begin{equation*}
    g(x) = \nu([\zerovec, x] \cap [0, 1)^m) = \bar{\nu}([\zerovec, x])
\end{equation*}
for every $x \in [0, 1]^m$.
By Theorem \ref{thm:aistleitner}, $g$ thus has finite HK0 variation and 
\begin{equation*}
    |\bar{\nu}|([0, 1]^m) = \Vhk(g) + |g(\zerovec)|.
\end{equation*}
Also, since $|g(\zerovec)| = |\nu(\{0\})| = |\nu|(\{0\})$ and $|\bar{\nu}|([0, 1]^m) = |\nu|([0, 1)^m)$, it follows that 
\begin{equation*}
    \Vhk(g) = |\nu|([0, 1)^m \setminus \{\zerovec\}).
\end{equation*}
\end{proof}

\subsubsection{Proof of Lemma \ref{lem:hardy-krause}}\label{pf:hardy-krause}
\begin{proof}[Proof of Lemma \ref{lem:hardy-krause}]
First, note that since $g$ is smooth in the sense of Lemma \ref{lem:smooth-functions-helper}, we have 
\begin{equation}\label{eq:smooth-functions-helper-half-closed}
\begin{split}
        g(x_1, \dots, x_m) &= g(\zerovec) + \sum_{\alpha \in \{0, 1\}^m \setminus \{\zerovec\}} \int_{[\zerovec, \xalpha]} g^{(\alpha)} (\tilde{t}^{(\alpha)}) \, d\talpha
\end{split}
\end{equation}
for all $x = (x_1, \dots, x_m) \in [0, 1]^m$.
For each $\alpha \in \{0, 1\}^m \setminus \{\zerovec\}$, let $\mu_{\alpha}$ be the signed measure on $(0, 1]^{|\alpha|}$ defined by 
\begin{equation*}
    d\mu_{\alpha} = g^{(\alpha)} (\tilde{t}^{(\alpha)}) \, d\talpha.
\end{equation*}
Also, for each $\alpha \in \{0, 1\}^m \setminus \{\zerovec\}$, consider the projection $\pi_{\alpha}: \R^m \rightarrow \R^{|\alpha|}$ for which  
\begin{equation*}
    \pi_{\alpha}\big((t_k, k \in [m])\big) = (t_k, k \in S(\alpha)) 
\end{equation*}
and let $R^{(\alpha)} = \prod_{k = 1}^{m} R^{(\alpha)}_k$ where 
\begin{align*}
    R^{(\alpha)}_k = 
    \begin{cases}
        (0, 1] &\mbox{if } \alpha_k = \max_j \alpha_j \\
        \{0\} &\mbox{otherwise}.
    \end{cases}
\end{align*}
Next, we patch together these signed measures and form the signed measure $\mu$ on $[0, 1]^m$ by 
\begin{equation*}
    \mu(E) = g(\zerovec) \cdot 1\{\zerovec \in E\} + \sum_{\alpha \in \{0, 1\}^m \setminus \{\zerovec\}} \mu_{\alpha}(\pi_{\alpha}(E \cap R^{(\alpha)})).
\end{equation*}
Then, we can observe that 
\begin{align}\label{eq:total-variation-of-nu-in-g-alpha}
\begin{split}
    |\mu|([0, 1]^m) &= |g(\zerovec)| + \sum_{\alpha \in \{0, 1\}^m \setminus \{\zerovec\}} |\mu_{\alpha}|((0, 1]^{|\alpha|}) \\
    &= |g(\zerovec)| + \sum_{\alpha \in \{0, 1\}^m \setminus \{\zerovec\}} \int_{(0, 1]^{|\alpha|}} |g^{(\alpha)} (\tilde{t}^{(\alpha)})| \, d\talpha \\
    &= |g(\zerovec)| + \sum_{\alpha \in \{0, 1\}^m \setminus \{\zerovec\}} \int_{\bar{\Sbeta}^{(\alpha)}} |g^{(\alpha)}|.
\end{split}
\end{align}
By \eqref{eq:smooth-functions-helper-half-closed}, we also have 
\begingroup
\allowdisplaybreaks
\begin{align*}
    &g(x_1, \dots, x_m) = g(\zerovec) + \sum_{\alpha \in \{0, 1\}^m \setminus \{\zerovec\}}  \int_{\prod\limits_{k \in S(\alpha)} (0, x_k]} g^{(\alpha)} (\tilde{t}^{(\alpha)}) \, d\talpha \\
    &\qquad= g(\zerovec) + \sum_{\alpha \in \{0, 1\}^m \setminus \{\zerovec\}} \int_{\prod\limits_{k \in S(\alpha)} (0, x_k]} \, d\mu_{\alpha} = g(\zerovec) + \sum_{\alpha \in \{0, 1\}^m \setminus \{\zerovec\}} \mu_{\alpha} \Big(\prod\limits_{k \in S(\alpha)} (0, x_k] \Big) \\
    &\qquad= g(\zerovec) \cdot 1\{\zerovec \in [\zerovec, x]\} + \sum_{\alpha \in \{0, 1\}^m \setminus \{\zerovec\}} \mu_{\alpha} \Big(\pi_{\alpha}\big([\zerovec, x] \cap R^{(\alpha)}\big)\Big) = \mu([\zerovec, x])
\end{align*}
\endgroup
for all $x = (x_1, \cdots, x_m) \in [0, 1]^m$.
Therefore, Theorem \ref{thm:aistleitner} implies that $g$ has finite HK0 variation and 
\begin{equation*}
    |\mu|([0, 1]^m) = \Vhk(g) + |g(\zerovec)|.
\end{equation*}
Combining this with the equation \eqref{eq:total-variation-of-nu-in-g-alpha}, we can derive that 
\begin{equation*}
    \Vhk(g) = \sum_{\alpha \in \{0, 1\}^m \setminus \{\zerovec\}} \int_{\bar{\Sbeta}^{(\alpha)}} |g^{(\alpha)}|.
\end{equation*}
\end{proof}

\subsection{Proof of Proposition \ref{prop:reduction-to-discrete-analogue}}\label{pf:reduction-to-discrete-analogue}
Before proving Proposition \ref{prop:reduction-to-discrete-analogue}, we extend the definition of the operator $H^{(2)}$ to general orders.
This helps understand connection between $\theta$ and $H^{(2)} \theta$ for vectors $\theta$ indexed with the set $I_0 = [0: (n_1 - 1)] \times \dots \times [0: (n_d - 1)]$.

Suppose that $\theta$ is an $n$-dimensional vector indexed with the set $I_0$.
We let $H^{(0)} \theta = \theta$ and for each positive integer $r$, let $H^{(r)} \theta$ be the $n$-dimensional vector indexed by $i \in I_0$, where 
\begin{align*}
    (H^{(r)} \theta)_i = 
    \begin{cases}
        (D^{(\alpha)} \theta)_i  &\mbox{if } i \in I_0^{(\alpha)} \mbox{ for some } \alpha \in [0: r]^d \mbox{ with } \max_j \alpha_j = r \\
        (D^{(i)} \theta)_i &\mbox{if } i \in [0: (r - 1)]^d.
    \end{cases}
\end{align*}
Recall that
\begin{equation*}
    \biguplus_{\substack{\alpha \in [0: r]^d \\ \max_j \alpha_j = r}} I_0^{(\alpha)} = I_0 \setminus [0: (r - 1)]^d
\end{equation*}
and note that the definition agrees with the former one when $r = 2$.
Next, let $A^{(0)} = L_{I_0}$ and for each positive integer $r$, let $A^{(r)}$ be the block matrix that is indexed with the set $I_0$ and consists of the identity block matrix indexed with $[0: (r-1)]^d$ and all the blocks $L_{I_0^{(\alpha)}}$ for $\alpha \in [0: r]^d$ with $\max_j \alpha_j = r$.
For an index set $I$, we use the notation $L_I$ to indicate the $|I| \times |I|$ matrix whose rows and columns are indexed by $i, j \in I$, where 
\begin{equation*}
    (L_I)_{ij} = \ind\{i \ge j\} := \prod_{k} \ind\{i_k \ge j_k\}
\end{equation*}
for $i, j \in I$.
Then, one can verify that for each $r \ge 0$,
\begin{equation*}
    (H^{(r)} \theta)_i = \sum_{j \in I_0^{(\alpha)}} \ind \{i \ge j\} (H^{(r + 1)} \theta)_j
\end{equation*}
for every $i \in I_0^{(\alpha)}$ for $\alpha \in [0: r]^d$ with $\max_j \alpha_j = r$. 
We thus have
\begin{equation*}
    H^{(r)} \theta = A^{(r)} (H^{(r + 1)} \theta)
\end{equation*}
for all $r \ge 0$, and this leads to $\theta = A^{(0)} A^{(1)} \cdots A^{(r - 1)} H^{(r)} \theta$ for all $r \ge 0$. 
Especially, when $r = 2$, we have 
\begin{equation}\label{eq:theta-and-H2theta}
    \theta = A^{(0)} A^{(1)} (H^{(2)} \theta ),
\end{equation}
which can be explicitly written as
\begin{equation}\label{eq:representation-of-theta}
    \theta_i = (H^{(2)} \theta)_{\zerovec} + \sumoveralphafull \sum_{l \in I_0^{(\alpha)}}\bigg[\bigg(\prod_{k = 1}^d \binom{i_k - l_k + \alpha_k}{\alpha_k}\bigg) \cdot \ind\{i \ge l\} \cdot (H^{(2)} \theta)_l\bigg] 
\end{equation}
for $i \in I_0$.
Moreover, here we additionally note that $V_2(\theta)$ can be represented in $H^{(2)} \theta$ as 
\begingroup
\allowdisplaybreaks
\begin{align}\label{discrete-variation-in-H-theta}
    V_2(\theta) &= \sumoverbeta \bigg[\bigg(\prod_{k = 1}^{d} n_k^{\beta_k - \ind\{\beta_k = 2\}}\bigg) \cdot  \sum_{i \in I_0^{(\beta)}} \big|(D^{(\beta)} \theta)_i\big|\bigg] \nonumber \\ 
    &= \sumoveralphafull \sum_{\beta \in J_{\alpha}} \bigg[\bigg(\prod_{k = 1}^{d} n_k^{\beta_k - \ind\{\beta_k = 2\}}\bigg) \cdot \sum_{i \in I_0^{(\beta)}} \big|(D^{(\beta)} \theta)_i\big|\bigg] \\ 
    &= \sumoveralphafull \bigg[\bigg(\prod_{k = 1}^{d} n_k^{\alpha_k}\bigg) \cdot \sum_{\beta \in J_{\alpha}} \sum_{i \in I_0^{(\beta)}} \big|(D^{(\beta)} \theta)_i\big|\bigg] \nonumber \\
    &= \sumoveralphafull \bigg[\bigg(\prod_{k = 1}^{d} n_k^{\alpha_k}\bigg) \cdot \sum_{i \in I_0^{(\alpha)} \setminus \{\alpha\}} \big|(H^{(2)} \theta)_i\big|\bigg]. \nonumber
\end{align}
\endgroup

\begin{proof}[Proof of Proposition \ref{prop:reduction-to-discrete-analogue}]
We first re-index the rows of $M$ (defined in Proposition \ref{prop:reduction-to-lasso}) with the set $I_0 = [0: (n_1 - 1)] \times \dots \times [0: (n_d - 1)]$, i.e., we let
\begin{equation*}
    M_{i, (\alpha, l)} = \prod_{k \in S(\alpha)} \Big(\frac{i_k}{n_k} - \frac{l_k}{n_k}\Big)_{+}
\end{equation*}
for $i \in I_0$ and $(\alpha, l) \in J$.
We can then observe that
\begin{equation*}
    M_{i, (\alpha, l)} = 0    
\end{equation*}
if $l_k = n_k - 1$ for some $k \in [d]$. 
This suggests that we can remove such all-zero columns from $M$ and the corresponding components from $\gamma$ in the problem \eqref{finite-dimensional-lasso}.
Consider the submatrix $\tilde{M}$ of $M$ consisting of the components that have column indices in $\tilde{J}$.
Then, from the observation above, it can be easily checked that for a solution $(\hat{a}_{\zerovec}, \hat{\omega}^{d}_{n, V}) \in \R\times\R^{|\tilde{J}|}$ to the new lasso problem
\begin{equation}\label{lasso-lattice}
    (\hat{a}_{\zerovec}, \hat{\omega}^{d}_{n, V}) \in
    \underset{a_{\zerovec} \in \R, \omega \in \R^{|\tilde{J}|}}{\argmin}
    \Bigg\{\big\lVert y - a_{\zerovec} \onevec - \tilde{M}\omega \big\rVert_2^2: \sum_{\substack{(\alpha, l) \in
        \tilde{J} \\ l \neq \zerovec}} |\omega_{\alpha, l}| \le
    V \Bigg\}, 
\end{equation}
if we let $\gamma$ be the $|J|$-dimensional vector indexed by $(\alpha, l) \in J$ for which 
\begin{align*}
    \gamma_{\alpha, l} = 
    \begin{cases}
        \big(\hat{\omega}^{d}_{n, V}\big)_{\alpha, l} &\mbox{if } (\alpha, l) \in \tilde{J} \\
        0 &\mbox{otherwise},
    \end{cases}
\end{align*}
then $(\hat{a}_{\zerovec}, \gamma)$ is a solution to the original lasso problem \eqref{finite-dimensional-lasso}.

Now, let $\check{M}$ be the $n \times (1 + |\tilde{J}|)$ matrix with columns indexed with the set $\{\zerovec\} \cup \tilde{J}$ such that
\begin{equation*}
    \check{M}_{i, 0} = 1 \qt{for $i \in [n]$}
\end{equation*}
and 
\begin{equation*}
\check{M}_{i, (\alpha, l)} = \tilde{M}_{i, (\alpha, l)} \qt{for $i \in [n]$ and $(\alpha, l) \in \tilde{J}$}. 
\end{equation*}
Since
\begin{equation*}
    1 + |\tilde{J}| = 1 + \sumoveralphafull \prod_{k \in S(\alpha)} (n_k - 1) = \prod_{k=1}^{d} \big[1 + (n_k - 1)\big] = n = |I_0|,
\end{equation*}
$\check{M}$ is a square matrix.
If we concatenate $a_0$ and $\omega$ in the problem \eqref{lasso-lattice} and form a $(1 + |\tilde{J}|)$-dimensional vector $\check{\omega}$ indexed with the set $\{\zerovec\} \cup \tilde{J}$, the problem \eqref{lasso-lattice} can be rewritten as 
\begin{equation}\label{problem-concatenated}
    \underset{\check{\omega} \in \R^{1 + |\tilde{J}|}}{\argmin}
    \Bigg\{\big\lVert y - \check{M}\check{\omega} \big\rVert_2^2: \sum_{\substack{(\alpha, l) \in
        \tilde{J} \\ l \neq \zerovec}} |\check{\omega}_{\alpha, l}| \le
    V\Bigg\}.
\end{equation}

Next, fix an $n$-dimensional vector $\theta$ indexed with the set $I_0$ and let $\check{\omega}$ be the $(1 + |\tilde{J}|)$-dimensional vector indexed with the set $\{\zerovec\} \cup \tilde{J}$ for which 
\begin{equation*}
    \check{\omega}_{\zerovec} = \theta_{\zerovec} \mbox{\quad and \quad} \check{\omega}_{\alpha, l} = \bigg(\prod_{k=1}^{d} n_k^{\alpha_k}\bigg) \cdot (H^{(2)} \theta)_{\tilde{l} + \alpha} \qt{for $(\alpha, l) \in \tilde{J},$}
\end{equation*}
where $\tilde{l}$ is the $d$-dimensional vector with 
\begin{equation*}
    \tilde{l}_k = 
    \begin{cases}
        l_k  &\mbox{if } k \in S(\alpha) \\
        0 &\mbox{otherwise}.
    \end{cases}
\end{equation*}
Then, by \eqref{eq:representation-of-theta}, 
\begingroup
\allowdisplaybreaks
\begin{align*}
    &(\check{M} \check{\omega})_i = \theta_{\zerovec} + \sumoveralphafull  
    \sum_{l \in \prod\limits_{k \in S(\alpha)}[0: (n_k - 2)]} \bigg[ \bigg(\prod_{k \in S(\alpha)} \Big(\frac{i_k}{n_k} - \frac{l_k}{n_k}\Big)_{+} \bigg)\\
    &\qquad \qquad \qquad \qquad \qquad \qquad \qquad \qquad  \qquad \qquad \cdot \bigg(\prod_{k=1}^{d} n_k^{\alpha_k}\bigg) \cdot (H^{(2)} \theta)_{\tilde{l} + \alpha} \bigg] \\
    &\quad= (H^{(2)} \theta)_{\zerovec} + \sumoveralphafull 
    \sum_{l \in \prod\limits_{k \in S(\alpha)}[n_k - 1]} \bigg[ \bigg(\prod_{k \in S(\alpha)} (i_k - l_k + 1)_{+} \bigg) \cdot (H^{(2)} \theta)_{\tilde{l}} \bigg] \\
    &\quad= (H^{(2)} \theta)_{\zerovec} + \sumoveralphafull \sum_{l \in I_0^{(\alpha)}}\bigg[\bigg(\prod_{k = 1}^d \binom{i_k - l_k + \alpha_k}{\alpha_k}\bigg) \cdot \ind\{i \ge l\} \cdot (H^{(2)} \theta)_l\bigg] = \theta_i
\end{align*}
\endgroup
for each $i \in I_0$, and thus, $\check{M} \check{\omega} = \theta$.
From this result, we can also deduce that $\check{M}$ is invertible.   
Moreover, by \eqref{discrete-variation-in-H-theta}, we have 
\begin{equation*}
    \sum_{\substack{(\alpha, l) \in \tilde{J} \\ l \neq \zerovec}} |\check{\omega}_{\alpha, l}| = \sumoveralphafull \bigg[\bigg(\prod_{k = 1}^{d} n_k^{\alpha_k}\bigg) \cdot \sum_{i \in I_0^{(\alpha)} \setminus \{\alpha\}} \big|(H^{(2)} \theta)_i\big|\bigg] = V_2(\theta).
\end{equation*}

For these reasons, for the unique solution $\hat{\theta}^{d}_{n, V}$ to the problem \eqref{discrete-analogue}, if we let $\check{\omega}$ be the $(1 + |\tilde{J}|)$-dimensional vector indexed with the set $\{\zerovec\} \cup \tilde{J}$ for which
\begin{equation*}
    \check{\omega}_{\zerovec} = \big(\hat{\theta}^{d}_{n, V}\big)_{\zerovec} \mbox{\quad and \quad} \check{\omega}_{\alpha, l} = \bigg(\prod_{k=1}^{d} n_k^{\alpha_k}\bigg) \cdot \big(H^{(2)} \hat{\theta}^{d}_{n, V}\big)_{\tilde{l} + \alpha} \qt{for $(\alpha, l) \in \tilde{J}$},
\end{equation*}
then $\check{\omega}$ is the unique solution to the problem \eqref{problem-concatenated}.
Due to the connection between the problem \eqref{lasso-lattice} and the problem \eqref{problem-concatenated}, this implies that if 
\begin{equation*}
    a_{\zerovec} = \big(\hat{\theta}^{d}_{n, V}\big)_{\zerovec}
\end{equation*}
and $\omega$ is the $|\tilde{J}|$-dimensional vector indexed by $(\alpha, l) \in \tilde{J}$ for which
\begin{equation*}
    \omega_{\alpha, l} = \bigg(\prod_{k=1}^{d} n_k^{\alpha_k}\bigg) \cdot \big(H^{(2)} \hat{\theta}^{d}_{n, V}\big)_{\tilde{l} + \alpha} \qt{for $(\alpha, l) \in \tilde{J}$},
\end{equation*}
then $(a_0, \omega)$ becomes the unique solution to the problem \eqref{lasso-lattice}. 
If we additionally let $\gamma$ be the $|J|$-dimensional vector indexed by $(\alpha, l) \in J$ for which 
\begin{align*}
    \gamma_{\alpha, l} = 
    \begin{cases}
        \big(\prod_{k=1}^{d} n_k^{\alpha_k}\big) \cdot \big(H^{(2)} \hat{\theta}^{d}_{n, V}\big)_{\tilde{l} + \alpha} &\mbox{if } (\alpha, l) \in \tilde{J} \\
        0 &\mbox{otherwise},
    \end{cases}
\end{align*}
the connection between the problem \eqref{finite-dimensional-lasso} and the problem \eqref{lasso-lattice} then shows that $(a_0, \gamma)$ is a solution to \eqref{finite-dimensional-lasso}.
Consequently, by Proposition \ref{prop:reduction-to-lasso}, we can see that the function $f$ on $[0, 1]^d$ defined by 
\begin{equation*}
    f(x_1, \dots, x_d) = \big(\hat{\theta}^{d}_{n, V}\big)_{\zerovec} + \sum_{(\alpha, l) \in \tilde{J}}  \big(H^{(2)} \hat{\theta}^{d}_{n, V}\big)_{\tilde{l} + \alpha} \cdot \prod_{k \in S(\alpha)} \big(n_k x_k - l_k\big)_{+},
\end{equation*} 
is a solution to the original optimization problem \eqref{our-problem-restated}. 
In addition, Proposition \ref{prop:reduction-to-lasso} also implies that for every solution $\hat{f}^{d, d}_{n, V}$ to \eqref{our-problem-restated}, the values of $\hat{f}^{d, d}_{n, V}$ at the design points equal $\hat{\theta}^{d}_{n, V}$, i.e.,
\begin{equation*}
    \hat{f}^{d, d}_{n, V}\Big(\Big(\frac{i_k}{n_k}, k \in [d]\Big)\Big) = \big(\hat{\theta}^{d}_{n, V}\big)_i
\end{equation*}
for all $i \in I_0$.
\end{proof}

\subsection{Proofs of Lemmas in Appendix \ref{subsec:proof-of-risk-results}}\label{subsec:proof-of-lemmas-risk-results}
\subsubsection{Proof of Lemma \ref{lem:svt-metric-entropy}}\label{pf:svt-metric-entropy}
\begin{proof} [Proof of Lemma \ref{lem:svt-metric-entropy}]
Suppose $f = \fazeronu \in S(V, t)$ and let $a_{\alpha} = \nu_{\alpha}(\{\zerovec\})$ for each $\alpha \in \{0, 1\}^d \setminus \{\zerovec\}$ with $|\alpha| \le s$. 
Then, we can write $f$ as  
\begin{align*}
    &f(x_1, \dots, x_d) = a_{\zerovec} + \sumoveralpha \int_{[0, 1)^{|\alpha|}} \prod_{j \in S(\alpha)} (x_j - s_j)_+ \, d\nu_{\alpha}(\salpha) \\   
    &\qquad \quad= \sum_{\substack{\alpha \in \{0, 1\}^d \\ |\alpha| \le s}} a_{\alpha} \prod_{j \in S(\alpha)} x_j + \sumoveralpha \int_{[0, 1)^{|\alpha|} \setminus \{\zerovec\}} \prod_{j \in S(\alpha)} (x_j - s_j)_+ \, d\nu_{\alpha}(\salpha)
\end{align*}
for $(x_1, \dots, x_d) \in [0, 1]^d$.
Note that 
\begin{align*}
    t^2 \ge \|f\|_n^2 = \frac{1}{n} \sum_{i = 1}^{n} f(x^{(i)})^2 = \frac{1}{n} \sum_{i \in I_0} f\big(\big(u^{(k)}_{i_k}, k \in [d]\big)\big)^2
\end{align*}
where $I_0 := [0 : (n_1 - 1)] \times \dots \times [0 : (n_d - 1)]$.
By the Cauchy inequality, we have 
\begingroup
\allowdisplaybreaks
\begin{align*}
    f(x_1, \dots, x_d)^2 &\ge \frac{1}{2} \bigg[\sum_{\substack{\alpha \in \{0, 1\}^d \\ |\alpha| \le s}} a_{\alpha} \prod_{j \in S(\alpha)} x_j \bigg]^2 \\ 
    & \qquad - \Bigg[\sumoveralpha \int_{[0, 1)^{|\alpha|} \setminus \{\zerovec\}} \prod_{j \in S(\alpha)} (x_j - s_j)_+ \, d\nu_{\alpha}(\salpha)\Bigg]^2 \\
    &\ge \frac{1}{2} \bigg[\sum_{\substack{\alpha \in \{0, 1\}^d \\ |\alpha| \le s}} a_{\alpha} \prod_{j \in S(\alpha)} x_j \bigg]^2 - \Bigg[\sumoveralpha \int_{[0, 1)^{|\alpha|} \setminus \{\zerovec\}} \, d|\nu_{\alpha}|(\salpha)\Bigg]^2 \\
    &\ge \frac{1}{2} \bigg[\sum_{\substack{\alpha \in \{0, 1\}^d \\ |\alpha| \le s}} a_{\alpha} \prod_{j \in S(\alpha)} x_j \bigg]^2 - V^2
\end{align*}
\endgroup
for every $(x_1, \dots, x_d) \in [0, 1]^d$.
Using this inequality, we will bound $a_{\alpha}$ for each $\alpha \in \{0, 1\}^d$ with $|\alpha| \le s$ first. 
Observe that $|a_{\zerovec}| = |f(\zerovec)| \le \sqrt{n} t \le V + \sqrt{n} t$. 
The above inequality then implies that
\begingroup
\allowdisplaybreaks
\begin{align*}
    n t^2 &\ge \sum_{i \in I_0} f\big(\big(u^{(k)}_{i_k}, k \in [d]\big)\big)^2 \ge \sum_{\substack{i_1 \ge 0\\ i_2, \dots, i_d = 0}} f\big(\big(u^{(k)}_{i_k}, k \in [d]\big)\big)^2 \\
    &\ge \sum_{i_1 = 0}^{n_1 - 1} \Big[\frac{1}{2}\big(a_{1, 0, \dots, 0} \cdot u^{(1)}_{i_1} + a_{\zerovec}\big)^2 - V^2\Big] = \frac{1}{2}\sum_{i_1 = 0}^{n_1 - 1} \big(a_{1, 0, \dots, 0} \cdot u^{(1)}_{i_1} + a_{\zerovec}\big)^2 - n_1 V^2 \\
    &\ge \frac{1}{2}\sum_{i_1 = 0}^{n_1 - 1} \Big(\frac{1}{2} a_{1, 0, \dots, 0}^2 \cdot \big(u^{(1)}_{i_1}\big)^2 - a_{\zerovec}^2\Big) - n_1 V^2 \\
    &\ge \frac{1}{4} a_{1, 0, \dots, 0}^2 \sum_{i_1 = 0}^{n_1 - 1} \big(u^{(1)}_{i_1}\big)^2 - \frac{n_1}{2} (nt^2) - n_1 V^2 \\
    &\ge \frac{1}{4} a_{1, 0, \dots, 0}^2 \sum_{i_1 = 0}^{n_1 - 1} {\rho}^2 \Big(\frac{i_1}{n_1}\Big)^2 - \frac{n_1}{2} (nt^2) - n_1 V^2 \\
    &= {\rho}^2 \cdot \frac{(n_1 - 1)(2n_1 - 1)}{24 n_1} \cdot a_{1, 0, \dots, 0}^2 - \frac{n_1}{2} (nt^2) - n_1 V^2.
\end{align*}
\endgroup
Here the last inequality follows from 
\begin{equation*}
    u^{(1)}_{i_1} = u^{(1)}_0 + \sum_{j_1 = 1}^{i_1} \big(u^{(1)}_{j_1} - u^{(1)}_{j_1 - 1}\big) \ge 0 + \sum_{j_1 = 1}^{i_1} \frac{\rho}{n_1} = \rho \cdot \frac{i_1}{n_1}
\end{equation*}
for $i_1 \in [n_1 - 1]$.
Thus, further applying the inequality $(x + y)^{1/2} \le x^{1/2} + y^{1/2}$, we can obtain
\begin{equation*}
    |a_{1, 0, \dots, 0}| \le C_{\rho} \cdot (V + \sqrt{n} t).
\end{equation*}
By the same argument, it can be shown that
\begin{equation*}
    |a_{\alpha}| \le C_{\rho} \cdot (V + \sqrt{n}t) 
\end{equation*}
for every $\alpha \in \{0, 1\}^d$ with $|\alpha| = 1$.
Also, since 
\begingroup
\allowdisplaybreaks
\begin{align*}
    nt^2 &\ge \sum_{i \in I_0} f\big(\big(u^{(k)}_{i_k}, k \in [d]\big)\big)^2 \\
    & \ge \sum_{\substack{i_1, i_2 \ge 0\\ i_3, \dots, i_d = 0}} \Big[\frac{1}{2}\big(a_{1, 1, 0, \dots, 0} \cdot u^{(1)}_{i_1} u^{(2)}_{i_2} + a_{1, 0, \dots, 0} \cdot u^{(1)}_{i_1} + a_{0, 1, 0, \dots, 0} \cdot u^{(2)}_{i_2} + a_{\zerovec}\big)^2 - V^2\Big] \\
    & =\frac{1}{2}\sum_{i_1 = 0}^{n_1 - 1} \sum_{i_2 = 0}^{n_2 - 1} \big(a_{1, 1, 0, \dots, 0} \cdot u^{(1)}_{i_1} u^{(2)}_{i_2} + a_{1, 0, \dots, 0} \cdot u^{(1)}_{i_1} + a_{0, 1, 0, \dots, 0} \cdot u^{(2)}_{i_2} + a_{\zerovec}\big)^2 - n_1 n_2 V^2 \\
    & \ge \frac{1}{2}\sum_{i_1 = 0}^{n_1 - 1} \sum_{i_2 = 0}^{n_2 - 1} \Big[\frac{1}{2} a_{1, 1, 0, \dots, 0}^2 \cdot \big(u^{(1)}_{i_1} u^{(2)}_{i_2})^2 \\
    &\qquad \qquad \qquad \quad- \big(a_{1, 0, \dots, 0} \cdot u^{(1)}_{i_1} + a_{0, 1, 0, \dots, 0} \cdot u^{(2)}_{i_2} + a_{\zerovec}\big)^2\Big] - n_1 n_2 V^2 \\
    & \ge {\rho}^4 \Big[\frac{(n_1 - 1)(2n_1 - 1)}{12n_1} \cdot \frac{(n_2 - 1)(2n_2 - 1)}{12n_2}\Big] \cdot a_{1, 1, 0, \dots, 0}^2 - C_{\rho} n_1 n_2 (V^2 + nt^2), 
\end{align*}
\endgroup
it follows that 
\begin{equation*}
    |a_{1, 1, 0, \dots, 0}| \le C_{\rho} \cdot (V + \sqrt{n}t).
\end{equation*}
The same argument implies 
\begin{equation*}
    |a_{\alpha}| \le C_{\rho} \cdot (V + \sqrt{n}t)
\end{equation*}
for all $\alpha \in \{0, 1\}^d$ with $|\alpha| = 2$.
Repeating this argument, we can show inductively that 
\begin{equation*}
    |a_{\alpha}| \le C_{\rho, s} \cdot (V + \sqrt{n}t) := \tilde{t}
\end{equation*}
for all $\alpha \in \{0, 1\}^d$ with $|\alpha| \le s$.

Now, fix $\delta > 0$, which will be specified later, and let 
\begin{equation*}
    \K = \big\{(k_{\alpha}, \alpha \in \{0, 1\}^d \mbox{ and }  |\alpha| \le s): -(K + 1) \le k_{\alpha} \le K \mbox{ for all } \alpha \in \{0, 1\}^d \mbox{ with } |\alpha| \le s \big\}, 
\end{equation*}
where $K = \floor{\tilde{t}/\delta}$. 
Here $\floor{\tilde{t}/\delta}$ indicates the greatest integer less than or equal to $\tilde{t}/\delta$.
Also, for each $k \in \K$, let 
\begin{equation*}
    M(k) = \{\fazeronu \in S(V, t): k_{\alpha} \delta \le a_{\alpha} \le (k_{\alpha} + 1) \delta \mbox{ for each } \alpha \in \{0, 1\}^d \mbox{ with } |\alpha| \le s\}.
\end{equation*}
Then,
\begin{equation*}
    S(V, t) = \bigcup_{k \in \K} M(k),
\end{equation*}
and thereby, 
\begin{align}\label{svt-decomposition}
\begin{split}
    \log N(\epsilon, S(V, t), \|\cdot\|_n) &\le \log\Big(\sum_{k \in \K} N(\epsilon, M(k), \|\cdot\|_n)\Big) \\
    &\le \log |\K| + \sup_{k \in \K} \log N(\epsilon, M(k), \|\cdot\|_n) \\
    &\le 2^d \log\Big(2 + \frac{2 \tilde{t}}{\delta}\Big) + \sup_{k \in \K} \log N(\epsilon, M(k), \|\cdot\|_n).
\end{split}
\end{align}

Fix $k \in \K$. Let $M_{\zerovec}(k)$ be the collection of all the constant functions on $[0, 1]^d$
\begin{equation*}
    (x_1, \dots, x_d) \mapsto a_{\zerovec}
\end{equation*}
where $k_{\zerovec} \delta \le a_{\zerovec} \le (k_{\zerovec} + 1) \delta$, and for each $\alpha \in \{0, 1\}^d \setminus \{\zerovec\}$ with $|\alpha| \le s$, let $M_{\alpha}(k)$ be the collection of the functions on $[0, 1]^d$ of the form
\begin{equation*}
    (x_1, \dots, x_d) \mapsto a_{\alpha} \prod_{j \in S(\alpha)} x_j + \int_{[0, 1)^{|\alpha|} \setminus \{\zerovec\}} \prod_{j \in S(\alpha)} (x_j - s_j)_+ \, d\nu_{\alpha}(\salpha),
\end{equation*}
where $k_{\alpha} \delta \le a_{\alpha} \le (k_{\alpha} + 1) \delta$ and $\nu_{\alpha}$ is a  signed measure concentrated on $(\prod_{k \in S(\alpha)} \mathcal{U}_k) \cap [0, 1)^{|\alpha|}$ with $|\nu_{\alpha}|([0, 1)^{|\alpha|} \setminus \{\zerovec\}) \le V$.
Then, by definition,
\begin{equation*}
    M(k) \subseteq \bigoplus_{\substack{\alpha \in \{0, 1\}^d \\ |\alpha| \le s}} M_{\alpha}(k), 
\end{equation*}
which implies that 
\begin{equation}\label{mk-decomposition}
    \log N(\epsilon, M(k), \|\cdot\|_n) \le \sum_{\substack{\alpha \in \{0, 1\}^d \\ |\alpha| \le s}} \log N\Big(\frac{\epsilon}{2^d}, M_{\alpha}(k), \|\cdot\|_n\Big).
\end{equation}
Here for a collection of sets $\{X_i\}_{i \in I}$, $\bigoplus_{i \in I} X_i$ indicates the set $\{\sum_{i \in I} x_i: x_i \in X_i \mbox{ for } i \in I\}$.

Now, we will bound each term of the right-hand side of \eqref{mk-decomposition}.
It is easy to show that
\begin{equation}\label{m0k-metric-entropy}
    N(\epsilon, M_{\zerovec}(k), \|\cdot\|_n) \le 1 + \frac{\delta_{\zerovec}}{\epsilon}.
\end{equation}
Hence, it suffices to bound $\log N(\epsilon, M_{\alpha}(k), \|\cdot\|_n)$ for each $\alpha \in \{0, 1\}^d \setminus \{\zerovec\}$ with $|\alpha| \le s$.
Fix $\alpha \in \{0, 1\}^d \setminus \{\zerovec\}$ with $|\alpha| \le s$ and let  $\tilde{M}_{\alpha}(k)$ be the collection of all the functions on $[0, 1]^{|\alpha|}$ of the form
\begin{equation*}
    (x_j, j \in S(\alpha)) \mapsto a_{\alpha} \prod_{j \in S(\alpha)} x_j + \int_{[0, 1)^{|\alpha|} \setminus \{\zerovec\}} \prod_{j \in S(\alpha)} (x_j - s_j)_+ \, d\nu_{\alpha}(\salpha),
\end{equation*}
where $a_{\alpha}$ and $\nu_{\alpha}$ satisfy the same conditions as in the definition of $M_{\alpha}(k)$.
With slight abusing of notation, let $\|\cdot\|_{n}$ also denote the norm on $[0, 1]^{|\alpha|}$ defined by 
\begin{equation*}
    \|f\|_{n}^2 = \frac{1}{\prod\limits_{j \in S(\alpha)} n_j} \sum_{i \in \prod\limits_{j \in S(\alpha)} [0: (n_j - 1)]}  \Big(f\big(\big(u^{(j)}_{i_j}, j \in S(\alpha)\big)\big)\Big)^2.
\end{equation*}
Then, clearly, 
\begin{equation}\label{m-alpha-k-and-m-tilde-alpha-k}
    \log N(\epsilon, M_{\alpha}(k), \|\cdot\|_n) \le \log N(\epsilon, \tilde{M}_{\alpha}(k), \|\cdot\|_{n}).
\end{equation}
Also, since $\tilde{M}_{\alpha}(k)$ can be obtained by shifting $\tilde{M}_{\alpha}(\zerovec)$ by some suitable function, we have 
\begin{equation}\label{m-tilde-alpha-k-and-0}
    \log N(\epsilon, \tilde{M}_{\alpha}(k), \|\cdot\|_{n}) = \log N(\epsilon, \tilde{M}_{\alpha}(\zerovec), \|\cdot\|_{n}).
\end{equation}
Moreover, if we let $\tilde{M}_{\alpha}$ be the collection of all the functions on $[0, 1]^{|\alpha|}$ of the form
\begin{equation*}
    (x_j, j \in S(\alpha)) \mapsto \int_{[0, 1)^{|\alpha|}} \prod_{j \in S(\alpha)} (x_j - s_j)_+ \, d\nu_{\alpha}(\salpha)
\end{equation*}
where $\nu_{\alpha}$ is a signed measure concentrated on $(\prod_{k \in S(\alpha)} \mathcal{U}_k) \cap [0, 1)^{|\alpha|}$ with $|\nu_{\alpha}|([0, 1)^{|\alpha|}) \le V + \delta$, then 
\begin{equation}\label{m-tilde-alpha-and-0}
    \log N(\epsilon, \tilde{M}_{\alpha}(\zerovec), \|\cdot\|_{n}) \le \log N(\epsilon, \tilde{M}_{\alpha}, \|\cdot\|_{n})
\end{equation}
because $\tilde{M}_{\alpha}(\zerovec) \subseteq \tilde{M}_{\alpha}$.
For these reasons, we can see that it is enough to bound the metric entropy of $\tilde{M}_{\alpha}$ in order to bound the metric entropy of $M_{\alpha}(k)$.

For $m \in [d]$ and $S > 0$, we denote by $\genmalpha_m(S)$ the collection of the functions on $[0, 1]^{m}$ of the form
\begin{equation*}
    (x_1, \dots, x_m) \mapsto \int_{[0, 1)^{m}} (x_1 - s_1)_+ \cdots (x_m - s_m)_+ \, d\nu(s)
\end{equation*}
where $\nu$ is a signed measure concentrated on $(\prod_{k =1}^{m} \mathcal{U}_k) \cap [0, 1)^{m}$ with variation $|\nu|([0, 1)^m) \le S$.
We can see that 
\begin{equation*}
    \tilde{M}_{\alpha} = \genmalpha_{|\alpha|}(V + \delta)
\end{equation*}
after suitable re-indexing. 
From the following lemma, which provides an upper bound of the metric entropy of $\genmalpha_{|\alpha|}(S)$, we can therefore obtain an upper bound of the metric entropy of $\tilde{M}_{\alpha}$.
We defer our proof of this lemma to Appendix \ref{pf:tas-metric-entropy}.

\begin{lemma}\label{lem:tas-metric-entropy}
    There exist positive constants $C_{\rho, m}$ and $\kappa_{\rho, m}$ depending on $\rho$ and $m$ such that for every $S > 0$,
    \begin{equation*}
        \log N(\epsilon, \genmalpha_m(S), \|\cdot\|_{n}) \le C_{\rho, m} \Big(\frac{S }{\epsilon}\Big)^{\frac{1}{2}} \bigg[\log\Big(\frac{S }{\epsilon}\Big)\bigg]^{\frac{3(2m - 1)}{4}}
    \end{equation*}
    provided that $0 < \epsilon / S \le \kappa_{\rho, m}$. 
    The log term can be omitted when $m = 1$.
\end{lemma}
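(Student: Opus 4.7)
My plan is to reduce the lemma to Theorem \ref{thm:d-metric-entropy-main-text} by embedding each element of $\genmalpha_m(a_1,\ldots,a_m;S)$ into $S\mathcal{D}_m$ via lattice evaluation and then transferring an entropy bound through the sup norm. For $\theta=(L_I)^2\beta$ with $(a_1\cdots a_m)\sum_j|\beta_j|\le S$, I introduce the discrete signed measure $\nu_\theta:=\sum_{j\in I}\beta_j\,\delta_{((j_k-1)/a_k,\,k\in[m])}$, whose total variation is at most $S/(a_1\cdots a_m)$, and set
\[
F_\theta(x)\;:=\;(a_1\cdots a_m)\int\prod_{k=1}^m(x_k-t_k)_+\,d\nu_\theta(t).
\]
The elementary identity $(i_k/a_k-(j_k-1)/a_k)_+=(i_k-j_k+1)_+/a_k$ then verifies both that $F_\theta\in S\mathcal{D}_m$ and that $\theta_i=F_\theta((i_k/a_k,\,k\in[m]))$ for every $i\in I$.

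Granted this embedding, the pointwise inequality $\|\theta-\theta'\|_2^2\le(a_1\cdots a_m)\|F_\theta-F_{\theta'}\|_\infty^2$ shows that any $\eta$-cover of $\mathcal{D}_m$ in $\|\cdot\|_\infty$ pulls back to an $(S\sqrt{a_1\cdots a_m}\,\eta)$-cover of $\genmalpha_m$ in Euclidean distance. Setting $\eta=\epsilon/(S\sqrt{a_1\cdots a_m})$ then reproduces the claimed rate once the sup-norm analogue
\[
\log N(\eta,\mathcal{D}_m,\|\cdot\|_\infty)\;\le\;C_m\,\eta^{-1/2}\bigl[\log(1/\eta)\bigr]^{3(2m-1)/4}
\]
of Theorem \ref{thm:d-metric-entropy-main-text} is in hand, with the logarithmic factor absent when $m=1$.

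The main obstacle is proving this sup-norm entropy bound on $\mathcal{D}_m$, since Theorem \ref{thm:d-metric-entropy-main-text} is stated only in $L^2$. My plan here is to rerun the small-ball/duality argument of Theorem \ref{thm:d-metric-entropy-main-text} with the roles of the two norms swapped: I apply Theorem \ref{thm:li-linde} with $E=L^2([0,1]^m)$ rather than $C([0,1]^m)$, using the $L^2$ small-ball lower bound $\log\mathbb{P}(\|X_m\|_{L^2}\le\eta)\ge -c_m\eta^{-2/3}[\log(1/\eta)]^{2m-1}$, which is immediate from the sup-norm version proved in Theorem \ref{thm:d-metric-entropy-main-text} because $\|\cdot\|_{L^2}\le\|\cdot\|_\infty$ on $[0,1]^m$. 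This yields an $L^2$-entropy bound on the RKHS ball $K_{X_m}$ at the same rate as before, and Artstein--Vershynin duality \cite{Artstein2004b} then transfers it to the desired sup-norm entropy bound on $\mathcal{D}_m$. The delicate point is setting up the duality when the target norm is sup; I handle this by working inside $C([0,1]^m)$ with its predual of signed measures, mirroring the arrangement in the proof of Theorem \ref{thm:d-metric-entropy-main-text} but with the two norms interchanged, so that the $m=1$ sharpening is inherited through exactly the same mechanism.
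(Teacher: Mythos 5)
Your embedding $\theta\mapsto F_\theta$ is essentially the same discretization map $\Phi$ that the paper uses (a discrete signed measure on the lattice $\{((j_k-1)/a_k)\}$ with total variation controlled by $S/(a_1\cdots a_m)$, interpolating $\theta$ at the design points), and the pullback step is sound as far as it goes: since $\|\theta-\theta'\|_2^2\le (a_1\cdots a_m)\,\|F_\theta-F_{\theta'}\|_\infty^2$, a sup-norm cover of $\mathcal{D}_m$ at scale $\eta=\epsilon/(S\sqrt{a_1\cdots a_m})$ does induce a Euclidean cover of $\genmalpha_m$ at scale $O(\epsilon)$. The problem is that this reduces the lemma to a \emph{sup-norm} entropy bound for $\mathcal{D}_m$, which is strictly stronger than Theorem \ref{thm:d-metric-entropy-main-text}, and your proposed derivation of it does not go through. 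The duality theorem you invoke (\citet[Theorem 5]{Artstein2004b}) is only known when one of the two convex bodies is an ellipsoid, i.e., when the covering is performed in a Hilbert norm. In the paper's argument this hypothesis is satisfied because the ambient norm is $\ell^2$: one passes from $N(\epsilon, Z, \|\cdot\|_{\ell^2})$ to $N(c\epsilon, B_{\ell^2}, \|\cdot\|_{Z^\circ})$, and the latter is identified with the sup-norm entropy of the RKHS ball $K_{X_m}$, to which Theorem \ref{thm:li-linde} applies. If you instead try to bound $N(\eta,\mathcal{D}_m,\|\cdot\|_\infty)$, neither $\mathcal{D}_m$ (equivalently its coefficient body $Z$) nor the unit ball of $C([0,1]^m)$ is an ellipsoid, so the known duality result does not apply; duality of metric entropy for two arbitrary symmetric convex bodies is an open conjecture. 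Running Theorem \ref{thm:li-linde} with $E=L^2$ as you suggest only produces the $L^2$-entropy of $K_{X_m}$, which is a different object from the sup-norm entropy of $\mathcal{D}_m$ and cannot be converted into it by the available duality. (Separately, it is not even clear that the sup-norm entropy of $\mathcal{D}_m$ obeys the same $\epsilon^{-1/2}(\log(1/\epsilon))^{3(2m-1)/4}$ rate for $m\ge 2$; sup-norm entropies of tensor-product classes are typically larger than their $L^2$ counterparts.)

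The paper avoids this entirely by staying in $L^2$: instead of the easy upper bound $\|\theta-\theta'\|_2\le \sqrt{a_1\cdots a_m}\,\|F_\theta-F_{\theta'}\|_\infty$, it proves the reverse-direction estimate
\begin{equation*}
\norm{\Phi(\theta)-\Phi(\theta')}_2 \;\ge\; \frac{c_m}{(a_1\cdots a_m)^{1/2}S}\,\norm{\theta-\theta'}_2 ,
\end{equation*}
i.e., that the discretization map is bounded below in $L^2$. This is the technical core of Lemma \ref{lem:t+-metric-entropy}: one writes $\Phi(\theta)$ explicitly on each lattice cell as a multilinear interpolation of neighboring $\theta$-values, restricts the integration to $[r,1]^m$ in the local coordinates, and chooses $r$ close to $1$ so that the diagonal term dominates the cross terms. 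With this lower Lipschitz bound, a packing argument transfers the $L^2$-entropy bound of Theorem \ref{thm:d-metric-entropy-main-text} directly to $\genmalpha_m$. To repair your proof you would need either to supply this lower bound (at which point you have reproduced the paper's argument) or to prove the sup-norm entropy bound for $\mathcal{D}_m$ by some independent means; as written, the sup-norm route is a genuine gap.
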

By Lemma \ref{lem:tas-metric-entropy}, 
\begin{align}\label{mtilde-alpha-metric-entropy}
\begin{split}
    \log N(\epsilon, \tilde{M}_{\alpha}, \|\cdot\|_n) &\le C_{\rho, |\alpha|} \Big(\frac{V + \delta}{\epsilon}\Big)^{\frac{1}{2}} \cdot \bigg[\log\Big(\frac{V + \delta}{\epsilon}\Big)\bigg]^{\frac{3(2|\alpha| - 1)}{4}} \\
    &\le C_{\rho, |\alpha|} \Big(\frac{2(V + \delta) }{\epsilon}\Big)^{\frac{1}{2}} \cdot \bigg[\log\Big(\frac{2(V + \delta)}{\epsilon}\Big)\bigg]^{\frac{3(2|\alpha| - 1)}{4}}
\end{split}
\end{align}
provided that $0 < \epsilon \le \kappa_{\rho, |\alpha|}(V + \delta) := \epsilon_0 $. 
Note that the logarithmic multiplicative factor can be omitted when $|\alpha| = 1$. For $\epsilon_0 < \epsilon \le V + \delta$, 
\begin{equation*}
    \log N(\epsilon, \tilde{M}_{\alpha}, \|\cdot\|_n) \le \log N(\epsilon_0, \tilde{M}_{\alpha}, \|\cdot\|_n) \le \tilde{C}_{\rho, |\alpha|} \Big(\frac{2(V + \delta)}{\epsilon}\Big)^{\frac{1}{2}} \cdot \bigg[\log\Big(\frac{2(V + \delta)}{\epsilon}\Big)\bigg]^{\frac{3(2|\alpha| - 1)}{4}}
\end{equation*}
for 
\begin{equation*}
    \tilde{C}_{\rho, |\alpha|} = C_{\rho, |\alpha|} \Big(\frac{1}{\kappa_{\rho, |\alpha|}}\Big)^{\frac{1}{2}} \Big[\log\Big(\frac{2}{\kappa_{\rho, |\alpha|}}\Big)\Big]^{\frac{3(2|\alpha| - 1)}{4}} \Big[2^{\frac{1}{2}}(\log 2)^{\frac{3(2|\alpha| - 1)}{4}}\Big]^{-1}, 
\end{equation*}
where $C_{\rho, |\alpha|}$ and $\kappa_{\rho, |\alpha|}$ are the constants in \eqref{mtilde-alpha-metric-entropy}.
Moreover, for $f \in \tilde{M}_{\alpha}$, we have
\begin{equation*}
    \|f\|_n \le V + \delta,
\end{equation*}
which implies that $\log N(\epsilon, \tilde{M}_{\alpha}, \|\cdot\|_n) = 0$ for every $\epsilon > V + \delta$. 
Combining those pieces, we can deduce that 
\begin{equation*}
    \log N(\epsilon, \tilde{M}_{\alpha}, \|\cdot\|_n) \le C_{\rho, |\alpha|} \Big(1 + \frac{2(V + \delta)}{\epsilon}\Big)^{\frac{1}{2}} \cdot \bigg[\log\Big(1 + \frac{2(V + \delta) }{\epsilon}\Big)\bigg]^{\frac{3(2|\alpha| - 1)}{4}}
\end{equation*}
for every $\epsilon > 0$. 
This together with \eqref{m-alpha-k-and-m-tilde-alpha-k}, \eqref{m-tilde-alpha-k-and-0}, and \eqref{m-tilde-alpha-and-0} implies that 
\begin{align}\label{m-alpha-k-metric-entropy}
    \log N(\epsilon, M_{\alpha}(k), \|\cdot\|_n) \le C_{\rho, |\alpha|} \Big(1 + \frac{2(V + \delta)}{\epsilon}\Big)^{\frac{1}{2}} \cdot \bigg[\log\Big(1 + \frac{2(V + \delta) }{\epsilon}\Big)\bigg]^{\frac{3(2|\alpha| - 1)}{4}}
\end{align}
Note that we can omit the logarithmic multiplicative factor when $|\alpha| = 1$.

By \eqref{mk-decomposition}, \eqref{m0k-metric-entropy}, and \eqref{m-alpha-k-metric-entropy}, we have
\begingroup
\allowdisplaybreaks
\begin{align*}
    &\log N(\epsilon, M(k), \|\cdot\|_n) \le \log\Big(1 + \frac{2^d\delta}{\epsilon}\Big) + C_{\rho} \sum_{\substack{\alpha \in \{0, 1\}^d \\ |\alpha| = 1}} \bigg[1 + \frac{2^{d + 1} (V + \delta) }{\epsilon}\bigg]^{\frac{1}{2}} \\
    &\qquad \qquad \qquad \quad + C_{\rho, d} \sum_{\substack{\alpha \in \{0, 1\}^d \\ 1 < |\alpha| \le s}} \bigg[1 + \frac{2^{d + 1} (V + \delta)}{\epsilon}\bigg]^{\frac{1}{2}} \cdot \bigg[\log\Big(1 + \frac{2^{d + 1} (V + \delta)}{\epsilon}\Big)\bigg]^{\frac{3(2|\alpha| - 1)}{4}}.
\end{align*}
\endgroup
As a result, with the choice of $\delta = \epsilon$, \eqref{svt-decomposition} derives the conclusion
\begingroup
\allowdisplaybreaks
\begin{align*}
    \log N(\epsilon, S(V, t), \|\cdot\|_n) &\le  2^d \log\Big(2 + C_{\rho, s} \cdot \frac{V + \sqrt{n} t}{\epsilon}\Big) + C_{\rho, d} + C_{\rho, d}\Big(\frac{V}{\epsilon}\Big)^{\frac{1}{2}} \\
    &\quad + C_{\rho, d} \Big(2^{d + 1} + 1 + \frac{2^{d + 1} V }{\epsilon}\Big)^{\frac{1}{2}} \bigg[\log\Big(2^{d + 1} + 1 + \frac{2^{d + 1} V}{\epsilon}\Big)\bigg]^{\frac{3(2s - 1)}{4}} \\
    &\le  2^d \log\Big(2 + C_{\rho, s} \cdot \frac{V + \sqrt{n} t}{\epsilon}\Big) \\
    &\quad + C_{\rho, d} \Big(2^{d + 1} + 1 + \frac{2^{d + 1} V }{\epsilon}\Big)^{\frac{1}{2}} \bigg[\log\Big(2^{d + 1} + 1 + \frac{2^{d + 1} V}{\epsilon}\Big)\bigg]^{\frac{3(2s - 1)}{4}}.
\end{align*}
\endgroup
\end{proof}

\subsubsection{Proof of Lemma \ref{lem:integration-helper}}\label{pf:integration-helper2}
\begin{proof}[Proof of Lemma \ref{lem:integration-helper}]
We basically follows the proof of \cite{fang2021multivariate}, Lemma C.5 here.
First, note that 
\begin{align*}
    \int_{0}^{t} \Big(\frac{u}{\epsilon}\Big)^{\frac{1}{4}} \Big[\log \frac{u}{\epsilon}\Big]^k \, d\epsilon &= u \int_{\tau}^{+\infty} e^{-\frac{3}{4}v} v^k \, dv, & v = \log\frac{u}{\epsilon}
\end{align*}
where $\tau = \log(u/t)$. 
If $\tau \le 1$, 
\begin{equation*}
    \int_{0}^{t} \Big(\frac{u}{\epsilon}\Big)^{\frac{1}{4}} \Big[\log \frac{u}{\epsilon}\Big]^k \, d\epsilon \le u \int_{0}^{+\infty} e^{-\frac{3}{4}v} v^k \, dv \le C u e^{-\frac{3}{4}\tau} = C u^{\frac{1}{4}} t^{\frac{3}{4}},
\end{equation*}
so the lemma directly follows.
Otherwise, applying integration by parts iteratively, we obtain
\begingroup
\allowdisplaybreaks
\begin{align*}
    \int_{\tau}^{+\infty} e^{-\frac{3}{4}v} v^k \, dv &= \frac{4}{3} e^{-\frac{3}{4}\tau} \tau^k + \frac{4}{3} k \int_{\tau}^{+\infty} e^{-\frac{3}{4}v} v^{k-1} \, dv \\
    &= \dots = \frac{4}{3} e^{-\frac{3}{4}\tau} \tau^k + \dots + \Big(\frac{4}{3}\Big)^{\floor{k}+1} k \cdots (k - \floor{k} + 1) e^{-\frac{3}{4}\tau} \tau^{k-\floor{k}} \\
    &\qquad \qquad + \Big(\frac{4}{3}\Big)^{\floor{k}+1} k \cdots (k - \floor{k}) \int_{\tau}^{+\infty} e^{-\frac{3}{4}v} v^{k-\floor{k}-1} \, dv \\
    &\le C_k e^{-\frac{3}{4}\tau} \tau^k + C_k \int_{\tau}^{+\infty} e^{-\frac{3}{4}v} v^{k-\floor{k}-1} \, dv \\
    &\le C_k e^{-\frac{3}{4}\tau} \tau^k + C_k e^{-\frac{3}{4}\tau} = C_k u^{-\frac{3}{4}} t^{\frac{3}{4}} (1 + \tau^k).
\end{align*}
\endgroup
Here $\floor{k}$ indicates the greatest integer less than or equal to $k$, and the inequalities are due to the fact that $\tau > 1$.
From this result, we can derive the conclusion 
\begin{equation*}
    \int_{0}^{t} \Big(\frac{u}{\epsilon}\Big)^{\frac{1}{4}} \Big[\log \frac{u}{\epsilon}\Big]^k \, d\epsilon \le C_k u^{\frac{1}{4}} t^{\frac{3}{4}} (1 + \tau^k).
\end{equation*}
\end{proof}

\subsubsection{Proof of Lemma \ref{lem:discrete-measures-approximation}}\label{pf:discrete-measures-approximation}
\begin{proof}[Proof of Lemma \ref{lem:discrete-measures-approximation}]
For $\alpha \in \{0, 1\}^d$ with $|\alpha| \le s$ and $l \in \prod_{k \in S(\alpha)} [0: (N_k - 1)]$, let $R_l^{(\alpha)} = \prod_{k \in S(\alpha)} [l_k/N_k, (l_k + 1)/N_k)$ and define discrete signed measures $\tilde{\mu}_{\alpha, l}$ and $\mu_{\alpha}$ as in the proof of Lemma \ref{lem:reduction-to-discrete-measures}. 
We will show that $\fazeromu$ constructed from these signed measures $\mu_{\alpha}$ satisfies all the desired conditions. 

First, as in the proof of Lemma \ref{lem:reduction-to-discrete-measures}, it can be shown that 
\begin{equation}\label{eq:agree-at-lattice}
    \fazeromu\Big(\frac{i_1}{N_1}, \dots, \frac{i_d}{N_d}\Big) = \fazeronu\Big(\frac{i_1}{N_1}, \dots, \frac{i_d}{N_d}\Big)
\end{equation}
for every $i \in \prod_{k \in [d]} [0: N_k]$ and 
\begin{equation*}
    \Vmars(\fazeromu) \le \Vmars(\fazeronu).
\end{equation*}
Also, by repeating the arguments in the proof of Lemma \ref{lem:reduction-to-discrete-measures}, we can show that
\begin{equation}\label{eq:mualphal-nualpha}
    \int_{[0, 1]^{|\alpha|}} \prod_{k \in S(\alpha)} (x_k - t_k)_+ \, d\tilde{\mu}_{\alpha, l}(t^{(\alpha)}) = \int_{R_l^{(\alpha)}} \prod_{k \in S(\alpha)} (x_k - t_k)_+ \, d\nu_{\alpha}(t^{(\alpha)}) 
\end{equation}
if $x_k \ge (l_k + 1)/ N_k$ for all $k \in S(\alpha)$, and that 
\begin{equation}\label{eq:mualphal-nualpha-total-varitation}
    |\tilde{\mu}_{\alpha, l}|([0, 1]^{|\alpha|}) \le |\nu_{\alpha}|\big(R_l^{(\alpha)}\big).
\end{equation}

Fix $x = (x_1, \dots, x_d) \in [0, 1)^d$ and suppose that $i_k/N_k \le x_k < (i_k + 1)/N_k$ for $k \in [d]$.
We then have
\begin{align*}
    &\big|\fazeromu(x_1, \dots, x_d) - \fazeronu(x_1, \dots, x_d)\big| \\
    &\qquad \le \sumoveralpha \sum_{l \in \prod\limits_{k \in S(\alpha)}[0: i_k]} \bigg| \int_{[0, 1]^{|\alpha|}} \prod_{k \in S(\alpha)} (x_k - t_k)_+ \, d\tilde{\mu}_{\alpha, l}(t^{(\alpha)}) \\
    &\qquad \qquad \qquad \qquad \qquad \qquad \qquad \qquad- \int_{R_l^{(\alpha)}} \prod_{k \in S(\alpha)} (x_k - t_k)_+ \, d\nu_{\alpha}(t^{(\alpha)}) \bigg|.  
\end{align*}
If $l \in \prod_{k \in S(\alpha)}[0:(i_k - 1)]$, then 
\begin{equation*}
    \bigg| \int_{[0, 1]^{|\alpha|}} \prod_{k \in S(\alpha)} (x_k - t_k)_+ \, d\tilde{\mu}_{\alpha, l}(t^{(\alpha)}) - \int_{R_l^{(\alpha)}} \prod_{k \in S(\alpha)} (x_k - t_k)_+ \, d\nu_{\alpha}(t^{(\alpha)}) \bigg| = 0
\end{equation*}
because of \eqref{eq:mualphal-nualpha}. 
Otherwise, if $l \in \prod_{k \in S(\alpha)}[0:i_k] \setminus \prod_{k \in S(\alpha)}[0:(i_k - 1)]$, there exists $j \in S(\alpha)$ such that $l_j = i_j$. 
Hence, by \eqref{eq:mualphal-nualpha-total-varitation}, 
\begin{align*}
    &\bigg| \int_{[0, 1]^{|\alpha|}} \prod_{k \in S(\alpha)} (x_k - t_k)_+ \, d\tilde{\mu}_{\alpha, l}(t^{(\alpha)}) - \int_{R_l^{(\alpha)}} \prod_{k \in S(\alpha)} (x_k - t_k)_+ \, d\nu_{\alpha}(t^{(\alpha)}) \bigg| \\
    &\quad \le \frac{1}{N_j} \cdot \Big(|\tilde{\mu}_{\alpha, l}|\big([0, 1]^{|\alpha|}\big) +  |\nu_{\alpha}|\big(R_l^{(\alpha)}\big) \Big) \le \frac{2}{N} \cdot |\nu_{\alpha}|\big(R_l^{(\alpha)}\big) = \frac{2}{N} \cdot |\nu_{\alpha}|\big(R_l^{(\alpha)} \setminus \{\zerovec\}\big)
\end{align*}
if $l \neq \zerovec$, and 
\begin{align*}
    &\bigg| \int_{[0, 1]^{|\alpha|}} \prod_{k \in S(\alpha)} (x_k - t_k)_+ \, d\tilde{\mu}_{\alpha, \zerovec}(t^{(\alpha)}) - \int_{R_{\zerovec}^{(\alpha)}} \prod_{k \in S(\alpha)} (x_k - t_k)_+ \, d\nu_{\alpha}(t^{(\alpha)}) \bigg| \\
    &\quad \le \bigg| \int_{[0, 1]^{|\alpha|} \setminus \{\zerovec\}} \prod_{k \in S(\alpha)} (x_k - t_k)_+ \, d\tilde{\mu}_{\alpha, \zerovec}(t^{(\alpha)})\bigg| + \bigg| \int_{R_{\zerovec}^{(\alpha)} \setminus \{\zerovec\}} \prod_{k \in S(\alpha)} (x_k - t_k)_+ \, d\nu_{\alpha}(t^{(\alpha)})\bigg| \\
    &\quad \qquad + \bigg| \prod_{k \in S(\alpha)} x_k \cdot \big(|\tilde{\mu}_{\alpha, \zerovec}|(\{\zerovec\}) -  |\nu_{\alpha}|(\{\zerovec\}) \big) \bigg| \\
    &\quad \le \frac{1}{N_j} \cdot \Big(|\tilde{\mu}_{\alpha, \zerovec}|\big([0, 1]^{|\alpha|} \setminus \{\zerovec\}\big) +  |\nu_{\alpha}|\big(R_{\zerovec}^{(\alpha)} \setminus \{\zerovec\}\big) + \big||\tilde{\mu}_{\alpha, \zerovec}| (\{\zerovec\}) -  |\nu_{\alpha}|(\{\zerovec\}) \big| \Big) \\ 
    &\quad \le \frac{1}{N} \cdot \bigg( |\nu_{\alpha}|\big(R_{\zerovec}^{(\alpha)} \setminus \{\zerovec\}\big) \\
    &\quad \quad + \sum_{\delta \in \prod\limits_{k \in S(\alpha)} \{0, 1\} \setminus \{\zerovec\}} \bigg|\int_{R^{(\alpha)}_\zerovec \setminus \{\zerovec\}} \prod_{k \in S(\alpha)} \bigg[\bigg(\frac{1/N_k - t_k}{1/N_k - 0}\bigg)^{1 - \delta_k} \cdot \bigg(\frac{t_k - 0}{1/N_k - 0}\bigg)^{\delta_k}\bigg] \,  d\nu_{\alpha}(\talpha)\bigg| \\
    &\quad \quad +  \bigg|\int_{R^{(\alpha)}_\zerovec \setminus \{\zerovec\}} \prod_{k \in S(\alpha)} \bigg(\frac{1/N_k - t_k}{1/N_k - 0}\bigg) \,  d\nu_{\alpha}(\talpha)\bigg|\bigg) \\
    &\quad \le \frac{1}{N} \cdot \bigg( |\nu_{\alpha}|\big(R_{\zerovec}^{(\alpha)} \setminus \{\zerovec\}\big) \\
    &\quad \quad + \sum_{\delta \in \prod\limits_{k \in S(\alpha)} \{0, 1\}} \bigg|\int_{R^{(\alpha)}_\zerovec \setminus \{\zerovec\}} \prod_{k \in S(\alpha)} \bigg[\bigg(\frac{1/N_k - t_k}{1/N_k - 0}\bigg)^{1 - \delta_k} \cdot \bigg(\frac{t_k - 0}{1/N_k - 0}\bigg)^{\delta_k}\bigg] \,  d\nu_{\alpha}(\talpha)\bigg|\bigg) \\
    &\quad \le \frac{2}{N} \cdot  |\nu_{\alpha}|\big(R_{\zerovec}^{(\alpha)} \setminus \{\zerovec\}\big), 
\end{align*}
where the third inequality is from the definition of $\tilde{\mu}_{\alpha, \zerovec}$.
Using these results, we can first show that 
\begin{align*}
    &\big|\fazeromu(x_1, \dots, x_d) - \fazeronu(x_1, \dots, x_d)\big| \\
    &\qquad \le \sumoveralpha \sum_{l \in \prod\limits_{k \in S(\alpha)}[0: i_k]} \bigg| \int_{[0, 1]^{|\alpha|}} \prod_{k \in S(\alpha)} (x_k - t_k)_+ \, d\tilde{\mu}_{\alpha, l}(t^{(\alpha)}) \\
    &\qquad \qquad \qquad \qquad \qquad \qquad \qquad \qquad- \int_{R_l^{(\alpha)}} \prod_{k \in S(\alpha)} (x_k - t_k)_+ \, d\nu_{\alpha}(t^{(\alpha)}) \bigg| \\
    &\qquad \le \sumoveralpha \sum_{l \in \prod\limits_{k \in S(\alpha)}[0: i_k]} \frac{2}{N} \cdot  |\nu_{\alpha}|\big(R_{l}^{(\alpha)} \setminus \{\zerovec\}\big) \\
    &\qquad \le \frac{2}{N} \cdot  \sumoveralpha |\nu_{\alpha}|\big([0, 1)^{|\alpha|} \setminus \{\zerovec\}\big) = \frac{2}{N} \cdot \Vmars(\fazeronu),
\end{align*}
which directly leads to that
\begin{equation*}
    \| \fazeromu - \fazeronu \|_{\infty} \le \frac{2}{N} \cdot \Vmars(\fazeronu).
\end{equation*}
Also, we can show that
\begin{align*}
    &\big|\fazeromu(x_1, \dots, x_d) - \fazeronu(x_1, \dots, x_d)\big| \\
    &\qquad \le \sumoveralpha \sum_{j \in S(\alpha)} \sum_{\substack{l \in \prod\limits_{k \in S(\alpha)}[0: i_k] \\ l_j = i_j}} \bigg| \int_{[0, 1]^{|\alpha|}} \prod_{k \in S(\alpha)} (x_k - t_k)_+ \, d\tilde{\mu}_{\alpha, l}(t^{(\alpha)}) \\
    &\qquad \qquad \qquad \qquad \qquad \qquad \qquad \qquad \qquad - \int_{R_l^{(\alpha)}} \prod_{k \in S(\alpha)} (x_k - t_k)_+ \, d\nu_{\alpha}(t^{(\alpha)}) \bigg| \\
    &\qquad \le \sumoveralpha \sum_{j \in S(\alpha)} \sum_{\substack{l \in \prod\limits_{k \in S(\alpha)}[0: (N_k - 1)] \\ l_j = i_j}} \frac{2}{N} \cdot  |\nu_{\alpha}|\big(R_{l}^{(\alpha)} \setminus \{\zerovec\}\big) \\
    &\qquad \le \frac{2}{N} \cdot \sumoveralpha \sum_{j \in S(\alpha)} |\nu_{\alpha}|\bigg( \Big(\Big[\frac{i_j}{N_j}, \frac{i_j + 1}{N_j}\Big) \times \prod_{\substack{k \in S(\alpha) \\ k \neq j}} [0, 1) \Big) \setminus \{\zerovec\} \bigg).
\end{align*}
From this result, we can derive that
\begin{align*}
    &\| \fazeronu - \fazeromu \|_{p_0, 2}^2 = \int_{[0, 1]^d} \big(\fazeromu(x_1, \dots, x_d) - \fazeronu(x_1, \dots, x_d)\big)^2 p_0(x) dx \\
    &\quad \le B \cdot \sum_{i \in \prod\limits_{k = 1}^{d}[0: (N_k - 1)]} \bigg( \prod_{k = 1}^{d} \frac{1}{N_k}\bigg) \\
    &\quad \qquad \qquad \qquad \cdot \Bigg[ \frac{2}{N} \cdot \sumoveralpha \sum_{j \in S(\alpha)} |\nu_{\alpha}|\bigg( \Big(\Big[\frac{i_j}{N_j}, \frac{i_j + 1}{N_j}\Big) \times \prod_{\substack{k \in S(\alpha) \\ k \neq j}} [0, 1) \Big) \setminus \{\zerovec\} \bigg) \Bigg]^2 \\
    &\quad \le \frac{4B}{N^2} \cdot \sum_{i \in \prod\limits_{k = 1}^{d}[0: (N_k - 1)]} \bigg( \prod_{k = 1}^{d} \frac{1}{N_k}\bigg) \cdot \bigg(\sumoveralpha |\alpha|\bigg) \\
    &\quad \qquad \qquad \qquad \cdot \sumoveralpha \sum_{j \in S(\alpha)} \Bigg[ |\nu_{\alpha}|\bigg( \Big(\Big[\frac{i_j}{N_j}, \frac{i_j + 1}{N_j}\Big) \times \prod_{\substack{k \in S(\alpha) \\ k \neq j}} [0, 1) \Big) \setminus \{\zerovec\} \bigg) \Bigg]^2 \\
    &\quad \le \frac{C_d B}{N^2} \cdot \sumoveralpha \sum_{j \in S(\alpha)} \sum_{i \in \prod\limits_{k = 1}^{d}[0: (N_k - 1)]} \bigg( \prod_{k = 1}^{d} \frac{1}{N_k}\bigg)\\
    &\quad \qquad \qquad \qquad \qquad \qquad \qquad \qquad \cdot \Bigg[ |\nu_{\alpha}|\bigg( \Big(\Big[\frac{i_j}{N_j}, \frac{i_j + 1}{N_j}\Big) \times \prod_{\substack{k \in S(\alpha) \\ k \neq j}} [0, 1) \Big) \setminus \{\zerovec\} \bigg) \Bigg]^2 \\
    &\quad \le \frac{C_d B}{N^3} \cdot \sumoveralpha \sum_{j \in S(\alpha)} \sum_{i_j = 0}^{N_j - 1} \Bigg[ |\nu_{\alpha}|\bigg( \Big(\Big[\frac{i_j}{N_j}, \frac{i_j + 1}{N_j}\Big) \times \prod_{\substack{k \in S(\alpha) \\ k \neq j}} [0, 1) \Big) \setminus \{\zerovec\} \bigg) \Bigg]^2 \\
     &\quad \le \frac{C_d B}{N^3} \cdot \sumoveralpha \sum_{j \in S(\alpha)} \Bigg[ \sum_{i_j = 0}^{N_j - 1} |\nu_{\alpha}|\bigg( \Big(\Big[\frac{i_j}{N_j}, \frac{i_j + 1}{N_j}\Big) \times \prod_{\substack{k \in S(\alpha) \\ k \neq j}} [0, 1) \Big) \setminus \{\zerovec\} \bigg) \Bigg]^2 \\ 
     &\quad \le \frac{C_d B}{N^3} \cdot \sumoveralpha |\alpha| \cdot \Big[ |\nu_{\alpha}|
     \big([0, 1)^{|\alpha|} \setminus \{\zerovec\} \big) \Big]^2 \\
     &\quad \le \frac{C_d B}{N^3} \cdot \bigg[ \sumoveralpha |\nu_{\alpha}|
     \big([0, 1)^{|\alpha|} \setminus \{\zerovec\} \big) \bigg]^2 = \frac{C_d B}{N^3} \cdot \big( \Vmars(\fazeronu) \big)^2,
\end{align*}
where the second inequality is from the Cauchy inequality.
\end{proof}

\subsubsection{Proof of Theorem \ref{thm:han-wellner-prop2-variant}}\label{pf:han-wellner-prop2-variant}
Here we almost follow the proof of \cite{han2019convergence}, Proposition 2 and make a few modifications to the proof.
As in the proof of \cite{han2019convergence}, Proposition 2, we also use the standard peeling argument along with the following moment inequality for empirical processes.

\begin{lemma}[\cite{gine2000exponential}, Proposition 3.1]\label{lem:Hoffmann-moment-inequality}
Let $\F$ be a countable collection of real-valued functions defined on $\mathcal{X}$.
Suppose $X^{(1)}, \dots, X^{(n)}$ are i.i.d. random variables with law $P$ on $\mathcal{X}$ and $\xi_1, \dots, \xi_n$ are independent mean zero random variables independent of $X^{(1)}, \dots, X^{(n)}$. 
Then, there exists a universal positive constant $C$ such that
\begin{align*}
    &\E \bigg[\sup_{f \in \F} \Big| \sum_{i = 1}^{n} \xi_i f(X^{(i)}) \Big|^p \bigg] \le C^p \bigg[ \bigg( \E \bigg[\sup_{f \in \F} \Big| \sum_{i = 1}^{n} \xi_i f(X^{(i)}) \Big| \bigg] \bigg)^p \\
    &\qquad \qquad \qquad + p^{\frac{p}{2}} n^{\frac{p}{2}} \Big( \sup_{f \in \F} \| f \|_{P, 2} \Big)^{p} \cdot \max_i \| \xi_i \|_2^p + p^p \E \Big[ \max_i |\xi_i|^p \cdot \sup_{f \in \F} |f(X^{(i)})|^p \Big]\bigg]
\end{align*}
for every $p \ge 1$.
\end{lemma}

\begin{proof}[Proof of Theorem \ref{thm:han-wellner-prop2-variant}]
Let $A$ be the event where $\hat{f}$ becomes a least squares estimator over $\F$.
By the assumption, we have $\P(A) \ge 1 - \epsilon$.
Fix $r \ge 1$ and let
\begin{equation*}
    \F_j = \{f \in \F: 2^{j - 1} r t_n < \| f - f_0 \|_{P, 2} \le 2^{j} r t_n\}
\end{equation*} 
for each positive integer $j$. 
Then, clearly,
\begin{equation*}
    \{f \in \F: \| f - f_0 \|_{P, 2} > r t_n\} = \biguplus_{j = 1}^{\infty} \F_j,
\end{equation*}
and thus
\begin{align}\label{eq:decomposition-into-fj}
\begin{split}
    \P(\| \hat{f} - f_0 \|_{P, 2} > rt_n) &\le \P(A^c) + \P(A \ \text{and} \ \|\hat{f} - f_0 \|_{P, 2} > rt_n) \\
    &\le \epsilon + \sum_{j = 1}^{\infty} \P(A \ \text{and} \ \hat{f} \in \F_j).
\end{split}
\end{align}
Recall that $\biguplus$ indicates disjoint union.
Let $(\mathbb{M}_n(f): f \in \F)$ denote the stochastic process defined by
\begin{equation*}
    \mathbb{M}_n(f) = \frac{2}{n} \sum_{i = 1}^{n} \xi_i (f - f^*)(X^{(i)}) - \frac{1}{n} \sum_{i = 1}^{n} \big((f - f^*) (X^{(i)})\big)^2
\end{equation*}
and let $(M(f): f \in \F)$ denote the deterministic process defined by 
\begin{equation*}
    M(f) = \E[\mathbb{M}_n(f)] = - \|f - f^* \|_{P, 2}^2.
\end{equation*}
Note that $\mathbb{M}_n(f)$ can be alternatively represented as 
\begin{equation*}
    \mathbb{M}_n(f) = - \frac{1}{n} \sum_{i = 1}^{n} \big(y_i - f(X^{(i)})\big)^2 + \frac{1}{n} \sum_{i = 1}^{n} \xi_i^2.
\end{equation*}
Because $f_0 \in \F$, if $\hat{f}$ is a least squares estimator over $\F$, then it follows that
\begin{equation*}
    \mathbb{M}_n(\hat{f}) - \mathbb{M}_n(f_0) \ge 0.
\end{equation*}
Now, observe that 
\begin{align*}
    \mathbb{M}_n(f) - \mathbb{M}_n(f_0) &= \frac{2}{n} \sum_{i = 1}^{n} \xi_i (f - f_0)(X^{(i)}) - \frac{1}{n} \sum_{i = 1}^{n} \big((f - f_0) (X^{(i)})\big)^2 \\ 
    &\qquad - \frac{2}{n} \sum_{i = 1}^{n} (f - f_0) (X^{(i)}) \cdot (f_0 - f^*) (X^{(i)}) 
\end{align*}
and that 
\begin{align*}
    M(f) - M(f_0) &= - \|f - f^* \|_{P, 2}^2 + \|f_0 - f^* \|_{P, 2}^2 \\
    &= - \|f - f_0 \|_{P, 2}^2 - 2 \E_{X \sim P} \big[(f - f_0)(X) \cdot (f_0 - f^*) (X)\big],
\end{align*}
where $X$ in the expectation is independent of all $X^{(i)}$.
By the assumption that $\| f_0 - f^* \|_{P, 2} \le t_n/4$, we thus have that for every $f \in \F_j$
\begin{align*}
  - (M(f) - M(f_0)) &= \|f - f_0 \|_{P, 2}^2 + 2 \E_{X \sim P} \big[(f - f_0)(X) \cdot (f_0 - f^*) (X)\big] \\
  &\ge \|f - f_0 \|_{P, 2} \cdot \big(\|f - f_0 \|_{P, 2} - 2 \|f_0 - f^* \|_{P, 2}\big) \\
  &\ge 2^{j - 1} rt_n \Big(2^{j - 1} rt_n - \frac{t_n}{2}\Big) \ge 2^{2j - 3} r^2 t_n^2,
\end{align*}
where the first inequality is from the Cauchy inequality.
Hence, for each positive integer $j$, 
\begin{align}\label{peeling-results}
    \P(A \ \text{and} \ &\hat{f} \in \F_j) \le \P\Big(\sup_{f \in \F_j} 
    \big(\mathbb{M}_n(f) - \mathbb{M}_n(f_0)\big) \ge 0 \Big) \nonumber \\
    &\le \P\Big(\sup_{f \in \F_j} 
    \big((\mathbb{M}_n(f) - \mathbb{M}_n(f_0)) - (M(f) - M(f_0))\big) \ge 2^{2j - 3} r^2 t_n^2 \Big) \nonumber \\
    &\le \P \bigg(\sup_{f \in \F_j} \Big| \frac{1}{n} \sum_{i = 1}^{n} \xi_i (f - f_0)(X^{(i)}) \Big| \ge 2^{2j - 5} r^2 t_n^2 \bigg) \nonumber \\
    &\quad + \P \bigg(\sup_{f \in \F_j} \Big| \frac{1}{n} \sum_{i = 1}^{n} \big((f - f_0)(X^{(i)})\big)^2 - \| f - f_0 \|_{P, 2}^2 \Big| \ge 2^{2j - 5} r^2 t_n^2 \bigg) \nonumber \\
    &\quad + \P \bigg(\sup_{f \in \F_j} \Big| \frac{1}{n} \sum_{i = 1}^{n} (f - f_0) (X^{(i)}) \cdot (f_0 - f^*) (X^{(i)}) \nonumber \\
    &\qquad \qquad \qquad \qquad - \E_{X \sim P} \big[(f - f_0)(X) \cdot (f_0 - f^*) (X)\big] \Big| \ge 2^{2j - 6} r^2 t_n^2 \bigg) \nonumber \\
    &\le \P \bigg(\sup_{\substack{f \in \F - \{f_0\}\\ \|f\|_{P, 2} \le 2^j r t_n}} \Big| \frac{1}{\sqrt{n}} \sum_{i = 1}^{n} \xi_i f(X^{(i)}) \Big| \ge 2^{2j - 5} r^2 \sqrt{n} t_n^2 \bigg) \nonumber \\
    &\quad + \P \bigg(\sup_{\substack{f \in \F - \{f_0\} \\ \|f\|_{P, 2} \le 2^j r t_n}} \Big| \frac{1}{\sqrt{n}} \sum_{i = 1}^{n} \big(f(X^{(i)})^2 - \| f \|_{P, 2}^2 \big)\Big| \ge 2^{2j - 5} r^2 \sqrt{n} t_n^2 \bigg) \\
    &\quad + \P \bigg(\sup_{\substack{f \in \F -\{f_0\} \\ \|f\|_{P, 2} \le 2^j r t_n}} \Big| \frac{1}{\sqrt{n}} \sum_{i = 1}^{n} \Big(f(X^{(i)}) \cdot (f_0 - f^*) (X^{(i)}) \nonumber \\
    &\qquad \qquad \qquad \qquad \qquad \quad- \E_{X \sim P} \big[f(X) \cdot (f_0 - f^*)(X)\big] \Big) \Big| \ge 2^{2j - 6} r^2 \sqrt{n} t_n^2 \bigg) \nonumber.
\end{align}
Here the second inequality is due to the fact that 
\begin{equation*}
    - (M(f) - M(f_0)) \ge 2^{2j - 3} r^2 t_n^2
\end{equation*}
for $f \in \F_j$, and the third inequality is from the triangle inequality.

We first bound the first term of \eqref{peeling-results}. 
Fix a positive integer $j$.
By Markov's inequality, 
\begin{align}\label{first-term-Markov-inequality}
\begin{split}
    &\P \bigg(\sup_{\substack{f \in \F - \{f_0\}\\ \|f\|_{P, 2} \le 2^j r t_n}} \Big| \frac{1}{\sqrt{n}} \sum_{i = 1}^{n} \xi_i f(X^{(i)}) \Big| \ge 2^{2j - 5} r^2 \sqrt{n} t_n^2 \bigg) \\
    &\qquad \quad \le \frac{1}{(2^{2j - 5} r^2 \sqrt{n} t_n^2)^3} \cdot \E \bigg[\sup_{\substack{f \in \F - \{f_0\}\\ \|f\|_{P, 2} \le 2^j r t_n}} \Big| \frac{1}{\sqrt{n}} \sum_{i = 1}^{n} \xi_i f(X^{(i)}) \Big|^3 \bigg].
\end{split}
\end{align}
Also, by Lemma \ref{lem:Hoffmann-moment-inequality} (the moment inequality for empirical processes), we have 
\begin{align*}
    &\E \bigg[\sup_{\substack{f \in \F - \{f_0\} \\ \|f\|_{P, 2} \le 2^j r t_n}} \Big| \frac{1}{\sqrt{n}} \sum_{i = 1}^{n} \xi_i f(X^{(i)}) \Big|^3 \bigg] \le C \bigg( \E \bigg[\sup_{\substack{f \in \F - \{f_0\} \\ \|f\|_{P, 2} \le 2^j r t_n}} \Big| \frac{1}{\sqrt{n}} \sum_{i = 1}^{n} \xi_i f(X^{(i)}) \Big| \bigg] \bigg)^3 \\
    &\qquad \qquad \qquad \qquad \quad + C 2^{3j} r^3 t_n^3 \cdot \| \xi_1 \|_2^3 + C n^{-\frac{3}{2}} \cdot \E \bigg[ \max_{i} |\xi_i|^3 \cdot \sup_{\substack{f \in \F - \{f_0\} \\ \|f\|_{P, 2} \le 2^j r t_n}} |f(X^{(i)})|^3 \bigg] \\
    &\qquad \quad \le C 2^{3j} r^3 n^{\frac{3}{2}} t_n^6 + C 2^{3j} r^3 t_n^3 \cdot \| \xi_1 \|_2^3 + C M^3 n^{-\frac{3}{2}} \cdot \E\big[\max_{i} |\xi_i|^3 \big].
\end{align*}
Since $\xi_i$ have finite $L^q$ norm, 
\begin{equation*}
    \E\big[\max_{i} |\xi_i|^3 \big] = \E\big[\big(\max_{i} |\xi_i|^q \big)^{\frac{3}{q}}\big] \le \Big(\E\big[\max_{i} |\xi_i|^q \big] \Big)^{\frac{3}{q}} \le \bigg(\sum_{i = 1}^{n} \E\big[|\xi_i|^q \big] \bigg)^{\frac{3}{q}} = n^{\frac{3}{q}} \| \xi_1 \|_{q}^3,
\end{equation*}
where the first inequality is from Jensen's inequality.
Hence, we have 
\begin{align*}
    &\E \bigg[\sup_{\substack{f \in \F - \{f_0\} \\ \|f\|_{P, 2} \le 2^j r t_n}} \Big| \frac{1}{\sqrt{n}} \sum_{i = 1}^{n} \xi_i f(X^{(i)}) \Big|^3 \bigg] \\
    &\qquad \qquad \le C 2^{3j} r^3 n^{\frac{3}{2}} t_n^6 + C 2^{3j} r^3 t_n^3 \cdot \| \xi_1 \|_2^3 + C M^3 n^{3(-\frac{1}{2} + \frac{1}{q})} \| \xi_1 \|_{q}^3,
\end{align*}
which together with \eqref{first-term-Markov-inequality} implies that 
\begin{align*}
    &\P \bigg(\sup_{\substack{f \in \F - \{f_0\}\\ \|f\|_{P, 2} \le 2^j r t_n}} \Big| \frac{1}{\sqrt{n}} \sum_{i = 1}^{n} \xi_i f(X^{(i)}) \Big| \ge 2^{2j - 5} r^2 \sqrt{n} t_n^2 \bigg) \\
    &\qquad \qquad \le
    C \cdot \frac{1}{2^{3j} r^3} + C \cdot \frac{\|\xi_1 \|_2^3}{2^{3j} r^3 n^{\frac{3}{2}} t_n^3} + C \cdot \frac{M^3 \| \xi_1 \|_{q}^3}{2^{6j} r^6 n^{3(1 - \frac{1}{q})} t_n^6}.
\end{align*}

Now, we bound the second term of \eqref{peeling-results}.
We start by applying Markov's inequality as in the case of the first term: 
\begin{align}\label{second-term-Markov-inequality}
\begin{split}
    &\P \bigg(\sup_{\substack{f \in \F - \{ f_0 \}\\ \|f\|_{P, 2} \le 2^j r t_n}} \Big| \frac{1}{\sqrt{n}} \sum_{i = 1}^{n} \big(f(X^{(i)})^2 - \| f \|_{P, 2}^2 \big)\Big| \ge 2^{2j - 5} r^2 \sqrt{n} t_n^2 \bigg) \\
    &\qquad \quad \le \frac{1}{(2^{2j - 5} r^2 \sqrt{n} t_n^2)^3} \cdot \E \bigg[\sup_{\substack{f \in \F - \{f_0\} \\ \|f\|_{P, 2} \le 2^j r t_n}} \Big| \frac{1}{\sqrt{n}} \sum_{i = 1}^{n} \big(f(X^{(i)})^2 - \| f \|_{P, 2}^2 \big)\Big|^3 \bigg].
\end{split}
\end{align}
Using the standard argument of symmetrization (see, e.g., \cite{vaartwellner96book}, Lemma 2.3.1 or \cite{van2016estimation}, Theorem 16.1), we can show that 
\begin{equation*}
    \E \bigg[\sup_{\substack{f \in \F - \{f_0\} \\ \|f\|_{P, 2} \le 2^j r t_n}} \Big| \frac{1}{\sqrt{n}} \sum_{i = 1}^{n} \big(f(X^{(i)})^2 - \| f \|_{P, 2}^2 \big)\Big|^3 \bigg] \le 8 \E \bigg[\sup_{\substack{f \in \F - \{f_0\} \\ \|f\|_{P, 2} \le 2^j r t_n}} \Big| \frac{1}{\sqrt{n}} \sum_{i = 1}^{n} \epsilon_i f(X^{(i)})^2 \Big|^3 \bigg].
\end{equation*}
Also, by Lemma \ref{lem:Hoffmann-moment-inequality}, we have  
\begin{align*}
    &\E \bigg[\sup_{\substack{f \in \F - \{f_0\} \\ \|f\|_{P, 2} \le 2^j r t_n}} \Big| \frac{1}{\sqrt{n}} \sum_{i = 1}^{n} \epsilon_i f(X^{(i)})^2 \Big|^3 \bigg] \le C \bigg( \E \bigg[\sup_{\substack{f \in \F - \{f_0\} \\ \|f\|_{P, 2} \le 2^j r t_n}} \Big| \frac{1}{\sqrt{n}} \sum_{i = 1}^{n} \epsilon_i f(X^{(i)})^2 \Big| \bigg] \bigg)^3 \\
    &\qquad \qquad \qquad \qquad + C \bigg(\sup_{\substack{f \in \F - \{f_0\} \\ \|f\|_{P, 2} \le 2^j r t_n}} \|f^2\|_{P,2}\bigg)^3 + C n^{-\frac{3}{2}} \cdot \E \bigg[  \sup_{\substack{f \in \F - \{f_0\} \\ \|f\|_{P, 2} \le 2^j r t_n}} |f (X^{(i)})|^6 \bigg].
\end{align*}
Due to the contraction principle (see, e.g., \cite{vaartwellner96book}, Proposition A.3.2), 
\begin{align*}
    \E \bigg[\sup_{\substack{f \in \F - \{f_0\} \\ \|f\|_{P, 2} \le 2^j r t_n}} \Big| \frac{1}{\sqrt{n}} \sum_{i = 1}^{n} \epsilon_i f(X^{(i)})^2 \Big| \bigg] &\le 4 M \cdot \E \bigg[\sup_{\substack{f \in \F - \{f_0\} \\ \|f\|_{P, 2} \le 2^j r t_n}} \Big| \frac{1}{\sqrt{n}} \sum_{i = 1}^{n} \epsilon_i f(X^{(i)}) \Big| \bigg] \\
    &\le 2^{j + 2} r M \sqrt{n} t_n^2.
\end{align*}
Moreover, since $\|f\|_{\infty} \le M$ for every $f \in \F - \{f_0\}$, 
\begin{equation*}
    \sup_{\substack{f \in \F - \{f_0\} \\ \|f\|_{P, 2} \le 2^j r t_n}} \|f^2\|_{P,2} \le M \cdot \sup_{\substack{f \in \F - \{f_0\} \\ \|f\|_{P, 2} \le 2^j r t_n}} \|f\|_{P,2} \le 2^{j} r M t_n.
\end{equation*}
Therefore, we have 
\begin{equation*}
    \E \bigg[\sup_{\substack{f \in \F - \{f_0\} \\ \|f\|_{P, 2} \le 2^j r t_n}} \Big| \frac{1}{\sqrt{n}} \sum_{i = 1}^{n} \epsilon_i f(X^{(i)})^2 \Big|^3 \bigg] \le C 2^{3j} r^3 M^3 n^{\frac{3}{2}} t_n^6 + C 2^{3j} r^3 M^3 t_n^3 + C M^6 n^{-\frac{3}{2}}, 
\end{equation*}
which directly leads to   
\begin{align*}
    &\P \bigg(\sup_{\substack{f \in \F - \{f_0\} \\ \|f\|_{P, 2} \le 2^j r t_n}} \Big| \frac{1}{\sqrt{n}} \sum_{i = 1}^{n} \big(f(X^{(i)})^2 - \| f \|_{P, 2}^2 \big)\Big| \ge 2^{2j - 5} r^2 \sqrt{n} t_n^2 \bigg) \\
    &\qquad \quad \le C \cdot \frac{M^3}{2^{3j} r^3} + C \cdot \frac{M^3}{2^{3j} r^3 n^{\frac{3}{2}} t_n^3} + C \cdot \frac{M^6}{2^{6j} r^6 n^3 t_n^6},
\end{align*}
because of \eqref{second-term-Markov-inequality}.

The third term of \eqref{peeling-results} can be bounded in the same way as the second term. 
Indeed, by repeating the same argument (except for the application of the contraction principle), we can show that
\begin{align*}
    &\P \bigg(\sup_{\substack{f \in \F - \{f_0\}\\ \|f\|_{P, 2} \le 2^j r t_n}} \Big| \frac{1}{\sqrt{n}} \sum_{i = 1}^{n} \Big(f(X^{(i)}) \cdot (f_0 - f^*) (X^{(i)}) \\
    &\qquad \qquad \qquad \qquad \qquad \quad - \E_{X \sim P} \big[f(X) \cdot (f_0 - f^*)(X)\big] \Big) \Big| \ge 2^{2j - 6} r^2 \sqrt{n} t_n^2 \bigg) \\
    &\qquad \quad \le C \cdot \frac{1}{2^{3j} r^3} + C \cdot \frac{\|f_0 - f^* \|_{\infty}^3}{2^{3j} r^3 n^{\frac{3}{2}} t_n^3} + C \cdot \frac{M^3 \cdot \|f_0 - f^* \|_{\infty}^3}{2^{6j} r^6 n^3 t_n^6}.
\end{align*}

As a result, by \eqref{eq:decomposition-into-fj} and \eqref{peeling-results},
\begin{align*}
    \P(\| \hat{f} - f_0 \|_{P, 2} > r t_n) \le \epsilon &+ \sum_{j = 1}^{\infty} \bigg[ C \cdot \frac{1 + M^3}{2^{3j} r^3} + C \cdot \frac{\|\xi_1\|_2^3 + \|f_0 - f^*\|_{\infty}^3 + M^3}{2^{3j} r^3 n^{\frac{3}{2}} t_n^3} \\
    &\qquad \qquad \qquad + C \cdot \frac{M^3(\|\xi_1\|_q^3 n^{\frac{3}{q}} + \|f_0 - f^*\|_{\infty}^3 + M^3)}{2^{6j} r^6 n^3 t_n^6}  \bigg] \\
    \le \epsilon &+ C \cdot \frac{1 + M^3}{r^3} + C \cdot \frac{\|\xi_1\|_2^3 + \|f_0 - f^*\|_{\infty}^3 + M^3}{r^3 n^{\frac{3}{2}} t_n^3} \\
    &+ C \cdot \frac{M^3(\|\xi_1\|_q^3 n^{\frac{3}{q}} + \|f_0 - f^*\|_{\infty}^3 + M^3)}{r^6 n^3 t_n^6},
\end{align*}
and replacing $r$ with $t/t_n$ completes the proof.
\end{proof}

\subsubsection{Proof of Lemma \ref{lem:sup-norm-bound}}\label{pf:sup-norm-bound}
\begin{proof}[Proof of Lemma \ref{lem:sup-norm-bound}]
We first observe that every function $f \in \infmars^{d, s}$ can be uniquely split into a multi-affine term and the remaining term as follows.
Let $\A$ be the collection of all the multi-affine functions 
\begin{equation*}
    a(x_1, \dots, x_d) = \sum_{\substack{\alpha \in \{0, 1\}^d \\ |\alpha| \le s}} a_{\alpha} \prod_{j \in S(\alpha)} x_j,
\end{equation*}
where $a_{\alpha} \in \R$ for each $\alpha \in \{0, 1\}^d$ with $|\alpha| \leq s$, 
and let $\H$ be the collection of all the functions
\begin{equation*}
    h(x_1, \dots, x_d) = \sumoveralpha
    \int_{[0, 1)^{|\alpha|} \setminus \{\zerovec\}} \prod_{j \in S(\alpha)} (x_j -
    t_j)_+ \, d\nu_{\alpha}(\talpha),
\end{equation*}
where $\nu_{\alpha}$ is a finite signed measure on $[0, 1)^{|\alpha|} \setminus \{\zerovec\}$ for each $\alpha \in \{0, 1\}^d \setminus \{\zerovec\}$ with $|\alpha| \leq s$.
By the uniqueness of representation (see Lemma \ref{lem:uniqueness-of-representation}), clearly, we have
\begin{equation*}
    \infmars^{d, s} = \A \oplus \H,
\end{equation*}
where $\oplus$ indicates direct sum.
Also, note that if $f = a + h$ where $a \in \A$ and $h \in \H$ are in the forms above, then
\begin{equation*}
    \| h \|_{\infty} \le \sumoveralpha |\nu_{\alpha}|([0, 1)^{|\alpha|} \setminus \{\zerovec\}) = \Vmars(f).
\end{equation*}

Write $\hat{f}^{d, s}_{n, V} = \hat{a}^{d, s}_{n, V} + \hat{h}^{d, s}_{n, V}$ and $f^* = a^* + h^*$ where $\hat{a}^{d, s}_{n, V}, a^* \in \A$ and $\hat{h}^{d, s}_{n, V}, h^* \in \H$.
Because
\begin{equation*}
\| \hat{h}^{d, s}_{n, V} \|_{\infty} \le   \Vmars\big(\hat{f}^{d, s}_{n, V}\big) = V \ \text{ and } \ \| h^* \|_{\infty} \le   \Vmars(f^*) = V, 
\end{equation*}
it follows that
\begin{align*}
    \|\hat{f}^{d, s}_{n, V} - f^* \|_{\infty} &\le \|\hat{a}^{d, s}_{n, V} - a^* \|_{\infty} + \|\hat{h}^{d, s}_{n, V} \|_{\infty} + \| h^* \|_{\infty} \\
    &\le \|\hat{a}^{d, s}_{n, V} - a^* \|_{\infty} + 2V.
\end{align*}
Hence, in order to prove the lemma, it suffices to show that 
\begin{equation*}
    \| \hat{a}^{d, s}_{n, V} - a^* \|_{\infty} = O_p(1).
\end{equation*}

Recall from \eqref{our-problem-restated} that there is no constraint on multi-affine terms in our definition of $\hat{f}^{d, s}_{n, V}$.  
Thus, we can characterize $\hat{a}^{d, s}_{n, V}$ as 
\begin{align}\label{eq:characterization-of-a-function}
\begin{split}
    \hat{a}^{d, s}_{n, V} &= \argmin_{a \in \A} \bigg\{\sum_{i = 1}^{n} \big(y_i - \hat{h}^{d, s}_{n, V}(X^{(i)}) -a(X^{(i)})\big)^2 \bigg\} \\
    &= \argmin_{a \in \A} \bigg\{\sum_{i = 1}^{n} \big(\xi_i - (\hat{h}^{d, s}_{n, V} - h^*)(X^{(i)}) - (a - a^*)(X^{(i)})\big)^2 \bigg\}.
\end{split}
\end{align}
Let $\underline{X}$ be the matrix whose rows and columns are indexed by $i \in [n]$ and $\alpha \in \{0, 1\}^d$ with $|\alpha| \le s$, where $\underline{X}_{i\zerovec} = 1$ and 
\begin{equation*}
    \underline{X}_{i \alpha} = \prod_{j \in S(\alpha)} X^{(i)}_j
\end{equation*}
for $\alpha \neq \zerovec$.
Also, consider the coefficient vectors $\hat{\underline{a}}$ and $\underline{a}^*$ of the multi-affine functions $\hat{a}^{d, s}_{n, V}$ and $a^*$.
Specifically, 
\begin{equation*}
    \hat{\underline{a}} = (\hat{a}_{\alpha}, \alpha \in \{0, 1\}^d \text{ and } |\alpha| \le s)
\end{equation*}
and 
\begin{equation*}
    \underline{a}^* = (a^*_{\alpha}, \alpha \in \{0, 1\}^d \text{ and } |\alpha| \le s),
\end{equation*}
where 
\begin{equation*}
    \hat{a}^{d, s}_{n, V}(x_1, \dots, x_d) = \sum_{\substack{\alpha \in \{0, 1\}^d \\ |\alpha| \le s}} \hat{a}_{\alpha} \prod_{j \in S(\alpha)} x_j
\end{equation*}
and
\begin{equation*}
    a^*(x_1, \dots, x_d) = \sum_{\substack{\alpha \in \{0, 1\}^d \\ |\alpha| \le s}} a^*_{\alpha} \prod_{j \in S(\alpha)} x_j.
\end{equation*}
Moreover, let $\hat{\underline{h}}$ and $\underline{h}^*$ be the $n$-dimensional vectors for which
\begin{equation*}
    \hat{\underline{h}}_i = \hat{h}^{d, s}_{n, V}(X^{(i)}) \ \text{ and } \ \underline{h}^*_i = h^*(X^{(i)})
\end{equation*}
for $i \in  [n]$.
Then, it follows from \eqref{eq:characterization-of-a-function} that
\begin{equation}\label{eq:characterization-of-a-vector}
    \hat{\underline{a}} = \argmin_{\underline{a}} \big\{\| \xi - (\hat{\underline{h}} - \underline{h}^*) - \underline{X} (\underline{a} - \underline{a}^*) \|_2^2:  \underline{a} = (a_{\alpha}, \alpha \in \{0, 1\}^d \text{ and } |\alpha| \le s) \big\},
\end{equation}
where $\xi := (\xi_i, i \in [n])$.

Now, let $\Pi_{\underline{X}}$ be the projection operator onto the column space of $\underline{X}$.
Then, \eqref{eq:characterization-of-a-vector} implies that
\begin{equation*}
    \underline{X} (\hat{\underline{a}} - \underline{a}^*) = \Pi_{\underline{X}}\big(\xi - (\hat{\underline{h}} - \underline{h}^*)\big).
\end{equation*}
Since $\Pi_{\underline{X}}$ is a projection operator, 
\begin{align*}
    \big\|\Pi_{\underline{X}}\big(\xi - (\hat{\underline{h}} - \underline{h}^*)\big)\big\|_2 &\le \|\xi - (\hat{\underline{h}} - \underline{h}^*)\|_2 \le \bigg( \sum_{i = 1}^{n} \xi_i^2 \bigg)^{\frac{1}{2}} + \sqrt{n} \big(\|\hat{h}^{d, s}_{n, V}\|_{\infty} + \| h^*\|_{\infty}\big) \\
    &\le \bigg( \sum_{i = 1}^{n} \xi_i^2 \bigg)^{\frac{1}{2}} + 2V \cdot \sqrt{n}.
\end{align*}
Also, we have that
\begin{equation*}
    \|\underline{X} (\hat{\underline{a}} - \underline{a}^*)\|_2^2 = (\hat{\underline{a}} - \underline{a}^*)^T \underline{X}^T \underline{X} (\hat{\underline{a}} - \underline{a}^*) \ge \lambda_{\text{min}} (\underline{X}^T \underline{X}) \cdot \|\hat{\underline{a}} - \underline{a}^*\|_2^2,
\end{equation*}
where $\lambda_{\text{min}} (\underline{X}^T \underline{X})$ is the smallest eigenvalue of $\underline{X}^T \underline{X}$.
Hence, combining these results, we can derive that 
\begin{equation}\label{eq:a-vector-inequality}
    \big(\lambda_{\text{min}} (\underline{X}^T \underline{X} / n)\big)^{\frac{1}{2}} \cdot \|\hat{\underline{a}} - \underline{a}^*\|_2 \le \bigg( \frac{1}{n} \sum_{i = 1}^{n} \xi_i^2 \bigg)^{\frac{1}{2}} + 2V.
\end{equation}

Consider the symmetric matrix $\Sigma$ whose rows and columns are indexed by $\alpha \in \{0, 1\}^d$ with $|\alpha| \le s$, where 
\begin{equation*}
    \Sigma_{\alpha \alpha'} = \E\bigg[ \prod_{j \in S(\alpha)} X^{(1)}_j \cdot \prod_{j' \in S(\alpha')} X^{(1)}_{j'} \bigg].
\end{equation*}
By the law of large numbers, for every $\epsilon > 0$,
\begin{equation*}
    \lim_{n \rightarrow \infty} \P \big(\| \underline{X}^T \underline{X} / n - \Sigma \|_F > \epsilon \big) = 0,
\end{equation*}
where $\| \cdot \|_F$ denotes the Frobenius norm.
Also,  
\begin{equation*}
    \big|\lambda_{\text{min}}(\underline{X}^T \underline{X} / n) - \lambda_{\text{min}} (\Sigma)\big| \le \| \underline{X}^T \underline{X} / n - \Sigma \|_F
\end{equation*}
due to Weyl's inequality, and the following lemma, which we prove in Appendix \ref{pf:smallest-eigenvalue}, ensures that $\lambda_{\text{min}} (\Sigma) > 0$.

\begin{lemma}\label{lem:smallest-eigenvalue}
The smallest eigenvalue of $\Sigma$ is positive, i.e., $\lambda_{\text{min}}(\Sigma) > 0$.
\end{lemma}

For these reasons,  
\begin{align*}
    \P\Big(\lambda_{\text{min}}(\underline{X}^T \underline{X} / n) < \frac{\lambda_{\text{min}}(\Sigma)}{2}\Big) &\le \P\Big(\big|\lambda_{\text{min}}(\underline{X}^T \underline{X} / n) - \lambda_{\text{min}} (\Sigma)\big| > \frac{\lambda_{\text{min}}(\Sigma)}{2} \Big) \\
    &\le \P\Big(\| \underline{X}^T \underline{X} / n - \Sigma \|_F > \frac{\lambda_{\text{min}}(\Sigma)}{2} \Big),
\end{align*}
which leads to that 
\begin{equation*}
    \lim_{n \rightarrow \infty} \P\Big(\lambda_{\text{min}}(\underline{X}^T \underline{X} / n) < \frac{\lambda_{\text{min}}(\Sigma)}{2}\Big) = 0.
\end{equation*}
Consequently, by \eqref{eq:a-vector-inequality}, we have
\begin{align*}
    \limsup_{n \rightarrow \infty} \P\big(\|\hat{\underline{a}} - \underline{a}^* \|_2 > K\big) &\le \lim_{n \rightarrow \infty} \P\Big(\lambda_{\text{min}}(\underline{X}^T \underline{X} / n) < \frac{\lambda_{\text{min}}(\Sigma)}{2}\Big) \\
    &\qquad + \limsup_{n \rightarrow \infty} \P\bigg(\frac{1}{n} \sum_{i = 1}^{n} \xi_i^2 > \Big\{ \Big( \frac{\lambda_{\text{min}}(\Sigma)}{2} \Big)^{\frac{1}{2}} \cdot K - 2V \Big\}^2 \bigg) \\
    &\le \Big\{ \Big( \frac{\lambda_{\text{min}}(\Sigma)}{2} \Big)^{\frac{1}{2}} \cdot K - 2V \Big\}^{-2} \cdot \|\xi_1\|_2^2
\end{align*}
provided that $(\lambda_{\text{min}}(\Sigma)/2)^{1/2} \cdot K - 2V > 0$.
Here Markov's inequality is used for the second inequality.
This result implies that for each $\epsilon > 0$, there exists $K > 0$ such that 
\begin{equation*}
    \P\big(\|\hat{\underline{a}} - \underline{a}^* \|_2 > K\big) < \epsilon,
\end{equation*}
i.e., $\|\hat{\underline{a}} - \underline{a}^* \|_2 = O_p(1)$.
By the Cauchy inequality, 
\begin{equation*}
    \|\hat{a}^{d, s}_{n, V} - a^*\|_{\infty} \le \|\hat{\underline{a}} - \underline{a}^* \|_1 \le C_d \cdot \|\hat{\underline{a}} - \underline{a}^* \|_2,
\end{equation*}
and thus, it follows that $\|\hat{a}^{d, s}_{n, V} - a^* \|_{\infty} = O_p(1)$.
\end{proof}

\subsubsection{Proof of Lemma \ref{lem:maximal-inequality-bracketing-fvm}}\label{pf:maximal-inequality-bracketing-fvm}
Our proof of Lemma \ref{lem:maximal-inequality-bracketing-fvm} is based on the following maximal inequality for empirical processes involving the Bernstein norm. 
For a random variable $X$ with law $P$ on $\mathcal{X}$ and a real-valued function $f$ on $\mathcal{X}$, the Bernstein norm $\|f\|_{P, B}$ of $f$ is defined as 
\begin{equation*}
    \| f \|_{P, B}  = \big(2 \E_{P} \big[\exp(|f(X)|) - 1 - |f(X)|\big] \big)^{\frac{1}{2}}.
\end{equation*}
In fact, the Bernstein norm is not a norm because it is not homogeneous and does not satisfy the triangle inequality. 
However, we can still use it for measuring the size of functions.

\begin{lemma}[\cite{vaartwellner96book}, Lemma 3.4.3]\label{lem:maximal-inequality-empirical-processes-vaart}
Suppose $X^{(1)}, \dots, X^{(k)}$ are i.i.d. random variables with law $P$ on $\mathcal{X}$, and $\F$ is a countable collection of real-valued functions on $\mathcal{X}$. 
Also, assume that $\|f\|_{P, B} \le \delta$ for every $f \in \F$. 
Then, we have
\begin{equation*}
    \E_{P} \Big[\sup_{f \in \F} \Big|\frac{1}{\sqrt{k}} \sum_{i = 1}^{k} f(X^{(i)}) \Big| \Big] \le C J_{[ \ ]}(\delta, \F, \|\cdot\|_{P, B}) \bigg(1 + \frac{J_{[ \ ]}(\delta, \F, \|\cdot\|_{P, B})}{\delta^2 \sqrt{k}} \bigg)
\end{equation*}
for some universal positive constant $C$.
\end{lemma}

\begin{remark}
As in Theorem \ref{thm:han-wellner-prop2-variant}, the countability assumption on $\F$ in Lemma \ref{lem:maximal-inequality-empirical-processes-vaart} is just for the measurability of the supremum.
This condition can be removed if $\F$ is pointwise measurable; 
that is, $\F$ has a countable subset $\G$ for which for every $f \in \F$, there exists a sequence $\{g_m\}_{m \ge 1}$ in $\G$ such that $g_m(x) \rightarrow f(x)$ for every $x \in \mathcal{X}$. 
In this case, 
\begin{equation*}
    \sup_{f \in \F} \Big|\frac{1}{\sqrt{k}} \sum_{i = 1}^{k} f(X^{(i)}) \Big| = \sup_{g \in \G} \Big|\frac{1}{\sqrt{k}} \sum_{i = 1}^{k} g(X^{(i)}) \Big|,
\end{equation*}
so that the measurability issue can be avoided, and the lemma is still valid without the assumption of countability.
\end{remark}

\begin{proof}[Proof of Lemma \ref{lem:maximal-inequality-bracketing-fvm}]
Let $P$ be the law of $(X^{(i)}, \epsilon_i)$ on $[0, 1]^d \times \R$. 
Also, let $\mathcal{A}_t$ be the collection of all the functions on $[0, 1]^d \times \R$ of the form
\begin{equation*}
    \Phi_f(X, \epsilon) = \frac{1}{2 M} \cdot \epsilon f(X) \qt{ for $X \in [0, 1]^d$ and $\epsilon \in \R$,}
\end{equation*}
where $f \in \F_M(V) - \{f^*\}$ and $\|f\|_{p_0, 2} \le t$, i.e., $f \in B_{\F_M(V) - \{f^*\}}(t, 0, \| \cdot \|_{p_0, 2}) := B(t, 0, \| \cdot \|_{p_0, 2})$.
Since $B(t, 0, \| \cdot \|_{p_0, 2})$ is a subset of $C([0, 1]^d)$, and $(C([0,1]^d), \|\cdot\|_{\infty})$ is a separable metric space, $B(t, 0, \| \cdot \|_{p_0, 2})$ is also separable with respect to the sup-norm. 
Let $\G$ be a countable dense subset of $B(t, 0, \| \cdot \|_{p_0, 2})$ with respect to the sup-norm. 
For every $\Phi_{f} \in \mathcal{A}_t$, if we let $\{g_m\}_{m \ge 1}$ be a sequence in $\G$ for which $\|g_m - f\|_{\infty} \rightarrow 0$, then we have
\begin{equation*}
    \Phi_{g_m}(X, \epsilon) - \Phi_f(X, \epsilon) = \frac{1}{2 M} \cdot \epsilon (g_m - f)(X) \rightarrow 0 
\end{equation*}
for every $(X, \epsilon) \in [0, 1]^d \times \R$.
Hence, $\mathcal{A}_t$ is pointwise measurable, and Lemma \ref{lem:maximal-inequality-empirical-processes-vaart} is valid for $\mathcal{A}_t$.

Note that for $\Phi_f \in \mathcal{A}_t$,
\begin{align}\label{eq:bernstein-norm-bound}
\begin{split}
    \| \Phi_f \|_{P, B} &= \bigg(2 \E_{P} \Big[\exp\Big(\Big|\frac{1}{2 M}\cdot \epsilon f(X)\Big|\Big) - 1 - \Big|\frac{1}{2M}\cdot \epsilon f(X)\Big|\Big] \bigg)^{\frac{1}{2}} \\
    &= \bigg(2 \sum_{m = 2}^{\infty} \frac{1}{m!} \cdot \E_{P} \Big[ \Big(\frac{|f(X)|}{2M} \Big)^{m} \Big] \bigg)^{\frac{1}{2}} \\
    &\le \bigg(2 \sum_{m = 2}^{\infty} \frac{1}{m!} \cdot \E_{P} \Big[ \Big(\frac{|f(X)|}{2M} \Big)^{2} \Big] \bigg)^{\frac{1}{2}} \le \frac{c_0 t}{2M},
\end{split}
\end{align}
where $c_0 := (2(e - 2))^{1/2}$.
Here the first inequality is due to that 
\begin{equation*}
    \| f \|_{\infty} \le M 
\end{equation*}
for every $f \in \F_M(V) - \{f^*\}$, and the second inequality is from that $\|f\|_{p_0, 2} \le t$.
Thus, by Lemma \ref{lem:maximal-inequality-empirical-processes-vaart}, we have
\begin{align}\label{eq:maximal-inequality-bernstein-norm-result}
\begin{split}
    &\E\bigg[\sup_{\substack{f \in \F_M(V) - \{f^*\} \\ \|f\|_{p_0, 2} \le t} } \Big| \frac{1}{\sqrt{k}} \sum_{i=1}^{k} \epsilon_i f(X^{(i)}) \Big| \bigg] = 2 M \cdot \E\bigg[\sup_{\Phi_f \in \mathcal{A}_t} \Big| \frac{1}{\sqrt{k}} \sum_{i=1}^{k} \Phi_f (X^{(i)}, \epsilon_i) \Big| \bigg] \\
    &\qquad \quad \le C \cdot 2 M \cdot J_{[ \ ]}\Big(\frac{c_0 t}{2 M}, \mathcal{A}_t, \|\cdot\|_{P, B}\Big) \bigg(1 + \frac{J_{[ \ ]}(\frac{c_0 t}{2 M}, \mathcal{A}_t, \|\cdot\|_{P, B})}{(\frac{c_0 t}{2 M})^2 \sqrt{k}} \bigg).
\end{split}
\end{align}

Now, we relate the bracketing entropy of $\mathcal{A}_t$ with respect to the Bernstein norm to that of $\F_M(V) - \{f^*\}$ with respect to the norm $\| \cdot \|_{p_0, 2}$. 
Suppose $\Phi_f \in \mathcal{A}_t$ and let $[f_1, f_2]$ be a bracket containing $f$. 
Since $\|f\|_{\infty} \le M$, we can assume that $\|f_1\|_{\infty} \le M$ and $\|f_2\|_{\infty} \le M$.
Also, let $p$ and $q$ denote the functions 
\begin{equation*}
    p(\epsilon) = \epsilon_+ = \max \{\epsilon, 0\} \ \mbox{ and } \ q(\epsilon) = \epsilon_- = \max \{-\epsilon, 0\}.
\end{equation*}
Then, clearly, 
\begin{equation*}
    \Phi_f \in \Big[\frac{1}{2 M}\{ f_1 \otimes p - f_2 \otimes q \}, \frac{1}{2 M}\{ f_2 \otimes p - f_1 \otimes q \}\Big].
\end{equation*}
Recall that $\otimes$ denotes the tensor product between functions. 
For example, $f_1 \otimes p$ indicates the function defined by
\begin{equation*}
    (f_1 \otimes p) (X, \epsilon) = f_1(X) \cdot p(\epsilon) 
\end{equation*}
for $X \in [0, 1]^d$ and $\epsilon \in \R$.
Also, observe that by the same argument as in \eqref{eq:bernstein-norm-bound},
\begin{align*}
    &\Big\| \frac{1}{2 M}\{ f_2 \otimes p - f_1 \otimes q \} - \frac{1}{2 M}\{ f_1 \otimes p - f_2 \otimes q \} \Big\|_{P, B} \\
    &\qquad = \Big\| \frac{1}{2 M} \cdot (f_2 - f_1) \otimes (p + q)\Big\|_{P, B} \\
    &\qquad = \bigg(2 \E_{P} \Big[\exp\Big(\Big|\frac{1}{2 M}\cdot (f_2 - f_1)(X)\Big|\Big) - 1 - \Big|\frac{1}{2 M}\cdot (f_2 - f_1)(X)\Big|\Big] \bigg)^{\frac{1}{2}} \\
    &\qquad \le \frac{c_0}{2  M} \cdot \|f_2 - f_1\|_{p_0, 2}.
\end{align*}
For these reasons, for every $\epsilon > 0$, we have
\begin{equation*}
    N_{[ \ ]} \Big(\frac{c_0 \epsilon}{2 M}, \mathcal{A}_t, \| \cdot \|_{P, B} \Big) \le N_{[ \ ]}(\epsilon, \F_M(V) - \{f^*\}, \| \cdot \|_{p_0, 2}),
\end{equation*}
which implies that
\begin{align*}
\begin{split}
    J_{[ \ ]} \Big( \frac{c_0 t}{2  M}, \mathcal{A}_t, \| \cdot \|_{P, B} \Big) &= \int_{0}^{\frac{c_0 t}{2 M}} \sqrt{1 + N_{[ \ ]}(\epsilon, \mathcal{A}_t, \| \cdot \|_{P, B})} \, d\epsilon \\
    &= \frac{c_0}{2 M} \int_{0}^{t} \sqrt{1 + N_{[ \ ]} \Big(\frac{c_0\epsilon}{2 M}, \mathcal{A}_t, \| \cdot \|_{P, B} \Big)} \, d\epsilon \\
    &\le \frac{c_0}{2 M} \int_{0}^{t} \sqrt{1 + N_{[ \ ]}(\epsilon, \F_M(V) - \{f^*\}, \| \cdot \|_{p_0, 2} )} \, d\epsilon \\ 
    &= \frac{c_0}{2 M} \cdot J_{[ \ ]} (t, \F_M(V) - \{f^*\}, \| \cdot \|_{p_0, 2}).
\end{split}
\end{align*}
As a result, we can deduce from \eqref{eq:maximal-inequality-bernstein-norm-result} that 
\begin{align*}
    &\E\bigg[\sup_{\substack{f \in \F_M(V) - \{f^*\} \\ \|f\|_{p_0, 2} \le t} } \Big| \frac{1}{\sqrt{k}} \sum_{i=1}^{k} \epsilon_i f(X^{(i)}) \Big| \bigg] \\
    &\qquad \le C \cdot J_{[ \ ]} (t, \F_M(V) - \{f^*\}, \| \cdot \|_{p_0, 2}) \bigg(1 + M \cdot \frac{J_{[ \ ]} (t, \F_M(V) - \{f^*\}, \| \cdot \|_{p_0, 2})}{t^2 \sqrt{k}} \bigg).
\end{align*}
\end{proof}

\subsubsection{Proof of Lemma \ref{lem:fvm-bracketing-entropy}}\label{pf:fvm-bracketing-entropy}
\begin{proof}[Proof of Lemma \ref{lem:fvm-bracketing-entropy}]
Suppose $f = \fazeronu \in \F_M(V) - \{f^*\}$. 
By the definition of $\F_M(V)$, we have $\|f\|_{\infty} \le M$ and $\Vmars(f) \le V + \Vmars(f^*) \le 2V$.
Let $a_{\alpha} = \nu_{\alpha}(\{\zerovec\})$ for each $\alpha \in \{0, 1\}^d \setminus \{\zerovec\}$ with $|\alpha| \le s$. 
Then, the function $f$ can be written as  
\begin{align*}
    &f(x_1, \dots, x_d) = a_{\zerovec} + \sum_{\substack{\alpha \in \{0, 1\}^d \setminus \{\zerovec\} \\ |\alpha| \le s}} \int_{[0, 1)^{|\alpha|}} \prod_{j \in S(\alpha)} (x_j - s_j)_+ \, d\nu_{\alpha}(\salpha) \\   
    &\qquad \quad= \sum_{\substack{\alpha \in \{0, 1\}^d \\ |\alpha| \le s}} a_{\alpha} \prod_{j \in S(\alpha)} x_j + \sum_{\substack{\alpha \in \{0, 1\}^d \setminus \{\zerovec\} \\ |\alpha| \le s}} \int_{[0, 1)^{|\alpha|} \setminus \{\zerovec\}} \prod_{j \in S(\alpha)} (x_j - s_j)_+ \, d\nu_{\alpha}(\salpha).
\end{align*}
As in the proof of Lemma \ref{lem:svt-metric-entropy}, we will bound $a_{\alpha}$ for each $\alpha \in \{0, 1\}^d \setminus \{\zerovec\}$ with $|\alpha| \le s$ first.

Note that $|a_{\zerovec}| = |f(\zerovec)| \le M$.
Also, since 
\begin{equation*}
    f(1, 0, \dots, 0) = a_{\zerovec} + a_{1, 0, \dots, 0} + \int_{(0, 1)} (1 - s_1)_+ \, d\nu_{1, 0, \dots, 0}(s_1),
\end{equation*}
we have
\begin{align*}
    |a_{1, 0, \dots, 0}| &\le |f(1, 0, \dots, 0)| + |a_{\zerovec}| + \Big|\int_{(0, 1)} (1 - s_1)_+ \, d\nu_{1, 0, \dots, 0}(s_1)\Big| \\
    &\le |f(1, 0, \dots, 0)| + |a_{\zerovec}| + |\nu_{1, 0, \dots, 0}|((0, 1)) \le 2M + 2V.
\end{align*}
By the same argument, we can show that
\begin{equation*}
    |a_{\alpha}| \le 2M + 2V
\end{equation*}
for every $\alpha \in \{0, 1\}^d \setminus \{\zerovec\}$ with $|\alpha| = 1$.
Moreover, since 
\begin{align*}
    &f(1, 1, 0, \dots, 0) = a_{\zerovec} + a_{1, 0, \dots, 0} + a_{0, 1, 0, \dots, 0} + a_{1, 1, 0, \dots, 0} \\ 
    &\qquad \qquad \qquad \quad+ \int_{(0, 1)} (1 - s_1)_+ \, d\nu_{1, 0, \dots, 0}(s_1) + \int_{(0, 1)} (1 - s_2)_+ \, d\nu_{0, 1, 0, \dots, 0}(s_2) \\
    &\qquad \qquad \qquad \quad + \int_{[0, 1)^2 \setminus \{\zerovec\}} (1 - s_1)_+ (1 - s_2)_+ \, d\nu_{1, 1, 0, \dots, 0}(s_1, s_2), 
\end{align*}
it follows that
\begin{align*}
    |a_{1, 1, 0, \dots, 0}| &\le |f(1, 1, 0, \dots, 0)| + |a_{\zerovec}| + |a_{1, 0, \dots, 0}| + |a_{0, 1, 0, \dots, 0}| \\
    &\quad+ \Big|\int_{(0, 1)} (1 - s_1)_+ \, d\nu_{1, 0, \dots, 0}(s_1)\Big| + \Big|\int_{(0, 1)} (1 - s_2)_+ \, d\nu_{0, 1, 0, \dots, 0}(s_2)\Big| \\
    &\quad+ \Big|\int_{[0, 1)^2 \setminus \{\zerovec\}} (1 - s_1)_+ (1 - s_2)_+ \, d\nu_{1, 1, 0, \dots, 0}(s_1, s_2)\Big| \\
    &\le |f(1, 1, 0, \dots, 0)| + |a_{\zerovec}| + |a_{1, 0, \dots, 0}| + |a_{0, 1, 0, \dots, 0}| \\
    &\quad+ |\nu_{1, 0, \dots, 0}|((0, 1)) + |\nu_{0, 1, 0, \dots, 0}|((0, 1)) + |\nu_{1, 1, 0, \dots, 0}|\big([0, 1)^2 \setminus \{\zerovec\}\big) \\
    &\le M + M + (2M + 2V) + (2M + 2V) + 2V = 6M + 6V.
\end{align*}
Similarly, it can be shown that 
\begin{equation*}
    |a_{\alpha}| \le 6M + 6V
\end{equation*}
for all $\alpha \in \{0, 1\}^d \setminus \{\zerovec\}$ with $|\alpha| = 2$.
Repeating this argument, we can show inductively that 
\begin{equation*}
    |a_{\alpha}| \le C_s(M + V) := \tilde{M}
\end{equation*}
for every $\alpha \in \{0, 1\}^d \setminus \{\zerovec\}$ with $|\alpha| \le s$.

Now, we repeat our arguments in the proof of Lemma \ref{lem:svt-metric-entropy}. 
We cover $\F_M(V) - \{f^*\}$ with function classes whose upper bounds of bracketing entropy can be obtained from those of $\D_m$ for $m \ge 1$. 
Fix $\delta > 0$, which we will specify later, and let 
\begin{equation*}
    \K = \big\{(k_{\alpha}, \alpha \in \{0, 1\}^d \mbox{ and } |\alpha| \le s): -(K + 1) \le k_{\alpha} \le K \mbox{ for all } \alpha \in \{0, 1\}^d \mbox{ with } |\alpha| \le s \big\} 
\end{equation*}
where $K = \floor{\tilde{M}/\delta}$. 
For each $k \in \K$, we also let
\begin{equation*}
    M(k) = \{\fazeronu \in \F_M(V) - \{f^*\}: k_{\alpha} \delta \le a_{\alpha} \le (k_{\alpha} + 1) \delta \mbox{ for each } {\alpha} \in \{0, 1\}^d \mbox{ with } |\alpha| \le s\}.
\end{equation*}
Then, by definition,
\begin{equation*}
    \F_M(V) - \{f^*\} = \bigcup_{k \in \K} M(k),
\end{equation*}
from which we can obtain 
\begin{align}\label{fvm-decomposition}
\begin{split}
    \log N_{[ \ ]}(\epsilon, \F_M(V) - \{f^*\}, \|\cdot\|_{p_0, 2}) &\le \log\Big(\sum_{k \in \K} N_{[ \ ]}(\epsilon, M(k), \|\cdot\|_{p_0, 2})\Big) \\
    &\le \log |\K| + \sup_{k \in \K} \log N_{[ \ ]}(\epsilon, M(k), \|\cdot\|_{p_0, 2}) \\
    &\le 2^d \log\Big(2 + \frac{2 \tilde{M}}{\delta}\Big) + \sup_{k \in \K} \log N_{[ \ ]}(\epsilon, M(k), \|\cdot\|_{p_0, 2}).
\end{split}
\end{align}

Fix $k \in \K$. 
Let $M_{\zerovec}(k)$ be the collection of all the constant functions on $[0, 1]^d$
\begin{equation*}
    (x_1, \dots, x_d) \mapsto a_{\zerovec}
\end{equation*}
where $k_{\zerovec} \delta \le a_{\zerovec} \le (k_{\zerovec} + 1) \delta$, and for each $\alpha \in \{0, 1\}^d \setminus \{\zerovec\}$ with $|\alpha| \le s$, let $M_{\alpha}(k)$ be the collection of all the functions on $[0, 1]^d$ of the form
\begin{equation*}
    (x_1, \dots, x_d) \mapsto a_{\alpha} \prod_{j \in S(\alpha)} x_j + \int_{[0, 1)^{|\alpha|} \setminus \{\zerovec\}} \prod_{j \in S(\alpha)} (x_j - s_j)_+ \, d\nu_{\alpha}(\salpha),
\end{equation*}
where $k_{\alpha} \delta \le a_{\alpha} \le (k_{\alpha} + 1) \delta$ and $\nu_{\alpha}$ is a  signed measure on $[0, 1)^{|\alpha|}$ with  $|\nu_{\alpha}|([0, 1)^{|\alpha|} \setminus \{\zerovec\}) \le 2V$.
It then follows that
\begin{equation*}
    M(k) \subseteq \bigoplus_{\substack{\alpha \in \{0, 1\}^d \\ |\alpha| \le s}} M_{\alpha}(k), 
\end{equation*}
which leads to 
\begin{equation}\label{mk-decomposition-bracketing}
    \log N_{[ \ ]}(\epsilon, M(k), \|\cdot\|_{p_0, 2}) \le \sum_{\substack{\alpha \in \{0, 1\}^d \\ |\alpha| \le s}} \log N_{[ \ ]}\Big(\frac{\epsilon}{2^d}, M_{\alpha}(k), \|\cdot\|_{p_0, 2} \Big).
\end{equation}

It is clear that 
\begin{equation}\label{m0k-bracketing-entropy}
    N_{[ \ ]}(\epsilon, M_{\zerovec}(k), \|\cdot\|_{p_0, 2}) \le 1 + \frac{\delta_{\zerovec}}{\epsilon}.
\end{equation} 
Hence, it is enough to bound $\log N_{[ \ ]}(\epsilon, M_{\alpha}(k), \|\cdot\|_{p_0, 2})$ for each $\alpha \in \{0, 1\}^d \setminus \{\zerovec\}$ with $|\alpha| \le s$.
For each $\alpha \in \{0, 1\}^d \setminus \{\zerovec\}$ with $|\alpha| \le s$, let $\tilde{M}_{\alpha}$ be the collection of all the functions on $[0, 1]^{|\alpha|}$ of the form
\begin{equation*}
    (x_j, j \in S(\alpha)) \mapsto \int_{[0, 1)^{|\alpha|}} \prod_{j \in S(\alpha)} (x_j - s_j)_+ \, d\nu_{\alpha}(\salpha)
\end{equation*}
where $\nu_{\alpha}$ is a signed measure on $[0, 1)^{|\alpha|}$ with variation $|\nu_{\alpha}|([0, 1)^{|\alpha|}) \le 2V + \delta$.
As in the proof of Lemma \ref{lem:svt-metric-entropy}, we can easily show that
\begin{equation}\label{m-alpha-k-and-m-tilde-alpha}
    N_{[ \ ]}(\epsilon, M_{\alpha}(k), \|\cdot\|_{p_0, 2}) \le N_{[ \ ]}(\epsilon, \tilde{M}_{\alpha}, \|\cdot\|_{\tilde{p}_0, 2}),
\end{equation}
where $\tilde{p}_0$ is the probability density function on $[0, 1]^{|\alpha|}$ defined by 
\begin{equation*}
    \tilde{p}_0(x^{(\alpha)}) = \int_{[0, 1]^{|\onevec - \alpha|}} p_0(x_1, \dots, x_d) \, dx^{(\onevec - \alpha)}
\end{equation*}
for $x^{(\alpha)} \in [0, 1]^{|\alpha|}$.
Since $p_0$ is bounded by $B$, $\tilde{p}_0$ is also bounded by $B$, i.e., $\|\tilde{p}_0 \|_{\infty} \le B$.
Hence, from Theorem \ref{thm:d-bracket-entropy}, we can deduce that  
\begin{align}\label{bracketing-entropy-m-tilde-alpha}
\begin{split}
    \log N_{[ \ ]}(\epsilon, \tilde{M}_{\alpha}, \|\cdot\|_{\tilde{p}_0, 2}) &\le \log N_{[ \ ]}\Big( \frac{\epsilon}{\sqrt{B}}, \tilde{M}_{\alpha}, \|\cdot\|_{2}\Big) \\
    &\le C_{|\alpha|} \Big(\frac{4 \sqrt{B} (2V + \delta)}{ \epsilon}\Big)^{\frac{1}{2}}\Big|\log \Big( \frac{4 \sqrt{B}(2V + \delta)}{\epsilon} \Big) \Big|^{2(|\alpha| - 1)}.
\end{split}
\end{align}
Combining \eqref{mk-decomposition-bracketing}, \eqref{m0k-bracketing-entropy}, \eqref{m-alpha-k-and-m-tilde-alpha}, and
\eqref{bracketing-entropy-m-tilde-alpha}, 
we can derive
\begin{align*}
    &\log N_{[ \ ]}(\epsilon, M(k), \|\cdot\|_{p_0, 2}) \le \log\Big(1 + \frac{2^d\delta}{\epsilon}\Big) \\
    &\qquad \qquad \qquad + C_s \sum_{\substack{\alpha \in \{0, 1\}^d \\ |\alpha| \le s}} \bigg[\frac{2^{d + 2} \sqrt{B} (2V + \delta)}{ \epsilon}\bigg]^{\frac{1}{2}} \cdot \bigg[\log\Big(\frac{2^{d + 2} \sqrt{B} (2V + \delta)}{ \epsilon}\Big)\bigg]^{2(|\alpha| - 1)}.
\end{align*}
With the choice of $\delta = \epsilon$, this together with \eqref{fvm-decomposition} implies that
\begin{align*}
    &\log N_{[ \ ]} (\epsilon, \F(V, M), \|\cdot\|_{p_0, 2}) \le 2^d \log \Big( 2 + C_s \cdot \frac{M + V}{\epsilon} \Big) \\
    &\qquad \qquad \qquad \qquad \qquad \qquad + C_{B, d} \Big(2^{d+2} + \frac{2^{d+3} V}{\epsilon} \Big)^{\frac{1}{2}}\bigg[\log \Big(2^{d+2} + \frac{2^{d+3} V}{\epsilon}\Big)\bigg]^{2(s - 1)}
\end{align*}
as desired.
\end{proof}

\subsubsection{Proof of Lemma \ref{lem:f-eta-zero-properties}}
\label{pf:f-eta-zero-properties}
\begin{proof} [Proof of \eqref{f-eta-zero-variation}]
For each $\eta \in \{-1, 1\}^q$, we have
\begingroup
\allowdisplaybreaks
\begin{align*}
    |\nu_{\eta}|\big((0, 1)^s\big) &= \frac{V}{\sqrt{|P_l|}} \int_{(0,1)^s} \Bigg|\sum_{p \in P_l} \sum_{i \in I_p} \eta_{p, i} \bigg(\prod_{j = 1}^s \phi_{p_j, i_j}(t_j)\bigg) \Bigg| \, dt \\
    &\le \frac{V}{\sqrt{|P_l|}} \Bigg(\int_{(0,1)^s} \bigg(\sum_{p \in P_l} \sum_{i \in I_p} \eta_{p, i} \bigg(\prod_{j = 1}^s \phi_{p_j, i_j}(t_j)\bigg) \bigg)^2 \, dt \Bigg)^{\frac{1}{2}} \\
    &= \frac{V}{\sqrt{|P_l|}} \Bigg(\int_{(0,1)^s} \sum_{p \in P_l} \bigg(\sum_{i \in I_p} \eta_{p, i} \bigg(\prod_{j = 1}^s \phi_{p_j, i_j}(t_j)\bigg) \bigg)^2 \, dt \Bigg)^{\frac{1}{2}} \\ 
    &= \frac{V}{\sqrt{|P_l|}} \Bigg(\sum_{p \in P_l} \sum_{i \in I_p} \int_{(0,1)^s} \bigg(\prod_{j = 1}^s \phi_{p_j, i_j}(t_j)\bigg)^2 \, dt \Bigg)^{\frac{1}{2}} \\
    &= \frac{V}{\sqrt{|P_l|}} \Bigg(\sum_{p \in P_l} \sum_{i \in I_p} \prod_{j = 1}^s \int_{0}^{1} (\phi_{p_j, i_j}(t_j))^2 \, dt_j \Bigg)^{\frac{1}{2}} \\
    &= \frac{V}{\sqrt{|P_l|}} \bigg(\sum_{p \in P_l} \sum_{i \in I_p} \prod_{j = 1}^s 2^{-p_j} \bigg)^{\frac{1}{2}} = V.
\end{align*}
\endgroup
Here the inequality follows from the Cauchy inequality, the second equality is due to the fact that 
\begin{equation*}
    \int_{0}^{1} \phi_{m,k}(x) \phi_{m', k'}(x) \, dx = 0
\end{equation*}
for distinct $m$ and $m'$, and the third equality is from that $\phi_{m,k} \phi_{m, k'}$ $\equiv 0$ if $k$ and $k'$ are different.  
This directly implies that
$\Vmars(f_\eta) \le V$ for every $\eta \in \{-1, 1\}^q$. 
\end{proof}

\begin{proof} [Proof of \eqref{f-eta-zero-first-property}]
We first represent our function $f_{\eta}$ in a simpler form.
For a positive integer $m$ and $k \in [2^m]$, we let $\Phi_{m, k}$ denote the real-valued function on $[0, 1]$ defined by  
\begin{equation*}
    \Phi_{m, k}(x) = \int_{0}^{x} \int_{0}^{u} \phi_{m, k}(t) \, dt \, du.
\end{equation*}
Then, it can be easily checked that $\Phi_{m, k}(x) = 0$ if $x \le (k-1)2^{-m}$ or $x \ge k2^{-m}$, $\Phi_{m, k}(x + 2^{-m - 1}) = -\Phi_{m, k}(x)$ for $x \in [(k-1) 2^{-m}, (k - 1/2) 2^{-m}]$, and $|\Phi_{m, k}(x)| \le 2^{-2m-6}$ for all $x \in [0, 1]$.
Using these $\Phi_{m, k}$, we can represent $f_{\eta}$ as
\begingroup
\allowdisplaybreaks
\begin{align*}
    f_{\eta}(x_1, \dots, x_d) &= \int_{(0,1)^s} \prod_{j = 1}^{s} (x_j - t_j)_+ \, d\nu_{\eta}(t) \\
    &= \frac{V}{\sqrt{|P_l|}} \sum_{p \in P_l} \sum_{i \in I_p} \eta_{p, i} \int_{(0,1)^s} \prod_{j = 1}^{s} \big((x_j - t_j)_+ \cdot \phi_{p_j, i_j}(t_j)\big) \, dt \\
    &= \frac{V}{\sqrt{|P_l|}} \sum_{p \in P_l} \sum_{i \in I_p} \eta_{p, i} \prod_{j = 1}^{s} \int_{0}^{1} (x_j - t_j)_+ \cdot \phi_{p_j, i_j}(t_j) \, dt_j \\
    &= \frac{V}{\sqrt{|P_l|}} \sum_{p \in P_l} \sum_{i \in I_p} \eta_{p, i} \prod_{j = 1}^{s} \int_{0}^{x_j} \int_{0}^{u_j} \phi_{p_j, i_j}(t_j) \, dt_j \, du_j \\
    &= \frac{V}{\sqrt{|P_l|}} \sum_{p \in P_l} \sum_{i \in I_p} \eta_{p, i} \prod_{j = 1}^{s} \Phi_{p_j, i_j}(x_j)
\end{align*}
\endgroup
for each $\eta \in \{-1, 1\}^q$ .

Now, we prove \eqref{f-eta-zero-first-property} using the above representation. 
Suppose $H(\eta, \eta') = 1$ for $\eta, \eta' \in \{-1, 1\}^q$. 
Since $H(\eta, \eta') = 1$, there exists a unique $(p, i) \in Q$ such that $\eta_{p, i} \neq \eta'_{p, i}$, and thus 
\begin{equation*}
    (f_{\eta} - f_{\eta'})(x_1, \dots, x_d) = \frac{V}{\sqrt{|P_l|}} (\eta_{p, i} - \eta'_{p, i}) \prod_{j=1}^{s} \Phi_{p_j, i_j} (x_j).
\end{equation*}
Hence, 
\begin{align*}
    &\| f_\eta - f_{\eta'}\|_{p_0, 2}^2 = \frac{4 V^2}{|P_l|} \int_{[0, 1]^d} \bigg( \prod_{j=1}^{s} \Phi_{p_j, i_j} (x_j) \bigg)^2 p_0(x) \, dx \\
    &\qquad\le \frac{4 B V^2}{|P_l|} \cdot \prod_{j = 1}^{s} \int_{0}^{1} \big( \Phi_{p_j, i_j} (x_j) \big)^2 \, dx_j \le \frac{4 B V^2}{|P_l|} \cdot \prod_{j = 1}^{s} 2^{-p_j} \cdot 2^{-4p_j -12} \\
    &\qquad= \frac{BV^2}{|P_l|} \cdot 2^{-5l - 12s + 2}.
\end{align*}
\end{proof}

\begin{proof} [Proof of \eqref{f-eta-zero-second-property}]
Fix $\eta \neq \eta' \in \{-1, 1\}^q$.
For each positive integer $m$ and $k \in [2^m]$, let $h_{m, k}$ be the real-valued function on $[0, 1]$ defined by 
\begin{equation*}
    h_{m, k}(x) = 
    \begin{cases}
        2^{\frac{m}{2}} &\mbox{if } (k-1)2^{-m} < x < \big(k - \frac{1}{2}\big) 2^{-m} \\
        -2^{\frac{m}{2}} &\mbox{if } \big(k-\frac{1}{2}\big)2^{-m} < x < k 2^{-m} \\
        0 &\mbox{otherwise}, 
    \end{cases}
\end{equation*}
and for each $(p, i) \in Q$, let $H_{p, i}$ be the real-valued function on $[0, 1]^s$ defined by 
\begin{equation*}
    H_{p, i}(x_1, \dots, x_s) = \prod_{j = 1}^{s} h_{p_j, i_j}(x_j).
\end{equation*}
One can check that $\{H_{p, i}: (p, i) \in Q\}$ is an orthonormal set with respect to the $L^2$ inner product.
Also, let $g_{\eta, \eta'}$ denote the function on $[0, 1]^s$ defined by
\begin{equation*}
    g_{\eta, \eta'}(x_1, \dots, x_s) = \frac{V}{\sqrt{|P_l|}} \sum_{p \in P_l} \sum_{i \in I_p} \big(\eta_{p, i} - \eta'_{p, i}\big) \prod_{j=1}^{s} \Phi_{p_j, i_j} (x_j).
\end{equation*}
Then, by Bessel's inequality, 
\begin{equation}\label{bessel-inequality}
    \| f_{\eta} - f_{\eta'} \|_{p_0, 2}^2 \ge b \|g_{\eta, \eta'}\|_2^2 \ge b\sum_{p' \in P_l} \sum_{i' \in I_p} \inner{g_{\eta, \eta'}}{H_{p', i'}}^2,
\end{equation}
where $\|\cdot\|_2$ and $\inner{\cdot}{\cdot}$ denote the $L^2$ norm and the $L^2$ inner product, respectively. 
Note that for each $(p', i') \in Q$,  
\begin{equation}\label{bessel-inequality-application}
    \inner{g_{\eta, \eta'}}{H_{p', i'}} = \frac{V}{\sqrt{|P_l|}} \sum_{p \in P_l} \sum_{i \in I_p} \big(\eta_{p, i} - \eta'_{p, i}\big) \prod_{j=1}^{s} \inner{\Phi_{p_j, i_j}}{h_{p'_j, i'_j}}. 
\end{equation}
Now, we claim that for $(p, i), (p', i') \in Q$, 
\begin{equation}\label{inner-product-claim}
    \prod_{j = 1}^{s} \inner{\Phi_{p_j, i_j}}{h_{p'_j, i'_j}} =
    \begin{cases}
        2^{-\frac{5}{2}l - 7s} &\mbox{if } (p, i) = (p', i') \\
        0 &\mbox{if } (p, i) \neq (p', i'). 
    \end{cases}
\end{equation}
We first assume $(p, i) \neq (p', i')$. 
If $p \neq p'$, then since $\sum_{j = 1}^{s} p_j = l = \sum_{j = 1}^{s} p'_j$, there exists $j \in [s]$ such that $p_j > p'_j$, and thus $h_{p'_j, i'_j}$ is constant on $((i_j - 1)2^{-p_j}, i_j 2^{-p_j})$. 
Also, recall that $\Phi_{p_j, i_j}(x) = 0$ if $x \le (i_j-1)2^{-p_j}$ or $x \ge i_j2^{-p_j}$ and $\Phi_{p_j, i_j}(x + 2^{-p_j - 1}) = -\Phi_{p_j, i_j}(x)$ for $x \in [(i_j-1) 2^{-p_j}, (i_j - 1/2) 2^{-p_j}]$.
For these reasons, we have $\inner{\Phi_{p_j, i_j}}{h_{p'_j, i'_j}} = 0$.
Otherwise, if $p = p'$, then $i$ and $i'$ should be distinct. 
Let $j \in [s]$ be an index for which $i_j \neq i'_j$. 
Then, $\Phi_{p_j, i_j}(x) h_{p'_j, i'_j}(x) = 0$  for all $x \in [0, 1]$, which directly implies that $\inner{\Phi_{p_j, i_j}}{h_{p'_j, i'_j}} = 0$.
Next, we assume $(p, i) = (p', i')$. 
In this case, the claim follows from that 
\begin{equation*}
    \inner{\Phi_{p_j, i_j}}{h_{p_j, i_j}} = \int_{(i_j - 1)2^{-p_j}}^{i_j2^{-p_j}} \Phi_{p_j, i_j}(x) h_{p_j, i_j}(x) \, dx = 2^{\frac{p_j}{2}} \int_{(i_j - 1)2^{-p_j}}^{i_j2^{-p_j}} |\Phi_{p_j, i_j}(x)| \, dx = 2^{-\frac{5}{2} p_j - 7} 
\end{equation*}
for each $j \in [s]$.

Combining \eqref{bessel-inequality}, \eqref{bessel-inequality-application}, and \eqref{inner-product-claim}, we obtain
\begin{equation*}
    \| f_{\eta} - f_{\eta'}\|_{p_0, 2}^2 \ge \frac{b V^2}{|P_l|} \cdot 2^{-5l -14s} \sum_{p \in P_l} \sum_{i \in I_p} (\eta_{p, i} - \eta'_{p, i})^2 = \frac{b V^2}{|P_l|} \cdot 2^{-5l -14s + 2} \cdot H(\eta, \eta'). 
\end{equation*}
From this result, we can derive the conclusion
\begin{equation*}
    \min_{\eta \neq \eta'} \frac{\| f_\eta - f_{\eta'}\|_{p_0, 2}^2}{H(\eta, \eta')} \ge \frac{b V^2}{|P_l|} \cdot 2^{-5l - 14s + 2}.
\end{equation*}
\end{proof}

\subsection{Proofs of Lemmas in Appendix \ref{subsec:proof-of-lemmas-risk-results}}
\subsubsection{Proof of Lemma \ref{lem:tas-metric-entropy}}\label{pf:tas-metric-entropy}
Recall that we define $\D_m$ as the collection of all the functions $F$ on $[0, 1]^m$ of the form
\begin{equation*}
    F(x_1, \dots, x_m) = \int (x_1 - s_1)_+ \cdots (x_m - s_m)_+ \, d\mu(s) = \int_{[\zerovec, x]} (x_1 - s_1) \cdots (x_m - s_m) \, d\mu(s),
\end{equation*}
where $[\zerovec, x] = \{s \in \R^m : 0 \le s_i \le x_i \mbox{ for all } i \in [m]\}$ and $\mu$ is a signed measure on $[0, 1]^m$ with variation $|\mu|([0, 1]^m) \le 1$.
Lemma \ref{lem:tas-metric-entropy} is then a direct consequence of  Theorem \ref{thm:d-metric-entropy-main-text} and the following lemma, which connects the metric entropy of $\genmalpha_m(S)$ and $\D_m$.
Here the covering number of $\D_m$ in the lemma is under the $L^2$ norm as in Theorem \ref{thm:d-metric-entropy-main-text}.

\begin{lemma}\label{lem:t+-metric-entropy}
    There exists a positive constant $c_{\rho, m}$ depending on $\rho$ and $m$ such that 
    \begin{equation*}
        N(\epsilon, \genmalpha_m(S), \|\cdot\|_n) \le N\Big(\frac{c_{\rho, m} \epsilon}{S}, \D_m, \|\cdot\|_2 \Big)
    \end{equation*}
    for every $S > 0$ and $\epsilon > 0$.
  \end{lemma}

\begin{proof}[Proof of Lemma \ref{lem:t+-metric-entropy}]
Define a map $\Phi: \genmalpha_m(S) \rightarrow \D_m$ as follows. 
Given $f \in \genmalpha_m(S)$ of the form 
\begin{equation*}
    f(x_1, \dots, x_m) = \int_{[0, 1)^{m}} (x_1 - s_1)_+ \cdots (x_m - s_m)_+ \, d\nu(s)
\end{equation*}
where $\nu$ is a signed measure concentrated on $(\prod_{k =1}^{m} \mathcal{U}_k) \cap [0, 1)^{m}$ with variation $|\nu|([0, 1)^m) \le S$,
we first consider the signed measure $\mu$ on $[0, 1]^m$ satisfying the followings:
\begin{itemize}
    \item $\mu$ is concentrated on the set 
    \begin{equation*}
        \prod_{k = 1}^{m} \Big\{u^{(k)}_{i_k} + \big(1 - u^{(k)}_{n_k - 1}\big): i_k \in [0:(n_k - 1)]\Big\}.
    \end{equation*}
    \item For $(i_1, \dots, i_m) \in I_0 := [0 : (n_1 - 1)] \times \dots \times [0 : (n_d - 1)]$,
    \begin{equation*}
        \mu\Big(\Big\{\Big(u^{(k)}_{i_k} + \big(1 - u^{(k)}_{n_k - 1}\big), k \in [m] \Big)\Big\}\Big) = \frac{1}{S} \cdot \nu\big(\big\{\big(u^{(k)}_{i_k}, k \in [m] \big)\big\}\big).
    \end{equation*}
\end{itemize}
Note that $|\mu|([0, 1]^m) \le 1$ since $|\nu|([0, 1)^m) \le S$.
Next, let $F$ denote the function on $[0, 1]^m$ of the form 
\begin{equation*}
    F(x_1, \dots, x_m) = \int_{[0, 1]^m} (x_1 - s_1)_+ \cdots (x_m - s_m)_+ \, d\mu(s) 
\end{equation*}
and let $\Phi(f) = F$. 
Clearly, for $(x_1, \dots, x_m) \in [0, 1]^m$,
\begin{equation*}
    F(x_1, \dots, x_m) = 0
\end{equation*}
if $x_k \le 1 - u^{(k)}_{n_k - 1}$ for some $k \in [m]$, and 
\begin{equation*}
    F(x_1, \dots, x_m) = \frac{1}{S} \cdot f\Big(\Big(x_k - \big(1 - u^{(k)}_{n_k - 1}\big), k \in [m]\Big)\Big)
\end{equation*}
otherwise.
Also, we can note that for
\begin{equation*}
    x = \Big((1 - z_k) \cdot u^{(k)}_{i_k - 1} + z_k \cdot u^{(k)}_{i_k} + \big(1 - u^{(k)}_{n_k - 1}\big), k \in [m]\Big),
\end{equation*}
where $i = (i_1, \dots, i_m) \in \prod_{k = 1}^{m} [n_k - 1]$ and $0 \le z_1, \dots, z_m < 1$, 
\begingroup
\allowdisplaybreaks
\begin{align*}
    F(x) &= \int_{[\zerovec, x]} (x_1 - s_1) \cdots (x_m - s_m) \, d\mu(s) \\
    &= \frac{1}{S} \sum_{j \in I_0} \prod_{k = 1}^{m} \Big(x_k - u^{(k)}_{j_k} - \big(1 - u^{(k)}_{n_k - 1}\big)\Big) \cdot \ind\{i - \onevec \ge j\} \cdot \beta_j \\
    &= \frac{1}{S} \sum_{j \in I_0} \prod_{k = 1}^{m} \Big[(1 - z_k)\big(u^{(k)}_{i_k - 1} - u^{(k)}_{j_k}\big) + z_k\big(u^{(k)}_{i_k} - u^{(k)}_{j_k}\big)\Big] \cdot \ind\{i - \onevec \ge j\} \cdot \beta_j \\
    &= \frac{1}{S} \sum_{j \in I_0} \sum_{\delta \in \{0, 1\}^m} \prod_{k = 1}^{m} \big[(1 - z_k)^{1 - \delta_k} z_k^{\delta_k} \big(u^{(k)}_{i_k - 1 + \delta_k} - u^{(k)}_{j_k}\big)\big] \cdot \ind\{i - \onevec \ge j\} \cdot \beta_j \\
    &= \frac{1}{S} \sum_{\delta \in \{0, 1\}^m} \bigg[\prod_{k = 1}^{m} \big[(1 - z_k)^{1 - \delta_k} z_k^{\delta_k}\big] \cdot \sum_{j \in I_0} \prod_{k = 1}^{m} \big(u^{(k)}_{i_k - 1 + \delta_k} - u^{(k)}_{j_k}\big) \cdot \ind\{i - \onevec \ge j\} \cdot \beta_{j}\bigg] \\
    &= \frac{1}{S} \sum_{\delta \in \{0, 1\}^m} \bigg[\prod_{k = 1}^{m} \big[(1 - z_k)^{1 - \delta_k} z_k^{\delta_k}\big] \cdot f\big(\big(u^{(k)}_{i_k - 1 + \delta_k}, k \in [m]\big)\big)\bigg].
\end{align*}
\endgroup
Here we let 
\begin{equation*}
    \beta_j = \nu\big(\big\{\big(u^{(k)}_{j_k}, k \in [m]\big)\big\}\big) \qt{for $j \in I_0$}
\end{equation*}
just for convenience.
Hence, for $f, \tilde{f} \in \genmalpha_m(S)$,
\begingroup
\allowdisplaybreaks
\begin{align*}
    &\big\| \Phi(f) - \Phi(\tilde{f}) \big\|_2^2 = \int_{[0, 1]^m} \big|\Phi(f)(s) - \Phi(\tilde{f})(s)\big|^2 \, ds \\
    &\qquad = \sum_{i \in \prod\limits_{k} [n_k - 1]} \int_{u^{(1)}_{i_1 - 1} + (1 - u^{(1)}_{n_1 - 1})}^{u^{(1)}_{i_1} + (1 - u^{(1)}_{n_1 - 1})} \dots \int_{u^{(m)}_{i_m - 1} + (1 - u^{(m)}_{n_m - 1})}^{u^{(m)}_{i_m} + (1 - u^{(m)}_{n_m - 1})} \big|\Phi(f)(s) - \Phi(\tilde{f})(s)\big|^2 \, ds_m \cdots \, ds_1 \\
    &\qquad = \frac{1}{S^2} \sum_{i \in \prod\limits_{k} [n_k - 1]} \prod_{k = 1}^{m} \big(u^{(k)}_{i_k} - u^{(k)}_{i_k - 1}\big) \\
    &\qquad \qquad \quad \cdot  \int_{[0, 1]^m} \bigg[ \sum_{\delta \in \{0, 1\}^m} \bigg[\prod_{k = 1}^{m} \big[(1 - z_k)^{1 - \delta_k} z_k^{\delta_k}\big] \cdot \Delta f\big(\big(u^{(k)}_{i_k - 1 + \delta_k}, k \in [m]\big)\big)\bigg] \bigg]^2 \, dz \\
    &\qquad \ge \frac{1}{S^2} \cdot \frac{{\rho}^m}{n_1 \cdots n_m} \sum_{i \in \prod\limits_{k} [n_k - 1]} \int_{[0, 1]^m} \bigg[ \sum_{\delta \in \{0, 1\}^m} \bigg[\prod_{k = 1}^{m} \big[(1 - z_k)^{1 - \delta_k} z_k^{\delta_k}\big] \\
    &\qquad \qquad \qquad \qquad \qquad \qquad \qquad \qquad \qquad \qquad \qquad \cdot \Delta f\big(\big(u^{(k)}_{i_k - 1 + \delta_k}, k \in [m]\big)\big)\bigg] \bigg]^2 \, dz, 
\end{align*}
\endgroup
where $\Delta f = f - \tilde{f}$.
For the last equality, we change the variables as 
\begin{equation*}
    s_k = (1 - z_k) \cdot u^{(k)}_{i_k - 1} + z_k \cdot u^{(k)}_{i_k} + \big(1 - u^{(k)}_{n_k - 1}\big)
\end{equation*}
for $k \in [m]$.
For a fixed constant $0 < r < 1$, by the Cauchy inequality, 
\begingroup
\allowdisplaybreaks
\begin{align*}
    &\int_{[0, 1]^m} \bigg[ \sum_{\delta \in \{0, 1\}^m} \bigg[\prod_{k = 1}^{m} \big[(1 - z_k)^{1 - \delta_k} z_k^{\delta_k}\big] \cdot \Delta f\big(\big(u^{(k)}_{i_k - 1 + \delta_k}, k \in [m]\big)\big) \bigg] \bigg]^2 \, dz \\
    &\; \ge \int_{[r, 1]^m} \bigg[ \sum_{\delta \in \{0, 1\}^m} \bigg[\prod_{k = 1}^{m} \big[(1 - z_k)^{1 - \delta_k} z_k^{\delta_k}\big] \cdot \Delta f\big(\big(u^{(k)}_{i_k - 1 + \delta_k}, k \in [m]\big)\big) \bigg] \bigg]^2 \, dz \\
    &\; \ge \frac{1}{2} \int_{[r, 1]^m} z_1^2 \cdots z_m^2 \cdot \Big(\Delta f\big(\big(u^{(k)}_{i_k}, k \in [m]\big)\big)\Big)^2 \, dz \\
    &\; \; - \int_{[r, 1]^m} \bigg[ \sum_{\delta \in \{0, 1\}^m \setminus \{\onevec\}} \bigg[\prod_{k = 1}^{m} \big[(1 - z_k)^{1 - \delta_k} z_k^{\delta_k}\big] \cdot \Delta f\big(\big(u^{(k)}_{i_k - 1 + \delta_k}, k \in [m]\big)\big) \bigg] \bigg]^2 \, dz \\
    &\; \ge \frac{1}{2} \int_{[r, 1]^m} z_1^2 \cdots z_m^2 \cdot \Big(\Delta f\big(\big(u^{(k)}_{i_k}, k \in [m]\big)\big)\Big)^2 \, dz \\
    &\; \; - (2^m - 1) \sum_{\delta \in \{0, 1\}^m \setminus \{\onevec\}} \int_{[r, 1]^m} \prod_{k = 1}^{m} \big[(1 - z_k)^{2(1 - \delta_k)} s_k^{2\delta_k}\big] \cdot \Big(\Delta f\big(\big(u^{(k)}_{i_k - 1 + \delta_k}, k \in [m]\big)\big)\Big)^2 \, dz \\
    &\; =\frac{(1 - r^3)^m}{2 \cdot 3^m} \Big(\Delta f\big(\big(u^{(k)}_{i_k}, k \in [m]\big)\big)\Big)^2 \\
    &\; \; - \frac{2^m - 1}{3^m} \sum_{\delta \in \{0, 1\}^m \setminus \{\onevec\}} \Big[(1 - r^3)^{\sum_{k} \delta_k}(1 - r)^{3(m- \sum_{k} \delta_k)} \Big(\Delta f\big(\big(u^{(k)}_{i_k - 1 + \delta_k}, k \in [m]\big)\big)\Big)^2\Big]
\end{align*}
\endgroup
for all $(i_1, \dots, i_m) \in \prod_{k = 1}^{m} [n_k - 1]$. 
Thus, for $f, \tilde{f} \in \genmalpha_m(S)$,
\begingroup
\allowdisplaybreaks
\begin{align*}
    \big\| \Phi(f) - \Phi(\tilde{f}) \big\|_2^2 &\ge \bigg[\frac{(1 - r^3)^m}{2 \cdot 3^m} - \frac{2^m - 1}{3^m} \cdot \sum_{\delta \in \{0, 1\}^m \setminus \{\onevec\}} \big[(1 - r^3)^{\sum_{k} \delta_k}(1 - r)^{3(m- \sum_{k} \delta_k)}\big] \bigg] \\
    &\qquad \cdot \frac{1}{S^2} \cdot \frac{{\rho}^m}{n_1 \cdots n_m} \sum_{i \in \prod\limits_{k} [n_k - 1]} \Big(\Delta f\big(\big(u^{(k)}_{i_k}, k \in [m]\big)\big)\Big)^2 \\
    &= \bigg[\frac{(1 - r^3)^m}{2 \cdot 3^m} - \frac{2^m - 1}{3^m} \cdot \sum_{\delta \in \{0, 1\}^m \setminus \{\onevec\}} \big[(1 - r^3)^{\sum_{k} \delta_k}(1 - r)^{3(m- \sum_{k} \delta_k)}\big] \bigg] \\
    &\qquad \cdot \frac{{\rho}^m}{S^2}  \cdot \| f - \tilde{f} \|_n^2 
\end{align*}
\endgroup
since $f((u^{(k)}_{i_k}, k \in [m])) = \tilde{f}((u^{(k)}_{i_k}, k \in [m])) = 0$ if $i_k = 0$ for some $k \in [m]$.
Note that 
\begingroup
\allowdisplaybreaks
\begin{align*}
    &\frac{(1 - r^3)^m}{2 \cdot 3^m} - \frac{2^m - 1}{3^m} \cdot \sum_{\delta \in \{0, 1\}^m \setminus \{\onevec\}} \big[(1 - r^3)^{\sum_{k} \delta_k}(1 - r)^{3(m- \sum_{k} \delta_k)}\big] \\
    &\qquad = \frac{(1 - r)^m}{3^m} \cdot \bigg[\frac{1}{2}\cdot \Big(\frac{1-r^3}{1-r}\Big)^m - (2^m - 1)(1 - r)^2 \\
    &\qquad \qquad \qquad \qquad \qquad \quad \cdot \sum_{\delta \in \{0, 1\}^m \setminus \{\onevec\}} \Big[\Big(\frac{1-r^3}{1-r}\Big)^{\sum_{k} \delta_k}(1 - r)^{2(m - 1 - \sum_{k} \delta_k)}\Big] \bigg].
\end{align*}
\endgroup
Therefore, by choosing $r$ close enough to 1, we can show that there exists a positive constant $c_{\rho, m} $ depending on $\rho$ and $m$ such that
\begin{equation}\label{norm-inequality}
    \big\| \Phi(f) - \Phi(\tilde{f}) \big\|_2 \ge \frac{c_{\rho, m}}{S} \cdot \| f - \tilde{f} \|_n
\end{equation}
for all $f, \tilde{f} \in \genmalpha_m(S)$.
Consequently, 
\begin{equation*}
    N(\epsilon, \genmalpha_m(S), \|\cdot\|_n) \le M(\epsilon, \genmalpha_m(S), \|\cdot\|_n) \le M\Big(\frac{c_{\rho, m} \epsilon}{S}, \D_m, \|\cdot\|_2\Big) \le N\Big(\frac{c_{\rho, m} \epsilon}{2S}, \D_m, \|\cdot\|_2\Big).
\end{equation*}
Here $M(\epsilon, X, \|\cdot\|_n)$ indicates the $\epsilon$-packing number of a set $X$ under the norm $\|\cdot\|_n$, the second inequality follows from \eqref{norm-inequality}, and the first and the last inequality are from the general theory of packing numbers and covering numbers (see, e.g., \cite{wainwright2019high}, Lemma 5.5).
\end{proof}

\subsubsection{Proof of Lemma \ref{lem:smallest-eigenvalue}}\label{pf:smallest-eigenvalue}
\begin{proof}[Proof of Lemma \ref{lem:smallest-eigenvalue}]
Note that for every $v = (v_{\alpha}, \alpha \in \{0, 1\}^d \text{ and } |\alpha| \le s)$,
\begin{equation*}
    v^T \Sigma v = \E\bigg[\sum_{\alpha, \alpha'} \Big(v_{\alpha} \prod_{j \in S(\alpha)} X^{(1)}_j \Big) \Big(v_{\alpha'} \prod_{j' \in S(\alpha')} X^{(1)}_{j'} \Big) \bigg] = \E\bigg[\bigg(\sum_{\substack{\alpha \in \{0, 1\}^d \\ |\alpha| \le s}} \prod_{j \in S(\alpha)} X^{(1)}_j \bigg)^2\Big] \ge 0.
\end{equation*}
Hence, it suffices to show that $v^T \Sigma v > 0$ for every $v \neq 0$ in order to prove the lemma.
Suppose there exists $v = (v_{\alpha}, \alpha \in \{0, 1\}^d \text{ and } |\alpha| \le s)$ such that $v^T \Sigma v = 0$.
From the computation above, we can see that 
\begin{equation*}
    \E\bigg[\bigg(\sum_{\substack{\alpha \in \{0, 1\}^d \\ |\alpha| \le s}} \prod_{j \in S(\alpha)} X^{(1)}_j \Big)^2\Big] = 0,
\end{equation*}
which directly implies
\begin{equation}\label{eq:multi-affine-term-as-zero}
    \P\bigg(\sum_{\substack{\alpha \in \{0, 1\}^d \\ |\alpha| \le s}} v_{\alpha} \prod_{j \in S(\alpha)} X^{(1)}_j = 0\bigg) = 1.
\end{equation}
For notational convenience, let $a_d$ be the multi-affine function defined by 
\begin{equation*}
    a_d(x_1, \dots, x_d) = \sum_{\substack{\alpha \in \{0, 1\}^d \\ |\alpha| \le s}} \prod_{j \in S(\alpha)} x_j.
\end{equation*}
Then, we can restate \eqref{eq:multi-affine-term-as-zero} as 
\begin{equation*}
    \P\Big(a_d\big(X^{(1)}_1, \dots, X^{(1)}_d\big) = 0\Big) = 1.
\end{equation*}

Now, observe that we can write $a_d$ as 
\begin{equation}\label{eq:multi-affine-function-split}
    a_d(x_1, \dots, x_d) = b_{d - 1}(x_1, \dots, x_{d - 1}) + a_{d - 1}(x_1, \dots, x_{d - 1}) \cdot x_d
\end{equation}
for some multi-affine functions $a_{d - 1}$ and $b_{d - 1}$ of $x_1, \dots, x_{d - 1}$.
We can then show that the evaluations of $a_{d - 1}$ and $b_{d - 1}$ at $X^{(1)}$ should also be zero almost surely. 
More precisely, we can show that 
\begin{equation*}
    \P\Big(a_{d - 1}\big(X^{(1)}_1, \dots, X^{(1)}_{d - 1}\big) = 0\Big) = 1 \ \text{ and } \ \P\Big(b_{d - 1}\big(X^{(1)}_1, \dots, X^{(1)}_{d - 1}\big) = 0\Big) = 1.
\end{equation*}
This is because
\begin{align*}
    1 &= \P\Big(a_d\big(X^{(1)}_1, \dots, X^{(1)}_d\big) = 0\Big) \\
    &\le \P\Big(a_{d - 1}\big(X^{(1)}_1, \dots, X^{(1)}_{d - 1}\big) = 0 \Big) \\
    &\quad+ \P\Big(a_{d - 1}\big(X^{(1)}_1, \dots, X^{(1)}_{d - 1}\big) \neq 0 \text{ and } X^{(1)}_d = -(b_{d - 1}/a_{d - 1})\big(X^{(1)}_1, \dots, X^{(1)}_{d - 1}\big) \Big) \\
    &= \P\Big(a_{d - 1}\big(X^{(1)}_1, \dots, X^{(1)}_{d - 1}\big) = 0 \Big),
\end{align*}
where the second equality is derived from the assumption that the $X^{(1)}$ is a continuous random variable with a probability density function.

We can repeat this argument. 
Suppose we have a multi-affine function of the first $i$ variables $x_1, \dots, x_i$ whose evaluation at $X^{(1)}$ is zero almost surely. 
Then, we can represent it as in \eqref{eq:multi-affine-function-split} with two multi-affine functions of one less variable, $x_1, \dots, x_{i - 1}$ and show as before that the evaluations of those two functions at $X^{(1)}$ are zero almost surely as well.
As the number of variables of multi-affine functions decreases by one at each iteration, we end up with constant functions that are zero almost surely, which means that they are identically zero.
If we recursively plug in them back to the recurrence relations between multi-affine functions, we can show that $a_d$ is also identically zero, and this proves that the coefficient vector $v$ of $a_d$ is a zero vector.
\end{proof}

\end{document}